\newcommand\pref[1]{\textbf{\ref{#1}}}
\newcommand\headtt[1]{{        \textrm{(}\tt #1\textrm{)}      }}
\newlength\savedwidth 
        \newcommand\whline{\noalign{\global\savedwidth\arrayrulewidth\global\arrayrulewidth 1.2pt}%
        \hline
        \noalign{\global\arrayrulewidth\savedwidth}}
\theoremstyle{plain}
\newtheorem{theorem}{Theorem}[subsection]
\newtheorem{lemma}[theorem]{Lemma}
\newtheorem{corollary}[theorem]{Corollary}
\newtheorem{proposition}[theorem]{Proposition}
\theoremstyle{definition}
\newtheorem{definition}[theorem]{Definition}
\newtheorem{example}[theorem]{Example}
\newtheorem{remark}[theorem]{Remark}
\newtheorem{pgraph}[theorem]{{}}
\DeclareMathOperator\ad{ad}
\DeclareMathOperator\Ad{Ad}
\DeclareMathOperator\Aff{Aff}
\DeclareMathOperator\Aut{Aut}
\DeclareMathOperator\bfAut{\mathbf{Aut}}
\DeclareMathOperator\Br{Br}
\DeclareMathOperator{\Centre}{Z}
\DeclareMathOperator{\card}{card}
\DeclareMathOperator{\Cd}{C}
\DeclareMathOperator\characteristic{char}
\DeclareMathOperator\diag{diag}
\DeclareMathOperator\disc{disc}
\DeclareMathOperator\End{End}
\DeclareMathOperator\Gal{Gal}
\DeclareMathOperator\gl{\mathfrak{gl}}
\DeclareMathOperator\GL{GL}
\DeclareMathOperator\SPL{SL}
\DeclareMathOperator\Hom{Hom}
\DeclareMathOperator\Ker{Ker}
\DeclareMathOperator\Lp{L} 
\DeclareMathOperator\Mat{M}
\DeclareMathOperator\Out{Out}
\DeclareMathOperator\rad{rad}
\DeclareMathOperator\rot{rot}
\DeclareMathOperator\spl{\mathfrak{sl}}
\DeclareMathOperator\tr{tr}
\DeclareMathOperator\spann{span}
\newcommand \andd{\quad\text{and}\quad}
\newcommand\ck{^{\vee}}
\newcommand \cardnum[1]{\card(#1)}
\newcommand\form{(\,\, \vert\,\, )}
\newcommand\isom{\simeq}
\newcommand\op {\text{op}}
\newcommand \order[1]{\left\vert #1 \right\vert}
\newcommand\ot {\otimes}
\newcommand \set[1]{\{#1 \}}
\newcommand \suchthat { \mid }
\newcommand \bbC {\mathbb C}
\newcommand \bbE {\mathbb E}
\newcommand \bbF {\mathbb F}
\newcommand \bbI {\mathbb I}
\newcommand \bbM {\mathbb M}
\newcommand \bbQ{\mathbb Q}
\newcommand \bbR{\mathbb R}
\newcommand \bbZ {\mathbb Z}
\newcommand \cA {\mathcal A}
\newcommand \cE {\mathcal E}
\newcommand \cH {\mathcal H}
\newcommand \cI {\mathcal I}
\newcommand \cJ {\mathcal J}
\newcommand \cK {\mathcal K}
\newcommand \cL {\mathcal L}
\newcommand \cO {\mathcal O}
\newcommand \cS {\mathcal S}
\newcommand \cT {\mathcal T}
\newcommand \fg {{\mathfrak g}}
\newcommand \fh {{\mathfrak h}}
\newcommand \al {\alpha}
\newcommand \Dl {\Delta}
\newcommand \Lm {\Lambda}
\newcommand \ph{\varphi}
\newcommand \sg {\sigma}
\newcommand\boldkay  {{\boldsymbol k}}
\newcommand\boldm    {{\boldsymbol m}}
\newcommand\boldone  {{\boldsymbol 1}}
\newcommand\boldq    {{\boldsymbol q}}
\newcommand\boldsg   {{\boldsymbol{\sigma}}}
\newcommand\boldsgt  {{\tilde{\boldsymbol{\sigma}}}}
\newcommand\boldtau   {{\boldsymbol{\tau}}}
\newcommand \bari     {{\bar \imath}}
\newcommand \barj     {{\bar \jmath}}
\newcommand \bkay     {{\bar k}}
\newcommand \bboldkay  {{\bar{\boldsymbol{k}}}}
\newcommand\bF       {{\bar F}}
\newcommand \bV      {{\bar V}}
\newcommand \bPhi    {{\bar \Phi}}
\newcommand\brva     {{\breve a}}
\newcommand \brvA    {{\breve A}}
\newcommand \brvDl  {{\breve \Delta}}
\newcommand \brvI   {{\breve {\tt I}}}
\newcommand \brvW   {{\breve W }}
\newcommand \tcL {\widetilde{\cL}}
\newcommand \tCd {\widetilde{C}}
\newcommand \tH {{\widetilde H}}
\newcommand \tsg {{\tilde\sg}}
\newcommand \core {c}
\newcommand \ccore {{cc}}
\newcommand \real{\text{re}}
\newcommand\sears[3]{\textrm{#1}_{#2}^{#3}}
\newcommand \Tindex[4]{{\vphantom{#2}}^{#1}{\textrm{#2}}_{#3}^{#4}}
\newcommand \type[2]{\textrm{#1}_{#2}}
\newcommand \typeaff[3]{{\textrm{#1}}_{#2}^{(#3)}}
\newcommand\typemult[4]{\textrm{#1}_{#2}^{(#3,#4)}}
\newcommand \prim {\theta}
\newcommand \Qp {Q(\prim)}
\newcommand\boldprim  {{\boldsymbol{\prim}}}
\newcommand \Aute{\Aut^\text{e}}
\newcommand \Autz{\Aut^0}
\newcommand \base {{\Bbbk}}
\newcommand\chev{\omega} 
\newcommand \gd {\dot{\mathfrak g}}
\newcommand \fgp {{\fg'}}
\newcommand \fgb {\overline{\fg'}}
\newcommand{\fnp}{{\mathfrak n_+}}
\newcommand\isoR{\simeq_{R_2}}
\newcommand \kay {k} 
\newcommand \ttI {{\tt I}} 
\newcommand \Lpbm {\Lp_\boldm}
\newcommand \Lpm {\Lp_m}
\newcommand \pd {{\dot p}}
\newcommand \piDlb {{\overline{\pi_\sigma(\Delta)}}}
\newcommand\rkaff{\ell}  
\newcommand\rkfin{k}  
\newcommand \sgd {\dot{\sigma}}
\newcommand\slashspace{,\ \ }
\newcommand \Qt {Q(\theta)}
\newcommand \Zmn {\mathbb Z^n_\boldm}
\newcommand \Zn {{\mathbb Z^n}}
\begin{document}

\title[Nullity 2 multiloop algebras]{Multiloop algebras,
iterated loop algebras and extended affine Lie algebras of nullity 2}
\author{Bruce Allison}
\address[Bruce Allison]{Department of Mathematics and Statistics \\ University
of Victoria \\  PO BOX 3060 STN CSC\\ Victoria BC Canada  V8W 3R4}
\email{ballison@uvic.ca}
\author{Stephen Berman}
\address[Stephen Berman]
{Saskatoon, Saskatchewan, Canada}
\email{sberman@shaw.ca}
\author{Arturo Pianzola}
\address[Arturo Pianzola]
{Department of Mathematical and Statistical Sciences\\
University of Alberta\\Edmonton, Alberta, Canada T6G 2G1}
\address[Arturo Pianzola]
{Instituto Argentino de Matemática\\ Saavedra 15, 1083 Buenos
Aires, Argentina} \email{a.pianzola@ualberta.ca}
\subjclass[2000]{17B65, 17B67} \keywords{loop algebras,
multiloop algebras, extended affine Lie algebras}
\thanks{B. Allison and A. Pianzola
gratefully acknowledge the support of the Natural Sciences and
Engineering Research Council of Canada.  A. Pianzola gratefully
acknowledges the support of CONICET}
\date{Feb 11, 2010}

\begin{abstract} Let $\bbM_n$ be the class of all
multiloop algebras  of finite dimensional simple Lie algebras
relative to $n$-tuples of commuting finite order automorphisms.
It is a classical result that $\bbM_1$ is the class of all
derived algebras modulo their centres of affine Kac-Moody Lie algebras.
This combined with the Peterson-Kac conjugacy theorem for affine algebras
results in a classification of the algebras in $\bbM_1$.
In this paper, we classify the algebras in $\bbM_2$,
and further determine the relationship between $\bbM_2$ and two
other classes of Lie algebras:  the class of all loop algebras of affine Lie algebras and
the class of all extended affine Lie algebras of nullity~2.
\end{abstract}

\maketitle

\section {Introduction} \
\label{sec:intro}

Affine Kac-Moody  algebras over an algebraically closed field of
of
charact\-eristic 0 comprise one of the most widely studied and
applied classes of infinite dimensional Lie algebras.  Kac's
realization theorem is of fundamental importance in this area as
it provides an explicit construction of affine algebras from finite
dimensional simple algebras. More precisely, the version of this
theorem that we use states that the algebras of the form $\fgb := \fgp/Z(\fgp)$,
where $\fg$ is an affine algebra, are  the same  up to isomorphism as
the loop algebras of the form $\Lp(\gd,\sg)$ where $\gd$ is a finite dimensional
simple and $\sg$ a diagram automorphism of $\gd$. (See Section \ref{subsec:loopdef}
below   for the definition of a loop algebra.)

Now finite dimensional simple algebras and affine algebras can be regarded as nullity 0 and nullity 1 Lie algebras, in the sense
that the additive group generated by the isotropic roots is free of rank 0 or 1 respectively.
Taking this point of view, it is irresistible to look for  higher nullity analogues of these algebras,
and to hope that loop algebras in some form  will provide constructions of algebras of increasing nullity.

A considerable amount    of interesting work has been done on higher nullity algebras in recent years using a number of different approaches.
These approaches  have included:
axiomatic characterizations;
realizations as multiloop algebras, iterated loop algebras,
or matrix algebras over nonassociative coordinate algebras;
constructions  by means of  generators and relations determined by root systems; constructions
of representations; and the study of higher nullity algebras
as forms of untwisted loop algebras and their relation to torsors over Laurent polynomial  rings.

In this paper, we will focus on three of these approaches (although some of the other approaches will also be
mentioned), namely the construction of higher nullity algebras using multiloop algebras
and iterated loop algebras, and the axiomatic description of higher nullity algebras.
To outline our main results, we now  introduce three classes of algebras $\bbM_n$, $\bbI_n$ and $\bbE_n$,
which depend on a nonnegative integer $n$, and we say that the algebras
 in these classes have \emph{nullity} $n$.   In each case,
we mention a sample of the papers which deal with these topics.  There are many other interesting
related articles, several of which are listed in the bibliography.

\begin{list}{$\bullet$}
{\setlength{\leftmargin}{.3in}\setlength{\rightmargin}{.3in}}
\item \textbf{Multiloop algebras and $\bbM_n$}
\cite{EMY, vdL, GP1, ABFP1, ABFP2, Na}
There is a natural generalization
of the loop construction that produces a \emph{multiloop algebra}
$\Lp(\fg,\boldsg)$ from a Lie algebra $\fg$ using an $n$-tuple $\boldsg = (\sg_1,\dots,\sg_n)$
of commuting finite order automorphisms of $\fg$  (see Section \ref{subsec:multiloop} below). We denote by $\bbM_n$ the class of all Lie
algebras that are isomorphic to multiloop algebras of the form $\Lp(\gd,\boldsg)$, where $\gd$ is finite dimensional and simple
and $\boldsg = (\sg_1,\dots,\sg_n)$ is as indicated.  By convention, $\bbM_0$ is the class
of all finite dimensional simple Lie algebras.

\item\textbf{Iterated loop algebras and $\bbI_n$}
\cite{W, Po, ABP1, ABP2, ABP2.5}
Let $\bbI_0$  be the class
of all finite dimensional simple Lie algebras; and, for $n \ge 1$,
let $\bbI_n$ be the class of all Lie algebras that are isomorphic
to loop algebras of the form $\Lp(\fg,\sg)$, where $\fg\in \bbI_{n-1}$
and $\sg$ is a finite order automorphism of $\fg$.

\item\textbf{EALAs and  $\bbE_n$}
\cite{Sai, H-KT, BGK, AABGP, Neh2, Neh3}
An \emph{extended affine Lie algebra}, or EALA for short, is an algebra $\fg$
that satisfies a list of natural axioms
that are modeled on well known properties of finite dimensional simple Lie algebras and affine algebras
(see Section \ref{subsec:EALAdef} below).
One of the axioms requires that the rank of the group generated by the isotropic
roots of $\fg$  be finite, and this rank is called the \emph{nullity} of $\fg$.
Further, the subalgebra
$\fg_c$ of $\fg$ generated by the root spaces corresponding to nonisotropic roots is called the \emph{core} of $\fg$.  (In general
$\fg_c \subseteq \fg'$, and these algebras are equal in the affine case.)
We denote by $\bbE_n$ the class
of all algebras that are isomorphic to algebras of the form $\fg_c/Z(\fg_c)$ for some EALA $\fg$ of nullity n.
\end{list}

It is not difficult to show that $\bbM_0 = \bbI_0 = \bbE_0,$ and that this is the
class of finite dimensional simple Lie algebras (Proposition \ref{prop:nullity0}).
Of course the classification of these algebras is well known.
Also, we will see using Kac's realization theorem that  $\bbM_1 = \bbI_1 = \bbE_1$  and that
this is the class of derived algebras modulo their centres of affine algebras (Corollary \ref{cor:KacReal1}).
Moreover, the Peterson-Kac conjugacy theorem for affine algebras provides a classification of the algebras in $\bbM_1$ (Corollary \ref{cor:KacReal3}).
In general $\bbM_n \subseteq \bbI_n$ (see \pref{pgraph:MnIn}), but no other containments  hold,  even for $n=2$.

This paper concludes a sequence \cite{ABP1}, \cite{ABP2}, \cite{ABP2.5}, \cite{ABFP1}, \allowbreak \cite{ABFP2}
of papers devoted to the study of the three classes $\bbM_n$,  $\bbI_n$,  $\bbE_n$.  The preceding five articles contain a number
of general results about these classes, and in this paper, we apply these results to the case $n=2$.  Our results provide the following:
\begin{list}{$\bullet$}
{\setlength{\leftmargin}{.3in}\setlength{\rightmargin}{.3in}}
\item A complete understanding of the relationship betweens the classes $\bbM_2$, $\bbI_2$ and $\bbE_2$.
\item A classification, up to isomorphism, of the algebras in $\bbM_2$.
\end{list}

To discuss this in more detail, note first that by Kac's realization theorem,
the algebras in $\bbI_2$ are, up to isomorphism, the algebras
of the form
\begin{equation}
\label{eq:I2form}
\Lp(\fgb,\sg),\ \text{with $\fg$ affine and $\sg$ a finite order automorphism of $\fgb$.}
\end{equation}
We show in Section \ref{sec:char},
that an algebra of this form is in
$\bbM_2$ if and only if $\sg$ is of first kind.
(The notion of first and second kind for finite order automorphisms of $\fgb$  is recalled
in \S \ref{sec:autaff}.)
In fact, our first main theorem, Theorem \ref{thm:char}, shows that
the algebras in $\bbM_2$ are, up to isomorphism, the algebras
of the form
\begin{equation}
\label{eq:M2form}
\Lp(\fgb,\sg),\ \text{with $\fg$ affine  and $\sg$ a diagram automorphism of $\fgb$;}
\end{equation}
or equivalently they are the algebras of the form
\begin{equation}
\label{eq:M2formu}
\Lp(\fgb,\sg),\ \text{with $\fg$ untwisted affine  and $\sg$ a diagram automorphism of $\fgb$.}
\end{equation}
Using this theorem we are able in Section \ref{sec:char} to describe the relationship between the classes $\bbM_2$, $\bbI_2$ and $\bbE_2$ in a sequence of corollaries.
This information is summarized in Figure~\ref{fig:classdiag}.

Our second main theorem, Theorem \ref{thm:class}, which provides a classification
theorem for algebras in $\bbM_2$, is proved by determining when two algebras of the form \eqref{eq:M2formu} are isomorphic.
Two key invariants that are used for this
are the relative and absolute types of an algebra $\cL \in \bbM_2$;
these are defined to be the relative and absolute types respectively
of the central closure of $\cL$, which is finite dimensional and simple
over its centroid.  The absolute type of $\cL$ is easy to compute (see
\pref{pgraph:abseasy}), but the relative type requires  considerably more work.

We now outline the structure of the paper, and briefly discuss some of our other results and methods.
First, after some preliminaries in Section \ref{sec:prelim},  we record in Sections \ref{sec:type}, \ref{sec:loop} and  \ref{sec:EALA}
the basic definitions and results we need about relative and absolute type, multiloop algebras, iterated loop algebras,
and EALAs.

In Section \ref{sec:autKM}, we obtain an erasing theorem, Theorem \ref{thm:erase}, for loop algebras of symmetrizable Kac-Moody Lie algebras.
This theorem  is an extension of a theorem in \cite{ABP2}, which allows us to replace an arbitrary $\sg$
in \eqref{eq:I2form} by an outer automorphism, just as Kac did for loop algebras
of finite dimensional simple algebras in \cite[Prop.~8.5]{K2}.

In Section \ref{sec:autaff}, we discuss automorphisms of $\fgb$, where $\fg$ is affine.
In particular, we  show that the kind of a finite order automorphism $\sg$
is determined by the structure of the centroid of $\Lp(\fgb,\sg)$.

In Sections \ref{sec:looprot} and \ref{sec:loopnt}, we study in detail the structure
of a loop algebra as in \eqref{eq:M2form}. Section \ref{sec:looprot} considers
the \emph{rotation case} when $\fg$ is of type $\typeaff{A}{\ell}{1}$ and $\sg$ is a rotation of the Dynkin diagram;
we see in this case that $\Lp(\fgb,\sg)$ is a special linear algebra  over a quantum torus.
Section \ref{sec:loopnt} considers the \emph{nontransitive case} when $\sg$ is not transitive on the Dynkin diagram; we
obtain in this case a description of the relative type of $\Lp(\fgb,\sg)$ in terms of a projection
of the root system of $\fg$.

As indicated previously, Section \ref{sec:char} contains our results about the relationship between $\bbM_2$,
$\bbI_2$ and $\bbE_2$.

In Sections \ref{sec:isomat} and \ref{sec:projaff},
we study the isomorphism problem for loop algebras as in \eqref{eq:M2form}.
In Section  \ref{sec:isomat}, we solve the problem in the rotation case,
using calculations involving cyclic algebras over the field of rational functions in 2 variables.
In  Section \ref{sec:projaff}, we use projections of affine root systems to compute the relative type
of $\Lp(\fgb,\sg)$ in the transitive case.  For this we use methods
of Fuchs, Schellekens and Schweigert \cite{FSS} and of Bausch  \cite{Bau}.

Section \ref{sec:class} contains the classification theorem for algebras in $\bbM_2$.
The algebras are listed and assigned labels  in Table \ref{tab:reltypeu}.
The list consists of 14 infinite  families and 9 exceptional algebras.

In the last section,  Section \ref{sec:links}, we establish links with some work of other
researchers by calculating the index of each algebra $\cL$ in $\bbM_2$ and the Saito extended affine root
system \cite{Sai} of each isotropic algebra $\cL$ in $\bbM_2$ (see Table \ref{tab:TitsEARSu}).
Here   the index of $\cL$ is defined to be the index, in the sense of  \cite{T}, of the connected component of the identity in the automorphism group
of the central closure of $\cL$ over its centroid;
whereas the SEARS of $\cL$ is defined to be the set of nonisotropic roots
of a certain EALA of nullity 2 whose core modulo its centre is $\cL$.

It  turns out that by the classification theorem there is only one infinite family of algebras $\cL \in \bbM_2$ that satisfy the following condition:
\begin{itemize}
\item[(AA)] The absolute type of $\cL$ is $\type{A}{\rkfin}$ for some $\rkfin\ge 1$ and the relative type of $\cL$ is $\type{A}{r}$ for some $r\ge 0$.
\end{itemize}
(Saying that $\cL$ has relative type $\type{A}{0}$ means that $\cL$ is anisotropic.
See Definition \ref{def:reltype1}.)
Moreover, we see in Sections \ref{sec:class} and \ref{sec:links}
that with the exception of these algebras, an algebra in $\bbM_2$ is determined up to isomorphism by
its relative and absolute type (together), or by its index, or  by its  SEARS.  The second of these facts was conjectured in \cite{GP1}.

To conclude this introduction, we make a brief  comment on the title of this article.  Originally
this project was envisaged as a sequence of three papers,
beginning with \cite{ABP1} and \cite{ABP2},
on covering algebras, an intentionally imprecise term describing
the various incarnations of loop algebras that occur
in the study of EALAs. (See the introduction to \cite{ABP1}.) For this reason our working title was
\emph{Covering algebras III: Classification of nullity 2 multiloop Lie algebras}.  However,
as the sequence expanded and our topic broadened, we decided that the present title more accurately reflects
the content of the paper.

\smallskip
\noindent\textbf{Acknowledgements:}  We wish to thank Erhard Neher and John Faulkner for many helpful conversations about
multiloop algebras, extended affine Lie algebras and their root systems.  In particular, John's collaboration
on \cite{ABFP1} and \cite{ABFP2} was essential in establishing the link between these topics.
We also wish to thank the Banff International Research Station, where several of the results
in this paper were obtained during a Research in Teams Programme in 2005.  Finally,
we thank the Mathematisches Forschungsinstitut Oberwolfach  for the opportunity
to announce the main results of this paper in \cite{ABPOb}.

\section{Preliminaries} \
\label{sec:prelim}

In this short section, we establish some notation and conventions that we will use in this work.

\emph{We suppose  throughout the paper that $\base$ is an algebraically closed field of characteristic~0.}

\subsection{Algebras}\

Unless  indicated otherwise, \emph{algebra} will mean algebra over~$\base$.

Many  of the algebras that we will consider have natural
gradings, for example by $\bbZ^n$ or by a root lattice.  Nevertheless,
unless specified to the contrary, \emph{two algebras will be regarded as
isomorphic if there is an ungraded isomorphism between them}.

If  $F$ is a unital commutative  associative $\base$-algebra and $\cL_1$ and $\cL_2$
are algebras over $F$, we write $\cL_1 \simeq_F \cL_2$  to mean that
$\cL_1$ and $\cL_2$ are isomorphic
as algebras over $F$.  If $F = \base$ we simply write
$\cL_1 \simeq \cL_2$.

If $\cL$ is an algebra, the automorphism group of $\cL$ over
$\base$ is denoted by $\Aut_\base(\cL)$, or simply $\Aut(\cL)$.

If $\cL$ is a Lie algebra, the derived algebra of $\cL$ is (unless indicated otherwise)
denoted by  $\cL'$ and
the centre of $\cL$ is denoted by $\Centre(\cL)$.

If $n$ is a positive integer, we let
\[R_n = \base[t_1^{\pm 1},\dots,t_n^{\pm 1}]\]
denote the algebra of Laurent polynomials in the variables $t_1,\dots,t_n$ over $\base$.

\subsection{Matrix Lie algebras} \
\label{subsection:matrix}

Suppose that $\cA$ is a unital associative algebra.  We use the notation $\cA^-$ for
the Lie algebra $\cA$ under the commutator product $[x,y] = xy-yx$.

If $g\ge 1$,  let $\Mat_g(\cA)$ denote
the associative algebra of $n\times n$-matrices over $\cA$.  We then let
$\gl_g(\cA) = \Mat_g(\cA)^-$ and $\spl_g(\cA) = \gl_g(\cA)'$.
It is well known (and easily checked) that
\[\spl_g(\cA) = \set{x\in \gl_g(\cA) \suchthat \tr(x) \in [\cA,\cA]}.\]
In particular $\spl_1(\cA) = [\cA,\cA]$ under the commutator product.
Finally, if $u$ is a unit in $\Mat_g(\cA)$, we define $\Ad(u)\in \Aut(\Mat_g(\cA))$ by
$\Ad(u) x = u x u^{-1}$ for $x\in \Mat_g(\cA)$.  Then $\Ad(u)$ is also an automorphism
of $\gl_g(\cA)$, and we denote its restriction to $\spl_g(\cA)$ also by $\Ad(u)$.

\subsection{Root systems}
\label{subsec:rtsystem}

If $\fg$  is a Lie algebra and $\fh$ is  ad-diagonalizable abelian subalgebra of $\fg$,
then we have the \emph{root space decomposition} $\fg = \sum_{\al\in \fh^*} \fg_\al$ of $\fg$ with respect to $\fh$,
where $\fh^*$ is the dual space of $\fh$ and $\fg_\al = \set{x\in \fg \suchthat [h,x] = \al(h)x \text{ for } h\in \fh}$.  An element
$\al\in \fh^*$ will be called a \emph{root} of $\fg$ relative to $\fh$ if $\fg_\al \ne 0$, and
the set $\set{\al\in \fh^* \suchthat \fg_\al \ne 0}$ is called the \emph{root system of $\fg$}
relative to $\fh$.
We emphasize that with this definition $0$ is a root of $\fg$ relative to $\fh$ unless $\fg = \set{0}$.

We use the standard definition of finite (not necessarily reduced)
root system, except that we regard $0$ is an element of the system.
Thus, in this article a \emph{finite root system} is a finite subset $\Dl$ of a vector space over  $\base$
such that $0\in \Dl$ and $\Dl\setminus\set{0}$ is
a finite root system as defined in  \cite[Chap. VI, \S 1, Def.~1]{Bo2}.
Our notation for the type of a   reduced   irreducible finite
root system is standard and follows \cite[Plates I-IX]{Bo2}.  As usual we identify
$\type{A}{1} =  \type{B}{1} = \type{C}{1}$, $\type{B}{2} = \type{C}{2}$ and $\type{A}{3}= \type{D}{3}$.
In addition we will use the notation $\type{BC}{\rkfin}$ for the type of the unique  nonreduced irreducible finite root system of rank $\rkfin\ge 1$
\cite[Chap. VI, \S 4, no.~14]{Bo2}.

\subsection{Symmetrizable Kac-Moody Lie algebras}\
\label{subsec:symm}

Here we establish the notation we will use for symmetrizable Kac-Moody Lie algebras.
The reader can consult
\cite[Chapters 1--4]{K2}, [MP, Chapter 4] or \cite[\S 4]{KW}
for any necessary background regarding these algebras.

Suppose that $A = (a_{ij})_{i,j \in \ttI}$ is an indecomposable symmetrizable GCM (generalized Cartan matrix),
where $\ttI$ is a finite subset of $\bbZ$.
The (symmetrizable) Kac-Moody Lie algebra determined by $A$ is the Lie algebra
$\fg = \fg(A)$ generated by $\fh$ and the symbols $\set{e_i}_{i\in \ttI}$
and $\set{f_i}_{i\in \ttI}$ subject to the relations
\begin{gather*}
[\fh,\fh] = \set{0},\quad [e_i,f_j] = \delta_{ij} \al_i\ck,\quad
[h,e_i] = \al_i(h)e_i,\quad [h,f_i] = -\al_i(h)f_i,,\\
\ad(e_i)^{1-a_{ij}}(e_j)= \ad(f_i)^{1-a_{ij}}(f_j) =0 \ (i\ne j),
\end{gather*}
where  $(\fh,\Pi,\Pi\ck) = (\fh,\set{\al_i}_{i\in \ttI},\set{\al_i}\ck_{i\in \ttI})$
is a realization of $A$.

Let $\Dl$ be the set of roots (including 0)
of $\fg$ relative to $\fh$,
let $Q = Q(\Dl) = \sum_{i\in \ttI} \bbZ \al_i$ be the root lattice of $\fg$,
let $\fg = \sum_{\al\in Q} \fg_\al$ be the root space decomposition of $\fg$, let $W$ be the Weyl group of $\fg$,
and let
$\Dl^\real = \cup_{i\in \ttI}W\al_i$ be the set of real roots of $\fg$.

Let $\fgp$ be the derived algebra of $\fg$, in which case
$\Centre(\fg) =
\Centre(\fgp) = \set{h\in \fh : \al_i(h) = 0 \text{ for } i\in \ttI}$.
We  set
\[\fgb := \fgp / \Centre(\fgp),\]
and let
$-  : \fgp \to \fgb$ be the canonical map.
We also let
\[\fh'  := \fh\cap \fgp = \textstyle \sum_{i\in \ttI} \base\al_i\ck.\]
(Of course, contrary to our convention, $\fh'$ is not the derived algebra of $\fh$.)
Note that $\fgp =  \sum_{\al\in Q} (\fgp)_\al$ and
$\fgb =  \sum_{\al\in Q} (\fgb)_\al$ are $Q$-graded algebras with
\[(\fgp)_\al = \fg_\al \cap \fg \andd (\fgb)_\al = \overline{(\fgp)_\al}\]
for $\al\in Q$.

Our main interest is in the special cases  when $A$ is of finite type
or $A$ is of affine type. If $A$ is of finite type, $\fg$ is a split simple Lie algebra with splitting Cartan subalgebra
$\fg$, and hence $\Dl$ is an irreducible reduced finite root system.

If $A$ is of affine type, $\fg$  is an affine Kac-Moody Lie algebra.
We  label $A$
using Tables  Aff1--Aff3 of
\cite[Chapter 4]{K2}.\footnote{A different system is used in \cite{MP} to label affine matrices.
We will see in Remark \ref{rem:labels}
that the system we are using from \cite{K2} is related to the absolute
type of $\fgb$, whereas the system in \cite{MP} is related to the relative type
of $\fgb$.}
If $A$ has label $\typeaff{X}{k}{m}$ using this
system, we say that $\fg$ (or $A$) has \emph{type} $\typeaff{X}{k}{m}$.
We say that
$\fg$ (or $A$)  is \emph{untwisted} (resp.~\emph{twisted}) if $m = 1$ (resp.~$m \ne 1$).  We will recall some
more (standard) notation for affine algebras when needed in Section~\ref{subsec:notationaff}.

\section{Relative and absolute type}
\label{sec:type}

In this section, we discuss some isomorphism invariants for Lie algebras that will play a crucial role in
our work on classification.

\subsection{The centroid}\
\label{subsec:centroid}

We begin by recalling the definition of the centroid.  Since we will work
with associative algebras as well as Lie algebras later in the paper,
we make the initial  definitions for arbitrary (not necessarily Lie) algebras.

\begin{definition}
If $\cL$ is an arbitrary algebra,  the \emph{centroid }of $\cL$ is the  subalgebra of $\End_\base(\cL)$ defined by
\[\Cd(\cL) = \Cd_\base(\cL) := \set{e\in \End_\base(\cL) \suchthat e(xy) = e(x) y = x e(y) \text{ for } x,y\in \cL}.\]
If  $\cL$ is perfect ($\cL \cL = \cL$), then $\Cd(\cL)$  is commutative.
Note that $\cL$ is naturally a left $\Cd(\cL)$-module, and, if $\cL$ is perfect,
$\cL$ is an algebra over $\Cd(\cL)$.
$\cL$ is said to be \emph{fgc} if $\cL$ is finitely generated as a left $\Cd(\cL)$-module.
Finally, $\cL$ is said to be \emph{central} if  $\Cd(\cL) = \base 1$.
\end{definition}

\begin{pgraph}
\label{pgraph:centralsimple}  If $\cL$ is a simple algebra and $F = \Cd(\cL)$, then
$F$ is a field and $\cL$ is central simple as an algebra over $F$.  Conversely,
if $F$ is an extension of $\base$ and $\cL$ is a central simple algebra
over $F$, then $\cL$ is a simple algebra over $\base$ with centroid naturally isomorphic
to $F$.  (See \cite[\S X.1]{J} for these facts.)
\end{pgraph}

\begin{pgraph}
\label{pgraph:cfunctor}
If  $\cL_1$ and $\cL_2$ are algebras and
$\sg: \cL_1 \to \cL_2$ is an isomorphism,
then the map $\Cd(\sg) : \Cd(\cL_1) \to \Cd(\cL_2)$ defined by $\Cd(\sg)(e) = \sg e \sg^{-1}$ is an algebra
isomorphism.
\end{pgraph}

\subsection{Relative and absolute type for simple fgc Lie algebras}
\label{subsec:typesimple}

\begin{definition}
\label{def:reltype1}
Suppose that $\cL$ is a  simple fgc Lie algebra, and let
$F = \Cd(\cL)$.
Then, by  \pref{pgraph:centralsimple},
$\cL$  is a
finite dimensional central simple Lie algebra over $F$.
Choose a \emph{MAD $F$-subalgebra} $\cT$ of $\cL$, by which we mean a maximal ad-diagonalizable (necessarily abelian) $F$-subalgebra of $\cL$.
We say that
$\cL$ is \emph{isotropic} (resp.~\emph{anisotropic})
if $\cT \ne 0$ (resp.~$\cT = 0$). If $\cL$ is isotropic,
the root system  of $\cL$ relative to the adjoint action of $\cT$
is a (possibly non-reduced) irreducible finite root system  \cite[\S I.2]{Se2}, and
the type of that root system  is called the \emph{relative type} of $\cL$.  If $\cL$
is anisotropic we define the relative type of $\cL$ to be $\type{A}{0}$ (which  is not
the type of an irreducible finite root system).
These notions are well-defined since  $\Aut_F(\cL)$ acts transitively on the MAD $F$-subalgebras
of $\cL$ \cite[\S I.3, Thm.~2]{Se2}.
Also, by \cite[\S X.1, Lemma 1]{J},
$\cL \ot_F \bar F$ is a finite dimensional (central) simple Lie algebra over $\bar F$,
where $\bar F$ is an algebraic closure of $F$.
We  define the \emph{absolute type} of $\cL$
to be the type of the root system  of $\cL \ot_F \bar F$
relative to a MAD $\bF$-subalgebra of $\cL \ot_F \bar F$.
In other words, the absolute type of $\cL$ is the relative
type of  $\cL\ot_F\bar F$.
\end{definition}

\begin{pgraph} We emphasize that the absolute type of a simple fgc Lie algebra
is always reduced, whereas the relative type may not be.
For example, $\cK$ could have absolute type $\type{A}{2}$
and relative type $\type{BC}{1}$.
\end{pgraph}

\begin{remark}
\label{rem:typefd}
Suppose that $\fg$ is a finite dimensional simple Lie algebra.
Since $\base$ is algebraically closed, it follows that
$\fg$ is central \cite[Lemma X.1]{J}.  So, the relative
type of $\fg$ equals the absolute type, and as usual we call this the
\emph{type} of $\fg$.
\end{remark}

\subsection{Relative and absolute type for prime perfect fgc Lie algebras}\
\label{subsec:typeprime}

\begin{pgraph}  Recall   that a Lie algebra $\cL$ is said to be \emph{prime} if
$\cL \ne 0$ and, for all ideals
$\cI$ and $\cJ$ of $\cL$, $[\cI,\cJ] = 0$ implies that $\cI= 0$ or $\cJ = 0$.
\end{pgraph}

\begin{proposition}
\label{prop:cclosure} Suppose that $\cL$ is a prime perfect fgc Lie  algebra.
Then $\Cd(\cL)$ is an integral domain and
\[\tcL := \cL \ot_{\Cd(\cL)}
\tCd(\cL)\]
is a simple fgc Lie algebra with centroid naturally isomorphic to $\tCd(\cL)$,
where $\tCd(\cL)$ denotes  the quotient field of $\Cd(\cL)$.  Moreover, the map
$x \mapsto x\ot 1$ identifies $\cL$ as a    $\Cd(\cL)$-subalgebra of  $\tcL$.
\end{proposition}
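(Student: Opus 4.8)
The plan is to establish the three assertions --- that $\Cd(\cL)$ is an integral domain, that $\tcL$ is simple fgc with the stated centroid, and that $\cL$ embeds --- in that logical order, since each feeds the next. First I would show $\Cd(\cL)$ is an integral domain. Since $\cL$ is perfect, $\Cd(\cL)$ is commutative by the definition in \S\ref{subsec:centroid}. To see it has no zero divisors, suppose $e_1 e_2 = 0$ for nonzero $e_1, e_2 \in \Cd(\cL)$. The key point is that for any $e \in \Cd(\cL)$, the image $e(\cL)$ and the kernel $\Ker(e)$ are ideals of $\cL$ (this uses $e(xy) = e(x)y$ to see $e(\cL)$ is an ideal, and that $e$ commutes with multiplication to see $\Ker(e)$ is one). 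Then $e_1 e_2 = 0$ gives $e_2(\cL) \subseteq \Ker(e_1)$, so $[e_2(\cL), e_1(\cL)] \subseteq [\Ker(e_1), e_1(\cL)]$; I would exploit primeness by producing two ideals whose bracket vanishes. Concretely, I expect $[e_1(\cL), e_2(\cL)] = e_1 e_2 [\cL,\cL] = 0$ using that centroidal elements pull out of brackets, and then primeness forces $e_1(\cL) = 0$ or $e_2(\cL) = 0$, i.e. $e_1 = 0$ or $e_2 = 0$.

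Next, granting that $F := \tCd(\cL)$ is the quotient field of the domain $\Cd(\cL)$, I would form the localization $\tcL = \cL \ot_{\Cd(\cL)} F$ and verify its properties. That $\tcL$ is fgc is inherited directly: a finite $\Cd(\cL)$-generating set of $\cL$ gives a finite $F$-spanning set of $\tcL$, so $\tcL$ is finite dimensional over $F$, and one checks the natural map $F \to \Cd_F(\tcL)$ is an isomorphism (localization commutes with the centroid construction here because $\cL$ is fgc, so there are no extra centroidal operators appearing over the field). For simplicity, I would argue that ideals of $\tcL$ as an $F$-algebra correspond to $\Cd(\cL)$-saturated ideals of $\cL$, and use primeness of $\cL$ together with perfectness to rule out proper nonzero ideals. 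The cleanest route is: a nonzero $F$-ideal $\cI$ of $\tcL$ meets $\cL$ in a nonzero ideal $\cI_0 = \cI \cap \cL$ of $\cL$; primeness gives $[\cI_0, \cI_0] \ne 0$ unless $\cI_0$ is central-ish, and perfectness propagates $\cI_0$ to all of $\tcL$ after inverting denominators, forcing $\cI = \tcL$.

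Finally, the embedding $x \mapsto x \ot 1$ is injective precisely because $\cL$ is torsion-free as a $\Cd(\cL)$-module: since $\Cd(\cL)$ is now known to be a domain acting faithfully, no nonzero element of $\cL$ is killed by passing to the field of fractions. I would note this torsion-freeness follows from the fact that multiplication by a nonzero $e \in \Cd(\cL)$ has kernel an ideal $\Ker(e)$, which by the integral-domain argument above must be zero (an element $e$ acting with nonzero kernel would, combined with primeness, contradict $e \ne 0$). Then $\cL \hookrightarrow \tcL$ is a $\Cd(\cL)$-subalgebra, as claimed.

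I expect the main obstacle to be the simplicity of $\tcL$, specifically the passage between ideals of the perfect prime algebra $\cL$ over $\Cd(\cL)$ and $F$-ideals of the localization $\tcL$. Primeness is a weaker hypothesis than simplicity, so I must genuinely use both the fgc condition (to keep dimensions finite and control centroidal extension) and perfectness (so that an ideal cannot be "too small" to generate). The delicate step is ruling out a nonzero proper $F$-ideal of $\tcL$ whose intersection with $\cL$ is an ideal on which $\cL$ acts in a way that respects primeness but not simplicity; I anticipate needing that over the \emph{field} $F$ the prime condition upgrades to simplicity because a finite-dimensional prime (hence semiprime) Lie algebra over a field, with the centroid already reduced to $F$, decomposes as a single simple summand. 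Verifying that $\Cd_F(\tcL) = F$ rather than some larger field is the companion subtlety, and I would handle it by the fgc property ensuring $\Cd(\cL) \ot_{\Cd(\cL)} F \to \Cd_F(\tcL)$ is surjective.
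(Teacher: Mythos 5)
Your route is genuinely different from the paper's: the paper disposes of everything in three lines by citing \cite[Prop.~8.7]{ABP2.5} for the statement that $\tcL$ is finite dimensional central simple over $\tCd(\cL)$ and \cite[Lemma 3.3(i)]{ABP2.5} for the embedding, whereas you reprove the package from scratch. Your arguments for the first and third assertions are correct and are essentially the standard ones hiding behind those citations: $[e_1(\cL),e_2(\cL)]=e_1e_2([\cL,\cL])=0$ forces $e_1=0$ or $e_2=0$ by primeness, and $[e(\cL),\Ker(e)]=0$ forces $\Ker(e)=0$, so $\cL$ is torsion-free over the domain $\Cd(\cL)$ and injects into the localization. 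For simplicity, however, the sentence ``perfectness propagates $\cI_0$ to all of $\tcL$ after inverting denominators'' is not an argument --- the $F$-span of a nonzero ideal $\cI_0$ of $\cL$ being all of $\tcL$ is exactly the simplicity you are trying to prove, and perfectness of $\cL$ contributes nothing here (a loop algebra $\dot\fg\ot R_1$ is prime and perfect with many proper ideals). The correct mechanism is the one you state only in your final paragraph: every nonzero element of $\tcL$ has a nonzero $\Cd(\cL)$-multiple in $\cL$, so $\tcL$ inherits primeness from $\cL$; being also finite dimensional over the field $F$ (by fgc) and in characteristic $0$, it has no nonzero abelian ideal, hence is a direct sum of simple ideals, and primeness leaves exactly one summand. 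That lemma, which you flag as an anticipated need rather than carry out, is the real content and should replace the propagation sentence. Your worry about $\Cd_F(\tcL)=F$ is resolvable exactly as you suggest: given $\tilde e\in \Cd_F(\tcL)$, clear denominators on the images of a finite $\Cd(\cL)$-generating set so that $d\tilde e$ maps $\cL$ into $\cL$, hence restricts to an element of $\Cd(\cL)$, giving $\tilde e\in \tCd(\cL)$; no finite-presentation hypothesis is needed because one only localizes centroidal operators. With those two points made explicit your proof is complete, and it has the merit of being self-contained where the paper is not.
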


\begin{proof}  It is easy to see (and well known) that $\Cd(\cL)$ is an integral domain.
It is proved in \cite[Prop. 8.7]{ABP2.5} that $\tcL$ is a finite dimensional
central simple Lie algebra over $\tCd(\cL)$,   so $\tcL$ is simple over $\base$ by
\pref{pgraph:centralsimple}.  The  last statement
follows from \cite[Lemma 3.3(i)]{ABP2.5}.
\end{proof}

\begin{definition}
\label{def:reltype2}
Suppose that $\cL$ is a prime perfect fgc Lie algebra.  We call the
Lie algebra $\tcL$ in Proposition \ref{prop:cclosure}
the \emph{central closure} of $\cL$.  We define the \emph{relative type}
and the \emph{absolute type} of $\cL$ to be the relative type
and the absolute type respectively of the Lie algebra $\tcL$.
We say that $\cL$ is \emph{isotropic} (resp.~\emph{anisotropic}) if
$\tcL$ is isotropic (resp.~anisotropic).
\end{definition}

The following  tells us that the notions just defined are isomorphism invariants.

\begin{lemma}
\label{lem:typeinvariant}
Suppose that $\cL_1$ and $\cL_2$
are prime prefect fgc Lie algebras that are isomorphic (as $\base$-algebras). Then
$\cL_1$ and $\cL_2$ have the same relative type and the same absolute type.
\end{lemma}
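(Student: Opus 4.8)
The plan is to reduce the two invariants to data attached to the central closures and then transport that data across a suitable semilinear isomorphism. By Definition~\ref{def:reltype2}, the relative and absolute types of $\cL_i$ are by definition those of its central closure $\tcL_i$, so it suffices to compare the central closures. Let $\sg\colon\cL_1\to\cL_2$ be a $\base$-algebra isomorphism. By \pref{pgraph:cfunctor} the map $\Cd(\sg)\colon\Cd(\cL_1)\to\Cd(\cL_2)$, $e\mapsto\sg e\sg^{-1}$, is an algebra isomorphism of integral domains; it fixes $\base\,1$ and hence $\bbQ$, and so it extends uniquely to a field isomorphism $\ph\colon F_1\to F_2$ between the quotient fields $F_i:=\tCd(\cL_i)$. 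The assignment $\tsg(x\ot\lm):=\sg(x)\ot\ph(\lm)$ then defines a bijective $\base$-Lie algebra homomorphism $\tsg\colon\tcL_1\to\tcL_2$ that is $\ph$-semilinear; well-definedness on the tensor product uses the identity $\sg(cx)=\Cd(\sg)(c)\,\sg(x)$ for $c\in\Cd(\cL_1)$ together with the compatibility of $\ph$ with the centroid action.

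The heart of the matter is the following principle, which I would then apply twice. \emph{Suppose $\cM_1,\cM_2$ are simple fgc Lie algebras with centroid fields $F_1,F_2$, and $\tau\colon\cM_1\to\cM_2$ is a $\base$-algebra isomorphism that is semilinear over a field isomorphism $\psi\colon F_1\to F_2$ with $\psi|_\bbQ=\id$; then $\cM_1$ and $\cM_2$ have the same relative type.} To see this, choose a MAD $F_1$-subalgebra $\cT_1$ of $\cM_1$. Since $\tau$ is a semilinear isomorphism, $\cT_2:=\tau(\cT_1)$ is an abelian $F_2$-subalgebra of $\cM_2$; moreover $\tau$ conjugates $\ad_{\cM_1}(t)$ to $\ad_{\cM_2}(\tau(t))$ and carries an eigenvector of eigenvalue $\lm\in F_1$ to one of eigenvalue $\psi(\lm)\in F_2$, so each $\ad_{\cM_2}(\tau(t))$ is $F_2$-diagonalizable and $\cT_2$ is a MAD $F_2$-subalgebra. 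Consequently $\tau$ carries the root-space decomposition of $\cM_1$ relative to $\cT_1$ onto that of $\cM_2$ relative to $\cT_2$, sending a root $\al$ to $\psi\circ\al\circ(\tau|_{\cT_1})^{-1}$. Because $\psi$ is a field isomorphism fixing $\bbQ$, this bijection preserves root addition, the integral Cartan data, and reducedness; hence the two relative root systems have the same type. (If $\cT_1=0$ then $\cT_2=0$, and both algebras are anisotropic of relative type $\type{A}{0}$.) By the transitivity of $\Aut_{F_i}(\cM_i)$ on MAD subalgebras (Definition~\ref{def:reltype1}), the conclusion is independent of the choice of $\cT_1$.

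Applying the principle to $\tsg\colon\tcL_1\to\tcL_2$ shows that $\cL_1$ and $\cL_2$ have the same relative type. For the absolute type, extend $\ph$ to an isomorphism $\bar\ph\colon\bF_1\to\bF_2$ of algebraic closures; then $\tsg\ot\bar\ph\colon\tcL_1\ot_{F_1}\bF_1\to\tcL_2\ot_{F_2}\bF_2$ is a $\bar\ph$-semilinear isomorphism of simple fgc Lie algebras with centroid fields $\bF_1,\bF_2$, and $\bar\ph$ again fixes $\bbQ$. By the principle these scalar extensions have the same relative type, which by Definition~\ref{def:reltype1} is precisely the absolute type of $\cL_1$ and of $\cL_2$; thus the absolute types agree as well. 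The one point that requires care---and which I expect to be the main obstacle---is that the argument is genuinely semilinear rather than linear over a fixed field, so one must check that the induced field isomorphisms $\ph$ and $\bar\ph$ fix $\bbQ$, which is what guarantees that all Cartan integers, and hence the type, are preserved under the root correspondence.
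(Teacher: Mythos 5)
Your proof is correct and follows essentially the same route as the paper's: reduce to the central closures, note that the base changes to algebraic closures are isomorphic so that the absolute type reduces to the relative type, then transport a MAD subalgebra and identify the root systems via $\al\mapsto\gamma\circ\al\circ\varphi^{-1}$. The only difference is that you spell out the semilinear details the paper dismisses as "easy to check"; note that the $\bbQ$-fixing you flag as the main obstacle is automatic, since any field isomorphism in characteristic $0$ restricts to the identity on the prime field.
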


\begin{proof}  It is easy to check using
\pref{pgraph:cfunctor} that the central closures
of $\cL_1$ and $\cL_2$ are isomorphic.  So,  replacing $\cL_i$ by $\tcL_i$, we can assume
that $\cL_i$ is a simple fgc algebra for $i=1,2$.
Let $F_i = \Cd(\cL_i$) and let
$\bar F_i$ for a algebraic closure of $F_i$ for $i=1,2$.
Again using  \pref{pgraph:cfunctor}, it is easy to see that
$\cL_1 \otimes_{F_1}\bar F_1$ and $\cL_2 \otimes_{F_2}\bar F_2$
are isomorphic.  Thus, it suffices to prove the statement
about relative type.
Suppose that $\varphi : \cL_1 \to \cL_2$ is a $\base$-algebra
isomorphism, and let $\gamma = \Cd(\varphi) : F_1 \to F_2$ be the induced
isomorphism.   Let $\cT_1$ be a MAD $F_1$-subalgebra of $\cL_1$, and set $\cT_2 = \varphi(\cT_1)$.
Then it is straightforward to check that
$\cT_2$ is a MAD $F_2$-subalgebra of $\cL_2$, and that the map
$\alpha \mapsto \gamma\circ  \alpha\circ  \varphi^{-1}$ is an isomorphism
of the root system of $\cL_1$ with respect to $\cT_1$
onto the root system of $\cL_2$ with respect to  $\cT_2$.
\end{proof}

\begin{proposition}
\label{prop:anisotropic}  Suppose that $\cL$ is a prime perfect fgc Lie algebra.
Then, $\cL$ is anisotropic if and only if the only ad-nilpotent element of
$\cL$ is{\/} $0$.
\end{proposition}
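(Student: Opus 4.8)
The plan is to transfer the question to the central closure $\tcL = \cL \ot_{\Cd(\cL)} F$, where $F = \tCd(\cL)$. By Proposition \ref{prop:cclosure} this is a finite dimensional central simple Lie algebra over $F$ containing $\cL$ as a $\Cd(\cL)$-subalgebra via $x \mapsto x \ot 1$, and by Definition \ref{def:reltype2} $\cL$ is anisotropic if and only if $\tcL$ is. Writing $\Cd = \Cd(\cL)$, the strategy is first to match up the ad-nilpotent elements of $\cL$ with those of $\tcL$, thereby reducing to a purely finite dimensional statement, and then to settle that statement using the root space decomposition and the Jacobson--Morozov theorem.

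For the reduction, the first step is to observe that ad-nilpotency is both preserved and reflected by the inclusion $\cL \hookrightarrow \tcL$. For $x \in \cL$ the operator $\ad_\cL(x)$ is $\Cd$-linear (immediate from the defining property of the centroid), and $\ad_{\tcL}(x\ot 1)$ is its base change along $\Cd \to F$. Since $\cL$ is a torsion-free $\Cd$-module (it embeds in the $F$-vector space $\tcL$), the map $\End_\Cd(\cL) \to \End_F(\tcL)$ is injective, so $\ad_\cL(x)^n = 0$ if and only if $\ad_{\tcL}(x\ot 1)^n = 0$; thus $x$ is ad-nilpotent in $\cL$ exactly when its image is ad-nilpotent in $\tcL$. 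The second step is a descent observation: every $y \in \tcL$ satisfies $sy \in \cL$ for some nonzero $s \in \Cd$ (clear denominators in an expression of $y$ as an $F$-combination of elements of $\cL$), and since $s$ lies in $F = \Cd(\tcL)$ and so acts as a scalar, $\ad_{\tcL}(sy) = s\,\ad_{\tcL}(y)$ is nilpotent whenever $\ad_{\tcL}(y)$ is, with $sy \ne 0$ when $y \ne 0$. Combining the two steps shows that $\cL$ has a nonzero ad-nilpotent element if and only if $\tcL$ does, reducing the proposition to the statement that a finite dimensional central simple Lie algebra over a field of characteristic $0$ is anisotropic if and only if its only ad-nilpotent element is $0$.

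For that finite dimensional statement I would argue both directions directly on $\tcL$. If $\tcL$ is isotropic, choose a nonzero MAD $F$-subalgebra $\cT$ and a nonzero $t \in \cT$; since $\tcL$ is central simple its centre is trivial, so $\ad(t) \ne 0$, and as $\ad(t)$ is diagonalizable over $F$ it has a nonzero eigenvalue, giving a nonzero root $\al$. Any $0 \ne e \in \tcL_\al$ is then ad-nilpotent, because $\ad(e)$ shifts the root grading by $\al$ and there are only finitely many roots, so $\ad(e)^N = 0$ for large $N$. Conversely, if $e$ is a nonzero ad-nilpotent element, the Jacobson--Morozov theorem (valid over any field of characteristic $0$, e.g. \cite[Ch.~III]{J}) embeds $e$ in an $\spl_2$-triple $(e,h,f)$ with $h \ne 0$; then $\ad(h)$ has integer eigenvalues, hence is diagonalizable over $\bbQ \subseteq F$, so $Fh$ is a nonzero ad-diagonalizable $F$-subalgebra and $\tcL$ is isotropic.

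The main obstacle is the descent in the reverse implication: isotropy of $\tcL$ only manufactures ad-nilpotent elements of $\tcL$, whereas the proposition demands a nonzero ad-nilpotent element of $\cL$ itself. The clearing-of-denominators argument of the second step, using that $F$ acts centrally on $\tcL$, is exactly what produces such an element inside $\cL$; once this bridge is in place, the remaining finite dimensional input (Jacobson--Morozov and the root space decomposition) is entirely standard.
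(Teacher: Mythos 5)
Your proposal is correct and follows essentially the same route as the paper: pass to the central closure $\tcL$, observe that nonzero ad-nilpotent elements of $\cL$ and of $\tcL$ determine one another (the paper compresses this into the assertion that the ad-nilpotent elements of $\tcL$ are exactly the $\tCd(\cL)$-multiples of those of $\cL$), and settle the finite dimensional case via the root space decomposition and the Jacobson--Morozov $\spl_2$-triple, which is precisely the paper's citation of \cite[Thm.~III.17]{J}. Your write-up simply supplies the details the paper leaves as ``well known.''
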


\begin{proof}  Let $\tcL$ be the central closure of $\cL$.
It is well known that $\tcL$ is anisotropic if and only if
the only ad-nilpotent element of $\tcL$ is $0$.  (This follows from
the fact that any nonzero ad-nilpotent element of $\tcL$ is part
of an $\spl_2$-triple \cite[Thm.~III.17]{J}.) Since the ad-nilpotent
elements of $\tcL$ are precisely the $\tCd(\cL)$-multiples of the ad-nilpotent
elements of $\cL$, the conclusion  follows.
\end{proof}

\section{Multiloop algebras, loop algebras and the classes $\bbM_n$ and $\bbI_n$}
\label{sec:loop}

For the rest of the paper \emph{we fix a compatible family $\set{\zeta_m}_{m \ge 1}$ of roots of unity in $\base$};
this means that
$\zeta_m$ is a primitive $m^\text{th}$ root of unity for $m\ge 1$ and
\[\zeta_{m \kay}^\kay = \zeta_m\]
for all $\kay, m\ge 1$.
For  each positive integer $m$, we let $\bbZ_m = \bbZ/m\bbZ  = \set{\bar k \suchthat k\in \bbZ}$ be the ring of integers
modulo $m$, where $\bar k = k + m\bbZ$ for $k\in \bbZ$.

In this section, we recall the definition of  multiloop algebras and loop algebras, and discuss some of their properties.
Some of these definitions and properties will be stated for arbitrary algebras.

\subsection{Multiloop algebras} \
\label{subsec:multiloop}

Suppose that   \textit{$n$ is a positive integer}.
Let
\[S_n = \textstyle
\base[z_1^{\pm 1},\dots,z_n^{\pm1}] = \sum_{\boldkay\in\Zn} \base z^\boldkay\]
be the $\Zn$-graded algebra of Laurent polynomials
over~$\base$, where $z^\boldkay = z_1^{\kay_1}\dots z_n^{\kay_n}$ for $\boldkay = (\kay_1,\dots,\kay_n)\in\Zn$.

Suppose that $\fg$ is an arbitrary algebra over~$\base$,
$\boldm = (m_1,\dots,m_n)$ is an $n$-tuple of positive integers,
$\boldsg = (\sg_1,\dots,\sg_n)$ is an $n$-tuple of commuting
automorphisms of $\fg$ such that $\boldsg^\boldm = \boldone$
(that is  $\sg^{m_i} = 1$ for $1\le i \le n$), and $\boldprim =  (\prim_1,\dots,\prim_n)$ is an  $n$-tuple of elements of
$\base$ such that $\prim_i$ has order $m_i$ in $\base^\times$ for $1\le i \le n$.

\begin{definition}
\label{def:multiloop} \headtt{Multiloop algebra}
Let  $\Zmn = \bbZ_{m_1}\oplus \dots \oplus \bbZ_{m_n}$
and let $\boldkay = (\kay_1,\dots,\kay_n) \mapsto \bboldkay = (\bkay_1,\dots,\bkay_n)$
be the canonical group homomorphism from $\Zn$ onto $\Zmn$.
For $\boldkay = (\kay_1,\dots,\kay_n) \in \Zmn$,
let
\begin{equation}
\label{eq:simeig}
 \fg^\bboldkay = \set{u\in \fg \suchthat \sg_j u
= \prim_j^{\kay_j} u \text{ for } 1\le j\le n},
\end{equation}
in which case
$\fg = \sum_{\bboldkay\in \Zmn} \fg^\bboldkay$ is a $\Zmn$-graded algebra.
The \textit{multiloop algebra
of $\fg$ relative to $\boldsg$,  $\boldm$ and $\boldprim$} is the $\Zn$-graded subalgebra
\[
\textstyle
\Lpbm(\fg,\boldsg,\boldprim) = \sum_{\boldkay\in \Zn}
\fg^\bboldkay \ot z^\boldkay
\]
of $\fg \ot_\base S_n$.
In particular, we set
\[
\Lpbm(\fg,\boldsg) := \Lpbm(\fg,\boldsg,(\zeta_{m_1},\dots,\zeta_{m_n}))
\]
and call $\Lpbm(\fg,\boldsg)$ the
\textit{multiloop algebra
of $\fg$ relative to $\boldsg$  and
$\boldm$}.\footnote{In \cite{ABP2.5}, \cite{ABFP1} and \cite{ABFP2},  the notation
$\operatorname{M}_\boldm(\fg,\boldsg)$ was used in place of $\Lpbm(\fg,\boldsg)$.  We have changed notation
here to be compatible with the usual notation when $n=1$.  (See Definition \ref{def:loop} below.)}
\end{definition}

\begin{lemma} \
\label{lem:changemult}
\begin{itemize}
\item[(a)]
If $\ph\in \Aut(\fg)$, then $\Lpbm(\fg,\ph \boldsg\ph^{-1},\boldprim) \simeq  \Lpbm(\fg,\boldsg,\boldprim)$
as $\Zn$-graded algebras,
where $\ph \boldsg\ph^{-1} = (\ph \sg_1 \ph^{-1},\dots, \ph \sg_n \ph^{-1})$.

\item[(b)] If $r_i, s_i \in \bbZ$ are relatively prime to $m_i$ and $r_i s_i \equiv 1 \pmod {m_i}$ for $1\le i \le n$,
then $\Lpbm(\fg,\boldsg,(\prim_1^{r_1},\dots,\prim_n^{r_n} )) = \Lpbm(\fg,(\sg_1^{s_1},\dots,\sg_n^{s_n} ),\boldprim)$
as $\Zn$-graded algebras.

\item[(c)] If $\mathbf r = (r_1,\dots,r_n)$ is an $n$-tuple of integers such that
$\boldsg^{\mathbf r} = \boldone$ and $r_i | m_i$ for $1\le i \le n$, then
$\Lp_{\mathbf r}(\fg,\boldsg,(\prim_1^{m_1/ r_1},\dots,\prim_1^{m_n/ r_n}))\simeq \Lp_{\boldm}(\fg,\boldsg,\boldprim)$.
\end{itemize}
\end{lemma}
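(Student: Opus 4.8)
The plan is to prove all three parts by tracking the homogeneous eigenspaces $\fg^\bboldkay$ of \eqref{eq:simeig} that enter the definition of the multiloop algebra. For part~(a), I would first observe that conjugating by $\ph$ simply applies $\ph$ to each eigenspace: if $u$ satisfies $\ph\sg_j\ph^{-1}u = \prim_j^{\kay_j}u$, then $v=\ph^{-1}u$ satisfies $\sg_jv=\prim_j^{\kay_j}v$, so the degree-$\bboldkay$ eigenspace for $\ph\boldsg\ph^{-1}$ is exactly $\ph$ applied to the degree-$\bboldkay$ eigenspace for $\boldsg$. Consequently the $S_n$-linear map $\ph\ot\id$ on $\fg\ot_\base S_n$, which preserves the $\Zn$-grading since it fixes each $z^\boldkay$, carries $\sum_\boldkay \fg^\bboldkay\ot z^\boldkay$ onto $\sum_\boldkay \ph(\fg^\bboldkay)\ot z^\boldkay$. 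As $\ph\ot\id$ is an algebra automorphism of $\fg\ot_\base S_n$, its restriction is the desired $\Zn$-graded isomorphism $\Lpbm(\fg,\boldsg,\boldprim)\simeq\Lpbm(\fg,\ph\boldsg\ph^{-1},\boldprim)$.

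For part~(b) I would show the two subalgebras are \emph{literally} equal by checking that their homogeneous components agree. Decompose $\fg=\sum_{\bar{\mathbf l}}\fg_{\bar{\mathbf l}}$ into simultaneous $\boldsg$-eigenspaces, where $u\in\fg_{\bar{\mathbf l}}$ means $\sg_ju=\prim_j^{l_j}u$. On $\fg_{\bar{\mathbf l}}$, membership in the degree-$\bboldkay$ component of $\Lpbm(\fg,\boldsg,(\prim_1^{r_1},\dots,\prim_n^{r_n}))$ reads $l_j\equiv r_j\kay_j\pmod{m_j}$, while membership in the degree-$\bboldkay$ component of $\Lpbm(\fg,(\sg_1^{s_1},\dots,\sg_n^{s_n}),\boldprim)$ reads $s_jl_j\equiv\kay_j\pmod{m_j}$. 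Multiplying the first congruence by $s_j$ and the second by $r_j$ and using $r_js_j\equiv1\pmod{m_j}$ shows these two conditions are equivalent for each $j$; hence the degree-$\bboldkay$ components coincide for every $\bboldkay$, giving equality of the two $\Zn$-graded subalgebras.

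For part~(c), set $d_i=m_i/r_i$ (correcting the evident typo, the value tuple is $(\prim_1^{d_1},\dots,\prim_n^{d_n})$), so each $\prim_i^{d_i}$ has order $r_i$ and the left-hand algebra is well defined. The isomorphism I would use is induced by the substitution $\psi:z_i\mapsto z_i^{d_i}$, giving an injective algebra endomorphism $\id_\fg\ot\psi$ of $\fg\ot_\base S_n$. A direct check shows that the degree-$\mathbf p$ eigenspace for the $\mathbf r$-construction, namely $\set{u\in\fg\suchthat \sg_ju=\prim_j^{d_jp_j}u,\ 1\le j\le n}$, coincides with the eigenspace $\fg^\bboldkay$ of the $\boldm$-construction for $\boldkay=(d_1p_1,\dots,d_np_n)$; hence $\id_\fg\ot\psi$ carries $\Lp_{\mathbf r}(\fg,\boldsg,(\prim_1^{d_1},\dots,\prim_n^{d_n}))$ onto $\sum_{\boldkay\in d_1\bbZ\times\cdots\times d_n\bbZ}\fg^\bboldkay\ot z^\boldkay$. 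The step I expect to be the crux is surjectivity onto all of $\Lp_\boldm(\fg,\boldsg,\boldprim)$: here I would use the hypothesis $\boldsg^{\mathbf r}=\boldone$, which on a nonzero $u\in\fg^\bboldkay$ gives $\prim_j^{r_j\kay_j}u=\sg_j^{r_j}u=u$, so $\prim_j^{r_j\kay_j}=1$ and therefore $d_j\mid\kay_j$ for all $j$. Thus every term with some $\kay_j\notin d_j\bbZ$ already vanishes, so the image is all of $\Lp_\boldm(\fg,\boldsg,\boldprim)$, and since $\id_\fg\ot\psi$ is injective its restriction is the required isomorphism.
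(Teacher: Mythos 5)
Your proposal is correct and follows essentially the same route as the paper: for part (c) you use exactly the paper's substitution endomorphism $z_i \mapsto z_i^{m_i/r_i}$ of $\fg \ot_\base S_n$, and your divisibility argument via $\boldsg^{\mathbf r} = \boldone$ is precisely the check needed to see it maps onto $\Lp_\boldm(\fg,\boldsg,\boldprim)$. For parts (a) and (b) the paper simply cites the $n=1$ case from \cite{ABP2} and says the general case is similar; your eigenspace computations (conjugation by $\ph\ot\id$ for (a), and the equivalence of the congruences $l_j \equiv r_j\kay_j$ and $s_j l_j \equiv \kay_j$ modulo $m_j$ for (b)) are exactly those "similar lines," written out in full.
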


\begin{proof} (a) and (b): For  the case $n=1$, see  \cite{ABP2}.  The general case is easily established along similar lines.

(c): The linear extension of $x\ot z_1^{k_1}\dots z_n^{k_n} \mapsto x\ot z_1^{\frac {m_1} {r_1} k_1}\dots z_n^{\frac {m_n} {r_n} k_n}$
is an injective  algebra endomorphism of $\fg \ot_\base S_n$ which maps
$\Lp_{\mathbf r}(\fg,\boldsg,(\prim_1^{m_1/ r_1},\dots,\prim_1^{m_n/ r_n}))$ onto
$\Lp_{\boldm}(\fg,\boldsg,\boldprim)$.
\end{proof}

\begin{pgraph} By  Lemma \ref{lem:changemult}(b), any graded algebra of the form $\Lpbm(\fg,\boldsg,\boldprim)$
is equal to a graded algebra of the form $\Lpbm(\fg,\boldsymbol \tau)$ for some $\boldsymbol \tau$. Thus,
there is no loss of generality in considering only  graded algebras of the form $\Lpbm(\fg,\boldsg)$.
For the most part, we will do this in what follows.
\end{pgraph}

\begin{pgraph}  By  Lemma
\ref{lem:changemult}(c),   $\Lpbm(\fg,\boldsg)$  does not depend up to (ungraded) isomorphism
on the period $\boldm$. Therefore, when we are regarding
$\Lpbm(\fg,\boldsg)$ as an ungraded algebra, we often denote it simply by
$\Lp(\fg,\boldsg)$.
An algebra of the form $\Lp(\fg,\boldsg)$ for some
$\boldsg$ as above, will be called an
\emph{$n$-fold multiloop algebra of~$\fg$}.
\end{pgraph}

\begin{pgraph}  An $n$-fold  multiloop algebra of $\fg$ is a Lie algebra (resp.~an associative algebra)
if and only if $\fg$ is a Lie algebra (resp.~an associative algebra).
\end{pgraph}

\begin{pgraph}
\label{pgraph:identRS}
Using the n-tuple $\boldm$, we may identify $R_n = \base[t_1^{\pm 1},\dots,t_n^{\pm 1}]$ as a
subalgebra of $S_n= \base[z_1^{\pm 1},\dots,z_n^{\pm 1}]$ by setting
\[t_i = z_i^{m_i}\]
for $1\le i \le n$.  Now $\fg \otimes S_n$ is an  algebra over $S_n$ and hence also an
algebra over $R_n$.  Furthermore, $\Lpbm(\fg,\boldsg,\boldprim)$ is an $R_n$-subalgebra
of $\fg \otimes S_n$, and in this way $\Lpbm(\fg,\boldsg,\boldprim)$ is an algebra over $R_n$.
So we have a natural homomorphism of $R_n$
into the centroid  of $\Lpbm(\fg,\boldsg,\boldprim)$.
\end{pgraph}

Since  $\base$ is algebraically closed, finite dimensional simple algebras
are central simple \cite[Lemma X.1]{J}.
Thus, by \cite[Thm. 6.2 and Cor. 6.6]{ABP2.5}  (see also \cite[Lemmas 4.1.2 and 4.6.3]{GP1}),
we have:

\begin{proposition} \headtt{The centroid of a multiloop algebra}
\label{prop:centloop}  If
$\fg$ is  finite dimensional and simple, then the natural homomorphism
from $R_n$ into  $\Cd(\Lpbm(\fg,\boldsg,\boldprim))$ is an isomorphism \emph{(which
we often treat as an identification)}.
\end{proposition}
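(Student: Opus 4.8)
The plan is to show that the natural map $\eta\colon R_n \to \Cd(\cL)$, where $\cL := \Lpbm(\fg,\boldsg,\boldprim)$ and $\eta(f)$ is multiplication by $f$, is a well-defined injective homomorphism, and then to obtain surjectivity by viewing $\cL$ as a twisted form of $\fg\ot_\base R_n$ and descending the centroid computation from $S_n$ to $R_n$.

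First I would record that $\eta$ lands in the centroid and is an algebra homomorphism: by \pref{pgraph:identRS}, multiplication by $t_i = z_i^{m_i}$ carries $\fg^{\bboldkay}\ot z^{\boldkay}$ to $\fg^{\bboldkay}\ot z^{\boldkay + m_i e_i}$, which again lies in $\cL$ since $m_i e_i$ is trivial in $\Zmn$, while multiplication by any element of $S_n\supseteq R_n$ commutes with the bracket of $\fg\ot_\base S_n$ by bilinearity over the commutative ring $S_n$. Injectivity is immediate: $\cL\ne 0$ (some $\fg^{\bboldkay}\ne 0$, as $\fg = \sum_{\bboldkay}\fg^{\bboldkay}$), and $\fg\ot_\base S_n$ is free over the domain $S_n$, so multiplication by a nonzero $f\in R_n$ is injective on $\fg\ot_\base S_n$, hence on $\cL$. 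Because $\fg$ is simple, $\cL$ is perfect and its centroid is commutative; consequently every element of $\Cd_\base(\cL)$ commutes with $\eta(R_n)$ and is therefore $R_n$-linear, which identifies $\Cd_\base(\cL) = \Cd_{R_n}(\cL)$. Thus the proposition reduces to the single equality $\Cd_{R_n}(\cL) = R_n\cdot 1$.

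For surjectivity — the heart of the matter — I would exploit that $\cL$ is an $R_n$-form of the untwisted algebra $\fg\ot_\base R_n$ split by $S_n/R_n$. Concretely, $S_n/R_n$ is a Galois extension of commutative rings with group $\Zmn$ acting by $z_j\mapsto \zeta_{m_j}^{k_j} z_j$ (so that $S_n^{\Zmn} = R_n$, using that $\base$ contains the needed roots of unity), and the multiplication map $\cL\ot_{R_n} S_n \to \fg\ot_\base S_n$ is a homomorphism of $S_n$-algebras. It is an isomorphism because both sides are free $S_n$-modules of the same finite rank $\dim_\base\fg$ (on the left because $\sum_{\bboldkay}\dim\fg^{\bboldkay} = \dim\fg$ makes $\cL$ free of rank $\dim\fg$ over $R_n$) and the map is surjective. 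Since $\base$ is algebraically closed and $\fg$ is finite dimensional simple, $\fg$ is central, i.e. $\Cd_\base(\fg) = \base\cdot\id$, whence $\Cd_{S_n}(\fg\ot_\base S_n) = \Cd_\base(\fg)\ot_\base S_n = S_n$. Transporting this back along the descent, I would then conclude
\[
\Cd_{R_n}(\cL) = \Cd_{S_n}(\cL\ot_{R_n} S_n)^{\Zmn} = \Cd_{S_n}(\fg\ot_\base S_n)^{\Zmn} = S_n^{\Zmn} = R_n.
\]

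The step I expect to be the main obstacle is this descent assertion: that the centroid is compatible with the base change $R_n\to S_n$ and that passing to $\Zmn$-invariants recovers $\Cd_{R_n}(\cL)$ and nothing larger. Making it rigorous requires the isomorphism $\Cd_{R_n}(\cL)\ot_{R_n} S_n \cong \Cd_{S_n}(\cL\ot_{R_n} S_n)$ — valid because $\cL$ is finitely presented and $S_n$ is flat over $R_n$, so the equations cutting out $\Cd$ inside a $\Hom$-module survive flat base change — together with a careful matching of the $\Zmn$-action on the two descriptions of the $S_n$-centroid; this is exactly the content imported from \cite{ABP2.5} and \cite{GP1}. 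As an alternative, internal route I would instead prove directly that $\Cd(\cL)$ is $\Zn$-graded (each graded piece $\fg^{\bboldkay}\ot z^{\boldkay}$ being finite dimensional and $\cL$ perfect), reduce to a homogeneous centroid element $\chi$ of degree $\boldsymbol d$, interpret it as a homogeneous element of the $\Zmn$-graded centroid of the central simple algebra $\fg$ — which by centrality must be a scalar multiple of the identity, living in the trivial degree — and conclude that $\boldsymbol d\in\boldm\Zn$ and $\chi = \eta(c\, z^{\boldsymbol d})$ for some $c\in\base$; the delicate point there is gluing the pieces $\fg^{\bboldkay}\to\fg^{\overline{\boldkay+\boldsymbol d}}$ into a single element of $\Cd_\base(\fg)$.
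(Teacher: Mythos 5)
Your argument is correct in substance, but it is worth pointing out that the paper does not actually prove this proposition: it observes that $\fg$ is central simple over $\base$ (since $\base$ is algebraically closed) and then cites \cite[Thm.~6.2 and Cor.~6.6]{ABP2.5} and \cite[Lemmas 4.1.2 and 4.6.3]{GP1}. What you have written is in effect a reconstruction of the proofs in those two references: your main route --- exhibiting $\cL\ot_{R_n}S_n\simeq\fg\ot_\base S_n$ as a trivialization over the $\Zmn$-Galois extension $S_n/R_n$ and descending $\Cd_{S_n}(\fg\ot_\base S_n)=S_n$ --- is the forms/descent argument of \cite{GP1}, while your ``internal'' alternative --- showing $\Cd(\cL)$ is $\Zn$-graded and comparing a homogeneous component with the $\Zmn$-graded centroid of the central simple algebra $\fg$ --- is essentially \cite[Thm.~6.2]{ABP2.5}. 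Both work: the descent route generalizes at once to arbitrary $R_n$-forms, while the graded route stays inside the multiloop construction and needs no Galois machinery. Two steps deserve more care than you give them. First, in the descent route, after identifying $\Cd_{S_n}(\cL\ot_{R_n}S_n)$ with $S_n$ you must check that the induced $\Zmn$-action is the standard one on $S_n$; this holds because the descended Galois action on $\fg\ot_\base S_n$ combines the natural action on the factor $S_n$ with automorphisms of $\fg$ built from $\boldsg$, and the latter act trivially on the centroid $S_n\cdot\id$ --- but this is exactly where an unexamined identification could go wrong. Second, in the alternative route, finite-dimensionality of the homogeneous pieces is not by itself enough to conclude that a centroid element has only finitely many homogeneous components; you need to combine the already-established $R_n$-linearity of centroid elements with the fact that $\cL$ is a finitely generated $R_n$-module. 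Neither point is a mathematical gap, only a place where the write-up is thinner than the argument requires.
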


The following simple properties of multiloop  algebras
will also be useful.

\begin{lemma}
\label{lem:loopfactor}
Suppose  $\mathfrak b$  is an ideal of $\fg$ with $\sg_i(\mathfrak b) = \mathfrak b$ for $1\le i \le n$.
Let $\boldsg|_{\mathfrak b}$ denote the $n$-tuple  of automorphisms of $\mathfrak b$ obtained by restricting $\boldsg$, and let
$\bar \boldsg$ denote the $n$-tuple of automorphisms of $\bar\fg = \fg/\mathfrak b$ induced by $\boldsg$.
Then $\Lpbm(\mathfrak b,\boldsg|_\mathfrak b)$ is a $\bbZ^n$-graded
ideal of $\Lpbm(\fg,\boldsg)$ and $\Lpbm(\fg,\boldsg)/\Lpbm(\mathfrak b,\boldsg|_\mathfrak b)$ is graded
isomorphic to $\Lpbm(\bar\fg,\bar\boldsg)$.
\end{lemma}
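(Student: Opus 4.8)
The plan is to reduce both assertions to the compatibility of the simultaneous eigenspace decompositions of $\fg$, $\mathfrak b$ and $\bar\fg$, and then to realize the isomorphism of the second assertion as the map induced by the canonical projection $\pi\colon\fg\to\bar\fg$ tensored with the identity on $S_n$. The one fact I would establish first is the following compatibility. Since the $\sg_j$ commute, satisfy $\sg_j^{m_j}=1$, and $\base$ has characteristic $0$, they are simultaneously diagonalizable, and their simultaneous eigenspaces are exactly the $\fg^{\bboldkay}$, $\bboldkay\in\Zmn$, of the definition. As $\mathfrak b$ is stable under every $\sg_j$, the same automorphisms are simultaneously diagonalizable on $\mathfrak b$, and their eigenspaces there are the intersections with those of $\fg$; that is,
\[
\mathfrak b^{\bboldkay}=\mathfrak b\cap\fg^{\bboldkay}\qquad(\bboldkay\in\Zmn),
\]
where $\mathfrak b^{\bboldkay}$ is the eigenspace computed with $\boldsg|_{\mathfrak b}$. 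On $\bar\fg=\fg/\mathfrak b$ the induced $\bar\sg_j$ are again diagonalizable, and a lift-and-project argument (lift a vector of $\bar\fg^{\bboldkay}$, decompose its lift into $\fg$-eigencomponents, apply $\pi$, and invoke uniqueness of the decomposition in $\bar\fg$) shows that $\pi$ carries $\fg^{\bboldkay}$ onto $\bar\fg^{\bboldkay}$ with kernel $\mathfrak b^{\bboldkay}$, so $\bar\fg^{\bboldkay}\cong\fg^{\bboldkay}/\mathfrak b^{\bboldkay}$.

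With this in hand the first assertion is immediate. By the displayed identity,
\[
\Lpbm(\mathfrak b,\boldsg|_{\mathfrak b})=\sum_{\boldkay\in\Zn}(\mathfrak b\cap\fg^{\bboldkay})\ot z^{\boldkay}
\]
is a $\bbZ^n$-graded $\base$-subspace of $\Lpbm(\fg,\boldsg)=\sum_{\boldkay\in\Zn}\fg^{\bboldkay}\ot z^{\boldkay}$. To check it absorbs products I would take homogeneous elements $x\ot z^{\boldkay}$ with $x\in\fg^{\bboldkay}$ and $b\ot z^{\boldq}$ with $b\in\mathfrak b\cap\fg^{\overline{\boldq}}$; then $(x\ot z^{\boldkay})(b\ot z^{\boldq})=xb\ot z^{\boldkay+\boldq}$, and $xb$ lies in $\mathfrak b$ (as $\mathfrak b$ is an ideal) and in $\fg^{\overline{\boldkay+\boldq}}$ (by the $\Zmn$-grading), hence in $\mathfrak b^{\overline{\boldkay+\boldq}}$. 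The product $bx\ot z^{\boldkay+\boldq}$ is handled the same way, so by bilinearity $\Lpbm(\mathfrak b,\boldsg|_{\mathfrak b})$ is a two-sided $\bbZ^n$-graded ideal.

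For the second assertion I would consider $\pi\ot\id\colon\fg\ot_\base S_n\to\bar\fg\ot_\base S_n$, an $S_n$-algebra homomorphism that is homogeneous for the $\bbZ^n$-gradings. On the summand $\fg^{\bboldkay}\ot z^{\boldkay}$ it has image $\pi(\fg^{\bboldkay})\ot z^{\boldkay}=\bar\fg^{\bboldkay}\ot z^{\boldkay}$ and kernel $(\mathfrak b\cap\fg^{\bboldkay})\ot z^{\boldkay}$. Summing over $\boldkay\in\Zn$, the restriction of $\pi\ot\id$ to $\Lpbm(\fg,\boldsg)$ is therefore a surjective graded homomorphism onto $\Lpbm(\bar\fg,\bar\boldsg)$ whose kernel is exactly $\Lpbm(\mathfrak b,\boldsg|_{\mathfrak b})$. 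The graded first isomorphism theorem then gives $\Lpbm(\fg,\boldsg)/\Lpbm(\mathfrak b,\boldsg|_{\mathfrak b})\simeq\Lpbm(\bar\fg,\bar\boldsg)$ as $\bbZ^n$-graded algebras.

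All the real content sits in the compatibility $\mathfrak b^{\bboldkay}=\mathfrak b\cap\fg^{\bboldkay}$ and $\bar\fg^{\bboldkay}=\fg^{\bboldkay}/\mathfrak b^{\bboldkay}$; once that is available the two statements are formal. Thus the only step needing genuine (if routine) care is that forming simultaneous eigenspaces commutes with passing to the $\boldsg$-stable ideal $\mathfrak b$ and to the quotient $\bar\fg$ — this is where the invariance $\sg_j(\mathfrak b)=\mathfrak b$ and the diagonalizability of the commuting finite-order $\sg_j$ in characteristic $0$ enter.
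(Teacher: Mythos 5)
Your proof is correct and follows essentially the same route as the paper: both realize the isomorphism via the map $a\ot z^{\boldkay}\mapsto \bar a\ot z^{\boldkay}$ (i.e.\ $\pi\ot\id$ restricted to the loop algebra), identify it as a surjective graded homomorphism onto $\Lpbm(\bar\fg,\bar\boldsg)$, and compute its kernel gradewise to be $\Lpbm(\mathfrak b,\boldsg|_{\mathfrak b})$. You merely spell out the eigenspace compatibilities $\mathfrak b^{\bboldkay}=\mathfrak b\cap\fg^{\bboldkay}$ and $\bar\fg^{\bboldkay}=\pi(\fg^{\bboldkay})$ that the paper leaves implicit.
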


\begin{proof}  Let $- : \fg \to \bar\fg$ be the canonical homomorphism.  Then
the map $\psi: a \ot z^\boldkay$ to $\bar a \ot z^\boldkay$
is a surjective graded homomorphism
of $\Lpbm(\fg,\boldsg)$ onto $\Lpbm(\bar\fg,\bar\boldsg)$. Also
$\Lpbm(\mathfrak b,\boldsg|_\mathfrak b)$ is contained in $\Ker(\psi)$; and, since
$\Ker(\psi)$ is graded,  the reverse inclusion is clear.
\end{proof}

\begin{lemma}
\label{lem:loopderived}
Suppose that $\fg$ is a Lie algebra.
Then
\begin{itemize}
\item[(a)] $\Lpbm(\fg,\boldsg)' = \Lpbm(\fg',\boldsg|_{\fg'})$.
\item[(b)]  $\Centre(\Lpbm(\fg,\boldsg)) = \Lpbm(\Centre(\fg),\boldsg|_{\Centre(\fg)})$.
\item[(c)] $\Lpbm(\fg,\boldsg)'/\Centre(\Lpbm(\fg,\boldsg)')$ is graded isomorphic to
$\Lpbm(\fg'/\Centre(\fg'),\overline{\boldsg|_{\fg'}}\ )$.
\end{itemize}
\end{lemma}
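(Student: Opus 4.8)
The plan is to prove the three statements in order, exploiting the fact that the multiloop construction $\Lpbm(-,\boldsg)$ is functorial in $\fg$ (compatibly with the grading) and interacts well with the subspaces $\fg^\bboldkay$. The key observation throughout is that forming $\Lpbm$ is a graded operation: an element of $\Lpbm(\fg,\boldsg)$ is a finite sum $\sum_\boldkay u_\boldkay \ot z^\boldkay$ with $u_\boldkay \in \fg^\bboldkay$, and the bracket is computed componentwise via $[u\ot z^\boldkay, v\ot z^\boldm] = [u,v]\ot z^{\boldkay+\boldm}$. So each assertion reduces to a statement about how the relevant subalgebra/ideal/centre of $\fg$ decomposes under the simultaneous eigenspace decomposition $\fg = \sum_\bboldkay \fg^\bboldkay$.

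For part (a), I would first check that $\fg'$ is invariant under each $\sg_j$ (automorphisms preserve the derived algebra), so that $\boldsg|_{\fg'}$ makes sense and the eigenspace decomposition of $\fg'$ is simply $(\fg')^\bboldkay = \fg' \cap \fg^\bboldkay$. The containment $\Lpbm(\fg',\boldsg|_{\fg'}) \subseteq \Lpbm(\fg,\boldsg)'$ follows because each $[u,v]\ot z^{\boldkay+\boldm}$ with $u,v$ homogeneous lies in the derived algebra. For the reverse inclusion, the derived algebra of $\Lpbm(\fg,\boldsg)$ is spanned by such brackets, and $[u,v]\in \fg'\cap\fg^{\bboldkay+\bar\boldm}$, so every generator already lies in $\Lpbm(\fg',\boldsg|_{\fg'})$. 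For part (b), one uses that $\Centre(\fg)$ is a $\boldsg$-invariant ideal, and that an element $\sum_\boldkay u_\boldkay \ot z^\boldkay$ is central in $\Lpbm(\fg,\boldsg)$ if and only if each homogeneous component is central, which by a degree-by-degree argument (bracketing against homogeneous elements of $\Lpbm(\fg,\boldsg)$, noting that for fixed $\boldkay$ the elements $v\ot z^\boldm$ range over all of $\fg^{\bar\boldm}$ as $\boldm$ varies) forces $u_\boldkay \in \Centre(\fg)\cap \fg^\bboldkay$. The only point requiring mild care is that the grading group $\Zn$ is torsion-free, so distinct degrees genuinely separate and there is no collapsing of homogeneous pieces.

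Part (c) is where I expect the main work, and I would derive it by combining the first two parts with Lemma \ref{lem:loopfactor}. By part (a), $\Lpbm(\fg,\boldsg)' = \Lpbm(\fg',\boldsg|_{\fg'})$. Applying part (b) to the algebra $\fg'$ equipped with $\boldsg|_{\fg'}$ gives $\Centre(\Lpbm(\fg',\boldsg|_{\fg'})) = \Lpbm(\Centre(\fg'),\boldsg|_{\Centre(\fg')})$. Now $\Centre(\fg')$ is an ideal of $\fg'$ stable under each $\sg_j|_{\fg'}$, so Lemma \ref{lem:loopfactor} applies with $\fg$ replaced by $\fg'$ and $\mathfrak b = \Centre(\fg')$: it yields that
\[
\Lpbm(\fg',\boldsg|_{\fg'})\,/\,\Lpbm(\Centre(\fg'),\boldsg|_{\Centre(\fg')})
\]
is graded isomorphic to $\Lpbm(\fg'/\Centre(\fg'),\overline{\boldsg|_{\fg'}})$. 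Since the quotient being taken is exactly $\Lpbm(\fg,\boldsg)'/\Centre(\Lpbm(\fg,\boldsg)')$ by the two identifications above, the desired graded isomorphism follows.

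The main obstacle, such as it is, lies in part (b): one must verify carefully that centrality of an inhomogeneous element forces centrality of each homogeneous component, and that centrality in the loop algebra pulls back to centrality of $u_\boldkay$ in all of $\fg$ (not merely in $\fg^{\bar\boldm}$ for some single $\boldm$). This hinges on the fact that as $\boldm$ runs over $\Zn$, the eigenvalue-index $\bar\boldm$ runs over all of $\Zmn$, so that requiring $[u_\boldkay, v] = 0$ for all homogeneous $v$ across all degrees is equivalent to $u_\boldkay \in \Centre(\fg)$. Once this bookkeeping is in place, all three statements are routine consequences of the graded structure, and I would keep the exposition brief accordingly.
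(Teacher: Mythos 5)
Your proposal is correct and follows essentially the same route as the paper: parts (a) and (b) reduce to the observation that the derived algebra and centre of the $\bbZ^n$-graded algebra $\Lpbm(\fg,\boldsg)$ are graded subspaces whose homogeneous components are identified degree by degree, and part (c) is obtained exactly as in the paper by combining (a), (b) applied to $\fg'$, and Lemma \ref{lem:loopfactor}. You simply spell out the details that the paper dismisses as "clear."
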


\begin{proof} (a) and (b) are clear since the left hand sides are  $\bbZ^n$-graded.
(c) now follows using (a), (b) (applied to $\fg'$) and Lemma \ref{lem:loopfactor}.
\end{proof}

Finally, we recall the following result on the permanence of absolute type for multiloop algebras from
\cite[Thm.~8.16]{ABP2.5}.

\begin{proposition}
\label{prop:loopperm}
Suppose that $\cL$ is an $n$-fold  multiloop algebra of a Lie algebra $\fg$.
If $\fg$ is prime, perfect and fgc,  then so is $\cL$.  Moreover, in that case,
the absolute type of $\cL$ is equal to the absolute type of~$\fg$.
\end{proposition}

\subsection{Loop algebras} \
\label{subsec:loopdef}

The special case of Definition \ref{def:multiloop} when $n=1$ is of particular importance.

\begin{definition} \headtt{Loop algebra}
\label{def:loop}
Suppose that
$\fg$ is an algebra over~$\base$, $m$ is a positive integer,
$\sg\in \Aut(\fg)$ with $\sg^m = 1$, and $\prim\in \base^\times$ has order $m$.
The  \textit{loop algebra
of $\fg$ relative to $\sg$,  $m$ and $\prim$} is the $\bbZ$-graded  subalgebra
\[
\textstyle
\Lpm(\fg,\sg,\prim) = \sum_{\kay\in\bbZ}
\fg^\bkay \ot z_1^\kay
\]
of $\fg\ot S_1$, where $S_1= \base[z_1^{\pm 1}]$ and
$\fg^\bkay = \set{u\in \fg \suchthat \sg u
= \prim^{\kay} u }$
for $\kay\in\bbZ$.
We mainly consider the special case
\[\Lpm(\fg,\sg) := \Lpm(\fg,\sg,\zeta_m),\]
which is called the  \textit{loop algebra of $\fg$ relative to $\sg$ and $m$}.
When we are regarding
$\Lpm(\fg,\sg)$ as an ungraded algebra, we often denote it by
$\Lp(\fg,\sg)$.  An algebra of the form $\Lp(\fg,\sg)$ for some
$\sg\in\Aut(\fg)$ of finite order  will be called a \emph{loop algebra of $\fg$}.
\end{definition}

\begin{remark}
\label{rem:itloop}   When $n\ge 2$, we have
\begin{equation*}
\label{eq:itloop}
\Lpbm(\fg,\boldsg,\boldprim) \simeq
\Lp_{m_n}(\Lp_{\boldm'}(\fg,\boldsg',\boldprim'),\sg_n\ot 1,\prim_n),
\end{equation*}
where $\boldm'$, $\boldsg'$ and $\boldprim'$ are obtained from $\boldm$, $\boldsg$
and $\boldprim$ respectively by deleting the last entry,
and
$\sg_n\ot 1$ denotes the restriction of $\sg_n\ot 1 \in \Aut(\fg\ot S_{n-1})$ to
$\Lp_{\boldm'}(\fg,\boldsg',\boldprim')$
\cite[Example 5.4]{ABP2.5}.  Thus multiloop algebras can be constructed by a sequence of loop
constructions.
\end{remark}

\subsection{The class $\bbM_n$, $n\ge 0$} \
\label{subsec:Mn}

\begin{pgraph}
By
convention, we will regard an algebra  as a $0$-fold multiloop algebra of itself.
\end{pgraph}

\begin{definition}
\label{def:Ln}  If $n$ is a nonnegative integer, we let $\bbM_n$ be the class of all algebras that are isomorphic to
$n$-fold  multiloop algebras of finite dimensional simple Lie algebras.
Note in particular, that $\bbM_0$ is the class of all finite dimensional simple Lie algebras.
We write $\cL\in \bbM_n$ to mean
that $\cL$ is an algebra in the class $\bbM_n$.  Algebras
in $\bbM_n$ will be called  \emph{nullity $n$ multiloop algebras}.
\end{definition}

\subsection{The class $\bbI_n$, $n\ge 0$} \
\label{subsec:In}

\begin{definition}
\label{def:In} Let $n\ge 0$. We define the class of Lie algebras $\bbI_n$ inductively.
First let $\bbI_0 = \bbM_0$ be the class of finite dimensional simple Lie algebras.
Second, if $n\ge 1$, we let $\bbI_n$ be the class of algebras isomorphic to loop algebras
of algebras in $\bbI_{n-1}$.  Algebras in $\bbI_n$ will be called \emph{nullity $n$  iterated  loop algebras.}
\end{definition}

\begin{pgraph}
\label{pgraph:MnIn}
It follows by induction  using Remark \ref{rem:itloop} that
\[\bbM_n \subseteq \bbI_n;\]
that is every
algebra in the class $\bbM_n$ is in  $\bbI_n$.
\end{pgraph}

The following propositions follow  from Proposition \ref{prop:loopperm} and Remark \ref{rem:typefd}.

\begin{proposition}
\label{prop:ppfgc}
Any algebra  $\cL$ in $\bbI_n$ is a
prime perfect fgc Lie algebra, and hence its relative and absolute types are defined.
\end{proposition}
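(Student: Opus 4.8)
The plan is to argue by induction on $n$, using the fact (Proposition \ref{prop:loopperm}) that the loop construction preserves the properties of being prime, perfect and fgc, while carrying the absolute type along unchanged. The statement to be proved, Proposition \ref{prop:ppfgc}, asserts that every $\cL \in \bbI_n$ is prime, perfect and fgc; once this is established, the final sentence is immediate, since Definition \ref{def:reltype2} (via Proposition \ref{prop:cclosure}) defines the relative and absolute types precisely for prime perfect fgc Lie algebras.

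First I would handle the base case $n = 0$. Here $\bbI_0 = \bbM_0$ is the class of finite dimensional simple Lie algebras over $\base$. Such an algebra is certainly perfect (being simple and nonzero) and prime (any simple algebra is prime). Moreover, since $\base$ is algebraically closed, a finite dimensional simple Lie algebra is central by \cite[Lemma X.1]{J} (as already noted in Remark \ref{rem:typefd}), so its centroid is $\base\,1$; being finite dimensional over $\base$, it is trivially finitely generated over its centroid, hence fgc. So the base case holds.

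For the inductive step, suppose $n \ge 1$ and that every algebra in $\bbI_{n-1}$ is prime, perfect and fgc. Let $\cL \in \bbI_n$; up to isomorphism $\cL = \Lp(\fg,\sg)$ for some $\fg \in \bbI_{n-1}$ and some finite order $\sg \in \Aut(\fg)$. By the inductive hypothesis $\fg$ is prime, perfect and fgc, so in particular $\fg$ is a prime perfect fgc \emph{Lie} algebra. A loop algebra is by definition a $1$-fold multiloop algebra, so $\cL$ is a $1$-fold multiloop algebra of the Lie algebra $\fg$. Proposition \ref{prop:loopperm} then applies directly and tells us that $\cL$ is again prime, perfect and fgc (and, as a bonus, that the absolute type is preserved, though that is not needed here). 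Since isomorphic copies inherit all these properties, every member of $\bbI_n$ is prime, perfect and fgc. This completes the induction, and with it the first assertion of the proposition.

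Finally, since $\cL$ is prime, perfect and fgc, Proposition \ref{prop:cclosure} guarantees that the central closure $\tcL$ is a simple fgc Lie algebra, and Definition \ref{def:reltype2} then defines both the relative and absolute types of $\cL$; so these invariants are indeed well-defined for every $\cL \in \bbI_n$. The only point requiring any care is the recognition that a loop algebra is a multiloop algebra with $n = 1$, so that Proposition \ref{prop:loopperm} can be invoked as a black box; I do not expect any genuine obstacle, as the entire argument is a clean induction whose inductive step is a single application of an already-established permanence result.
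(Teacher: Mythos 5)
Your proof is correct and is essentially the paper's argument: the paper simply states that the proposition "follows from Proposition \ref{prop:loopperm} and Remark \ref{rem:typefd}," which is exactly the induction you carry out — the base case via finite dimensional simple algebras being central (hence fgc) over the algebraically closed field $\base$, and the inductive step via a single application of the permanence result for ($1$-fold) multiloop algebras. Your write-up just makes the induction explicit.
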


\begin{proposition}
\label{prop:absolute}
If $\cL$ is isomorphic to an $n$-fold multiloop algebra of a finite dimensional simple
Lie algebra $\gd$, then the absolute type of $\cL$ is equal to the type of~$\gd$.
\end{proposition}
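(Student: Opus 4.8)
The plan is to reduce the statement to a direct application of Proposition \ref{prop:loopperm} and Remark \ref{rem:typefd}, using that absolute type is an isomorphism invariant. First I would verify that the finite dimensional simple Lie algebra $\gd$ satisfies the hypotheses of Proposition \ref{prop:loopperm}, namely that it is prime, perfect and fgc. Primeness is immediate from simplicity (a simple algebra has no nonzero proper ideals, so the implication in the definition of prime holds vacuously). Perfectness holds because $\gd' = [\gd,\gd]$ is a nonzero ideal of the nonabelian simple algebra $\gd$, forcing $\gd' = \gd$. Finally $\gd$ is fgc: by \pref{pgraph:centralsimple} (or directly, since $\base$ is algebraically closed) we have $\Cd(\gd) = \base 1$, and $\gd$ is finite dimensional over $\base$, hence certainly finitely generated over its centroid.

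Next, let $\cM$ denote an $n$-fold multiloop algebra of $\gd$ with $\cL \simeq \cM$. Since $\gd$ is prime, perfect and fgc, Proposition \ref{prop:loopperm} applies and yields that $\cM$ is again prime, perfect and fgc, with absolute type equal to the absolute type of $\gd$. By Remark \ref{rem:typefd}, the fact that $\gd$ is central (again because $\base$ is algebraically closed) forces the absolute type of $\gd$ to coincide with its relative type, which is by definition the type of $\gd$. Thus the absolute type of $\cM$ equals the type of $\gd$.

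To conclude, I would invoke the isomorphism invariance of absolute type. Both $\cL$ and $\cM$ are prime perfect fgc Lie algebras (this passes across the isomorphism $\cL \simeq \cM$ for $\cL$), so Lemma \ref{lem:typeinvariant} applies and gives that $\cL$ and $\cM$ have the same absolute type. Chaining these equalities, the absolute type of $\cL$ equals the absolute type of $\cM$, which equals the type of $\gd$, as required.

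I do not expect any genuine obstacle here, since the substantive content, the permanence of absolute type under the multiloop construction, is already packaged in Proposition \ref{prop:loopperm}. The only points requiring care are the routine verification that a finite dimensional simple Lie algebra is prime, perfect and fgc, and the bookkeeping that lets one identify \emph{absolute} type of $\gd$ with its \emph{type} via Remark \ref{rem:typefd}; both are immediate from centrality over the algebraically closed base field.
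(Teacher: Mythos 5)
Your proposal is correct and follows exactly the route the paper takes: the paper's entire proof is the one-line remark that the statement ``follows from Proposition \ref{prop:loopperm} and Remark \ref{rem:typefd},'' and you have simply filled in the routine verifications (that $\gd$ is prime, perfect and fgc, and that absolute type is an isomorphism invariant via Lemma \ref{lem:typeinvariant}) that the authors leave implicit. No further comment is needed.
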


\begin{pgraph}
\label{pgraph:abseasy}
Proposition \ref{prop:absolute} tells us how to compute the absolute type of an algebra in~$\bbM_n$.
The determination of the relative type of an algebra in $\bbM_n$ is a more difficult
problem.  In fact, in later sections, a lot of work will be devoted to the solution of this problem
when $n=1$ (see Corollary \ref{cor:KacReal2}) and  most especially when $n=2$
(see  Corollary \ref{cor:rot}
and Theorem \ref{thm:relativecalc}).
\end{pgraph}

\subsection{An example: the quantum torus $\Qp$}\
\label{subsec:quantum}

Quantum tori, which are constructed from multiplicatively alternating
matrices $\boldq \in  M_n(\base)$ \cite[\S 4.6.1]{Ma},
play a fundamental role in the study of extended affine Lie algebras and multiloop algebras
(see for  example \cite{BGK} and \S \ref{subsec:structurerot} below).
In this section, we consider the special case that will  be important
for our purposes: the case when $n=2$

\begin{pgraph}
\label{pgraph:quantum} If $\prim\in \base^\times$, let $\Qp$ be
the unital associative
algebra presented by the generators $x_1,x_2,x_1^{-1},x_2^{-1}$ subject to the inverse relations
$x_1x_1^{-1} = x_1^{-1}x_1 = 1$, $x_2x_2^{-1} = x_2^{-1}x_2 = 1$, and the relation
\[x_1x_2 = \prim x_2x_1.\]
$\Qp$ has a $\bbZ^2$-grading $\Qp= \bigoplus_{\boldkay\in \bbZ^2} \Qp^\boldkay$
with $\Qp^{(k_1,k_2)} = \base x_1^{k_1}x_2^{k_2}$.
We call
$\Qp$ the \emph{quantum torus} determined by $\prim$, and we call $x_1,x_2,x_1^{-1},x_2^{-1}$ the \emph{distinguished
generators} of $\Qp$ over $\base$.
\end{pgraph}

\begin{pgraph}
For the rest of the section, assume that $\prim$ is an \emph{element of $\base^\times$ of finite order~$m$}.
\end{pgraph}

\begin{pgraph}
\label{pgraph:centquantum}
It is easy to check that the centre $Z(\Qp)$ of $\Qp$ is the algebra generated
by $x_1^{\pm m}$ and $x_2^{\pm m}$.  Thus we can identify
\[R_2 = \base[t_1^{\pm 1},t_2^{\pm 2}] = Z(\Qp)\]
by setting
\[t_1 = x_1^m \andd t_2 = x_2^m.\]
In particular, $\Qp$ is an algebra over $R_2$.  Moreover,
$\Qp$ is a free $R_2$-module with basis $\set{x_1^{k_1}x_2^{k_2} \suchthat 0\le k_1, k_2 \le m-1}$.
\end{pgraph}

To describe $\Qp$ as a multiloop algebra, we need some notation (which will also
be used later in Section \ref{sec:looprot}).

\begin{pgraph}
\headtt{Pauli matrices} \
\label{pgraph:Pauli}
If $F$ is a commutative associative unital $\base$-algebra, $u$ is a unit in $F$,
and $m\ge 1$, we let
\[d_m(u) = \diag(1,u,u^2,\dots,u^{m-1}) \andd p_m(u) = \left[
\begin{array}{cccc}
   0 & \dots& 0 & u \\
   1 &   \ddots &   & 0   \\
   \vdots   &\ddots & \ddots & \vdots \\
     0 &  \dots & 1  & 0
 \end{array}
\right]\]
in $\Mat_m(F)$. (If $m=1$, we interpret $d_1(u) = [1]$ and $p_1(u) = [u]$.)
Now $d_m(u)$ and $p_m(u)$ are units in $\Mat_m(F)$, so we can set
\[D_m(u) = \Ad(d_m(u)) \andd P_m(u) = \Ad(p_m(u))\]
in the automorphism group of $\Mat_m(F)$, $\gl_m(F)$ or $\spl_m(F)$.
(See  Section \ref{subsection:matrix} for the notation $\Ad$.)
Note  that $p_m(u)^m = uI$, so $P_m(u)^m = I$.  Also, if $u^m=1$, then $d_m(u)^m = 1$ and $D_m(u)^m = 1$.
\end{pgraph}

\begin{pgraph} Now let
\[\boldm = (m,m) \andd \boldprim = (\prim,\prim).\]
We have seen in \pref{pgraph:identRS}
that for any algebra $\fg$ and any  pair $\boldsg = (\sg_1,\sg_2)$ of period $m$ automorphisms
of $\fg$, the multiloop algebra $\Lpbm(\fg,\boldsg,\boldprim)$ is naturally
an  $R_2$-algebra, where $R_2 = \base[t_1^{\pm 1}, t_2^{\pm 1}]$ with
$t_1 = z_1^m$ and $t_2 = z_2^m$.  With this in mind we have:
\end{pgraph}

\begin{lemma}\label{lem:quantum} \
Let $\boldsg = (\sg_1,\sg_2)$, where
\[\sg_1 = D_m(\theta)\andd \sg_2 = P_m(1)\]
in $\Aut(\Mat_m(\base))$. Let $g$ be a positive integer and identify $\Mat_{gm}(\base) = \Mat_g(\Mat_m(\base))$ in the usual fashion.
With this identification, let  $\boldsgt = (\tilde \sg_1,\tilde \sg_2)$ be the pair of automorphisms
of $\Mat_{gm}(\base)$ induced by $\boldsg$ acting on the $m\times m$-blocks.  Then
\begin{itemize}
\item[(a)] $\Qp \isoR \Lpbm(\Mat_m(\base),\boldsg,\boldprim)$.
\item[(b)] $\Mat_g(\Qp) \isoR \Lpbm(\Mat_{gm}(\base),\boldsgt,\boldprim)$.
\item[(c)] $\gl_g(\Qp) \isoR \Lpbm(\gl_{gm}(\base),\boldsgt,\boldprim)$ and under this isomorphism
\begin{equation}
\label{eq:quantumsl}
\spl_g(\Qp) \isoR \Lpbm(\spl_{gm}(\base),\boldsgt|_{\spl_{gm}(\base)},\boldprim).
\end{equation}
\item[(d)] $\gl_g(\Qp) = R_2 1 \oplus \spl_g(\Qp)$ as Lie algebras over $R_2$.
\item[(e)]  If $gm>1$, the natural homomorphism $R_2 \mapsto C(\spl_g(\Qp))$ is
an isomorphism.
\end{itemize}
\end{lemma}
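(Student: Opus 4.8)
The plan is to prove (a) by an explicit computation with Pauli matrices and then to deduce (b)--(e) formally from (a) together with the results already recorded. Write $X = p_m(1)$ and $Y = d_m(\prim)$, so that $\sg_2 = \Ad(X)$ and $\sg_1 = \Ad(Y)$, and observe the commutation relation $YX = \prim XY$, which is immediate from the definitions of the clock and shift matrices. From this relation one computes $\sg_1(X) = YXY^{-1} = \prim X$, $\sg_2(X) = X$, $\sg_1(Y^{-1}) = Y^{-1}$ and $\sg_2(Y^{-1}) = XY^{-1}X^{-1} = \prim Y^{-1}$, so that $X \in \Mat_m(\base)^{(\bar 1,\bar 0)}$ and $Y^{-1}\in\Mat_m(\base)^{(\bar 0,\bar 1)}$. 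Hence $u_1 := X\ot z_1$ and $u_2 := Y^{-1}\ot z_2$, together with their inverses, lie in $\Lpbm(\Mat_m(\base),\boldsg,\boldprim)$. First I would check that $u_1 u_2 = \prim\,u_2 u_1$: this is exactly the defining relation of $\Qp$, and it follows from $XY^{-1} = \prim Y^{-1}X$ (a restatement of $YX=\prim XY$). Since $u_1^m = X^m\ot z_1^m = 1\ot z_1^m = t_1$ and $u_2^m = Y^{-m}\ot z_2^m = t_2$ (because $X^m=I$ and $Y^m=I$), the assignment $x_1\mapsto u_1$, $x_2\mapsto u_2$ extends to an $R_2$-algebra homomorphism $\Qp \to \Lpbm(\Mat_m(\base),\boldsg,\boldprim)$.

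To finish (a) I would verify bijectivity by a rank count: the map sends the $R_2$-basis $\set{x_1^i x_2^j \suchthat 0\le i,j\le m-1}$ of $\Qp$ recorded in \pref{pgraph:centquantum} to the family $\set{X^iY^{-j}\ot z_1^iz_2^j}$, whose members lie in pairwise distinct $\bbZ^2$-homogeneous components and are each nonzero, hence form an $R_2$-basis of the multiloop algebra. This is the only step requiring a genuine computation; \emph{the main obstacle is purely bookkeeping}, namely keeping the grading conventions and the direction of $YX=\prim XY$ straight so that the defining relation emerges with the correct power $\prim$ rather than $\prim^{-1}$.

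For (b) I would use $\Mat_{gm}(\base) = \Mat_g(\base)\ot_\base\Mat_m(\base)$, under which $\boldsgt = (\id\ot\sg_1,\id\ot\sg_2)$ acts only on the second tensor factor; consequently the simultaneous eigenspaces satisfy $(\Mat_g(\base)\ot\Mat_m(\base))^\bboldkay = \Mat_g(\base)\ot\Mat_m(\base)^\bboldkay$, whence $\Lpbm(\Mat_{gm}(\base),\boldsgt,\boldprim)\isoR\Mat_g(\base)\ot_\base\Lpbm(\Mat_m(\base),\boldsg,\boldprim)\isoR\Mat_g(\base)\ot_\base\Qp = \Mat_g(\Qp)$, the middle step being (a) tensored with the identity on $\Mat_g(\base)$. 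Part (c) then follows by applying the functor $(\,\cdot\,)^-$ to (b), since the multiloop construction commutes with passage to the associated Lie algebra (the $\sg_i$ being associative automorphisms); this gives the statement for $\gl_g(\Qp)$, and the statement for $\spl_g(\Qp)$ is obtained by taking derived algebras on both sides and invoking Lemma \ref{lem:loopderived}(a), noting $\gl_g(\Qp)' = \spl_g(\Qp)$ and $\gl_{gm}(\base)' = \spl_{gm}(\base)$.

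Finally, (d) and (e) are routine. For (d) I would use the $\base$-space decomposition $\Qp = \Centre(\Qp)\oplus[\Qp,\Qp]$, where $\Centre(\Qp)=R_2$ is spanned by the monomials $x_1^ax_2^b$ with $(a,b)\equiv(0,0)\pmod m$ and $[\Qp,\Qp]$ by the remaining monomials; splitting the trace of an arbitrary $x\in\gl_g(\Qp)$ accordingly and subtracting a suitable multiple of the identity matrix yields $\gl_g(\Qp) = R_2 1\oplus\spl_g(\Qp)$, the sum being direct because $R_2\cap[\Qp,\Qp]=0$. For (e), when $gm>1$ the algebra $\spl_{gm}(\base)$ is finite dimensional and simple, so by the $R_2$-isomorphism of (c) and Proposition \ref{prop:centloop} the natural map $R_2\to\Cd(\spl_g(\Qp))$ is identified with the natural isomorphism $R_2\to\Cd(\Lpbm(\spl_{gm}(\base),\boldsgt|_{\spl_{gm}(\base)},\boldprim))$, and is therefore an isomorphism.
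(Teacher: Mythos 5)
Your proof is correct and follows essentially the same route as the paper's: part (a) is established with exactly the same units $p_m(1)\ot z_1$ and $d_m(\prim)^{-1}\ot z_2$ (your $X\ot z_1$ and $Y^{-1}\ot z_2$), with the same commutation relation and the same basis/rank count to get bijectivity, and parts (b), (c) and (e) are deduced formally just as in the paper (tensoring with $\Mat_g(\base)$, passing to $(\cdot)^-$ and derived algebras via Lemma \ref{lem:loopderived}(a), then Proposition \ref{prop:centloop}). The only divergence is in (d): the paper pulls the decomposition $\gl_{gm}(\base)=\base 1\oplus\spl_{gm}(\base)$ through the multiloop construction and invokes (c), whereas you argue directly inside $\gl_g(\Qp)$ using $\Qp=R_2\oplus[\Qp,\Qp]$ and a trace argument; both are valid, and yours has the minor advantage of not depending on the loop realization.
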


\begin{proof}  (a): Let $p = p_m(1)$ and $d = d_m(\prim)^{-1}$ in $\Mat_m(\base)$.
Then one checks that $pd = \prim dp$,
\[\sg_1(p) = \prim p,\quad \sg_2(p) = p,\quad \sg_1(d) = d \andd \sg_2(d) = \prim d.\]
So $\tilde x_1 := p \ot z_1$ and $\tilde x_2 := d\ot z_2$ are  units in
$\Lpbm(\Mat_m(\base),\boldsg,\boldprim)$ satisfying
$\tilde x_1 \tilde x_2 = \theta \tilde x_2 \tilde x_1$.  Thus we have an algebra
homomorphism $\ph : \Qt \to  \Lpbm(\Mat_m(\base),\boldsg,\boldprim)$
such that  $x_i \mapsto \tilde x_i$ for $i= 1,2$.  Using the well known fact that
$\set{p^{k_1}d^{k_2} \suchthat 0 \le k_1,k_2\le m-1}$ is a basis for $\Mat_m(\base)$, it is easy to check
that
$\ph$ is an isomorphism (see the argument in \cite[Example 9.8]{ABP2.5}).  Moreover, under $\ph$ we have
$x_1^m \mapsto  \tilde x_1^m = p^m \ot z_1^m = 1 \ot t_1$ and $x_2^m \mapsto  \tilde x_2^m = d^m \ot z_2^m = 1 \ot t_2$,
so $\ph$ is $R_2$-linear.

(b):  This follows from (a).

(c): The first isomorphism in (c) follows from (b); and then \eqref{eq:quantumsl} follows  from
Lemma \ref{lem:loopderived}(a).

(d): Now $\gl_{gm}(\base) = \base 1 \oplus \spl_{gm}(\base)$. So we have
$\Lpbm(\gl_{gm}(\base) ,\boldsgt,\boldprim) = R_2 1 \oplus \Lpbm(\spl_{gm}(\base) ,\boldsgt|_{\spl_{gm}(\base)},\boldprim)$,
and (d) follows using (c).

(e):  This follows from \eqref{eq:quantumsl} using Proposition \ref{prop:centloop}.
\end{proof}

\begin{remark}  Parts (d) and (e) of the lemma can be proved without using multiloop algebras.  In particular,
if $g > 1$, the Lie algebra $\spl_g(\Qp)$ is graded by the root system $\type{A}{g-1}$, and hence
(e)  follows from \cite[Thm.~5.15]{BN}.
\end{remark}

\begin{proposition} \label{prop:spgquantum} Suppose $g$ is a positive integer and $\theta\in \base^\times$
has order $m$.
\begin{itemize}
\item[(a)]
If $gm > 1$, then $\spl_g(\Qp)\in \bbM_2$.
\item[(b)] If $m >1$, then $\spl_1(\Qp)$ has relative type $\type{A}{0}$; that is, $\spl_1(\Qp)$ is aniso\-tropic.
\end{itemize}
\end{proposition}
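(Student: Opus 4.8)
The plan is to prove the two parts by realizing $\spl_g(\Qp)$ as a multiloop algebra via Lemma \ref{lem:quantum} and then reading off the required properties. For part (a), I would argue as follows. Assume $gm > 1$. By \eqref{eq:quantumsl} of Lemma \ref{lem:quantum}(c), we have
\[
\spl_g(\Qp) \isoR \Lpbm(\spl_{gm}(\base),\boldsgt|_{\spl_{gm}(\base)},\boldprim),
\]
so in particular $\spl_g(\Qp)$ is isomorphic (as a $\base$-algebra, forgetting the $R_2$-structure) to a $2$-fold multiloop algebra of $\spl_{gm}(\base)$. Since $gm > 1$, the algebra $\spl_{gm}(\base)$ is a finite dimensional simple Lie algebra (of type $\type{A}{gm-1}$), and $\boldsgt|_{\spl_{gm}(\base)}$ is a pair of commuting finite order automorphisms of it (their orders divide $m$, as noted in \pref{pgraph:Pauli} where $D_m(\theta)^m = 1$ and $P_m(1)^m = I$). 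Hence by Definition \ref{def:Ln}, $\spl_g(\Qp) \in \bbM_2$, which is exactly what part (a) asserts.

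For part (b), assume $m > 1$ and $g = 1$, so $gm = m > 1$ and the setup of part (a) still applies: $\spl_1(\Qp) \in \bbM_2$ is prime, perfect and fgc by Proposition \ref{prop:ppfgc} (using $\bbM_2 \subseteq \bbI_2$ from \pref{pgraph:MnIn}), so its relative type is defined. The cleanest route to anisotropy is Proposition \ref{prop:anisotropic}: it suffices to show that the only ad-nilpotent element of $\spl_1(\Qp)$ is $0$. Now by \pref{subsection:matrix}, $\spl_1(\Qp) = [\Qp,\Qp]$ under the commutator bracket. I would therefore analyze the commutator ideal of the quantum torus $\Qp$ directly. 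Using the $\bbZ^2$-grading $\Qp = \bigoplus_{\boldkay} \base x_1^{k_1}x_2^{k_2}$ from \pref{pgraph:quantum} together with the relation $x_1 x_2 = \theta x_2 x_1$, one computes
\[
[x_1^{a_1}x_2^{a_2},\, x_1^{b_1}x_2^{b_2}] = (\theta^{a_2 b_1} - \theta^{a_1 b_2})\, x_1^{a_1+b_1}x_2^{a_2+b_2},
\]
so the bracket of two homogeneous elements is a scalar multiple of a monomial, and that scalar vanishes precisely when $a_2 b_1 \equiv a_1 b_2 \pmod{m}$. Consequently $[\Qp,\Qp]$ is spanned by those monomials $x_1^{k_1}x_2^{k_2}$ with $(k_1,k_2) \not\equiv (0,0) \pmod m$ (here I use $m>1$ to guarantee such monomials are genuine commutators), and in particular $[\Qp,\Qp]$ contains no nonzero central element, i.e.\ $[\Qp,\Qp] \cap Z(\Qp) = 0$.

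It remains to show every ad-nilpotent $x \in \spl_1(\Qp)$ is zero. The key point is that $\Qp$ is a division-algebra-like object along the grading: since $m>1$ the grading group contains elements on which the bicharacter $(\boldkay,\boldell)\mapsto \theta^{k_1 \ell_2 - k_2 \ell_1}$ is nondegenerate modulo the center, $\Qp$ has no zero divisors among nonzero homogeneous elements, and in fact $\Qp$ is a central simple algebra over $Z(\Qp) = R_2$ that becomes a division algebra after passing to the quotient field $\widetilde{R_2}$ of $R_2$. The most efficient way to finish is to pass to the central closure: by Proposition \ref{prop:cclosure}, $\widetilde{\spl_1(\Qp)} = \spl_1(\Qp) \ot_{R_2} \widetilde{R_2}$, and this is $[\cD,\cD]$ for the quaternion-type cyclic division algebra $\cD = \Qp \ot_{R_2} \widetilde{R_2}$ over the field $\widetilde{R_2}$. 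An element of a division algebra that is ad-nilpotent in the associated Lie algebra must be central, hence lies in $[\cD,\cD]\cap \widetilde{R_2} = 0$; tracing back, the only ad-nilpotent element of $\spl_1(\Qp)$ is $0$, and Proposition \ref{prop:anisotropic} gives anisotropy, i.e.\ relative type $\type{A}{0}$ as in Definition \ref{def:reltype1}.

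The main obstacle I anticipate is justifying that $\cD = \Qp \ot_{R_2} \widetilde{R_2}$ is a division algebra and that ad-nilpotent elements of a division algebra are central. The division property is where $m > 1$ is essential: it amounts to showing the symbol/cyclic algebra $(t_1, t_2)_\theta$ over the rational function field $\base(t_1,t_2)$ is nonsplit, which is a standard but nontrivial fact about cyclic algebras (and is precisely the kind of cyclic-algebra computation the introduction flags for Section \ref{sec:isomat}). Once that is in hand, the Lie-theoretic step is routine: in a division algebra $[\cD,\cD]$ meets the center only in $0$, and any ad-nilpotent element $x$ satisfies $(\ad x)^N = 0$, forcing $x$ to commute with everything after a short argument, hence $x$ central, hence $x \in [\cD,\cD]\cap\widetilde{R_2} = 0$.
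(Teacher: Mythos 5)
Your proof of (a) coincides with the paper's: both simply read $\spl_g(\Qp)\in\bbM_2$ off \eqref{eq:quantumsl}. Your proof of (b) is correct but takes a genuinely different route. The paper stays entirely inside the graded algebra $\Qp$: given a nonzero ad-nilpotent $x\in\cL=\spl_1(\Qp)$, it replaces $x$ by its homogeneous component of highest degree (legitimate since $\cL$ is a $\bbZ^2$-graded subalgebra of $\Qp^-$), uses primeness of $\cL$ (hence triviality of its centre) to find a homogeneous $y$ with $[x,y]\ne 0$, and then, because every homogeneous component of $\Qp$ is spanned by a unit, obtains $[x,y]=axy$ with $a\in\base^\times$, whence $\ad(x)^ry=a^rx^ry\ne 0$ for all $r$ --- contradicting ad-nilpotency, and Proposition \ref{prop:anisotropic} finishes. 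You instead pass to the central closure $[\cD,\cD]$ with $\cD=\Qp\ot_{R_2}K_2$ a division algebra, and use that an ad-nilpotent element of a finite-dimensional division algebra in characteristic $0$ is central, hence lies in $[\cD,\cD]\cap K_2\cdot 1=0$. This works and is not circular (the division property is Lemma \ref{lem:cyclicquantum}(c), whose proof does not use this proposition), and it isolates the structural reason for anisotropy; but two remarks. First, you do not need the nonsplitness/cyclic-algebra computation you anticipate: the division property follows from $\Qp$ being a domain, which is seen by exactly the highest-degree-component trick the paper uses directly in its own proof --- so the paper's argument is strictly more elementary and self-contained. Second, two steps you label routine do need a word: that ad-nilpotency of $x$ on $\spl_1(\Qp)$ extends to all of $\cD$ (use $\Qp=R_2 1\oplus\spl_1(\Qp)$ from Lemma \ref{lem:quantum}(d)), and that ad-nilpotent implies central in $\cD$ (e.g.\ over a splitting field $x\ot 1$ is semisimple since the field it generates is separable over $K_2$, and a semisimple ad-nilpotent matrix is scalar).
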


\begin{proof} (a) follows from \eqref{eq:quantumsl}  since $\spl_{gm}(\base)$ is simple.
For (b), let $\cL = \spl_1(\Qp)$, and suppose for
contradiction (see Lemma \ref{prop:anisotropic})
that $\cL$ contains a nonzero ad-nilpotent element $x$.  Now $\cL$ is a $\bbZ^2$-graded subalgebra
of $\Qp^-$, so the homogeneous components of $x$ lie in $\cL$.  Replacing $x$ by its component
of highest degree (using the lexicographic order), we can assume that $x$ is homogeneous.
But since $\cL\in \bbM_2$ is prime by Proposition \ref{prop:ppfgc}, $\cL$ has trivial centre.  Hence,
$[x,y] \ne 0$ for some nonzero homogeneous $y\in \cL$.  Since the homogeneous components
of $\Qp$ are each spanned by a unit, it follows that
$[x,y] = a xy$ for some $a\in \base^\times$.
But then, by induction, we have $\ad(x)^r y = a^r x^r y$ for $r\ge 1$, giving a contradiction.
\end{proof}

\begin{pgraph}
Proposition \ref{prop:spgquantum} holds more generally
for the quantum torus $\base_\boldq$   \cite[\S 4.6.1]{Ma}
constructed from a multiplicatively alternating matrix $\boldq \in  M_n(\base)$ such that
each $q_{ij}$ has finite order in $\base^\times$.  Specifically, if either $g > 1$
or some $q_{ij} \ne 1$, then
$\spl_g(\base_\boldq)\in \bbM_n$.  Moreover, $\spl_1(\base_\boldq)$ is anisotropic if
some $q_{ij} \ne 1$. Our proof of these statements uses
\cite[Thm.~9.2.1]{ABFP1} as well as the argument above.  We omit the details since
we will not use these   facts.
\end{pgraph}

\section{Extended affine Lie algebras and the class $\bbE_n$}
\label{sec:EALA}

In this section,  we recall
some background on extended affine Lie algebras.

\subsection{The definition} \
\label{subsec:EALAdef}

Following \cite{Neh2} and \cite[\S 6.11]{Neh3}, we have the  following:

\begin{definition} \headtt{EALA}
\label{def:EALA}
Let $\fg$ be a Lie algebra satisfying the following axioms:
\begin{itemize}
\item[(EA1)]  $\fg$ has a nondegenerate invariant symmetric bilinear form $\form$.
\item[(EA2)] $\fg$ contains a nontrivial finite dimensional self-centralizing ad-diagonal\-izable
subalgebra $\fh$.
\end{itemize}
Let  $\fg = \sum_{\al\in \fh^*} \fg_\al$ be the root space decomposition
of $\fg$ with respect to $\fh$, and let
\[\Phi := \set{\al \in \fh^* : \fg_\al \ne 0}\]
be the root system of $\fg$ relative to $\fh$.
The  form  $\form$ restricted to $\fh$ is nondegenerate, and hence, as usual, we can
transfer $\form$ to a nondegenerate symmetric bilinear form on the dual space $\fh^*$
of $\fh$.
Let
\[\Phi^\times = \set{\al\in \Phi \suchthat (\al \vert \al) \ne 0} \andd \Phi^0 = \set{\al\in \Phi \suchthat (\al \vert \al) = 0} \]
be the sets of \emph{nonisotropic} (resp.~\emph{isotropic}) roots in $\Phi$.  Let
\[\fg_\core :=\text{subalgebra of $\fg$ generated by $\fg_\al$, $\al\in \Phi^\times$}\]
be the \emph{core} of $\fg$.  We say that $(\fg,\form,\fh)$
is an \emph{extended affine Lie algebra} (EALA) if in addition to (EA1) and (EA2), the following
axioms hold:
\begin{itemize}
\item[(EA3)] $\ad(x)$ is locally nilpotent for $x\in \fg_\al$, $\al\in \Phi^\times$.
\item[(EA4)] $\Phi^\times$ cannot be decomposed as the union of two orthogonal nonempty subsets.
\item[(EA5)] The centralizer of $\fg_\core$ in $\fg$ is contained in $\fg_\core$.
\item[(EA6)] The subgroup $\langle \Phi^0\rangle$ of $\fh^*$ generated by $\Phi^0$  is finitely generated.
\end{itemize}
We will then often say for short that $\fg$ is an EALA.
\end{definition}

\begin{pgraph}  An   EALA over $\bbC$ such that $\Phi$ is a discrete subgroup of $\fh^*$
is called a \emph{discrete EALA} \cite[6.14]{Neh3}.  (In fact, (EA6) is redundant in this definition \cite[6.15]{Neh3}.)
Discrete EALAs have been studied in a number of papers
including \cite{H-KT}, \cite{BGK}, \cite{AABGP}, \cite{ABGP}, \cite{AG} and \cite{ABP1} where, beginning with  \cite{AABGP},
they were called \emph{tame EALAs}.
\end{pgraph}

\begin{pgraph}
\label{pgraph:Chapter2}
The properties of the root
systems of discrete EALAs were developed
in \cite[Chapter 2]{AABGP}. It was shown that these root systems
can be described using a finite root system together with a family
of semilattices.
\end{pgraph}

\begin{pgraph}
\label{pgraph:LN}
More generally, as described in \cite[\S 6]{Neh3},
if $\Phi$ is the root system of an EALA $\fg$  then
$\Phi$ has the following properties:
$\Phi$ is a reduced symmetric affine reflection
system with irreducible finite quotient root system; all root strings
are unbroken; $\langle \Phi^0\rangle$ is finitely generated;
and $\Phi^0 \subseteq \Phi^\times + \Phi^\times$.  (The interested reader can consult
\cite{Neh3} or \cite{LN} for the terms used here.  We will not use
them subsequently.)
As a consequence, it follows from \cite[\S  4-5]{LN} (see also
\cite[\S 3.7]{Neh3}) that
the structure of $\Phi$ can be described in terms of extension data for the
finite quotient root system  of $\Phi$.
This generalizes the approach mentioned in \pref{pgraph:Chapter2}  that uses
semilattices.
As a result, most of the basic properties of discrete EALAs
carry over to general EALAs with similar and sometimes shorter
proofs.
In a few cases below we will only sketch an argument along these lines,
as the details are not hard to fill in.
\end{pgraph}

\begin{pgraph}
\label{pgraph:finitetype}\headtt{The quotient type of an EALA}

Suppose  that $(\fg,\form,\fh)$ is an EALA with root system  $\Phi$.
Then, using the properties mentioned in \pref{pgraph:LN}, it is easy to deduce the following facts about $\Phi$
from standard facts about finite root systems.  First,
let
\[V = \spann_\bbQ(\Phi)\]
in
$\fh^*$.  Then  $V$ is finite dimensional over $\bbQ$.
Further, we can and do normalize $\form$, by replacing it by a $\base^\times$ multiple of itself, so that
$(\al \vert \beta) \in \bbQ$ and $(\al\vert\al)\ge 0$  for $\al,\beta\in \Phi$.
In that case, $\form$ restricts to a  $\bbQ$-bilinear form  $V\times V \to \bbQ$; and this restriction,
which we also denote by $\form$, is positive semi-definite (that is $(\al \vert \al)\ge 0$ for $\al\in V$).
We let
\[V^0 = \rad(V)\]
be the radical of this form, and we set
\[\bV = V/V^0\]
with canonical map $- : V \to \bar V$.  So
$\form$ induces a positive definite $\bbQ$-bilinear form on $\bV$, which we again denote by~$\form$.
Further, $\bPhi$ is an
irreducible (possibly nonreduced) finite root system  in $\bV$.
We call $\bPhi$ the \emph{finite quotient root system}   for $(\fg,\form,\fh)$, or simply for $\fg$,
and we call its type the
\emph{quotient type}  of $\fg$.
\end{pgraph}

\begin{remark}
\label{rem:finitetype}
Suppose that $\fg$ is a discrete EALA (over $\bbC$).  Then the properties described in \pref{pgraph:finitetype} hold with the rational field $\bbQ$
everywhere replaced by the real field $\bbR$, in which case one obtains a finite quotient root system
over $\bbR$ rather than $\bbQ$ \cite[Chap.~1]{AABGP}.  It is not difficult to check
that this finite root system over $\bbR$ is obtained by base field extension (as described
in \cite[Chap. VI, \S 1, Remark 1]{Bo2}) from the
one over $\bbQ$.  So these two finite root systems have the same type.
Hence,   the notion of quotient type defined above
coincides with the notion of  type defined in \cite[p.~27]{AABGP}.
\end{remark}

\begin{lemma} \label{lem:EALAbasic}
Suppose that $\fg$ is an EALA with root system $\Phi$. Then
\begin{itemize}
\item[(a)]  $\fg_c$ is a perfect ideal of $\fg$.
\item[(b)] $\Phi^0 = \set{\al\in \Phi \suchthat \bbZ\al \subseteq \Phi}.$
Hence, $\Phi^0$ and $\Phi^\times$ are determined by $\Phi$ (without reference
to the form $\form$).
\end{itemize}
\end{lemma}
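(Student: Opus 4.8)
The plan is to prove the two parts of Lemma~\ref{lem:EALAbasic} using the structural properties of the root system $\Phi$ recorded in \pref{pgraph:LN}, together with the standard EALA axioms.

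For part~(a), I would first observe that $\fg_c$ is an ideal. Since $\fg_c$ is generated by the root spaces $\fg_\al$ with $\al\in\Phi^\times$, and the bracket $[\fg_\al,\fg_\beta]\subseteq \fg_{\al+\beta}$ respects the root grading, to show $\fg_c$ is an ideal it suffices to check that $[\fg_\mu,\fg_\al]\subseteq\fg_c$ for an arbitrary root $\mu\in\Phi$ and $\al\in\Phi^\times$. If $\al+\mu$ is nonisotropic this is immediate; the case $\al+\mu\in\Phi^0$ requires using that root strings are unbroken (so that $\fg_{\al+\mu}$ is reached inside a string through nonisotropic roots) together with the property $\Phi^0\subseteq\Phi^\times+\Phi^\times$ from \pref{pgraph:LN}. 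For \emph{perfectness}, I would show $\fg_c=[\fg_c,\fg_c]$ by establishing that each generating space $\fg_\al$ ($\al\in\Phi^\times$) lies in $[\fg_c,\fg_c]$. The standard device is the $\spl_2$ theory attached to a nonisotropic root: axiom (EA3) guarantees local nilpotence, and one produces an $\spl_2$-triple in $\fg_\al\oplus[\fg_\al,\fg_{-\al}]\oplus\fg_{-\al}$ so that $\fg_\al=[\,[\fg_\al,\fg_{-\al}],\fg_\al\,]\subseteq[\fg_c,\fg_c]$. This is essentially the statement that the core of an affine reflection system is generated in degrees that are brackets of nonisotropic root vectors.

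For part~(b), the inclusion $\Phi^0\subseteq\set{\al\in\Phi\suchthat\bbZ\al\subseteq\Phi}$ is the assertion that every integer multiple of an isotropic root is again a root; this follows because isotropic roots have $(\al\vert\al)=0$, so reflections fix them and all $\bbZ\al$ sit inside unbroken root strings, again invoking the affine reflection system structure from \pref{pgraph:LN}. For the reverse inclusion, suppose $\bbZ\al\subseteq\Phi$ and $\al\in\Phi^\times$, so $(\al\vert\al)>0$ after the normalization of \pref{pgraph:finitetype}; then $(n\al\vert n\al)=n^2(\al\vert\al)\to\infty$, yet the nonisotropic roots of a reduced affine reflection system have bounded length (their images $\bar\mu$ lie in the finite root system $\bPhi$, and the isotropic part contributes nothing to the norm), so $n\al$ cannot remain a root for all $n$. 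This contradiction forces $\al\in\Phi^0$. The final sentence of~(b) is then immediate: the decomposition $\Phi=\Phi^0\sqcup\Phi^\times$ has just been characterized purely in terms of which integer multiples lie in $\Phi$, with no reference to $\form$.

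The main obstacle I anticipate is the perfectness claim in part~(a): unlike the ideal property, it genuinely needs the $\spl_2$-machinery supplied by (EA3) and the unbroken-string property to recover each nonisotropic root space as a bracket, and one must be careful that the isotropic generators (if one were tempted to include them) play no role—only the $\fg_\al$ with $\al\in\Phi^\times$ generate $\fg_c$. By contrast the ideal property and both inclusions in~(b) are light consequences of the root-system axioms in \pref{pgraph:LN}. I would keep the argument brief, citing the relevant structure theory from \cite{Neh3} and \cite{LN} rather than reconstructing it, in keeping with the paper's stated convention in \pref{pgraph:LN} of only sketching such standard arguments.
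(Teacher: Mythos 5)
Your overall strategy matches the paper's, which for this lemma simply cites \cite[\S 1]{AG} for (a) and \cite[Cor.~2.31]{AABGP} for (b), noting that the general (non-discrete) case follows the same lines via the affine reflection system structure of \pref{pgraph:LN}; your sketch of (a) and of the reverse inclusion in (b) fills in those citations correctly. Two remarks on (a): the perfectness argument via $\fg_\al=[[\fg_\al,\fg_{-\al}],\fg_\al]$ is exactly the standard one, but the ideal property is easier than you make it. Since the normalized form of \pref{pgraph:finitetype} is positive semi-definite on $V$, every isotropic root lies in the radical $V^0$, so for $\mu\in\Phi^0$ and $\al\in\Phi^\times$ one has $(\mu+\al\vert\mu+\al)=(\al\vert\al)\ne 0$; hence $[\fg_\mu,\fg_\al]$ lands in a nonisotropic root space or is zero, while the case $\mu\in\Phi^\times$ is trivial because both factors already lie in $\fg_c$. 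No appeal to unbroken strings or to $\Phi^0\subseteq\Phi^\times+\Phi^\times$ is needed there, and the phrase ``$\fg_{\al+\mu}$ is reached inside a string'' is misleading: for isotropic $\al+\mu$ the full root space $\fg_{\al+\mu}$ need not lie in $\fg_c$ at all --- only the bracket does.

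The genuine gap is the inclusion $\Phi^0\subseteq\set{\al\in\Phi \suchthat \bbZ\al\subseteq\Phi}$ in (b). The mechanism you name cannot produce it: a reflection $r_\beta$ with $\beta\in\Phi^\times$ fixes every isotropic root pointwise precisely because $(\al\vert\beta)=0$, and a $\beta$-root string through $\al$ moves in the $\beta$-direction, never in the $\al$-direction; there is no reflection along an isotropic root, so neither device ever reaches $2\al$ from $\al$. What is actually needed is the description of $\Phi$ by extension data $\set{\Lambda_\xi}_{\xi\in\bPhi}$ over the finite quotient root system, together with their closure properties (of the type $\Lambda_\xi\pm 2\Lambda_\xi\subseteq\Lambda_\xi$) and the identification of $\Phi^0$ with unions of differences of the $\Lambda_\xi$ --- this is where $\Phi^0\subseteq\Phi^\times+\Phi^\times$ genuinely enters. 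That computation is exactly the content of \cite[Cor.~2.31]{AABGP} in the discrete case and of \cite[\S 4--5]{LN} in general, so your closing appeal to the literature does cover it; but the specific justification you offer for this inclusion is not a proof and should be replaced by the citation or by the semilattice computation.
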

\begin{proof} In the case of discrete EALAs, (a) is proved in
\cite[\S 1]{AG}, and (b) follows from \cite[Cor.~2.31]{AABGP}.
The arguments in general follow the same lines, as discussed in \pref{pgraph:LN}.
\end{proof}

\subsection{The nullity of an EALA}\
\label{subsec:nullity}

\begin{definition} Suppose that $\fg$ is an EALA with root system  $\Phi$.
Since the additive group of $\fh^*$ is torsion free, it follows from (EA6) that
$\langle \Phi^0 \rangle$ is a finitely generated free abelian group.  We define
the \emph{nullity} of $\fg$ to be the rank of the group $\langle \Phi^0 \rangle$.
\end{definition}

\begin{remark}
\label{rem:nullity1}
Let $\fg$ be an EALA with notation as in \pref{pgraph:finitetype}.

(a) Using extension data, one can easily see that  $\spann_\bbQ(\Phi^0) = V^0$ (see
\cite[(2.10) and (2.11)(b)]{AABGP} in the discrete case).
Thus, the nullity of $\fg$ equals $\dim_\bbQ(V^0)$.

(b) Suppose that $\fg$ is discrete (over $\bbC$).   Let $V_\bbR$ be the real span of $R$ and let $V_\bbR^0$ be the
radical of the restriction of $\form$ to $V_\bbR$.  Then, by \cite[Prop.~1.4 and Cor.~2.31]{AABGP},
the rank of $\langle \Phi^0\rangle$ equals $\dim_\bbR(V_\bbR^0)$.
Thus, the notion of nullity defined above
coincides with the notion of nullity defined in \cite[p.~27]{AABGP}.
\end{remark}

Using results of \cite{ABFP2} and \cite{ABGP}, we have the following characterizations of
EALAs of nullity 0 and 1.

\begin{proposition}\
\label{prop:lownullity}
\begin{itemize}
\item[(a)] If $\fg$ is a finite dimensional simple Lie algebra with Killing form
$\form$ and Cartan subalgebra $\fh$, then  $(\fg,\form,\fh)$ is an EALA of nullity 0.
Conversely, if $(\fg,\form,\fh)$ is an EALA of nullity 0, then   $\fg$ is a finite dimensional simple Lie algebra and $\fg_c = \fg$.

\item[(b)]  If $\fg$ is an affine Kac-Moody Lie algebra with normalized invariant form $\form$ (see
\cite[\S 6.2]{K2} or \pref{pgraph:notationaff} below)
and with distinguished Cartan
subalgebra $\fh$, then
$(\fg,\form,\fh)$ is an EALA of nullity 1.  Conversely, if
$(\fg,\form,\fh)$ is an EALA of nullity~1, then
$\fg$  is isomorphic to an affine Kac-Moody Lie algebra and $\fg_c = \fg'$.
\end{itemize}
\end{proposition}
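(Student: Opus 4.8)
The plan is to treat each part by a direct verification of axioms (EA1)--(EA6) for the forward implication, drawing on standard structural facts about finite dimensional simple and affine Kac--Moody Lie algebras, and then to obtain the two converse implications from the properties of EALA root systems recorded in \pref{pgraph:LN} together with the cited structure and characterization results. Throughout I would use Remark \ref{rem:nullity1}(a) to compute nullity as $\dim_\bbQ(V^0)$ and Lemma \ref{lem:EALAbasic} to reason about $\Phi^0$, $\Phi^\times$ and $\fg_c$.

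For the forward direction of (a), let $\fg$ be finite dimensional simple with Killing form $\form$ and Cartan subalgebra $\fh$. Then (EA1) holds because the Killing form of a simple Lie algebra is nondegenerate, symmetric and invariant, and (EA2) holds because $\fh$ is a nonzero finite dimensional ad-diagonalizable self-centralizing subalgebra. The root system $\Phi$ is the usual reduced irreducible finite root system with $0$ adjoined; since the transferred form is positive definite on the real span of the nonzero roots, $\Phi^0 = \set{0}$ and $\Phi^\times = \Phi \setminus \set{0}$, so $\fg_c$ is generated by all nonzero root spaces and hence $\fg_c = \fg$. Axioms (EA3)--(EA5) are then the standard facts that root vectors are $\ad$-nilpotent, that $\Phi^\times$ is irreducible, and that $\Centre(\fg) = 0$, while (EA6) is trivial since $\langle \Phi^0\rangle = 0$; in particular the nullity is $\rank\langle \Phi^0\rangle = 0$.

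For the forward direction of (b), let $\fg$ be affine with normalized form $\form$ and distinguished Cartan subalgebra $\fh$. Here (EA1) and (EA2) are standard properties of affine algebras (see \cite[Ch.~6]{K2}); the isotropic roots are exactly $0$ together with the nonzero integer multiples of the null root $\delta$, so $\langle \Phi^0\rangle = \bbZ\delta$ and the nullity is $1$, while the nonisotropic roots are the real roots. Consequently $\fg_c = \fgp$, since the real root spaces generate the derived algebra. Axioms (EA3) and (EA4) are the local nilpotence of real root vectors and the indecomposability of the affine Cartan matrix, and for (EA5) I would note that the scaling derivation $d$ induces an outer derivation of $\fgp$, so any element of $\fg$ centralizing $\fgp$ has zero $d$-component and therefore lies in $\Centre(\fgp) = \base c \subseteq \fgp = \fg_c$.

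The converse implications carry the real content. For (a), nullity $0$ forces $\langle \Phi^0\rangle = 0$, hence $\Phi^0 = \set{0}$, and by Remark \ref{rem:nullity1}(a) this gives $V^0 = 0$, so $\Phi = \bPhi$ is a finite root system. By the structure theory of \cite{ABFP2} the core (a perfect ideal by Lemma \ref{lem:EALAbasic}(a)) is, modulo its centre, a centreless Lie torus of nullity $0$, which is a finite dimensional simple Lie algebra, and (EA4) yields irreducibility; a derivation argument then upgrades this to $\fg = \fg_c$, since every $\ad(x)|_{\fg_c}$ is an inner derivation of the simple algebra $\fg_c$, so $x$ differs from an element of $\fg_c$ by a centralizer of $\fg_c$, which lies in $\fg_c$ by (EA5). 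For (b), the same theory identifies $\fg_c/\Centre(\fg_c)$ with a centreless Lie torus of nullity $1$, and the characterization of affine Kac--Moody algebras in \cite{ABGP} identifies this torus with $\fgb$ for an affine $\fg$; lifting back through the central extension and adjoining the degree derivation via (EA1) and (EA5) recovers $\fg$ as the affine algebra with $\fg_c = \fgp$. The main obstacle is precisely this last converse: recognizing an abstractly given nullity $1$ EALA as an affine algebra is the deep theorem of \cite{ABGP}, and the work of the present proof is to reduce the EALA axioms to its hypotheses through the Lie torus correspondence, the forward directions being routine axiom checks by comparison.
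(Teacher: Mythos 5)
Your proof follows essentially the same route as the paper: the forward directions are routine axiom checks, the converse of (a) uses the identification of $\fg_c$ as a Lie torus over the trivial lattice $\langle\Phi^0\rangle=\set{0}$ (hence finite dimensional simple, via \cite[Prop 6.4]{Neh3} and \cite[Remark 1.2.4]{ABFP2}) followed by exactly your inner-derivation/(EA5) argument, and the converse of (b) defers the substance to \cite{ABGP}. The one small divergence is in (b), where the paper argues directly via the root system $\Phi$ (using extension data to show $\Phi$ is an affine root system and then running the proof of Theorem 2.31 of \cite{ABGP}, adapted from the discrete complex case to general $\base$ as in \pref{pgraph:LN}) rather than routing through the centreless core as a Lie torus and "lifting back," but both organizations rest on the same ABGP characterization.
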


\begin{proof}  (a):  The first statement follows from standard facts about finite dimensional simple
Lie algebras.  For the converse, suppose that $(\fg,\form,\fh)$ is an EALA of nullity 0 with root system
$\Phi$.  Then, by
\cite[Prop 6.4]{Neh3}, $\fg_c$ is a Lie $(\Lm,\bPhi)$-torus,
where $\Lm = \langle \Phi^0\rangle = \set{0}$
(see \pref{pgraph:Lietori} for this terminology).
So, by \cite[Remark 1.2.4]{ABFP2}, $\fg_c$ is finite dimensional and simple.
Now let $d\in \fg$.  Then, since $\fg_c$ is an ideal of $\fg$ and every
derivation of $\fg_c$ is inner, it follows that $d-e$ centralizes $\fg_c$ for some
$e\in \fg_c$.  So by (EA5),  $d= e$.  Hence, $\fg = \fg_c$ is finite dimensional simple.

(b):  The first statement follows from standard facts about affine algebras.
The  second statement is proved for discrete EALAs over $\bbC$ in \cite{ABGP},
and the proof given there can be easily adapted to handle the general case (as discussed in \pref{pgraph:LN}).
Indeed, suppose that $(\fg,\form,\fh)$ is an EALA of nullity 1.
Then, using extension  data, one sees,
as in \cite[p.~677--678]{ABGP},
that the root system $\Phi$ of $\fg$ is one of the root systems for an affine algebra. Using that information
the proof of Theorem 2.31 of \cite{ABGP} gives our conclusion.
\end{proof}

\begin{pgraph}   If $\fg$ is a finite dimensional simple (resp.~an affine Kac-Moody) Lie algebra,
we will subsequently regard $\fg$ as an EALA with the choices of $\form$ and $\fh$
made in the first sentence  of Proposition \ref{prop:lownullity}(a) (resp.~Proposition \ref{prop:lownullity}(b)).
\end{pgraph}

\begin{pgraph} If $\fg$ is finite dimensional simple, the  quotient type of $\fg$ is the type of $\fg$.  Also, for each affine GCM $A$, the quotient type
of $\fg(A)$ is well known (see Remark \ref{pgraph:quotientrs} below).
\end{pgraph}

\subsection{Centreless cores of EALAs and Lie tori}
\label{subsec:ccore}

\begin{definition}
Suppose that $\fg$ is an EALA with  root system  $\Phi$.
We let
 \[\fg_\ccore = \fg_\core / Z(\fg_\core).\]
Since $\fg_\core$ is perfect, $\fg_\ccore$ is \emph{centreless}  (that is it
has trivial centre).  For this reason
$\fg_\ccore$ is called the \emph{centreless core} of~$\fg$.
\end{definition}

\begin{example}
If $\fg$ is a finite dimensional simple Lie algebra (resp.~an affine Kac-Moody
Lie algebra) then by Proposition \ref{prop:lownullity}  we have
$\fg_\ccore \simeq \fg$ (resp.~$\fg_\ccore = \fgb$).
\end{example}

\begin{pgraph}
\label{pgraph:Lietori}
\headtt{Lie tori}
In  order to give an axiomatic description of the centreless cores of EALAs,
Yoshii introduced Lie tori in \cite{Y2}.  To describe these, let
$\Lm$ be a finitely generated free abelian group and let
$\Dl$ be an irreducible finite root system  with root lattice $Q(\Dl)$,

A \emph{Lie $(\Lm,\Delta)$-torus} is a Lie algebra $\cL$ with  compatible gradings by
$\Lm$ and $Q(\Dl)$ so that the $Q(\Dl)$-support of $\cL$ is contained in $\Dl$
and four natural axioms hold.
We will not need to refer
directly to these axioms and instead direct the interested reader to \cite{Neh1}, \cite[\S 1]{ABFP2}
or (for an equivalent definition) \cite[\S 5.1]{Neh3}.

If $\cL$ is a Lie $(\Lm,\Delta)$-torus,
we say that $\cL$ has \emph{full root support}
if the $Q(\Dl)$-support of $\cL$ equals $\Dl$. There is no loss of generality
in assuming this when convenient, since it always holds
if we replace $\Dl$ by a suitable sub-root-system of $\Dl$.  (See \cite[Remark 1.1.11]{ABFP2}
for more about this.)

If $\cL$ is a centreless Lie $(\Lm,\Delta)$-torus, we say that $\cL$ is \emph{invariant}
if there exists a nondegenerate invariant graded symmetric bilinear form on $\cL$.
(This is equivalent to the definition in \cite[\S 5.1]{Neh3} in view of \cite[Prop.~1.2.2(vi)]{ABFP2}.)
\end{pgraph}

We have the following relationship between EALAs and centreless Lie tori.  (See
\cite{Neh2} or \cite[\S 6]{Neh3}.  For discrete EALAs,
see \cite[\S 1]{AG} and \cite{Y2}.)

\begin{proposition}\
\label{prop:LTEALA} Let $\Lm$ be a free abelian group of rank $n$, let
$\Delta$ be an irreducible finite root system of type $X_k$.
Then, a Lie algebra $\cL$ is isomorphic to the centreless core
of an EALA of nullity $n$ and  quotient type $X_k$ if and only if
$\cL$ is isomorphic to an invariant centreless Lie $(\Lm,\Delta)$-torus with full root support.
\end{proposition}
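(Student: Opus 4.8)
The plan is to prove the two implications separately; the statement is Neher's correspondence between EALAs and centreless Lie tori \cite{Neh2}, \cite[\S 6]{Neh3}, and the strategy is to verify that passing to the centreless core and Neher's affinization construction are mutually inverse up to isomorphism. Throughout I would exploit the structural description of $\Phi$ recorded in \pref{pgraph:LN}: that $\Phi$ is a reduced symmetric affine reflection system with irreducible finite quotient $\bPhi$, unbroken root strings, and $\Phi^0 \subseteq \Phi^\times + \Phi^\times$.

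For the forward (``only if'') direction, suppose $(\fg,\form,\fh)$ is an EALA of nullity $n$ and quotient type $X_k$ with root system $\Phi$. I would set $\Lm = \langle\Phi^0\rangle$, which is free of rank $n$ by the definition of nullity, and $\Dl = \bPhi$, which has type $X_k$ by \pref{pgraph:finitetype}. By Lemma \ref{lem:EALAbasic}(a) the core $\fg_\core$ is a perfect ideal, so $\fg_\ccore = \fg_\core/\Centre(\fg_\core)$ is centreless. I would equip $\fg_\ccore$ with a $Q(\Dl)$-grading induced by the projection $- : V \to \bV$ applied to the $\fh$-root space decomposition, and a $\Lm$-grading coming from the isotropic directions, the two gradings together being supplied by the extension-data description of $\Phi$. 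One then checks the four Lie torus axioms: the grading and support axioms are essentially a restatement of the affine reflection system properties of $\Phi$, while the remaining axioms follow from the $\spl_2$-theory of the nonisotropic root spaces guaranteed by (EA3). The invariant graded form is obtained by restricting $\form$ to $\fg_\core$ and descending to the quotient, and full root support is arranged by passing to the appropriate sub-root-system of $\Dl$.

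For the reverse (``if'') direction, given an invariant centreless Lie $(\Lm,\Dl)$-torus $\cL$ with full root support, I would apply Neher's affinization. One forms $\fg = D \oplus \cL \oplus C$, where $C$ is the graded dual of $D$ and serves as the centre of the core $\cL\oplus C$, $D$ is a graded subalgebra of the centroidal derivations of $\cL$ containing the degree derivations for a basis of $\Lm$, and the bracket is twisted by a cocycle $\tau : D\times D \to C$ built from the invariant form. Taking $\fh$ to be the sum of a Cartan of the $\Dl$-grading of $\cL$, the degree-zero part of $C$, and the span of the degree derivations, one verifies (EA1)--(EA6) directly and identifies the core with $\cL\oplus C$, so that the centreless core is $\cL$; the nullity is then $\rank(\Lm)=n$ and the quotient type is that of $\Dl$, namely $X_k$.

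The hard part will be the bookkeeping in the reverse direction: confirming that the affinized algebra satisfies all six axioms, most delicately the self-centralizing property (EA2) and the core-centralizer condition (EA5), and that the nullity and quotient type emerge as claimed. The invariance and full-root-support hypotheses are precisely what make $\form$ nondegenerate (EA1) and $\Phi^\times$ irreducible (EA4), so the real subtlety is matching these structural hypotheses to the axioms. Since the cited references \cite{AG}, \cite{Y2} treat the discrete case over $\bbC$, a final routine step is to adapt those arguments to a general EALA over $\base$, replacing $\bbR$-spans by $\bbQ$-spans throughout as indicated in \pref{pgraph:LN}.
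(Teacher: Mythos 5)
The paper gives no proof of this proposition: it is quoted from Neher's work, with the parenthetical citations of \cite{Neh2} and \cite[\S 6]{Neh3} (and \cite{AG}, \cite{Y2} in the discrete case) standing in for an argument. Your sketch is a faithful outline of exactly that correspondence --- centreless core in one direction, the affinization $\cL \oplus C \oplus D$ in the other --- so it takes the same route the paper relies on, and the substantive verifications you defer (the Lie torus axioms, (EA1)--(EA6) for the affinized algebra) are precisely the content of the cited results.
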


\begin{remark}
\label{rem:LTYishii}  Suppose that $\base = \bbC$. In \cite{Y2}, Yoshii
showed, using some classification results for centreless Lie tori,
that every centreless Lie torus over $\bbC$ is invariant. Also, it follows from
\cite{Y2} that the term ``EALA'' can be replaced by the term ``discrete EALA''
in Proposition \ref{prop:LTEALA}.  Hence,
centreless cores of EALAs are the same algebras as centerless cores of discrete
EALAs.  We will not make use of these facts in this article.
\end{remark}

\begin{pgraph}
\label{pgraph:coordinate} Centreless Lie tori
and hence centreless cores of EALAs have been characterized as certain
``matrix algebras''
over (in general) infinite dimensional graded coordinate algebras using theorems that are called
\emph{coordinatization theorems}.  There is one such theorem for each
 quotient type; the reader is referred to \cite{AF} for a overview of this work
by many authors.  This approach provides a wealth of detailed information
about centreless cores, but we will not use it in this work,
except to calculate some indices in
Section  \ref{subsec:TitsM2}.
\end{pgraph}

\subsection{The class $\bbE_n$}\
\label{subsec:En}

\begin{definition}
\label{def:En}  If $n$ is a nonnegative integer,  let
$\bbE_n$ be the class of all
Lie algebras that are isomorphic to the centreless  core of an
EALA of nullity $n$.  We call algebras in $\bbE_n$ \emph{nullity $n$
centreless cores}.
\end{definition}

\begin{pgraph} Note that Proposition \ref{prop:LTEALA} gives a characterization
of the algebras in $\bbE_n$ in terms of centreless Lie tori.
\end{pgraph}

For each $n \ge 0$, we have now defined three classes
$\bbM_n$, $\bbI_n$ and $\bbE_n$ of Lie algebras.
For $n=0$, we have the following:

\begin{proposition} \label{prop:nullity0} $\bbM_0 = \bbI_0 = \bbE_0$ is the class of finite dimensional simple Lie algebras.
\end{proposition}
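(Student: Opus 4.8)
The plan is to separate the three claimed equalities into what is true by definition and what requires the EALA machinery. The equalities $\bbM_0 = \bbI_0$ and the identification of this common class with the class of finite dimensional simple Lie algebras are immediate: by Definition \ref{def:Ln}, $\bbM_0$ is \emph{defined} to be the class of all finite dimensional simple Lie algebras, and by Definition \ref{def:In}, $\bbI_0$ is \emph{defined} to equal $\bbM_0$. Thus all the substantive content of the proposition is the single statement $\bbE_0 = \bbM_0$, and the whole argument will rest on Proposition \ref{prop:lownullity}(a) together with the observation that a simple algebra is centreless.

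For the inclusion $\bbE_0 \subseteq \bbM_0$, I would take $\cL \in \bbE_0$, so that $\cL \simeq \fg_\ccore$ for some EALA $(\fg,\form,\fh)$ of nullity $0$. The converse half of Proposition \ref{prop:lownullity}(a) gives that $\fg$ is finite dimensional and simple and that $\fg_\core = \fg$. Since $\fg$ is simple it is centreless, so $\Centre(\fg_\core) = \Centre(\fg) = 0$ and hence
\[
\fg_\ccore = \fg_\core / \Centre(\fg_\core) = \fg.
\]
Therefore $\cL \simeq \fg$ is finite dimensional and simple, i.e.\ $\cL \in \bbM_0$.

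For the reverse inclusion $\bbM_0 \subseteq \bbE_0$, I would start from a finite dimensional simple Lie algebra $\fg$, equip it with its Killing form $\form$ and a Cartan subalgebra $\fh$, and invoke the first half of Proposition \ref{prop:lownullity}(a), which says that $(\fg,\form,\fh)$ is then an EALA of nullity $0$. Exactly as above, $\fg_\core = \fg$ and $\fg$ is centreless, so its centreless core is $\fg_\ccore = \fg$; hence $\fg$ is isomorphic to the centreless core of an EALA of nullity $0$, i.e.\ $\fg \in \bbE_0$. Combining the two inclusions yields $\bbE_0 = \bbM_0 = \bbI_0$, as claimed. There is no genuine obstacle here beyond bookkeeping; the only point requiring a moment's care is that the class $\bbE_n$ is defined via \emph{centreless} cores, so one must check that passing to $\fg_\ccore$ does nothing in the simple case, which is exactly the remark that simple algebras have trivial centre.
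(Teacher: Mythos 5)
Your argument is correct and follows exactly the route of the paper's own (one-sentence) proof: the paper likewise observes that the only substantive content is $\bbE_0 = \bbM_0$ and cites Proposition \ref{prop:lownullity}(a) for it. Your additional remark that one must check $\fg_\ccore = \fg$ because simple algebras are centreless is a detail the paper leaves implicit, and it is handled correctly.
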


\begin{proof}  All that must be proved is that $\bbE_0$ is the class of finite dimensional   simple
Lie algebras, and this follows from Proposition \ref{prop:lownullity}(a).
\end{proof}

\subsection{Fgc algebras in $\bbE_n$}\
\label{subsec:fgcEn}

We have the following from \cite{ABFP2}  and \cite{A}.

\begin{theorem}
\label{thm:LT}  Suppose that $\cL$ is an fgc centreless Lie $(\Lm,\Delta)$-torus with full root support,
where $\Lm$ has rank $n$ and $\Delta$ has type $X_k$.  Then,
$\cL\in \bbM_n$ and the relative type (as defined in Definition \ref{def:reltype2}) of $\cL$ is $X_k$.
\end{theorem}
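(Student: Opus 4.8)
The plan is to prove both conclusions—membership in $\bbM_n$ and the identification of the relative type as $X_k$—by invoking the coordinatization/structure theory for centreless Lie tori, since the statement explicitly attributes the result to \cite{ABFP2} and \cite{A}. First I would recall that a centreless Lie $(\Lm,\Delta)$-torus $\cL$ with full root support is, by the coordinatization theorems mentioned in \pref{pgraph:coordinate}, built as a matrix-type algebra over a graded coordinate algebra determined by the quotient type $X_k$. The hypothesis that $\cL$ is \emph{fgc} is the crucial finiteness assumption: it forces the coordinate algebra to be finitely generated as a module over its centre, which in the torus setting means the coordinate structure is essentially a twisted or quantum Laurent polynomial ring that is finite over its centre. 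I would then cite the main structural result of \cite{ABFP2} (or \cite{A}) to the effect that an fgc centreless Lie torus of this form is, up to isomorphism, a multiloop algebra $\Lp(\gd,\boldsg)$ of a finite dimensional simple Lie algebra $\gd$, where the number of loop variables equals $\rank(\Lm) = n$. This directly yields $\cL\in\bbM_n$.

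For the relative type, I would use Definition \ref{def:reltype2}: the relative type of $\cL$ is by definition the relative type of its central closure $\tcL$, which is a finite dimensional central simple Lie algebra over the quotient field $\tCd(\cL)$ of its centroid. The strategy is to show that the MAD-subalgebra data of $\tcL$ recovers exactly the finite root system $\Delta$ of type $X_k$ that grades $\cL$. Concretely, the $Q(\Delta)$-grading on $\cL$ refines to a root space decomposition of $\tcL$ over its centroid field, and the full root support hypothesis guarantees that every root of $\Delta$ actually occurs. Thus the Cartan grading subalgebra, base-changed to the centroid field, furnishes a MAD $\tCd(\cL)$-subalgebra whose associated root system is precisely $\Delta$, giving relative type $X_k$.

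The main obstacle I anticipate is the second claim rather than the first. Membership in $\bbM_n$ is a reasonably direct application of the cited coordinatization result once the fgc hypothesis is in place. The delicate point is verifying that the relative type—an invariant defined via MAD $F$-subalgebras of the central closure as in Definition \ref{def:reltype1}—coincides with the combinatorial type $X_k$ coming from the $Q(\Delta)$-grading of the torus. One must check that the grading subalgebra is genuinely maximal ad-diagonalizable \emph{over the centroid field} (not merely over $\base$), that the root system it produces is irreducible and of the stated type, and that full root support is exactly what rules out a proper sub-root-system appearing. This identification is the technical heart, and I expect the proof in \cite{ABFP2} handles it by comparing the $Q(\Delta)$-support of $\cL$ with the root system of $\tcL$ relative to the centroid-field span of the grading subalgebra.

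Since the result is stated as being extracted from \cite{ABFP2} and \cite{A}, I would keep the argument short: cite the coordinatization/structure theorem to obtain the multiloop realization and hence $\cL\in\bbM_n$, then cite the accompanying computation of relative type for fgc Lie tori (or reproduce the short argument comparing the $Q(\Delta)$-grading with the root space decomposition of the central closure) to conclude the relative type is $X_k$. The proof is thus essentially a matter of assembling the cited external results and matching their invariants against Definition \ref{def:reltype2}, rather than a self-contained computation.
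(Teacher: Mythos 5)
Your proposal is correct and follows essentially the same route as the paper, whose entire proof consists of citing Theorem 3.3.1 of \cite{ABFP2} for the membership $\cL\in\bbM_n$ and the paper \cite{A} for the identification of the relative type as $X_k$. The only small inaccuracy is attributing the multiloop realization to the coordinatization theorems of \pref{pgraph:coordinate} (which the paper explicitly avoids using here); the realization in \cite{ABFP2} proceeds instead via the theory of graded-central-simple algebras and loop constructions, but since you defer to the citation in any case this does not affect the argument.
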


\begin{proof}  The fact that $\cL\in \bbM_n$ is proved in Theorem 3.3.1 of
\cite{ABFP2}, and the fact that $\cL$ has relative type $X_k$ is proved in \cite{A}.
\end{proof}

\begin{corollary}
\label{cor:EALAmult} \
\begin{itemize}
\item[(a)]  If $\fg$ is an EALA of nullity $n$   such that $\fg_\ccore$ is fgc, then $\fg_\ccore\in \bbM_n$,
$\fg_\ccore$ is isotropic, and
the relative type of $\fg_\ccore$
is the    quotient type of $\fg$.
\item[(b)]  $\bbM_n \cap \bbE_n$ is the class of fgc algebras in $\bbE_n$.
\end{itemize}
\end{corollary}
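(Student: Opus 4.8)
The plan is to derive both parts directly from Proposition \ref{prop:LTEALA} and Theorem \ref{thm:LT}, since the substantive work is already packaged there; the corollary is then a matter of transporting invariants along isomorphisms.

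For part (a), let $\fg$ be an EALA of nullity $n$ and quotient type $X_k$ with $\fg_\ccore$ fgc. First I would feed $\cL = \fg_\ccore$ into Proposition \ref{prop:LTEALA}: the left-hand condition holds by definition, so the right-hand one does too, identifying $\fg_\ccore$ up to isomorphism with an invariant centreless Lie $(\Lm,\Delta)$-torus with full root support, where $\Lm$ has rank $n$ and $\Delta$ has type $X_k$. Being fgc is an isomorphism invariant (the centroid transports along isomorphisms by \pref{pgraph:cfunctor}), so this Lie torus is fgc, and Theorem \ref{thm:LT} applies to give that it lies in $\bbM_n$ with relative type $X_k$. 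Transporting back (membership in $\bbM_n$ is by definition an isomorphism-closed class, and relative type is an isomorphism invariant by Lemma \ref{lem:typeinvariant}) yields $\fg_\ccore \in \bbM_n$ with relative type $X_k$. Since $X_k$ is the type of a genuine irreducible finite root system and hence not $\type{A}{0}$, Definition \ref{def:reltype2} shows $\fg_\ccore$ is isotropic; and $X_k$ is by construction the quotient type of $\fg$, which finishes (a).

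For part (b), I would establish the two inclusions separately, each being short. For $\bbM_n \cap \bbE_n$ contained in the fgc algebras of $\bbE_n$: any $\cL \in \bbM_n$ lies in $\bbI_n$ by \pref{pgraph:MnIn}, hence is fgc by Proposition \ref{prop:ppfgc}, so together with $\cL \in \bbE_n$ this exhibits $\cL$ as an fgc algebra in $\bbE_n$. For the reverse inclusion: if $\cL \in \bbE_n$ is fgc, then by Definition \ref{def:En} we have $\cL \simeq \fg_\ccore$ for some EALA $\fg$ of nullity $n$, so $\fg_\ccore$ is fgc, and part (a) gives $\fg_\ccore \in \bbM_n$; thus $\cL \in \bbM_n \cap \bbE_n$.

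I do not expect a genuine computational obstacle, as the hard analysis lives in Proposition \ref{prop:LTEALA} and Theorem \ref{thm:LT}. The only points needing care are bookkeeping: verifying that fgc, membership in $\bbM_n$, and relative type are all preserved under the isomorphism furnished by Proposition \ref{prop:LTEALA} (so that it may be applied to $\fg_\ccore$ freely), and the small observation that a relative type of the form $X_k$ forces isotropy.
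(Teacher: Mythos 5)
Your proposal is correct and follows essentially the same route as the paper: part (a) is Proposition \ref{prop:LTEALA} combined with Theorem \ref{thm:LT} (plus the observation that $k>0$ forces isotropy), and part (b) combines (a) with Proposition \ref{prop:ppfgc}. The extra bookkeeping you spell out — transporting fgc, membership in $\bbM_n$, and relative type along the isomorphism — is left implicit in the paper but is exactly the right thing to check.
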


\begin{proof}   (a)  Let $X_k$ be the   quotient type of $\fg$ and let $\cL =\fg_\ccore$.
By Proposition \ref{prop:LTEALA}   and Theorem \ref{thm:LT}, we know that $\cL\in \bbM_n$ and that the relative type of $\cL$ is $X_k$. Finally, since $k > 0$, it follows that $\cL$
is isotropic.

(b) follows from (a) and Proposition \ref{prop:ppfgc}.
\end{proof}

\section{Loop algebras of symmetrizable Kac-Moody Lie algebras}
\label{sec:autKM}

To obtain our main results, we will need to understand loop algebras
of finite dimensional simple Lie algebras and loop algebras of affine Lie algebras.
We  consider  in this section the more general topic of loop algebras of symmetrizable Kac-Moody Lie algebras.

We assume throughout the section that \emph{$\fg = \fg(A)$
is the Kac-Moody Lie algebra determined by an indecomposable symmetrizable
GCM}  $A = (a_{ij})_{i,j \in \ttI}$.  We use the notation
of Section \ref{subsec:symm}.

\subsection{Automorphisms of symmetrizable Kac-Moody Lie algebras} \
\label{subsec:autKM}

We now recall some facts about automorphisms of $\fg$, $\fgp$ and $\fgb$.

\begin{pgraph}
\label{pgraph:autsubgp}  We begin by discussing some subgroups of $\Aut(\fg)$.

First let $\Aut(A)$ be the group of automorphisms of the GCM $A$;  that is
$\Aut(A)$ is the group of permutations $\sg$ of $\ttI$ so that
$a_{\sg(i),\sg(j)} = a_{i,j}$ for $i,j\in\ttI$.
By \cite[\S 4.19]{KW}
there exists a group monomorphism $\sg \mapsto \tsg$ of $\Aut(A)$ into
$\Aut(\fg)$
such that $\tsg(\fh) = \fh$ and
$$\tsg(e_i) = e_{\sg(i)} \andd \tsg(f_i) = f_{\sg(i)},\quad$$
for $\sg\in \Aut(A)$ and $i\in \ttI$.
It is clear that $\sg \mapsto \tsg$ is unique when $A$ has  finite type,
and this is also true when $A$ is affine (see  Proposition \ref{prop:diagaff}  below).
In any case, we fix a choice of the monomorphism $\sg \mapsto \tsg$
and \emph{we use this map to identify $\Aut(A)$
as a subgroup of $\Aut(\fg)$.}

Next  let $\chev$ be the unique automorphism
of $\fg$  (the \emph{Chevalley automorphism} of $\fg$) such that
\[\chev(e_i) = -f_i,\quad \chev(f_i) = -e_i\andd \chev(h) = -h\]
for $i\in \ttI$ and $h\in \fh$.  Now $\chev$ has order 2 and it commutes with the
automorphisms in  $\Aut(A)$, so we may define the \emph{outer automorphism group of~$\fg$} by
\[\Out(A) :=
\left\{
  \begin{array}{ll}
   \Aut(A), & \hbox{if $A$ has finite type;} \\
    \langle \chev \rangle \times \Aut(A), & \hbox{otherwise.}
  \end{array}
\right.\]
Then $\Out(A)$ is a finite subgroup of $\Aut(\fg)$.

Next let $\Aute(\fg) = \langle \exp(\ad(x)) \suchthat x\in\fg_\al,\ \al\in\Dl^\real\rangle$
in $\Aut(\fg)$. We also have subgroups
$\Aut(\fg;\fh) := \set{\sg\in\Aut(\fg) \suchthat \sg(h) = h \text{ for } h\in \fh}$  and
$\Aut(\fg;\fgp) = \set{\sg\in\Aut(\fg) \suchthat \sg(x) = x \text{ for } x\in \fgp}$
of $\Aut(\fg)$.  Then
$\Aut(\fg;\fh)$ normalizes $\Aute(\fg)$, while $\Aut(\fg;\fgp)$ centralizes
both $\Aute(\fg)$ and $\Aut(\fg;\fh)$.  Hence,
$$\Autz(\fg) := \Aute(\fg) \Aut(\fg;\fh) \Aut(\fg;\fgp)$$
is a subgroup of $\Aut(\fg)$, which we call the \emph{inner automorphism group of $\fg$}.
We  will recall below in Proposition \ref{prop:Aut}
that $\Aut(\fg) = \Autz(\fg) \rtimes \Out(A)$.
If $\fg$ is finite dimensional, then $\Autz(\fg) $ coincides with
${\bf G}_{\rm ad}(k)$ where ${\bf G}_{\rm ad}$ is the group of adjoint type corresponding to $\fg$.
Furthermore, in that case,
${\bf G}_{\rm ad}$ is the connected component of the identity of
the algebraic  group $\textbf{Aut}(\fg).$
\end{pgraph}

\begin{pgraph}
\label{pgraph:identOut}
We have group homomorphisms
\begin{equation}
\label{map1}
\chi_1 : \Aut(\fg) \mapsto \Aut(\fgp) \andd \chi_2: \Aut(\fgp) \mapsto \Aut(\fgb),
\end{equation}
where $\chi_1(\upsilon) = \upsilon |_{\fgp}$ for $\upsilon\in \Aut(\fg)$
and $\chi_2(\tau) = \bar\tau$ for $\tau\in \Aut(\fgp)$,  and where
$\bar\tau$ denotes the automorphism  induced by $\tau$ on
$\fgb = \fgp/Z(\fgp)$.   Let
\[\Autz(\fgp) = \chi_1(\Autz(\fg)) \andd \Autz(\fgb) = (\chi_2\circ\chi_1) (\Autz(\fg)).\]
Also,  it  is easy to see that $\chi_1$ and $\chi_2\circ\chi_1$ restricted to $\Out(A)$ are injective, and  \emph{we
use these maps to identify $\Out(A)$  with a subgroup of $\Aut(\fgp)$ and  $\Aut(\fgb)$
respectively.}  In particular, we are regarding $\Aut(A)$ as a subgroup of
$\Aut(\fg)$, $\Aut(\fgp)$ and  $\Aut(\fgb)$.
\end{pgraph}

\begin{definition}
\label{def:diagsymm}
We  use the term \emph{diagram automorphism} to refer to an
automorphism of $\fg$, $\fgp$ or $\fgb$ that lies in
$\Aut(A)$.
\end{definition}

\begin{pgraph}
There is a unique action of $\Out(A)$ on $Q$ such that
$\nu(\al_i) = \al_{\nu(i)}$ and   $\chev(\al_i) = -\al_i$ for all $\nu\in \Aut(A)$, and $i\in \ttI$.
Then, with the identifications of \pref{pgraph:identOut}, we have
\[\sg(\fg_\al) = \fg_{\sg(\al)},\quad \sg((\fgp)_\al) =(\fgp)_{\sg(\al)} \andd
\sg((\fgb)_\al) =(\fgb)_{\sg(\al)} \]
for $\sg\in \Out(A)$ and $\al\in Q$.
\end{pgraph}

The following result on
the structure of $\Aut(\fg)$, $\Aut(\fgp)$ and $\Aut(\fgb)$ is due to Peterson and
Kac~\cite{PK} (see also \cite[Prop.~7.3]{ABP2}).

\begin{proposition}
\label{prop:Aut} $\chi_1$ is surjective with kernel $\Aut(\fg;\fgp)$, and
$\chi_2$ is an isomorphism.  Furthermore
\begin{equation}
\label{eq:Autdecomp}
\begin{gathered}
\Aut(\fg) = \Autz(\fg) \rtimes \Out(A), \\
\Aut(\fgp)= \Autz(\fgp) \rtimes \Out(A), \\
\Aut(\fgb)  = \Autz(\fgb) \rtimes \Out(A).
\end{gathered}
\end{equation}
\end{proposition}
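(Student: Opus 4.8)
The plan is to treat the genuinely analytic input as a black box coming from \cite{PK} and then assemble the three displayed decompositions by routine group theory using the definitions of \pref{pgraph:autsubgp} and \pref{pgraph:identOut}. First I would dispose of the easy half of the assertions about $\chi_1$ and $\chi_2$. The description of $\Ker(\chi_1)$ is immediate from the definitions: an automorphism $\upsilon$ of $\fg$ restricts to the identity on $\fgp$ precisely when $\upsilon\in\Aut(\fg;\fgp)$. For $\chi_2$, the map $\tau\mapsto\bar\tau$ is well defined because $Z(\fgp)$, being the centre of $\fgp$, is characteristic; and $\chi_2$ is injective because $\fgp$ is perfect. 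Indeed, if $\bar\tau=\id$ then $\tau(x)-x\in Z(\fgp)$ for all $x\in\fgp$, so for $x=[y,w]$ the centrality of these differences gives $\tau([y,w])=[\tau y,\tau w]=[y,w]$; hence $\tau$ fixes $[\fgp,\fgp]=\fgp$ pointwise and equals $\id$.

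The two hard facts---that $\chi_1$ is surjective (every automorphism of $\fgp$ extends across the part of $\fh$ lying outside $\fgp$) and that $\chi_2$ is surjective (every automorphism of $\fgb$ lifts to $\fgp$), together with the first decomposition $\Aut(\fg)=\Autz(\fg)\rtimes\Out(A)$---are exactly the content of the Peterson--Kac results, repackaged in \cite[Prop.~7.3]{ABP2}, and I would simply invoke that reference. This is also where the main difficulty lies, since controlling the action of automorphisms on the abelian directions of $\fh$ and ruling out spurious automorphisms created by passing to the central quotient are precisely the delicate points of the conjugacy theory; reproving them here would merely duplicate \cite{PK}.

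Granting these, the remaining two decompositions follow formally. Put $K=\Ker(\chi_1)=\Aut(\fg;\fgp)$. By the definition of $\Autz(\fg)$ we have $K\subseteq\Autz(\fg)$, while $K$ is normal in $\Aut(\fg)$ as a kernel and satisfies $K\cap\Out(A)=\{1\}$ because $\chi_1$ is injective on $\Out(A)$. Applying the surjection $\chi_1$ to $\Aut(\fg)=\Autz(\fg)\rtimes\Out(A)$ and recalling $\Autz(\fgp):=\chi_1(\Autz(\fg))$, the standard fact that a semidirect product $N\rtimes H$ modulo a normal $K\subseteq N$ with $K\cap H=\{1\}$ is again $(N/K)\rtimes H$ yields $\Aut(\fgp)=\Autz(\fgp)\rtimes\Out(A)$. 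Finally, since $\chi_2$ is an isomorphism taking $\Autz(\fgp)$ onto $\Autz(\fgb):=(\chi_2\circ\chi_1)(\Autz(\fg))$ and restricting to the identity on the identified copy of $\Out(A)$, transporting the previous decomposition through $\chi_2$ gives $\Aut(\fgb)=\Autz(\fgb)\rtimes\Out(A)$.
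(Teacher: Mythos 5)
Your proposal is correct and matches the paper's treatment: the paper gives no proof at all, simply attributing the result to Peterson--Kac \cite{PK} and \cite[Prop.~7.3]{ABP2}, which is exactly where you place the hard content (surjectivity of $\chi_1$ and $\chi_2$ and the first decomposition). Your supplementary verifications --- $\Ker(\chi_1)=\Aut(\fg;\fgp)$ by definition, injectivity of $\chi_2$ from perfectness of $\fgp$, and the transport of the semidirect product decomposition through $\chi_1$ and $\chi_2$ using $K\subseteq\Autz(\fg)$ and $K\cap\Out(A)=\{1\}$ --- are all sound routine group theory.
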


\begin{pgraph}
\label{pgraph:project}
Next we let $p : \Aut(\fg) \to \Out(A)$, $p' : \Aut(\fgp) \to \Out(A)$ and
$\bar p: \Aut(\fgb) \to \Out(A)$
be the projections onto the second factor relative to the decompositions in \eqref{eq:Autdecomp}.
Then we have the commutative diagram:
\begin{equation}
\label{diagram:Aut}
\begin{CD}
\Aut(\fg) @>p>> \Out(A)\\
@V{\chi_1}VV @|\\
\Aut(\fgp) @>{p'}>> \Out(A)\\
@V{\chi_2}VV @|\\
\Aut(\fgb) @>{\bar p}>> \Out(A)\\
\end{CD}
\end{equation}
\end{pgraph}

\begin{definition}
\label{def:kind}
Suppose that $\sg$ is an automorphism
of $\fg$, $\fgp$ or $\fgb$.  We say that
$\sg$ is of \emph{first kind}
if $p(\sg) \in \Aut(A)$, $p'(\sg) \in \Aut(A)$ or $\bar p(\sg) \in \Aut(A)$
respectively.  Otherwise, we say that
$\sg$ is of
\emph{second kind}.\footnote{Although we will not need this fact, one can show that this definition of first and second
kind agrees with the usual one which is defined using the image of the positive Borel subalgebra
under $\sg$ \cite[\S 4.6]{KW}.}
\end{definition}

\begin{proposition}
\label{prop:section} The group homomorphism $\chi_1$ has a section (namely  a homomorphism that is a right inverse
of $\chi_1$).
\end{proposition}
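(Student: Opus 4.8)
The plan is to use the two semidirect-product decompositions of Proposition~\ref{prop:Aut} together with the commutative diagram~\eqref{diagram:Aut} to reduce the problem to splitting the inner part of $\chi_1$ equivariantly. Since $p'\circ\chi_1 = p$ and, under the identifications of \pref{pgraph:identOut}, $\chi_1$ restricts to the identity on the copy of $\Out(A)$ sitting inside $\Aut(\fg)$, the map $\chi_1$ carries $\Aut(\fg) = \Autz(\fg)\rtimes\Out(A)$ to $\Aut(\fgp) = \Autz(\fgp)\rtimes\Out(A)$, acting as the identity on $\Out(A)$ and as the surjection $\chi_1\colon\Autz(\fg)\twoheadrightarrow\Autz(\fgp)$ with kernel $N := \Aut(\fg;\fgp)$. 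It therefore suffices to produce a homomorphic section $\tau\colon\Autz(\fgp)\to\Autz(\fg)$ of $\chi_1$ that is $\Out(A)$-equivariant, i.e.\ $\tau(\omega\psi\omega^{-1}) = \omega\,\tau(\psi)\,\omega^{-1}$ for all $\omega\in\Out(A)$ and $\psi\in\Autz(\fgp)$. Given such a $\tau$, the formula $s(\psi\omega) := \tau(\psi)\,\omega$ defines a section of $\chi_1$, the homomorphism property being a formal consequence of the equivariance and of $\tau$ being a homomorphism.

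To construct $\tau$ I would seek an $\Out(A)$-stable subgroup of $\Autz(\fg)$ that is mapped isomorphically onto $\Autz(\fgp)$ by $\chi_1$, the natural candidate being
\[
H := \Aute(\fg)\,\Aut(\fg;\fh).
\]
This is a subgroup because $\Aut(\fg;\fh)$ normalizes $\Aute(\fg)$, and it is normalized by $\Out(A)$: the diagram automorphisms permute the real root spaces and fix $\fh$ setwise, while the Chevalley automorphism $\chev$ sends $\fg_\al$ to $\fg_{-\al}$ and preserves $\fh$, so both factors of $H$ are stabilized. Since $\Autz(\fg) = \Aute(\fg)\,\Aut(\fg;\fh)\,\Aut(\fg;\fgp) = H\cdot N$ and $\chi_1(N) = \{1\}$, already $\chi_1(H) = \Autz(\fgp)$; hence $\tau := (\chi_1|_H)^{-1}$ is the desired equivariant section, provided one shows
\[
H\cap N = \{1\}.
\]

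This equality is the crux. An element $\phi\in N$ fixes $\fgp$ pointwise, and since the centralizer of $\fgp$ in $\fg$ is $\Centre(\fg)$, it satisfies $\phi|_\fh = \id + z$ with $z\colon\fh\to\Centre(\fg)$ and $z|_{\fh'} = 0$. To constrain a $\phi\in H\cap N$ I would use the standard nondegenerate invariant form $\form$ on $\fg$, which is preserved exactly by every element of $H$ (both by $\Aute(\fg)$ and by the torus automorphisms in $\Aut(\fg;\fh)$). Here one uses that $\Centre(\fg)\subseteq\fh'$, that $\Centre(\fg)$ lies in the radical of the restriction of $\form$ to $\fgp$, and that $\Centre(\fg)$ pairs nondegenerately with a complement $W$ of $\fh'$ in $\fh$; with these observations the relation $(\phi u\mid\phi v) = (u\mid v)$ for $u,v\in W$ collapses to $(z(u)\mid v) + (u\mid z(v)) = 0$. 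I expect this computation to be the main obstacle. When $\fg$ is affine, $\Centre(\fg)$ and $W$ are one-dimensional, so the displayed relation forces $z = 0$ outright, whence $\phi$ also fixes $\fh$ and hence $\phi = \id$ on $\fg = \fgp + \fh$; this is the case we need. For a general symmetrizable $A$ the same relation only shows that the bilinear form $(u,v)\mapsto(z(u)\mid v)$ on $W$ is alternating, and eliminating it requires the extra rigidity of $H$ — namely that a $\phi = ab$ with $a\in\Aute(\fg)$, $b\in\Aut(\fg;\fh)$ fixing every $e_i$ and $f_i$ must be trivial — which is where the remaining work lies.
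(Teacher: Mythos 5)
Your reduction is exactly the paper's: both arguments pass to the subgroup $H=\Aute(\fg)\Aut(\fg;\fh)$, note that $H\rtimes\Out(A)$ is a subgroup of $\Aut(\fg)$ mapping onto $\Aut(\fgp)$, and reduce the whole proposition to the single identity $H\cap\Aut(\fg;\fgp)=\{1\}$. Up to that point what you write is correct, including the observations that elements of $\Aut(\fg;\fh)$ preserve the normalized invariant form and that an element of $\Aut(\fg;\fgp)$ acts on $\fh$ as $\mathrm{id}+z$ with $z$ valued in $\Centre(\fg)$ and vanishing on $\fh'$.

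The gap is in the crux, and you have located it yourself. The form computation only shows that the bilinear form $(u,v)\mapsto(z(u)\mid v)$ on a complement of $\fh'$ in $\fh$ is alternating; since that complement has dimension equal to the corank of $A$, this forces $z=0$ only when the corank is at most $1$, i.e.\ in the finite and affine cases. The proposition (and Theorem~\ref{thm:erase}(b),(c), which depends on it) is asserted for an arbitrary indecomposable symmetrizable GCM, where the corank can exceed $1$, and the ``extra rigidity of $H$'' you defer to is precisely the entire content of the hard step. The paper supplies it by a different mechanism, with no appeal to the invariant form: writing $\tilde B_+=U_+\Aut(\fg;\fh)$, where $U_+$ is generated by the $\exp(\ad(x))$ with $x$ in a positive real root space, one has the Bruhat decomposition $H=\bigcup_{w\in W}\tilde B_+ n_w\tilde B_+$. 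If $p=b_1n_wb_2$ lies in $H\cap\Aut(\fg;\fgp)$, then for $h\in\fh'$ one gets $n_w(h)\in h+\fnp$, so $w|_{\fh'}=1$ and hence $w=1$ by \cite[Cor.~5.2.1]{MP}; then $p\in\tilde B_+$ stabilizes $\fh$, and the triangularity of $\tilde B_+$ (it sends $h$ into $h+\fnp$) forces $p|_\fh=1$, whence $p=1$ on $\fg=\fgp+\fh$. Your argument does fully cover the affine case, which is all the paper uses downstream, but as a proof of the stated proposition it is incomplete unless the final step is replaced by something of this kind.
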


\begin{proof}  We  use the notation and identifications of
Sections \ref{subsec:symm} and \ref{subsec:autKM}.
For convenience we set
\[G = \Aute(\fg),\quad \tH = \Aut(\fg;\fh)\andd K = \Aut(\fg;\fgp).\]
By Proposition \ref{prop:Aut}, $\chi_1$ is an epimorphism with kernel $K$ and
\begin{equation*}
\Aut(\fg) = \Autz(\fg) \rtimes \Out(A) = (G\tH K) \rtimes \Out(A).
\end{equation*}
Note also that $\tH$ normalizes $G$, so $G\tH$ is a subgroup of $\Aut(\fg)$.
Further $\Out(A)$ normalizes $G\tH$, so $(G\tH) \rtimes \Out(A)$ is a subgroup of $\Aut(\fg)$.
So $\chi_1|_{(G\tH) \rtimes \Out(A)} : (G\tH) \rtimes \Out(A) \to  \Aut(\fgp)$
is an epimorphism with kernel $\left((G\tH) \rtimes \Out(A)\right) \cap K$.
Thus it suffices to show that $\left((G\tH) \rtimes \Out(A)\right) \cap K = \set{1}$.  Hence
it suffices to show that
\begin{equation}
\label{eq:section}
(G\tH) \cap K = \set{1}.
\end{equation}

Let $\Dl_+$ be the set of positive roots of $\fg$;
let $U_+$ denote the subgroup of $G$ generated by the
automorphisms of the form
$\exp(\ad(x))$, where $x\in \fg_\al$, $\al\in \Dl^\real \cap \Dl_+$;
and let $\tilde B_+ = U_+ \tH$.
Then $\tilde B_+$ is a subgroup of $G\tH$ and
\begin{equation}
\label{eq:Bruhat}
G\tH = \cup_{w\in W} \tilde B_+ n_w \tilde B_+,
\end{equation}
where the family $\set{n_w}_{w\in W}$ of elements
of $G$ satisfies $n_1 = 1$, $n_w(\fh) = \fh$ and
$n_w|_{\fh} = w$ for $w\in W$.  (Here we are identifying $W$ as a subgroup of $\GL(\fh)$ as
usual \cite[Lemma 5.1.2]{MP}.)
Indeed, this follows from the Bruhat decomposition for the derived
group of $\fg$, since that group is mapped onto $G$ by the adjoint map \cite[Prop.~6.3.7]{MP}.

Observe also from the definition of $\tilde B_+$ that
\begin{equation}
\label{eq:B}
b\in \tilde B_+ \implies b(\fnp) \subseteq \fnp \text{ and } b(h) \in h+\fnp \text{ for } h\in \fh,
\end{equation}
where $\fnp  = \sum_{\al\in\Dl_+} \fg_\al$.

To show \eqref{eq:section}, suppose that $p\in (G\tH)\cap K$.  Then, by \eqref{eq:Bruhat}, we have
\[p = b_1 n_w b_2,\]
where $b_1, b_2\in \tilde B_+$ and $w\in W$.
Thus $n_w = b_1^{-1}pb_2^{-1}$.  So for $h\in \fh'$,
we have $n_w(h) \in  h + \fnp$ by
\eqref{eq:B}.  Thus $w|_{\fh'} = 1$,
and hence, by \cite[Cor.~5.2.1]{MP},   $w=1$. Therefore,
$p\in \tilde B_+\cap K$.
Now it easy to see that the elements of $K$
stabilize $\fh$  (see for example \cite[Prop.~7.5(a)]{ABP2}).
In particular, $p$ stabilizes $\fh$. So, since $p\in \tilde B_+$,
it follows from \eqref{eq:B} that $p\mid_\fh = 1$.
Since  $\fg = \fgp + \fh$, we have $p = 1$ as   desired.
\end{proof}

\subsection{Loop algebras of symmetrizable Kac-Moody Lie algebras} \
\label{subsec:loopKM}

\begin{pgraph}
If $\mu_1, \mu_2 \in \Out(A)$, we write $\mu_1\sim \mu_2$ to mean that
$\mu_1$ is conjugate to $\mu_2$ or $\mu_2^{-1}$ in $\Out(A)$.
\end{pgraph}

Using
the results of \cite{ABP2}, we can prove the following:

\begin{theorem} \label{thm:erase}
Suppose that $\fg = \fg(A)$
is the Kac-Moody Lie algebra determined by an indecomposable symmetrizable
GCM $A$.
\begin{itemize}
\item[(a)]  If  $\upsilon_1$ and $\upsilon_2$
are finite order automorphisms of $\fg$, then
$\Lp(\fg,\upsilon_1)\isom \Lp(\fg,\upsilon_2)$ if and only if
$p(\upsilon_1) \sim p(\upsilon_2)$.
\item[(b)]  If  $\tau_1$ and $\tau_2$
are finite order automorphisms of $\fgp$, then
$\Lp(\fgp,\tau_1)\isom \Lp(\fgp,\tau_2)$ if and only if $p'(\tau_1)\sim p'(\tau_2)$.
\item[(c)]  If  $\sg_1$ and $\sg_2$
are finite order automorphisms of $\fgb$, then $\bar p(\sg_1)\sim \bar p(\sg_2)$ implies that
$\Lp(\fgb,\sg_1)\isom \Lp(\fgb,\sg_2)$.
\end{itemize}
\end{theorem}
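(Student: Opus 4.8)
The plan is to take part (a) as the foundation and then cascade down the tower $\fg \rightsquigarrow \fgp \rightsquigarrow \fgb$, exploiting how loop algebras behave under passage to the derived algebra and to the centreless core. For (a), I would argue as in \cite{ABP2}: the backward (erasing) implication rests on the standard fact that modifying a finite-order automorphism by an inner one does not alter the isomorphism type of its loop algebra, so that using $\Aut(\fg) = \Autz(\fg)\rtimes\Out(A)$ (Proposition \ref{prop:Aut}) one may replace $\upsilon_i$ by $p(\upsilon_i)\in\Out(A)$, after which conjugation inside $\Out(A)$ and the grading reversal $z\mapsto z^{-1}$ realize the two cases of $\sim$; the forward (rigidity) implication is the converse supplied by \cite{ABP2}. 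Throughout I would lean on two structural inputs from the excerpt: the commutativity of diagram \eqref{diagram:Aut}, giving $p = p'\circ\chi_1$ and $p' = \bar p\circ\chi_2$, and Lemma \ref{lem:loopderived}, which yields $\Lp(\fg,\upsilon)' = \Lp(\fgp,\upsilon|_{\fgp})$ and, since $\fgp$ is perfect, $\Lp(\fgp,\tau)/\Centre(\Lp(\fgp,\tau)) \isom \Lp(\fgb,\bar\tau)$.

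The erasing directions cascade cleanly. To prove the backward implication of (b), assume $p'(\tau_1)\sim p'(\tau_2)$ and use the section $s$ of $\chi_1$ from Proposition \ref{prop:section} to lift each $\tau_i$ to $\upsilon_i := s(\tau_i)$; because $s$ is a homomorphism, $\upsilon_i$ still has finite order, and $\chi_1(\upsilon_i)=\tau_i$ together with $p=p'\circ\chi_1$ gives $p(\upsilon_i)=p'(\tau_i)$, so $p(\upsilon_1)\sim p(\upsilon_2)$. Part (a) then yields $\Lp(\fg,\upsilon_1)\isom\Lp(\fg,\upsilon_2)$, and taking derived algebras (Lemma \ref{lem:loopderived}(a)) gives $\Lp(\fgp,\tau_1)\isom\Lp(\fgp,\tau_2)$. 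For (c), where only the backward implication is claimed, I would start from $\bar p(\sg_1)\sim\bar p(\sg_2)$, set $\tau_i:=\chi_2^{-1}(\sg_i)$ (possible since $\chi_2$ is an isomorphism by Proposition \ref{prop:Aut}), observe $p'(\tau_i)=\bar p(\sg_i)$, apply the backward direction of (b), and pass to centreless cores: an isomorphism sends centre to centre, so $\Lp(\fgp,\tau_1)\isom\Lp(\fgp,\tau_2)$ descends through Lemma \ref{lem:loopderived}(c) to $\Lp(\fgb,\sg_1)\isom\Lp(\fgb,\sg_2)$.

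The rigidity (forward) direction of (b) is where I expect the real difficulty, and it also explains why (c) carries only one implication. Writing $\Lp(\fgp,\tau_i)=\Lp(\fg,\upsilon_i)'$ with $\upsilon_i=s(\tau_i)$, a $\base$-isomorphism $\Lp(\fgp,\tau_1)\isom\Lp(\fgp,\tau_2)$ is at first only an isomorphism of derived algebras, and one cannot naively promote it to an isomorphism of the $\Lp(\fg,\upsilon_i)$ so as to invoke (a): the full algebra is recovered from its derived algebra only after reinstating the action of the complement $\fg/\fgp$, and matching that extra derivation data is the delicate point. I would instead recover $p'(\tau_i)$ as an intrinsic invariant of $\Lp(\fgp,\tau_i)$, regarding it as a twisted form of an untwisted loop algebra over its centroid and reading the conjugacy class of $p'(\tau_i)$ off the associated descent datum, where the freedom to twist the base ring by $t\mapsto\lambda t^{\pm1}$ accounts precisely for the inversion in $\sim$; this is the content I expect to draw from the methods of \cite{ABP2}, and is the main obstacle. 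Finally I would remark that no converse to (c) should hold: the passage to the centreless core can merge distinct outer conjugacy classes lying in a single kind, so $\Lp(\fgb,\sg_1)\isom\Lp(\fgb,\sg_2)$ need not force $\bar p(\sg_1)\sim\bar p(\sg_2)$.
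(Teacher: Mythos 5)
Your overall architecture --- take (a) from \cite{ABP2}, lift along the section of $\chi_1$ and the isomorphism $\chi_2$, and push isomorphisms down the tower $\fg \rightsquigarrow \fgp \rightsquigarrow \fgb$ using Lemma \ref{lem:loopderived} and the commutative diagram \eqref{diagram:Aut} --- is exactly the paper's. Your treatment of (a), of the backward implication of (b), and of (c) matches the published argument essentially step for step (the paper routes (c) through (a) directly rather than through (b), but the two are equivalent).

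The gap is in the forward implication of (b), which you correctly identify as the hard point but do not close. The paper closes it with a single citation: Theorem 8.6 of \cite{ABP2} states that, for finite-order lifts $\upsilon_i$ of $\tau_i$ under $\chi_1$, one has $\Lp(\fgp,\tau_1)\isom\Lp(\fgp,\tau_2)$ \emph{if and only if} $\Lp(\fg,\upsilon_1)\isom\Lp(\fg,\upsilon_2)$; granting this, the forward direction of (b) is immediate from (a) and the identity $p'(\tau_i)=p(\upsilon_i)$. Your proposed substitute --- reading $p'(\tau_i)$ off a descent datum for $\Lp(\fgp,\tau_i)$ viewed as a twisted form over its centroid --- is only a sketch, which you yourself flag as ``the main obstacle''; as written it does not constitute a proof, and it is precisely the ingredient supplied by the external reference. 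A smaller point: your closing remark that no converse to (c) should hold because passage to the centreless core ``can merge distinct outer conjugacy classes'' is unsubstantiated and sits uneasily with Corollary \ref{cor:class1}, where the paper proves that the converse of (c) \emph{does} hold for affine $\fg$ and automorphisms of first kind; the accurate statement is simply that the converse is not established in general at this stage.
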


\begin{proof} (a): This is Theorem 9.3 of \cite{ABP2}.

(b):  This is  stated in \cite{ABP2} as a corollary of (a). For the  sake of completeness (since the proof is not entirely obvious) we include the necessary details here. By  Proposition \ref{prop:section}, we can choose $\upsilon_i \in \Aut(\fg)$ of finite order
so that $\chi_i(\upsilon_i) = \tau_i$ for $i=1,2$.  Then
$\Lp(\fgp,\tau_1)\isom \Lp(\fgp,\tau_2)$ if and only if $\Lp(\fg,\upsilon_1)\isom \Lp(\fg,\upsilon_2)$
(by \cite[Theorem 8.6]{ABP2}) which holds if and only if
$p(\upsilon_1) \sim p(\upsilon_2)$ (by (a)).  But, by \eqref{diagram:Aut}, $p'(\tau_i) = p(\upsilon_i)$
for $i=1,2$. So we have (b).

(c): Suppose that $\bar p(\sg_1)\sim \bar p(\sg_2)$. Since $\chi_1$ has a section and
$\chi_2$ is an isomorphism,
we can choose $\upsilon_i$ of finite order in $\Aut(\fg)$
such that $(\chi_2\circ \chi_1)(\upsilon_i) = \sg_i$ for $i=1,2$.
Then, by \eqref{diagram:Aut}, we have $p(\upsilon_1)\sim  p(\upsilon_2)$, so
$\Lp(\fg,\upsilon_1)\isom \Lp(\fg,\upsilon_2)$ by (a). Thus,
$\Lp(\fg,\upsilon_1)'/\Centre(\Lp(\fg,\upsilon_1)') \isom \Lp(\fg,\upsilon_2)'/\Centre(\Lp(\fg,\upsilon_2)')$.
So by Lemma \ref{lem:loopderived}, we have $\Lp(\fgb,\sg_1)\isom \Lp(\fgb,\sg_2)$.
\end{proof}

\begin{remark}\

(a) If $\fg$ is finite dimensional simple, then $\fgp = \fg$,
$\Centre(\fgp) = 0$, $\fgb \simeq \fg$, $\Out(A) = \Aut(A)$,
and the elements of $\Aut(A)$ are the classical diagram automorphisms.

(b) If $\fg$ is finite dimensional simple  and $\base = \bbC$,  Theorem \ref{thm:erase} is a result of Kac.
(See \cite{P1} and \cite{P2}  for other generalizations
of Kac's result.)

(c) If $\fg$ is affine and $\sg_1$ and $\sg_2$  are of first kind,
we will see in Corollary \ref{cor:class1} that
the converse in Theorem \ref{thm:erase}(c) is also true.
\end{remark}

\subsection[The class $\bbM_1$]{Loop algebras of finite dimensional simple algebras and the class~$\bbM_1$}
\label{subsec:Kacreal} \

The  following realization theorem for affine algebras is due to V.~Kac \cite[Lemma~22]{K1}.
In the untwisted case (see Corollary \ref{cor:KacReal0}), it was proved independently by R.~Moody \cite[Thm.~2]{Mo}.
A detailed proof is given in \cite[\S 7.4 and~\S 8.3]{K2} in the complex case, and that proof also works in  general.

\begin{theorem}[Kac]
\label{thm:Kacreal}\

\emph{(a)}
If $\gd$ is a finite dimensional simple Lie algebra  of type $\type{X}{\rkfin}$
and $\sgd$ is a diagram automorphism of
$\gd$ of order $m$, then $\Lp(\gd,\sgd) \simeq \fgb$, where
$\fg = \fg(A)$ is the Kac-Moody Lie algebra constructed from the
affine GCM $A$ of type  $\typeaff{X}{\rkfin}{m}$.

\emph{(b)} Conversely, if  $\fg = \fg(A)$,  where $A$ is the
affine GCM of type $\typeaff{X}{\rkfin}{m}$, then
there is an isomorphism $\varphi$ from $\fgb$ onto a loop algebra $\Lp_m(\gd,\sgd)$ contained
in $\gd\ot S_1$, where $\gd$ is a finite dimensional simple Lie algebra  of type $\type{X}{\rkfin}$
and $\sgd$ is a diagram automorphism of
$\gd$ of order $m$.  Moreover, this isomorphism can be chosen such that
\[t_1 \varphi((\fgb)_\al) = \varphi((\fgb)_{\al + m\delta})\]
for $\al\in Q$, where  $t_1 = z_1^m$ in  $R_1 = \base[t_1^{\pm 1}]$  (see \pref{pgraph:identRS}),
and $\delta$ is the standard null root  (see \eqref{eq:Xaffdiag2} below).
\end{theorem}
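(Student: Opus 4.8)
The plan is to prove~(a) by exhibiting explicit Chevalley generators for $\fg(A)$ inside the loop algebra and checking the defining relations, and then to deduce~(b) by reading off the inverse construction together with a direct computation of the grading shift. First I would fix a $\sgd$-stable Cartan subalgebra $\hd$ of $\gd$ and a system of Chevalley generators of $\gd$ compatible with the diagram permutation $\pi$ induced by $\sgd$, so that $\sgd$ acts by $e_i \mapsto e_{\pi(i)}$ and $f_i \mapsto f_{\pi(i)}$. Decomposing $\gd = \bigoplus_{\bar k \in \Zm} \gd^{\bar k}$ into $\sgd$-eigenspaces as in~\eqref{eq:simeig}, the fixed subalgebra $\gd_0 := \gd^{\bar 0}$ is a finite dimensional simple Lie algebra, and each $\gd^{\bar k}$ with $\bar k \ne \bar 0$ is a $\gd_0$-module. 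The key structural input I would establish, case by case over the diagram automorphisms, is that $\gd^{\bar 1}$ is an irreducible $\gd_0$-module possessing a distinguished extreme weight vector $x_-$ whose weight is the negative of the highest short root of $\gd_0$.

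Next I would define, inside $\Lpm(\gd,\sgd) = \bigoplus_k \gd^{\bar k}\ot z_1^k$, the elements $e_i = E_i \ot 1$ and $f_i = F_i \ot 1$ for the finite nodes $i \ne 0$, where the $E_i, F_i$ are Chevalley generators of $\gd_0$, together with $e_0 = x_- \ot z_1$ and $f_0 = x_+ \ot z_1^{-1}$ for the affine node~$0$, with $x_+ \in \gd^{\overline{m-1}}$ chosen dual to $x_-$. One then has to check that these elements satisfy the Serre relations for the affine GCM $A$ of type $\typeaff{X}{\rkfin}{m}$; here the entries of $A$ in the row and column of node~$0$ are precisely the pairings of the highest short root with the simple coroots of $\gd_0$, so that the relations match those defining $\fg(A)$. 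Because $\gd^{\bar 1}$ is generated over $\gd_0$ by $x_-$ and the subspaces of degree $0, \pm 1$ generate the whole loop algebra, the induced homomorphism $\fgp \to \Lpm(\gd,\sgd)$ is surjective; and since the canonical central element of $\fgp$ acts trivially in the loop realization while $\Centre(\fgp)$ is one dimensional, the kernel is exactly $\Centre(\fgp)$, giving $\fgb \isom \Lpm(\gd,\sgd)$.

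For~(b) I would invert this construction: starting from $\fg(A)$ of type $\typeaff{X}{\rkfin}{m}$, the subalgebra of $\fgb$ generated by the images of $e_i, f_i$ with $i \ne 0$ recovers $\gd_0$, and reassembling the eigenspaces reconstructs $\gd$ of type $\type{X}{\rkfin}$ and the order~$m$ automorphism $\sgd$, with $\varphi$ the inverse of the isomorphism in~(a). The grading-shift identity $t_1 \varphi((\fgb)_\al) = \varphi((\fgb)_{\al + m\delta})$ then follows by tracking loop degrees: multiplication by $t_1 = z_1^m$ raises the loop degree by $m$, and under $\varphi$ the loop degree corresponds to the coefficient along the null root $\delta$, so a degree shift of $m$ matches a shift of the $Q$-grading by $m\delta$.

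The hard part will be verifying that $\gd^{\bar 1}$ is an irreducible $\gd_0$-module with the asserted extreme weight, and the accompanying check that the node-$0$ entries of the Cartan matrix reproduce the affine matrix $A$. This forces one through the individual diagram automorphisms, and the non-reduced case $\typeaff{A}{2\rkfin}{2}$ is genuinely exceptional---there $\gd^{\bar 1}$ is not self-dual and the short-root bookkeeping differs---so I expect most of the real work to concentrate there.
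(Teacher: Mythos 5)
Your outline is precisely the standard proof of Kac's realization theorem from \cite[\S 7.4 and \S 8.3]{K2}, which is exactly what the paper cites in lieu of giving its own argument: the fixed-point subalgebra $\gd^{\bar 0}$, the irreducibility of $\gd^{\bar 1}$ with its distinguished extreme weight vector, the affine-node generators $x_{\mp}\ot z_1^{\pm 1}$, and the exceptional bookkeeping for $\typeaff{A}{2\rkfin}{2}$ all play the same roles there. The approach is correct (granting, as you implicitly do, that the graded kernel of $\fgp \to \Lpm(\gd,\sgd)$ is an ideal of $\fg$ and hence lies in $\Centre(\fgp)$), including your reading of the shift identity in (b) as the statement that loop degree corresponds to the coefficient of $\delta$.
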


\begin{corollary}
\label{cor:KacReal0} A   Lie algebra $\cL$ is isomorphic to $\fgb$ for some untwisted affine Kac-Moody Lie
algebra if and only if $\cL \simeq \Lp(\gd,1)$ for some finite dimensional simple Lie algebra $\gd$.\end{corollary}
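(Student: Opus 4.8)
The plan is to derive this corollary directly from Kac's realization theorem, Theorem \ref{thm:Kacreal}, by specializing to the case where the twisting parameter equals $1$. The two governing conventions are: first, by the labelling recalled in Section \ref{subsec:symm}, an affine algebra $\fg = \fg(A)$ of type $\typeaff{X}{\rkfin}{m}$ is \emph{untwisted} precisely when $m=1$; and second, a diagram automorphism of a finite dimensional simple Lie algebra $\gd$ has order $m=1$ if and only if it is the identity $\id$. Granting these bookkeeping facts, both implications become immediate specializations of the two halves of Theorem \ref{thm:Kacreal}.

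For the forward implication, I would suppose $\cL \simeq \fgb$ for some untwisted affine Kac-Moody Lie algebra $\fg = \fg(A)$, so that $A$ has type $\typeaff{X}{\rkfin}{1}$ for some finite type label $\type{X}{\rkfin}$. Applying Theorem \ref{thm:Kacreal}(b) produces an isomorphism of $\fgb$ onto a loop algebra $\Lpm(\gd,\sgd)$, where $\gd$ is finite dimensional simple of type $\type{X}{\rkfin}$ and $\sgd$ is a diagram automorphism of $\gd$ of order $m$. Since $\fg$ is untwisted we have $m=1$, so $\sgd$ has order $1$ and is therefore the identity automorphism of $\gd$. Hence $\cL \simeq \Lp(\gd,1)$, as required.

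For the converse, I would suppose $\cL \simeq \Lp(\gd,1)$ for a finite dimensional simple Lie algebra $\gd$, say of type $\type{X}{\rkfin}$. The identity map $\id$ is a diagram automorphism of $\gd$ of order $m=1$, so Theorem \ref{thm:Kacreal}(a) gives $\Lp(\gd,1) \simeq \fgb$, where $\fg = \fg(A)$ is constructed from the affine GCM $A$ of type $\typeaff{X}{\rkfin}{1}$. By definition this $A$ is untwisted, so $\cL$ is isomorphic to $\fgb$ for an untwisted affine algebra.

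I do not anticipate any real obstacle here; the argument is purely a matching of definitions. The only step meriting any care is verifying that an order-$1$ diagram automorphism is exactly the identity and that the superscript $m=1$ in the affine label $\typeaff{X}{\rkfin}{m}$ is precisely the definition of ``untwisted'' fixed in Section \ref{subsec:symm}. Once those identifications are in hand, the corollary is just the $m=1$ instance of Theorem \ref{thm:Kacreal} read in both directions.
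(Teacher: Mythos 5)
Your proof is correct and is exactly the intended argument: the paper gives no separate proof of Corollary \ref{cor:KacReal0}, treating it as the immediate $m=1$ specialization of Theorem \ref{thm:Kacreal}, which is precisely what you carry out (part (b) for the forward direction, part (a) for the converse). The bookkeeping points you flag — that ``untwisted'' means $m=1$ by the convention of Section \ref{subsec:symm}, and that an order-$1$ diagram automorphism is the identity — are the only things to check, and you check them.
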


In the  complex case, the equivalence of (a), (b), (c) and (d) in the following corollary is due to Kac.

\begin{corollary}
\label{cor:KacReal1}
If  $\cL$ is a Lie algebra, the following statements are equivalent:
\begin{itemize}
\item[(a)] $\cL\in \bbM_1$.
\item[(b)]  $\cL \simeq \Lp(\gd,\sgd)$  for some finite dimensional simple Lie algebra
$\gd$ and some diagram automorphism $\sgd$ of $\gd$.
\item[(c)] $\cL \simeq \fgb$ for some affine Kac-Moody Lie algebra $\fg$.
\item[(d)] $\cL\in \bbI_1$.
\item[(e)] $\cL\in \bbE_1$.
\end{itemize}
\end{corollary}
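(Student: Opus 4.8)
The plan is to prove all five statements equivalent by routing everything through statement (c), in the pattern (d) $\Leftrightarrow$ (a) $\Leftrightarrow$ (b) $\Leftrightarrow$ (c) $\Leftrightarrow$ (e). Each link rests on a result already in hand, so the work is one of assembly rather than of new ideas.

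I would begin with (a) $\Leftrightarrow$ (d), which I expect to be a pure unwinding of definitions. By Definition \ref{def:loop}, a $1$-fold multiloop algebra is nothing but a loop algebra $\Lp(\gd,\sg)$ of a finite dimensional simple $\gd$ relative to a finite order $\sg$, so $\bbM_1$ is exactly the class of such loop algebras. Since $\bbI_0 = \bbM_0$ is the class of finite dimensional simple Lie algebras, Definition \ref{def:In} makes $\bbI_1$ the same class. Hence $\bbM_1 = \bbI_1$, giving (a) $\Leftrightarrow$ (d) at once.

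For (a) $\Leftrightarrow$ (b), the direction (b) $\Rightarrow$ (a) is immediate because a diagram automorphism has finite order. The substantive direction is (a) $\Rightarrow$ (b): given $\cL \simeq \Lp(\gd,\sg)$ with $\sg$ of finite order, I would set $\sgd := p(\sg)$, where $p$ is the projection onto $\Out(A)$; since $\gd$ is finite dimensional simple we have $\Out(A) = \Aut(A)$, whose elements are the classical diagram automorphisms, so $\sgd$ is genuinely a diagram automorphism. As $p$ retracts onto $\Out(A)$ we get $p(\sgd) = p(\sg)$, hence $p(\sg) \sim p(\sgd)$, and the erasing theorem, Theorem \ref{thm:erase}(a), yields $\Lp(\gd,\sg) \simeq \Lp(\gd,\sgd)$. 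This is the one place where a paper-specific tool does real work (though it too is cited).

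The remaining two links are direct appeals to structure theorems, where I foresee no genuine obstacle. The equivalence (b) $\Leftrightarrow$ (c) is precisely Kac's realization theorem, Theorem \ref{thm:Kacreal}: part (a) gives (b) $\Rightarrow$ (c), and part (b) gives the converse. For (c) $\Leftrightarrow$ (e), I would invoke Proposition \ref{prop:lownullity}(b): if $\fg$ is affine it is an EALA of nullity $1$ with $\fg_\core = \fgp$, so its centreless core is $\fg_\core/\Centre(\fg_\core) = \fgp/\Centre(\fgp) = \fgb$, giving (c) $\Rightarrow$ (e); conversely a nullity $1$ EALA is isomorphic to an affine algebra with core $\fgp$, so its centreless core is again $\fgb$, giving (e) $\Rightarrow$ (c). The real depth in the whole corollary lies entirely in the cited inputs, namely Kac's theorem underlying (b) $\Leftrightarrow$ (c) and the converse half of Proposition \ref{prop:lownullity}(b) underlying (e) $\Rightarrow$ (c); everything else is bookkeeping.
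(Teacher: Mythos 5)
Your proposal is correct and follows essentially the same route as the paper's proof: (a)$\Leftrightarrow$(d) by definition, (a)$\Leftrightarrow$(b) via Theorem \ref{thm:erase}(a) applied to $\gd$ (with $\sgd = p(\sg)$), (b)$\Leftrightarrow$(c) via Theorem \ref{thm:Kacreal}, and (c)$\Leftrightarrow$(e) via Proposition \ref{prop:lownullity}(b). The only difference is that you spell out the choice of $\sgd$ and the computation of the centreless core, which the paper leaves implicit.
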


\begin{proof}  First of all ``(a)$\Rightarrow$(b)'' follows from  Theorem \ref{thm:erase}(a) (applied to $\gd$);
while ``(b)$\Rightarrow$(a)'' is trivial.
Next (b) and (c) are equivalent by Theorem \ref{thm:Kacreal};
(a) and (d) are equivalent by definition;
and (c) and (e) are equivalent by Proposition    \ref{prop:lownullity}(b).
\end{proof}

\begin{corollary}
\label{cor:KacReal2} Suppose that  $\fg = \fg(A)$, where $A$ is the
affine GCM of type $\typeaff{X}{\rkfin}{m}$.
Then
$\fgb$ is prime, perfect and fgc; and the absolute type and
relative type of $\fgb$  are respectively $\type{X}{\rkfin}$
and the   quotient type of $\fg$
(see \pref{pgraph:finitetype}).
\end{corollary}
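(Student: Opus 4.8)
The plan is to assemble the statement from the loop-algebra realization of $\fgb$ together with the general results already established for multiloop algebras and for EALAs, so that no genuinely new computation is required.

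First I would invoke Kac's realization theorem (Theorem \ref{thm:Kacreal}) to produce an isomorphism $\fgb \simeq \Lp(\gd,\sgd)$, where $\gd$ is finite dimensional simple of type $\type{X}{\rkfin}$ and $\sgd$ is a diagram automorphism of $\gd$ of order $m$. In particular $\fgb$ is isomorphic to a $1$-fold multiloop algebra of a finite dimensional simple Lie algebra, so $\fgb \in \bbM_1 \subseteq \bbI_1$. Since $\base$ is algebraically closed, $\gd$ is central simple and hence prime, perfect and fgc; thus Proposition \ref{prop:loopperm} (or, equivalently, Proposition \ref{prop:ppfgc}) shows that $\fgb$ is prime, perfect and fgc, and that its absolute type equals the type of $\gd$. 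By Proposition \ref{prop:absolute} the latter is exactly $\type{X}{\rkfin}$. This disposes of every assertion except the computation of the relative type.

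For the relative type I would switch to the EALA picture. By Proposition \ref{prop:lownullity}(b), $\fg$ is an EALA of nullity $1$ whose core is $\fg_\core = \fgp$, so its centreless core is $\fg_\ccore = \fgp/\Centre(\fgp) = \fgb$. Having already established that $\fgb$ is fgc, I can apply Corollary \ref{cor:EALAmult}(a): the centreless core of an EALA of nullity $1$ whose centreless core is fgc lies in $\bbM_1$, is isotropic, and has relative type equal to the quotient type of the ambient EALA. This yields precisely that the relative type of $\fgb$ is the quotient type of $\fg$.

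The argument is in the end a bookkeeping exercise that glues together the multiloop and EALA theories, so there is no single hard calculation. The one point that requires care — and which I would flag as the crux — is the logical ordering: the relative-type statement rests on Corollary \ref{cor:EALAmult}(a), whose hypothesis is that $\fg_\ccore$ be fgc, and this fgc property must first be extracted from the loop realization in the previous step. Once the identification $\fg_\ccore = \fgb$ and the fgc property are in hand, the two invariants drop out of Proposition \ref{prop:absolute} and Corollary \ref{cor:EALAmult}(a) respectively.
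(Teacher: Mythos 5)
Your proof is correct and follows essentially the same route as the paper, which likewise deduces the result from Theorem \ref{thm:Kacreal}(b), Propositions \ref{prop:ppfgc} and \ref{prop:absolute}, and Corollary \ref{cor:EALAmult}(a). Your explicit invocation of Proposition \ref{prop:lownullity}(b) to identify $\fg_\ccore = \fgb$ only makes visible a step the paper leaves implicit.
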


\begin{proof}
This follows from Theorem \ref{thm:Kacreal}(b), Propositions \ref{prop:ppfgc} and \ref{prop:absolute},
and Theorem \ref{cor:EALAmult}(a).
\end{proof}

\begin{pgraph}
\label{pgraph:quotientrs}
For each affine GCM $A$,  the quotient type of $\fg = \fg(A)$ is  calculated in \cite[Prop.~6.3]{K2}.
We display the results  in
Table \ref{tab:fq} below,  where $\typeaff{X}{k}{1}$ denotes any one
of the untwisted  affine types.  By Corollary \ref{cor:KacReal2},
this table also displays the relative type of $\fgb$.
\begin{table}[ht]
\renewcommand{\arraystretch}{1.5}
\begin{tabular}
[c]{|c |   c |} \hline
$A$ & \quad Quotient type of $\fg(A)$\quad \\
\whline
$\typeaff{X}{k}{1}$& $\type{X}{k}$\\
\hline
$\typeaff{A}{\rkfin}{2}$,\ $\rkfin\ge 2, \rkfin \ne 3$ & $\type{BC}{\frac \rkfin 2}$ ($\rkfin $ even) or $\type{C}{\frac {\rkfin +1}2}$ ($\rkfin$ odd) \\
\hline
$\typeaff{D}{\rkfin}{2}$,\ $\rkfin \ge 3$ & $\type{B}{\rkfin-1}$\\
\hline
$\typeaff{D}{4}{3}$ & $\type{G}{2}$\\
\hline
$\typeaff{E}{6}{2}$ & $\type{F}{4}$\\
\hline
\end{tabular}
\medskip
\caption{}
\label{tab:fq}
\end{table}
\end{pgraph}

\begin{remark}
\label{rem:labels} Suppose that $\fg = \fg(A)$ is affine.
If $A$ has type $\typeaff{X}{k}{m}$, using the labels from \cite{K2}  as above,
then  by Corollary \ref{cor:KacReal2} the absolute type of $\fgb$ can be read from
the label as $\type{X}{k}$.
In \cite[\S 3.5]{MP} a different system is used  to label
affine matrices  which allows one
to read the quotient type of $\fg$, and hence also
the relative type of $\fgb$, from the label in the same fashion.
For example if $A$ has label $\typeaff{D}{4}{3}$ as above,
the label for $A$ in \cite{MP} is $\typeaff{G}{2}{3}$.
Thus both systems
convey important information about the Lie algebra $\fgb$.  We
will continue to use the labels from \cite{K2} as discussed in  Section~\ref{subsec:symm}.
\end{remark}

The  realization theorem leads to a classification of the algebras in $\bbM_1$
using the description of these algebras in  Corollary \ref{cor:KacReal1}(b).

\begin{corollary}
\label{cor:KacReal3}\

\emph{(a)} If $\sgd_i$ is a diagram automorphism of
a finite dimensional simple Lie algebra $\gd_i$, $i=1,2$, then
$\Lp(\gd_1,\sgd_1) \isom\Lp(\gd_2,\sgd_2)$ implies that $\gd_1 \simeq \gd_2$.

\emph{(b)}  If $\sgd_i$ is a diagram automorphism of
a finite dimensional simple Lie algebra $\gd$, $i=1,2$, then
$\Lp(\gd,\sgd_1) \isom\Lp(\gd,\sgd_2)$ if and only if
$\sgd_1$ and $\sgd_2$ are conjugate in the group of diagram automorphisms of $\gd$.
\end{corollary}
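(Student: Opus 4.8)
The plan is to deduce part (a) from the invariance of absolute type, and part (b) from the erasing theorem, Theorem \ref{thm:erase}(a), applied to the finite dimensional simple algebra $\gd$ itself. For part (a), note that each loop algebra $\Lp(\gd_i,\sgd_i)$ is a $1$-fold multiloop algebra of a finite dimensional simple Lie algebra, hence lies in $\bbM_1 \subseteq \bbI_1$ (see \pref{pgraph:MnIn}); so by Proposition \ref{prop:ppfgc} it is prime, perfect and fgc, and its absolute type is defined. By Proposition \ref{prop:absolute} that absolute type is exactly the type of $\gd_i$. Given an isomorphism $\Lp(\gd_1,\sgd_1) \isom \Lp(\gd_2,\sgd_2)$, Lemma \ref{lem:typeinvariant} forces the two absolute types to agree, so $\gd_1$ and $\gd_2$ have the same type; since finite dimensional simple Lie algebras over the algebraically closed field $\base$ are classified up to isomorphism by their type, this gives $\gd_1 \simeq \gd_2$.

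For part (b), I would first record from the remark following Theorem \ref{thm:erase} that when $\gd$ is finite dimensional simple one has $\Out(A) = \Aut(A)$, and the diagram automorphisms are precisely the elements of $\Aut(A)$ regarded inside $\Aut(\gd)$. Because $\Aut(\gd) = \Autz(\gd) \rtimes \Out(A)$ and the projection $p$ onto the second factor restricts to the identity on the complement $\Out(A)$, each diagram automorphism satisfies $p(\sgd_i) = \sgd_i$. Applying Theorem \ref{thm:erase}(a) then yields directly that $\Lp(\gd,\sgd_1) \isom \Lp(\gd,\sgd_2)$ if and only if $\sgd_1 \sim \sgd_2$, that is, if and only if $\sgd_1$ is conjugate in $\Aut(A)$ to $\sgd_2$ or to $\sgd_2^{-1}$.

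The one genuinely delicate point — and what I expect to be the main obstacle — is upgrading the relation $\sim$ to honest conjugacy, so as to match the statement. The clean resolution is that for a finite dimensional simple Lie algebra the group $\Aut(A)$ of diagram automorphisms is always one of the symmetric groups $S_1$, $S_2$ or $S_3$: it is trivial for types $\type{A}{1}$, $\type{B}{k}$, $\type{C}{k}$, $\type{E}{7}$, $\type{E}{8}$, $\type{F}{4}$, $\type{G}{2}$; of order $2$ for $\type{A}{k}$ with $k \ge 2$, $\type{D}{k}$ with $k \ge 5$, and $\type{E}{6}$; and isomorphic to $S_3$ for $\type{D}{4}$. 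In any symmetric group every element is conjugate to its inverse, since a permutation and its inverse have the same cycle type; hence $\sgd_2$ is conjugate to $\sgd_2^{-1}$ in $\Aut(A)$, and therefore $\sgd_1 \sim \sgd_2$ is equivalent to $\sgd_1$ being conjugate to $\sgd_2$ in the group of diagram automorphisms. This conjugacy-to-inverse observation is the only ingredient not already furnished by Theorem \ref{thm:erase}(a); everything else is an immediate application of the results above.
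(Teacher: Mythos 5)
Your proposal is correct and coincides with the paper's own (alternative) argument: the paper derives (a) from Proposition \ref{prop:absolute} and (b) from Theorem \ref{thm:erase}(a), noting exactly as you do that every diagram automorphism is conjugate to its inverse, which upgrades the relation $\sim$ to honest conjugacy. The paper also records a second route via the Peterson--Kac conjugacy theorem and Theorem \ref{thm:Kacreal}(a), but your version is the one it spells out in the same detail.
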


\begin{proof} This follows from the Peterson-Kac conjugacy theorem
\cite[Thm.~2]{PK} and Theorem
\ref{thm:Kacreal}(a).
(See also Remark 8.13 of \cite{ABP2}.) Another  proof using Galois  cohomology is given in \cite{P2}.
Alternatively,  (a) follows from  Proposition \ref{prop:absolute};
and (b) follows from Theorem \ref{thm:erase}(a)  (since every element of
$\Aut(\fg)$ is conjugate to its inverse).
\end{proof}

\begin{corollary}
\label{cor:KacReal4} An algebra $\cL$ is in $\bbI_2$ if and only if
$\cL \simeq \Lp(\fgb,\sg)$ for some affine Lie algebra $\fg = \fg(A)$
and some finite order $\sg\in\Aut(\fg)$.  Moreover, in that
case, if $A$ has type $\typeaff{X}{\rkfin}{m}$, then $\cL$ has absolute type
$\type{X}{\rkfin}$.
\end{corollary}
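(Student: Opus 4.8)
The plan is to reduce the statement to the definition of $\bbI_2$ together with the realization of $\bbI_1$ provided by Corollary \ref{cor:KacReal1}, supplemented by the elementary fact that the loop construction is compatible with isomorphisms of the underlying algebra. For the forward direction, suppose $\cL\in\bbI_2$. By Definition \ref{def:In}, $\cL\isom\Lp(\fh,\tau)$ for some $\fh\in\bbI_1$ and some finite order $\tau\in\Aut(\fh)$. By the equivalence of (c) and (d) in Corollary \ref{cor:KacReal1}, there is an affine Lie algebra $\fg=\fg(A)$ and an isomorphism $\psi\colon\fh\to\fgb$. Set $\sg=\psi\tau\psi^{-1}\in\Aut(\fgb)$, a finite order automorphism satisfying $\sg\psi=\psi\tau$. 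Then $\psi$ carries each eigenspace of $\tau$ onto the corresponding eigenspace of $\sg$, so $\psi\ot\id$ on $\fh\ot S_1$ restricts to a graded isomorphism $\Lp(\fh,\tau)\isom\Lp(\fgb,\sg)$; this is the $n=1$ instance of the computation behind Lemma \ref{lem:changemult}(a), carried out across the isomorphism $\psi$ rather than within a fixed algebra. Hence $\cL\isom\Lp(\fgb,\sg)$, as desired.

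For the converse, if $\cL\isom\Lp(\fgb,\sg)$ for some affine $\fg=\fg(A)$ and some finite order $\sg\in\Aut(\fgb)$, then $\fgb\in\bbI_1$ by Corollary \ref{cor:KacReal1}; thus $\cL$ is a loop algebra of an algebra in $\bbI_1$ and so lies in $\bbI_2$ by Definition \ref{def:In}. For the claim about absolute type, assume $A$ has type $\typeaff{X}{\rkfin}{m}$. By Corollary \ref{cor:KacReal2}, $\fgb$ is prime, perfect and fgc with absolute type $\type{X}{\rkfin}$. Since $\Lp(\fgb,\sg)$ is a $1$-fold multiloop algebra of $\fgb$, Proposition \ref{prop:loopperm} shows that $\cL$ is again prime, perfect and fgc and has the same absolute type $\type{X}{\rkfin}$. (By Lemma \ref{lem:typeinvariant} this does not depend on the chosen realization of $\cL$.)

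I expect no substantive obstacle: the genuine content is already contained in Kac's realization theorem, used through Corollary \ref{cor:KacReal1}, and in the permanence of absolute type under the loop construction, Proposition \ref{prop:loopperm}. The only point that requires a moment's care is transporting $\tau$ to $\fgb$ and checking that loop algebras taken along transported automorphisms are isomorphic, which is routine.
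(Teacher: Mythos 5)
Your proposal is correct and follows essentially the same route as the paper, which derives the first statement from the definition of $\bbI_2$ together with Corollary \ref{cor:KacReal1} and the second from Corollary \ref{cor:KacReal2} and Proposition \ref{prop:loopperm}; you merely make explicit the routine step of transporting the automorphism across the isomorphism $\fh\isom\fgb$, which the paper leaves implicit.
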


\begin{proof} The first statement follows from the definition of
$\bbI_2$ and Corollary \ref{cor:KacReal1}; and the second statement
follows from Corollary \ref{cor:KacReal2} and Proposition \ref{prop:loopperm}.
\end{proof}

\section{Automorphisms of affine algebras}
\label{sec:autaff}

In  Section \ref{subsec:autKM}, we discussed some general properties of
automorphisms of symmetrizable Kac-Moody Lie algebras.  In this section,
we obtain some more detailed information in the affine case.

We assume throughout the section that \emph{$\fg = \fg(A)$ is the
Kac-Moody Lie algebra determined by an affine GCM $A = (a_{ij})_{i,j\in\ttI}$},
where $\ttI = \set{0,\dots,\rkaff}$ with $\rkaff\ge 1$.

\subsection{Notation and terminology for affine algebras}\
\label{subsec:notationaff}

Before looking at automorphisms, we collect some notation, recall some
terminology and discuss some basic properties of  affine algebras.
We will use  this material throughout the rest of this paper.

\begin{pgraph}
\label{pgraph:notationaff}
In addition to the notation of
Section \ref{subsec:symm},
we use the following  notation that is standard in the affine case \cite{K2}.
Choose relatively prime positive integers $a_0,\dots,a_\rkaff$
and relatively prime positive integers $a_0\ck,\dots,a_\rkaff\ck$ so that
$A [ a_0,\dots,a_\rkaff]^t = 0$ and $[a_0\ck, \dots, a_\rkaff\ck] A = 0.$
Note that if $\nu\in \Aut(A)$, then one has
\begin{equation}
\label{eq:Xaffdiag1}
a_{\nu(i)} = a_i \andd a_{\nu(i)}\ck = a_i\ck
\end{equation}
for $i\in \ttI$ \cite[p.~47]{FSS}.
Let
\begin{equation}
\label{eq:Xaffdiag2}
c = \textstyle \sum_{i\in\ttI} a_i\ck \al_i\ck \andd \delta = \sum_{i\in\ttI} a_i \al_i,
\end{equation}
in which case $\Centre(\fgp)= \base c$ and $\delta$ is a $\bbZ$-basis for the lattice of isotropic
roots of $\fg$. Let $d\in \fh$ be a scaling element satisfying
$\al_i(d) = \delta_{i,0}$ for $i\in\ttI$,  in which case
\[\fg = \fgp \oplus \base d \andd \fh = \fh' \oplus \base d.\]

Finally let $\form$ be the normalized invariant form on $\fg$ \cite[\S 6.2]{K2};  that is $\form$
is the unique nondegenerate symmetric bilinear form on $\fg$ satisfying
\[(\al_i\ck | \al_j\ck) = a_j a_j\ck a_{ij},\quad (\al_i\ck | d) =  \delta_{i,0}a_0 \andd (d | d) = 0\]
for $i,j\in \ttI$ (see \cite[\S 6.1 and 6.2]{K2} where $c$ is denoted by $K$).
Note in particular that
\begin{equation}
\label{eq:formef}
(e_i| f_i) = \frac{a_i}{a_i\ck} \quad \text{ for } i\in \ttI.
\end{equation}
We also have $(\al_i | \al_i) \ne 0$ and
\[a_{ij}  = \frac {2(\al_i | \al_j)}{(\al_i | \al_i)}\]
for $i,j\in \ttI$ \cite[\S 2.3]{K2}.

Also, as for general EALAs in \pref{pgraph:finitetype},  we let
\[V = \spann_\bbQ(\Dl).\]
Then $\set{\al}_{i\in\ttI}$
is a $\bbQ$-basis for $V$.
Note that the form $\form$ restricted to $V$ takes rational values \cite[\S 6.2--6.3]{K2}
and we denote its restriction to $V$ also by $\form$.
Then,
$\form : V \times V \to \bbQ$ is  positive
semi-definite  with radical
\[V^0 := \rad(V) = \bbQ \delta.\]
We let  $\bV = V/V^0$
with canonical map $^{-} : V \to \bV$,  in which case we have
the induced positive definite form
$\form : \bV \times \bV \to \bbQ$ on $\bV$.

If $S\subseteq V$, we let
\[S^\times = \set{\al\in S \suchthat (\al\vert\al) \ne 0}.\]
Then \cite[Prop.~5.10(c)]{K2}
\begin{equation}
\label{eq:realaff}
\Dl^\times = \Dl^\real = \cup_{i\in \ttI} W \al_i.
\end{equation}
\end{pgraph}

\subsection{Diagram automorphisms of affine algebras}\
\label{subsec:diagaff}

We now recall from \cite{Bau}
that the embedding
of $\Aut(A)$ into $\Aut(\fg)$ discussed in \pref{pgraph:autsubgp}
is unique in the affine case.  This gives a unique interpretation of the notion
of diagram automorphism.

\begin{proposition}
\label{prop:diagaff}
There exists a unique group homomorphism $\sg \mapsto \tsg$ of
$\Aut(A)$ into $\Aut(\fg)$
such that $\tsg(\fh) = \fh$,
\begin{equation}
\label{eq:diagaff1}
\tsg(e_i) = e_{\sg(i)} \andd \tsg(f_i) = f_{\sg(i)}
\end{equation}
for $\sg\in \Aut(A)$ and $i\in \ttI$.  This
homomorphism  is injective.
Finally, the normalized invariant form $\form$ on $\fg$
is invariant under $\tsg$ for $\sg\in \Aut(A)$.
\end{proposition}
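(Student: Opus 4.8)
The existence of a homomorphism $\sg\mapsto\tsg$ with the stated properties, together with its injectivity, has already been recorded in \pref{pgraph:autsubgp} (following \cite[\S 4.19]{KW}) for an arbitrary indecomposable symmetrizable $A$. So the only genuinely affine features to establish are \emph{uniqueness} (the point of Bausch's result \cite{Bau}) and the invariance of $\form$. Throughout I would use the notation of \pref{pgraph:notationaff}: $\fg = \fgp\oplus\base d$, $\fh = \fh'\oplus\base d$, $\Centre(\fgp)=\base c$, and $\al_i(d)=\delta_{i,0}$. The key preliminary observation is that any $\phi\in\Aut(\fg)$ which fixes $\fgp$ pointwise and satisfies $\phi(\fh)=\fh$ must be of the form $\phi_\beta$, where $\phi_\beta$ fixes $\fgp$ pointwise and sends $d\mapsto d+\beta c$ for some $\beta\in\base$ (conversely each such $\phi_\beta$ is an automorphism, since $c$ is central). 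Indeed, writing $\phi(d)=\mu d+h'$ with $h'\in\fh'$, the identity $[\phi(d),x]=\phi[d,x]=[d,x]$ for $x\in\fgp$ shows $\phi(d)-d$ centralizes $\fgp$; but an element of $\fh$ centralizes $\fgp$ exactly when every $\al_i$ vanishes on it, i.e. when it lies in $\Centre(\fg)=\base c$. Comparing $d$-components then forces $\mu=1$ and $h'=\beta c$.

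For uniqueness, suppose $\sg\mapsto\tsg$ and $\sg\mapsto\tsg'$ both satisfy the conclusion. For each $\sg$ the automorphism $\tsg'\tsg^{-1}$ fixes every $e_j,f_j$, hence $\fgp$, and preserves $\fh$, so by the observation above $\tsg'\tsg^{-1}=\phi_{\beta(\sg)}$ for a unique $\beta(\sg)\in\base$, with $\beta(1)=0$. Writing $\tsg(d)=d+h_\sg$ with $h_\sg\in\fh'$ (the $d$-coefficient is $1$ because $\tsg$ fixes $\delta$), and using $\tsg(c)=c$ (valid by \eqref{eq:Xaffdiag1}) together with $\tsg^{-1}(d)=d+h_{\sg^{-1}}$, a short computation gives $\tsg\phi_\beta\tsg^{-1}=\phi_\beta$, i.e. every $\phi_\beta$ commutes with every $\tsg$. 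Feeding this into the homomorphism identity $\widetilde{\sg\tau}'=\tsg'\tilde\tau'$ yields $\beta(\sg\tau)=\beta(\sg)+\beta(\tau)$. Thus $\beta$ is a group homomorphism from the finite group $\Aut(A)$ into the torsion-free group $(\base,+)$, whence $\beta\equiv0$ and $\tsg'=\tsg$.

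For the invariance of the form, fix $\sg$ and set $B(x,y)=(\tsg x\vert\tsg y)$; since $\tsg$ is an automorphism and $\form$ is invariant, $B$ is again an invariant symmetric bilinear form, and it suffices to prove $B=\form$. Every invariant symmetric form $B_0$ satisfies $B_0(h,\al_j\ck)=\al_j(h)\,B_0(e_j,f_j)$ (from $[e_j,f_j]=\al_j\ck$ and invariance). Applying this to $B$, and using $\tsg(\al_j\ck)=\al_{\sg(j)}\ck$, the dual relation $\al_{\sg(j)}(\tsg h)=\al_j(h)$, the value \eqref{eq:formef}, and the invariances $a_{\sg(j)}=a_j$, $a_{\sg(j)}\ck=a_j\ck$ of \eqref{eq:Xaffdiag1}, gives $B(h,\al_j\ck)=\al_j(h)\tfrac{a_j}{a_j\ck}=\form(h,\al_j\ck)$ for all $h\in\fh$. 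Hence $B$ and $\form$ agree on $\fh\times\fh'$; and since an invariant form is determined by its restriction to $\fh$ (for $\al\ne0$, $[\fg_\al,\fg_{-\al}]\subseteq\fh$ forces the pairing of $\fg_\al$ with $\fg_{-\al}$ once the form on $\fh$ is known), the only value left to match is $B(d,d)$.

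This last value is exactly where the difficulty lies: because $d\notin\fgp$, invariance places no constraint on $B(d,d)=(\tsg d\vert\tsg d)$, and different admissible choices of $\tsg$ (differing by some $\phi_\beta$) genuinely give different values. The plan to resolve it is to average: with $N$ the order of $\sg$, the form $\form'=\tfrac1N\sum_{k=0}^{N-1}(\tsg^k\,\cdot\,\vert\,\tsg^k\,\cdot\,)$ is $\tsg$-invariant by construction, and by the previous paragraph applied to each $\sg^k$ it still agrees with $\form$ on $\fh\times\fh'$. Expanding the identity $\form'(\tsg d,\tsg d)=\form'(d,d)$ with $\tsg d=d+h_\sg$ then collapses (since $\form'(d,h_\sg)=\form(d,h_\sg)$ and $\form'(h_\sg,h_\sg)=\form(h_\sg,h_\sg)$) to $2(d\vert h_\sg)+(h_\sg\vert h_\sg)=0$, which is precisely $(\tsg d\vert\tsg d)=(d\vert d)$. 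Thus $B=\form$ on all of $\fh$, hence everywhere, and $\form$ is invariant under $\tsg$.
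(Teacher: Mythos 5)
Your proof is correct, and it departs from the paper's in two noteworthy ways. First, for uniqueness the paper simply cites Lemma 2.2(b) of \cite{Bau}, whereas you give a self-contained argument: you identify the possible discrepancies $\tsg'\tsg^{-1}$ with the automorphisms $\phi_\beta\colon d\mapsto d+\beta c$ fixing $\fgp$ pointwise, check that these commute with every $\tsg$, and conclude that $\sg\mapsto\beta(\sg)$ is a homomorphism from the finite group $\Aut(A)$ into the torsion-free group $(\base,+)$, hence zero. This is a clean replacement for the citation, and it has the side benefit of independently establishing $\tsg(d)\in d+\fh'$, a fact the paper has to extract from the interior of Bausch's proof. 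Second, for the invariance of $\form$ both arguments rest on the same two pillars --- agreement of $\form$ and $(\tsg\,\cdot\,\vert\,\tsg\,\cdot\,)$ wherever $\fgp$ is involved, plus an averaging over $\langle\sg\rangle$ to handle the one remaining degree of freedom coming from $d$ --- but the scaffolding differs: the paper forms the ideal $\mathfrak m$ on which the two forms agree, shows $\mathfrak m\supseteq\fgp$ via the generators $e_i,f_i$, and averages the \emph{element} $d$ to get a $\tsg$-fixed $d'\notin\fh'$; you instead invoke the standard fact that an invariant symmetric form is determined by its restriction to $\fh$, reduce everything to the single value $(d\vert d)$, and average the \emph{form}. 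The two averaging tricks are dual to one another and equally valid; yours makes it more transparent that the entire obstruction is concentrated in $(\tsg d\vert\tsg d)$, while the paper's ideal-theoretic framing avoids having to spell out the orthogonality of root spaces under an arbitrary invariant form.
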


\begin{proof}  We know that a homomorphism with the property indicated in the first sentence exists.
Also,
uniqueness follows from Lemma 2.2(b)  of \cite{Bau}. Furthermore, it is shown in the proof of that
lemma that $\tsg(d) \in d + \fh'$ for $\sg \in \Aut(A)$.

Next it is clear that the map $\sg \mapsto \tsg$ is injective, so all that remains
is to prove the last statement.  For this, let $\sg\in \Aut(A)$, and
define a new form $\form'$ on $\fg$ by
$(x \mid y)' = (\tsg x \mid  \tsg y)$.
Then, both $\form$ and $\form'$ are invariant nondegenerate symmetric
bilinear forms on $\fg$.  Let
\[\mathfrak m = \{ x\in \fg : (x \mid y)' = (x \mid y)
\text{ for all } y\in \fg\}.\]
Then $\mathfrak m$ is an ideal of $\fg$, which we must show is $\fg$.  First
$e_i\in\mathfrak m$ for all $i$. To see this, observe that $e_i$ is
orthogonal, using either form, to all root spaces of $\fg$ except
$\fg_{-\al_i}$.  Hence, it suffices to show that $(e_i \mid f_i)' = (e_i \mid f_i)$,
which follows from \eqref{eq:Xaffdiag1} and \eqref{eq:formef}.
So $e_i\in\mathfrak m$ and similarly
$f_i\in\mathfrak m$.  Thus, $\mathfrak m$ contains $\fgp$.
Finally, let $d' = \sum_{i=0}^{\order{\sg}-1} \tsg^i d$.
Then, since  $\tsg(d) \in d + \fh'$, we see that $d'\in \fh\setminus\fh'$.
Further, $\tsg d' = d'$ and so certainly
$(d' \mid d')' = (d' \mid d')$.
But since $\mathfrak m$ contains $\fgp$, we have $(x \mid d')' =
(x \mid d')$ for all $x\in \fgp$.
So  $d'\in \mathfrak m$, and we have $\mathfrak m = \fg$.
\end{proof}

\begin{pgraph}
\label{pgraph:diagaff}  Suppose that $\sg\in \Aut(A)$.
As in   \pref{pgraph:identOut} and \pref{def:diagsymm},  we  will, whenever convenient,
identify $\sg$ with the automorphism
$\tsg$, $\tsg|_\fgp$ or $\overline{\tsg|_\fgp}$ of $\fg$, $\fgp$ or $\fgb$ respectively.
\end{pgraph}

\subsection{The kind of a finite order automorphism of an affine algebra} \
\label{subsec:kindaff}

In the affine case, using Theorem \ref{thm:Kacreal}(b) and the results of \cite{ABP2.5}, we can characterize
the kind (as defined in Definition \ref{def:kind})  of a finite order automorphism of $\fgb$ in terms of the
centroid of the loop algebra $\Lp(\fgb,\sg)$.

\begin{proposition}
\label{prop:kind}
Suppose that $\sg$ is an automorphism
of finite order of $\fgb$.
Then,  $\sg$ is of first kind  if and only if
$\Cd(\Lp(\fgb,\sg)) \isom R_2$.
\end{proposition}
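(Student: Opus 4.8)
The plan is to translate the abstract dichotomy ``first versus second kind'' into a statement about how $\sg$ acts on the centroid of $\fgb$, and then to compute $\Cd(\Lp(\fgb,\sg))$ as a loop algebra of that centroid. First I would record, using Corollary \ref{cor:KacReal2} and Theorem \ref{thm:Kacreal}(b), that $\fgb$ is prime, perfect and fgc with centroid $F := \Cd(\fgb)\simeq R_1 = \base[t_1^{\pm 1}]$, where the generator $t_1$ implements the grading shift $t_1 (\fgb)_\al = (\fgb)_{\al + m_0\delta}$ for the appropriate period $m_0$ (so $F$ is $Q$-graded with degree-$m_0\delta$ component $\base t_1$). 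Writing $\tau := \Cd(\sg)\in\Aut(F)$, which is well defined by \pref{pgraph:cfunctor}, the heart of the argument is the lemma
\[
\sg \text{ first kind} \iff \tau(t_1)\in\base^\times t_1, \qquad \sg \text{ second kind} \iff \tau(t_1)\in\base^\times t_1^{-1}.
\]

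To prove this lemma I would use the structure theorem $\Aut(\fgb) = \Autz(\fgb)\rtimes\Out(A)$ (Proposition \ref{prop:Aut}) and factor $\sg = \sigma_0\mu$ with $\sigma_0\in\Autz(\fgb)$ and $\mu\in\Out(A)$. Every element of $\Autz(\fgb)$ sends $t_1$ into $\base^\times t_1$: the generators coming from $\Aut(\fg;\fh)$ fix $\fh$ pointwise, hence preserve the $Q$-grading and map the line $\base t_1 = F_{m_0\delta}$ to itself, while the generators $\exp(\ad\bar x)$ commute with every centroid element (any adjoint map is $\Cd$-linear) and so fix $t_1$. On the outer part, a diagram automorphism $\tilde\nu$ fixes $\delta$ and so conjugates the shift-by-$m_0\delta$ operator $t_1$ to a scalar multiple of $t_1$, whereas the Chevalley automorphism satisfies $\chev(\delta) = -\delta$ and therefore sends $t_1$ to a scalar multiple of $t_1^{-1}$. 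Since $\bar p(\sg)\in\Aut(A)$ precisely in the first-kind case and lies in $\chev\cdot\Aut(A)$ in the second-kind case, the lemma follows; note also that $\sg^m = 1$ forces $\tau^m=\id$, so the scalar is a root of unity.

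Next I would invoke the computation of the centroid of a loop algebra of an fgc algebra from \cite{ABP2.5} to identify $\Cd(\Lp_m(\fgb,\sg))\simeq \Lp_m(F,\tau)$, the commutative loop algebra of $F$ relative to $\tau$, and then read off its isomorphism type. If $\tau(t_1) = \zeta_m^j t_1$ (first kind), then $t_1\in F^{\bar j}$ and $\Lp_m(F,\tau)$ is generated over $\base$ by the two commuting units $t_1\ot z^j$ and $1\ot z^m$ and their inverses; these are algebraically independent, so $\Lp_m(F,\tau)$ is a Laurent polynomial ring in two variables and $\Cd(\Lp(\fgb,\sg))\simeq R_2$. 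If instead $\tau(t_1)=a t_1^{-1}$ (second kind), then $\tau$ is an involution of $F$ whose fixed ring $F^\tau = \base[t_1 + a t_1^{-1}]$ is a genuine polynomial ring, having only scalar units; one checks that the resulting graded ring $\Lp_m(F,\tau)$ has unit group modulo $\base^\times$ isomorphic to $\bbZ$ rather than $\bbZ^2$, so it cannot be isomorphic to $R_2$. Combining the two cases with the lemma, and using that every finite-order $\sg$ is of exactly one kind, yields the asserted equivalence.

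The main obstacle is the lemma of the second paragraph: pinning down that the kind of $\sg$, defined abstractly through the decomposition of $\Aut(\fgb)$, coincides with the \emph{orientation} of the induced action on the centroid. Its crux is the identification of $t_1$ with the ``$\delta$-direction'' via Kac's realization, together with the fact that the Chevalley automorphism negates $\delta$; once this is in hand, the remaining work is the structure theorem for $\Aut(\fgb)$ and the elementary unit-group bookkeeping that separates a Laurent polynomial ring in two variables from the folded ring produced in the second-kind case.
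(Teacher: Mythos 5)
Your proposal is correct and follows essentially the same route as the paper's proof: the same reduction via Kac's realization to the action of $\Cd(\sg)$ on $t_1\in R_1=\Cd(\fgb)$, the same key equivalence (first kind $\iff$ $\Cd(\sg)(t_1)\in\base^\times t_1$) proved by factoring $\sg$ through $\Aut(\fgb)=\Autz(\fgb)\rtimes\Out(A)$ and tracking the sign of the action on $\delta$, and the same identification $\Cd(\Lp_m(\fgb,\sg))\simeq\Lp_m(R_1,\Cd(\sg))$ from \cite{ABP2.5}. The only divergence is cosmetic: where the paper invokes \cite[Lemma 9.1]{ABP2.5} to decide when this commutative loop algebra is $R_2$, you argue directly by exhibiting two independent units in the first-kind case and comparing unit groups in the second-kind case, which is a valid self-contained substitute.
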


\begin{proof}
We  use the
isomorphism $\varphi$ in Theorem \ref{thm:Kacreal}(b)
to identify $\fgb = \Lp_{m_1}(\gd,\sgd)$ in $\gd\ot S_1$, and we then identify
$\Cd(\fgb) = R_1 = \base[t_1^{\pm 1}]$  as in Proposition \ref{prop:centloop},
where $\sgd$ has order $m_1$ and $t_1 = z_1^{m_1}$.
Then,  $\Cd(\sg) \in \Aut(R_1)$ and we have
\[\big(C(\sg)(t_1^j)\big)(x) = \sg(t_1^j (\sg^{-1} x))\]
for $x\in \fgb$, $j\in \bbZ$. Note that since $\Cd(\sg) \in \Aut(R_1)$, we have
\[\Cd(\sg)(t_1) \in \base^\times t_1^{\pm 1}.\]

Let $m_2$ be a period for $\sg$ and $\cL = \Lp_{m_2}(\fgb,\sg) \subseteq \fgb\ot \base[z_2^{\pm 1}]$.
(We are using $z_2$ for the Laurent variable in this second loop construction to avoid possible confusion.)
Then, by \cite[Prop.~4.11 and Prop.~4.13]{ABP2.5}, we have
$\Cd(\cL) \isom \Lp_{m_2}(\Cd(\fgb),\Cd(\sg)) = \Lp_{m_2}(R_1,\Cd(\sg))$, so
\[\Cd(\cL) \isom  \Lp_{m_2}( R_1,\Cd(\sg)) \subseteq R_1 \ot \base[z_2^{\pm 1}].\]
Therefore, by \cite[Lemma 9.1]{ABP2.5}, we see that $\Cd(\cL) \isom R_2$ if and only
if $t_1\ot z_2^j \in \Lp_{m_2}( R_1,\Cd(\sg))$ for some $j\in \bbZ$.  But this last condition
holds if and only if $\Cd(\sg)(t_1) \in \zeta_{m_2}^{-j} t_1$ for some $j\in \bbZ$.  So, since
$\Cd(\sg)$ has period ${m_2}$, we see that
$\Cd(\cL) \isom R_2$ if and only if $\Cd(\sg)(t_1) \in \base^\times t_1$.
Therefore, it remains to show that
\begin{equation}
\label{eq:condcent}
\bar p(\sg) \in \Aut(A) \iff \Cd(\sg)(t_1) \in \base^\times t_1.
\end{equation}

Now, by Proposition \ref{prop:Aut}, we can write
\[\sg = \mu\, \rho\, \nu\, \chev^i,\]
where $\mu$ is a product of automorphisms of $\fgb$ of the form
$\exp(\ad(x_\al))$ with $x_\al\in (\fgb)_\al$ and $\al\in \Dl^\real$;
$\rho\in (\chi_2\circ\chi_1)(\Aut(\fg,\fh))$; $\nu\in \Aut(A)$; and $i = 0$ or $1$.
Then, $\bar p(\sg) \in \Aut(A)$ if and only if $i=1$. So,
to prove \eqref{eq:condcent}, it suffices to show that
\[\Cd(\mu)(t_1) \in \base^\times t_1,\quad \Cd(\rho)(t_1) \in \base^\times t_1,\quad \Cd(\nu)(t_1) \in \base^\times t_1 \ \text{and} \
 \Cd(\chev)(t_1) \in \base^\times t_1^{-1}.
\]

First, $\mu$ commutes with the action of $\Cd(\fgb)$, and hence
$\Cd(\mu)(t_1) = t_1$. Next, if $\al\in \Dl$, we have
$\rho((\fgb)_\al) = (\fgb)_\al$.  Hence for $\al\in\Dl$,
\[\big(C(\rho)(t_1)\big)(\fgb)_\al = \rho (t_1(\fgb)_\al) = \rho ((\fgb)_{\al+m_1\delta}) = (\fgb)_{\al+m_1\delta}. \]
So $C(\rho)(t_1) \notin \base^\times t_1^{-1}$, and thus $C(\rho)(t_1) \in \base^\times t_1$.
Next $\nu(\delta) = \delta$ and
$\nu((\fgb)_\al) = (\fgb)_{\nu (\al)}$ for $\al\in\Dl$.  So calculating as above,
we get
$\big(C(\nu)(t_1)\big)(\fgb)_\al =(\fgb)_{\al + m_1\delta}$ for $\al\in \Dl$, and therefore
$C(\nu)(t_1) \in \base^\times t_1$.
Next $\chev(\delta) = -\delta$ and $\chev((\fgb)_\al) = (\fgb)_{-\al}$ for $\al\in\Dl$.
So, we get $\big(C(\chev)(t_1)\big) (\fgb)_\al =(\fgb)_{\al - m_1\delta}$ for $\al\in \Dl$, hence
$C(\chev)(t_1) \notin \base^\times t_1$, and therefore $C(\chev)(t_1) \in \base^\times t_1^{-1}$.
\end{proof}

\section{Loop algebras of affine algebras relative to diagram rotations} \
\label{sec:looprot}

Our goal in the next two sections is to obtain detailed information
about $\Lp(\fgb,\sg)$, where  $\fg = \fg(A)$ is an affine algebra and $\sg\in \Aut(A)$.

We say  that $\sg$ is \emph{transitive} if the group $\langle \sg\rangle $ acts transitively on $\ttI$.
Observe that by the classification  of affine diagrams, $\sg$ is transitive if and only if
$\fg$ is of type  $\typeaff{A}{\rkaff}{1}$, where $\rkaff\ge 1$, and $\sg$ has order $\rkaff+1$ in the rotation
group of the diagram.

We will consider two overlapping cases:
\begin{description}
\item[The rotation case] $\fg$ is of type  $\typeaff{A}{\rkaff}{1}$, where $\rkaff\ge 1$  and $\sg$ is in the rotation
group of the diagram. (When  $\rkaff$ = 1, $\sg = (1)$ or $(01)$.)
\item[The nontransitive case] $\sg$ is  not transitive.
\end{description}
Note that by the preceding observation every $\sg$ is covered by one of these  two cases.

In this section, we determine the structure of
$\Lp(\fgb,\sg)$ in the rotation case.  We will consider the nontransitive case in the next section.

\subsection{The map $\iota_m$} \

\begin{pgraph} \headtt{The map $\iota_m$} We next introduce a map $\iota_m : \bbZ_m \to \bbZ_m$
for $m\ge 1$, which  will appear in our description of the structure of $\Lp(\fgb,\sg)$ (Theorem \ref{thm:structurerot}).
To do this, we  begin by defining for each positive divisor $g$ of $m$ the set
\[U(m,g) = \set{\bar k \in \bbZ_m \suchthat \gcd(k,m) = g}.\]
(As is usual, we interpret $\gcd(0,m)$ as $m$.) So, in particular,
$U(m,m) = \set{\bar 0}$, and $U(m) := U(m,1)$ is the group of units of  the ring $\bbZ_m$.
(Our convention  is that $\bar 0$ is a unit in $\bbZ_1 = \set{\bar 0}$.)
Now
\[\bbZ_m = \textstyle \bigcup_{g|m} U(m,g) \ \ \text{(disjoint)};\]
and, for each $g\mid m$, we have the bijection
\[\textstyle U(\frac m g) \xrightarrow{{}\cdot g} U(m,g).\]
given by $\bar k \mapsto \overline{kg}$.  We let $\iota_m : \bbZ_m \to \bbZ_m$ be the unique map such that,
for each positive divisor $g$ of $m$, we have  $\iota_m(U(m,g)) \subseteq U(m,g)$ and the diagram
\[\begin{CD}
U(\frac m g) @>{{}\cdot g}>>  U(m,g) \\
@V{\text{inv}}VV @VV{\iota_m\mid_{U(m,g)}}V\\
U(\frac m g) @>{{}\cdot g}>>  U(m,g)
\end{CD} \]
commutes, where $\text{inv}$ is the inversion map on $U(\frac m g)$.  It is clear from this definition that
$\iota_m^2 = 1$  and hence  $\iota_m$ is a bijection.
\end{pgraph}

\subsection{Some associative loop algebras}\

Our goal now is to realize certain associative loop algebras as matrix algebras over
quantum tori.  We use the notation $d_m(u)$, $p_m(u)$, $D_m(u)$ and $P_m(u)$ of \pref{pgraph:Pauli}.

\begin{lemma}
\label{lem:assocloop1}
Suppose that $m\ge 1$, $\prim$ has order $m$ in $\base^\times$, and
$q\in \bbZ$.  Then, using the notation of Definition \ref{def:multiloop},
\begin{equation}
\label{eq:assocloop1}
\Lp_m\big(\Mat_m(R_1),P_m(t_1)^q,\prim\big) \simeq \Lp_{\boldm}\big(\Mat_m(\base),(D_m(\prim),P_m(1)^q),\boldprim\big),
\end{equation}
where $\boldm = (m,m)$ and $\boldprim = (\prim,\prim)$.
\end{lemma}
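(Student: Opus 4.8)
The plan is to recognize the right-hand side as an iterated loop algebra and then \emph{untwist} the inner factor. Applying Remark \ref{rem:itloop} to the right-hand side (with $n=2$, treating the second slot as the outer loop variable), it suffices to produce an isomorphism
\[
\Lp_m\big(\Mat_m(R_1), P_m(t_1)^q, \prim\big) \simeq \Lp_m\big(\cM, \widetilde{P_m(1)^q}, \prim\big),
\]
where $\cM := \Lp_m(\Mat_m(\base), D_m(\prim), \prim) \subseteq \Mat_m(S_1)$, with $S_1 = \base[z_1^{\pm 1}]$, is the inner loop algebra and $\widetilde{P_m(1)^q}$ is the restriction to $\cM$ of $P_m(1)^q \ot 1$. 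This restriction is defined because $P_m(1)$ and $D_m(\prim)$ commute (their underlying matrices commute up to a central scalar, cf.\ the proof of Lemma \ref{lem:quantum}). The whole argument then rests on identifying $\cM$ with $\Mat_m(R_1)$ by an isomorphism $\Psi$ that carries $\widetilde{P_m(1)^q}$ to $P_m(t_1)^q$. Once that is done, the displayed isomorphism is immediate from the (obvious extension of) Lemma \ref{lem:changemult}(a): an algebra isomorphism $\Psi\colon \fg_1 \to \fg_2$ with $\Psi\tau = \sg\Psi$ sends the $\tau$-eigenspaces of $\fg_1$ onto the $\sg$-eigenspaces of $\fg_2$, so $\Psi\ot\id$ is a graded isomorphism $\Lp_m(\fg_1,\tau,\prim) \simeq \Lp_m(\fg_2,\sg,\prim)$.

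To construct $\Psi$, I would first record the elementary fixed-point description of a loop algebra: letting $\gamma$ be the automorphism of $S_1$ with $\gamma(z_1) = \prim^{-1}z_1$, one has $\Lp_m(\fg,\sg,\prim) = (\fg\ot S_1)^{\sg\ot\gamma}$, since $u\ot z_1^k$ is $(\sg\ot\gamma)$-fixed exactly when $\sg u = \prim^k u$. Thus $\cM = \Mat_m(S_1)^{D_m(\prim)\ot\gamma}$, while $\Mat_m(R_1) = \Mat_m(S_1)^{\gamma}$ under the identification $t_1 = z_1^m$ of \pref{pgraph:identRS} (the $\gamma$-fixed scalars being $\base[z_1^{\pm m}] = R_1$). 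Now set the unit $w = \diag(1, z_1, \dots, z_1^{m-1}) \in \Mat_m(S_1)$ and note the key identity $d_m(\prim)\,\gamma(w) = w$. A direct computation using this identity shows that $\Ad(w)$ conjugates $D_m(\prim)\ot\gamma$ to the entrywise action $\gamma$; hence $\Psi := \Ad(w)$ restricts to an isomorphism $\Mat_m(R_1) = \Mat_m(S_1)^{\gamma} \xrightarrow{\ \sim\ } \Mat_m(S_1)^{D_m(\prim)\ot\gamma} = \cM$.

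Finally I would transport the automorphism. Since $\Psi = \Ad(w)$, the automorphism of $\Mat_m(R_1)$ corresponding to $\widetilde{P_m(1)^q}$ is $\Ad\!\big(w^{-1}\,p_m(1)^q\,w\big) = \Ad\!\big((w^{-1}p_m(1)w)^q\big)$, and the single clean computation driving everything is
\[
w^{-1}\, p_m(1)\, w = z_1^{-1}\, p_m(t_1),
\]
which falls straight out of the shapes of $w$ and $p_m$ (each subdiagonal entry $1$ becomes $z_1^{-1}$, and the corner entry becomes $z_1^{m-1} = z_1^{-1}t_1$). Raising to the $q$-th power and discarding the central scalar $z_1^{-q}$ yields $\Ad(w^{-1}p_m(1)^q w) = \Ad(p_m(t_1)^q) = P_m(t_1)^q$ on $\Mat_m(R_1)$, exactly as needed. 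I do not expect a serious obstacle: the only genuine content is the untwisting identity $d_m(\prim)\gamma(w) = w$ together with the boxed computation. The main thing to watch is the bookkeeping of variables --- keeping the inner variable $z_1$ (with $t_1 = z_1^m$) distinct from the outer loop variable of Remark \ref{rem:itloop} --- and checking that $p_m(t_1)$ really is a unit of $\Mat_m(R_1)$ (its $m$-th power is $t_1 I$), so that $P_m(t_1)^q$ is indeed an automorphism of $\Mat_m(R_1)$.
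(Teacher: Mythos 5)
Your proposal is correct and follows essentially the same route as the paper: both arguments view the right-hand side as an iterated loop algebra via Remark \ref{rem:itloop}, untwist the inner factor by conjugating with $\Ad(d_m(z_1))$ (your $\Psi=\Ad(w)$ is exactly the paper's $\ph=D_m(z_1)$), and conclude from the computation $d_m(z_1)^{-1}p_m(1)d_m(z_1)=z_1^{-1}p_m(z_1^m)$, so that the transported automorphism is $P_m(t_1)^q$. The only cosmetic difference is the direction in which you transport the outer automorphism.
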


\begin{proof} We identify $\Mat_m(\base)\ot S_1 = \Mat_m(S_1)$, where $S_1 = \base[z_1^{\pm 1}]$.    Then
\begin{equation*}
\Lpm\big(\Mat_m\big(\base),D_m(\prim),\prim\big) = \Mat_m(S_1)^{D_m(\prim)\ot \eta}.
\end{equation*}
where $\eta\in \Aut(S_1)$ satisfies $\eta(z_1) = \prim^{-1} z_1$ and
$\Mat_m(S_1)^{D_m(\prim)\ot \eta}$ is the algebra of fixed points of $D_m(\prim)\ot \eta$ in $\Mat_m(S_1)$.
Similarly, we have
\begin{equation*}
\Lpm\big(\Mat_m(\base),1,\prim\big) = \Mat_m(S_1)^{1\ot \eta} = \Mat_m(R_1),
\end{equation*}
where $R_1 = \base[t_1^{\pm 1}]$ with   $t_1 = z_1^m$.
But setting
\[\ph = D_m(z_1)\in \Aut(\Mat_m(S_1)),\]
we have
\begin{align*}
\ph(1\ot \eta)\ph^{-1} &= \Ad\big(d_m(z_1)\big) (1\ot \eta)  \Ad\big(d_m(z_1)^{-1}\big)\\
&=\Ad\big(d_m(z_1)\big)   \Ad\big((1\ot \eta)d_m(z_1)^{-1}\big) (1\ot \eta)\\
&=\Ad\big(d_m(z_1) d_m(\eta z_1)^{-1}\big) (1\ot \eta)
=D_m(\prim)  \ot \eta.
\end{align*}
It follows that $\ph$ restricts to an isomorphism
\[\ph : \Lpm(\Mat_m(\base),1,\prim) \xrightarrow{\sim} \Lpm(\Mat_m(\base),D_m(\prim),\prim).\]
So using Remark \ref{rem:itloop}, we have
\begin{align*}
\Lp_{\boldm}\Big(\Mat_m(\base),\big(D_m(\prim),P_m(1)^q\big)&,\boldprim\Big) \simeq
\Lp_m\Big(\Lp_m\big(\Mat_m(\base),D_m(\prim),\prim\big),P_m(1)^q,\prim\Big)\\
&\simeq  \Lp_m\Big(\Lpm(\Mat_m(\base),1,\prim \big), \ph^{-1} P_m(1)^q \ph,\prim \Big).
\end{align*}
But $\ph^{-1}P_m(1)\ph = \Ad\big(d_m(z_1)^{-1}p_m(1)d_m(z_1)\big)$, and a direct calculation shows that
$d_m(z_1)^{-1}p_m(1)d_m(z_1) = z_1^{-1} p_m(z_1^m)$.
So $\ph^{-1}P_m(1)\ph = P_m(z_1^m)$.
Thus,
\begin{align*}
\Lp_{\boldm}\Big(\Mat_m(\base),(D_m(\prim),P_m(1)^q),\boldprim\Big)
&\simeq \Lp_m\Big(\Lp_m\big(\Mat_m(\base),1,\prim\big),P_m(z_1^m)^q,\prim\Big)\\
&= \Lp_m\big(\Mat_m(R_1),P_m(t_1)^q,\prim\big).\qedhere
\end{align*}
\end{proof}

\begin{proposition}
\label{prop:assocloop2} Suppose that $m\ge 1$, $\prim$ has order $m$ in $\base^\times$ and $q\in \bbZ$.
Then
\begin{equation}
\label{eq:assocloop2}
\Lp\big(\Mat_m(R_1),P_m(t_1)^q,\prim\big) \simeq M_{\gcd(q,m)}(Q(\prim^{\iota_{m}(\bar q)})),
\end{equation}
where  $\prim^{\iota_m(\bar q)}$ has the evident well-defined interpretation.
\end{proposition}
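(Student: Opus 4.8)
The plan is to pass to the multiloop realization of Lemma~\ref{lem:assocloop1}, reorganize the matrices along the cycle structure of the underlying permutation, gauge away an inessential diagonal twist, and then identify what survives as a matrix algebra over a quantum torus; the inversion encoded in $\iota_m$ will appear only at the very last step.

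First I would invoke Lemma~\ref{lem:assocloop1} to rewrite the left-hand side as $\Lp_{\boldm}(\Mat_m(\base),(D_m(\prim),P_m(1)^q),\boldprim)$ with $\boldm=(m,m)$ and $\boldprim=(\prim,\prim)$. The matrix $p_m(1)^q$ is the permutation matrix of $j\mapsto j+q\pmod m$, whose cycles are the $g:=\gcd(q,m)$ cosets of $\langle q\rangle$ in $\bbZ_m$, each of length $\ell:=m/g$ (so $\prim^q$ has order $\ell$). Reindexing the standard basis along these cycles produces a permutation matrix $\pi$ for which, under the identification $\Mat_m(\base)=\Mat_g(\base)\ot\Mat_\ell(\base)$, one has $\pi^{-1}p_m(1)^q\pi=I_g\ot p_\ell(1)$ and $\pi^{-1}d_m(\prim)\pi=d_g(\prim)\ot d_\ell(\prim^q)$, both by direct inspection of the diagonal and permutation entries. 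By Lemma~\ref{lem:changemult}(a), conjugating the automorphisms by $\Ad(\pi)$ (equivalently, relabelling the basis) replaces our pair by $\big(\Ad(d_g(\prim)\ot d_\ell(\prim^q)),\,\Ad(I_g\ot p_\ell(1))\big)$ without changing the algebra.

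The factor $d_g(\prim)$ couples the two tensor slots and must be removed. Realizing the first loop as the fixed-point algebra of $\Ad(d_g(\prim)\ot d_\ell(\prim^q))\ot\eta$ inside $\Mat_m(S_1)$, where $\eta(z_1)=\prim^{-1}z_1$, exactly as in the proof of Lemma~\ref{lem:assocloop1}, the gauge $\Psi:=\Ad(d_g(z_1)^{-1}\ot I_\ell)$ satisfies $\Psi\big(\Ad(d_g(\prim)\ot d_\ell(\prim^q))\ot\eta\big)\Psi^{-1}=\Ad(I_g\ot d_\ell(\prim^q))\ot\eta$ by the same diagonal computation, while commuting with $\Ad(I_g\ot p_\ell(1))$. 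Feeding this through the iterated-loop description of Remark~\ref{rem:itloop} shows the algebra is isomorphic to $\Lp_{\boldm}(\Mat_m(\base),(\id_{\Mat_g}\ot D_\ell(\prim^q),\,\id_{\Mat_g}\ot P_\ell(1)),\boldprim)$. Since both automorphisms now act trivially on the $\Mat_g$-slot, the simultaneous eigenspaces split off that slot and the algebra equals $\Mat_g\big(\cL\big)$ with $\cL:=\Lp_{\boldm}(\Mat_\ell(\base),(D_\ell(\prim^q),P_\ell(1)),(\prim,\prim))$.

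It remains to show $\cL\simeq Q(\prim^{\iota_m(\bar q)})$, and this is where I expect the real work. By Lemma~\ref{lem:changemult}(c) I would first cut the period to $(\ell,\ell)$, replacing the roots of unity by $(\prim^g,\prim^g)$, both now of order $\ell$; write $\vartheta=\prim^q$ and $\xi=\prim^g$, so $\vartheta=\xi^{k_1}$ with $k_1\equiv q'\pmod\ell$ where $q=q'g$ and $\gcd(q',\ell)=1$. The main obstacle is that the naive Pauli units $p_\ell(1)\ot z_1$ and $d_\ell(\vartheta)^{-1}\ot z_2$, which satisfy the quantum relation with parameter $\vartheta$, generate only a proper subalgebra when $k_1\not\equiv\pm1$: their $\ell$-th powers give $t_i^{k_1}$ rather than $t_i$, so they miss the centroid. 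The correct generators are the homogeneous units of $z$-degree $(1,0)$ and $(0,1)$, namely $p_\ell(1)^{k_1'}\ot z_1$ and $d_\ell(\vartheta)^{-k_1'}\ot z_2$ with $k_1k_1'\equiv1\pmod\ell$; using $p_\ell(1)\,d_\ell(\vartheta)^{-1}=\vartheta\,d_\ell(\vartheta)^{-1}\,p_\ell(1)$ one finds these satisfy the quantum relation with parameter $\vartheta^{(k_1')^2}=\xi^{k_1'}$, and (as in the proof of Lemma~\ref{lem:quantum}(a)) they do generate $\cL$, giving $\cL\simeq Q(\xi^{k_1'})$. Finally, since $k_1'\equiv q''\pmod\ell$ with $q'q''\equiv1\pmod\ell$ and $\iota_m(\bar q)=\overline{q''g}$ by definition of $\iota_m$, we get $\cL\simeq Q(\prim^{gq''})=Q(\prim^{\iota_m(\bar q)})$, whence the left-hand side is $\Mat_g(Q(\prim^{\iota_m(\bar q)}))$ as claimed.
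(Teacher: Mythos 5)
Your argument is correct, but it reaches \eqref{eq:assocloop2} by a genuinely different route than the paper. The paper stays with the single-loop description over $\Mat_m(R_1)$ and proceeds in three stages: the case $q=1$ (Lemma \ref{lem:assocloop1} plus Lemma \ref{lem:quantum}(a)), the case $q=g$ a divisor of $m$ (re-ordering the basis of $R_1^m$ so that $p_m(t_1)^g$ becomes block diagonal with $g$ copies of $p_{m/g}(t_1)$, then rescaling the period), and the general case, where the inversion $\iota_m$ enters through Lemma \ref{lem:changemult}(b): the exponent $r=q/g$ on the automorphism is traded for an exponent $s$ with $rs\equiv 1\pmod{m/g}$ on the root of unity. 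You instead pass immediately to the two-fold multiloop realization over $\Mat_m(\base)$, split off the $\Mat_g$ tensor factor by a single cycle-decomposition conjugation valid for every $q$ (your identities $\pi^{-1}p_m(1)^q\pi=I_g\ot p_\ell(1)$ and $\pi^{-1}d_m(\prim)\pi=d_g(\prim)\ot d_\ell(\prim^q)$ both check out), remove the residual $d_g(\prim)$ by the same gauge trick used in the proof of Lemma \ref{lem:assocloop1}, and then identify the remaining rank-$\ell$ piece with a quantum torus by exhibiting explicit Pauli-type units in $z$-degrees $(1,0)$ and $(0,1)$. Here the inversion arises for a different reason: those units must be the $k_1'$-th powers of the basic Pauli matrices, and the commutation parameter $\vartheta^{(k_1')^2}=\xi^{k_1'}$ records the inverse of $q'$ modulo $\ell$, which matches $\iota_m(\bar q)=\overline{q''g}$. (One small imprecision: the ``naive'' units $p_\ell(1)\ot z_1$ and $d_\ell(\vartheta)^{-1}\ot z_2$ do not merely generate a proper subalgebra when $k_1\not\equiv\pm1$ --- they are not elements of the loop algebra at all, since $p_\ell(1)$ lies in the eigenspace of degree $(\overline{k_1},\bar 0)$; this does not affect your actual argument.) The paper's proof is shorter and reuses the established change-of-data lemmas; yours is more explicit and uniform in $q$, and has the added benefit of producing the distinguished generators of $Q(\prim^{\iota_m(\bar q)})$ inside the loop algebra.
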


\begin{proof} By Lemma \ref{lem:assocloop1} and Lemma \ref{lem:quantum}(a), we have
\begin{equation}
\label{eq:assocloc2}
\Lp\big(\Mat_m(R_1),P_m(t_1),\prim\big) \simeq \Qp.
\end{equation}
Since $\iota_m(\bar 1) = \bar 1$, this proves \eqref{eq:assocloop2} when $q=1$.

We next suppose that $q = g$, where $g$ is a positive divisor of $m$.  Since $\iota_m(\bar g) = \bar g$,
we must show that
\begin{equation}
\label{eq:assocloc3}
\Lp_m\big(\Mat_m(R_1),P_m(t_1)^g,\prim\big) \simeq \Mat_g(Q(\prim^{g})).
\end{equation}
To  see this, we  regard matrices in $\Mat_m(R_1)$ as elements in $\End_{R_1}(R_1^m)$ by means
of the natural left action on column vectors.  Let $(x_1,\dots,x_m)$
be the standard ordered $R_1$-basis for $R_1^m$ and let $k= \frac m g$. Then, relative to the
new ordered $R_1$-basis
\[(x_1, x_{1+g},\dots,x_{1+(k-1)g}, x_2, x_{2+g},\dots,x_{2+(k-1)g},\dots,x_g, x_{g+g},\dots,x_{g+(k-1)g})\]
of $R_1^m$, $p_m(t_1)^g$ has matrix
\[p_{m,g}(t_1) := \diag(p_k(t_1),\dots,p_k(t_1))\]
in block diagonal form with $g$ diagonal $k\times k$-blocks.  So
\[\Lp_m\big(\Mat_m(R_1),P_m(t_1)^g,\prim\big) \simeq \Lp_m\big(\Mat_m(R_1),P_{m,g}(t_1),\prim\big),\]
where $P_{m,g}(t_1) = \Ad\big(p_{m,g}(t_1)\big)$.  It follows that
\[\Lp_m\big(\Mat_m(R_1),P_m(t_1)^g,\prim\big) \simeq \Mat_g\Big( \Lp_m\big(\Mat_k(R_1),P_k(t_1),\prim\big) \Big).\]
But using Lemma \ref{lem:changemult}(c) and \eqref{eq:assocloc2}, we have
\[ \Lp_m\big(M_k(R_1),P_k(t_1),\prim\big) \simeq \Lp_k\big(M_k(R_1),P_k(t_1),\prim^g\big) \simeq Q(\prim^g).\]
So we have \eqref{eq:assocloc3}.

Finally, suppose that $q\in \bbZ$ is arbitrary, and let $g = \gcd(m,q)$, $k = \frac m g$ and $r = \frac q g$.
Now $\gcd(r,k) = 1$.  Hence, since the  map $\bar s \mapsto \bar s$ from $U(m)$ to $U(k)$ is surjective,
we can choose  $s\in \bbZ$  satisfying
\[\gcd(s,m) =1 \andd sr \equiv 1 \pmod k.\]
Then
\begin{align*}
\Lp_m\big(\Mat_m(R_1),P_m(t_1)^q,\prim\big)&\simeq
\Lp_k\big(\Mat_m(R_1),P_m(t_1)^q,\prim^g\big)&&\text{(by Lemma \ref{lem:changemult}(c))}\\
&= \Lp_k\big(\Mat_m(R_1),(P_m(t_1)^g)^r,\prim^g\big)\\
&\simeq \Lp_k\big(\Mat_m(R_1),P_m(t_1)^g,(\prim^g)^s\big)&&\text{(by Lemma \ref{lem:changemult}(b))}\\
&\simeq \Lp_m\big(\Mat_m(R_1),P_m(t_1)^g,\prim^s\big)&&\text{(by Lemma \ref{lem:changemult}(c))}\\
&\simeq \Mat_g\big(Q(\prim^{sg})\big)&&\text{(by \eqref{eq:assocloc3})}\\
&\simeq \Mat_g\big(Q(\prim^{\iota_m(\bar q)})\big). \qedhere
\end{align*}
\end{proof}

\subsection{The structure of $\Lp(\fgb,\rho^q)$} \
\label{subsec:structurerot}

\emph{We now assume that $\fg = \fg(A)$, where $A$ is the affine GCM of type
$\typeaff{A}{\rkaff}{1}$, $\rkaff\ge~1$.}  We enumerate the fundamental roots
$\al_0,\al_1,\dots,\al_\rkaff$ in cyclic order
around the Dynkin diagram as in \cite[\S 4.8]{K2}; and we use  the
notation of  Section \ref{sec:autaff} for affine
algebras.  Let
\[\rho = (0,1,\dots,\rkaff)\in \Aut(A);\]
so $\rho$ is a rotation of the Dynkin diagram by 1 position.
Then a general rotation of the Dynkin diagram has the form
$\rho^q$, where $q\in \bbZ$.

We now prove a theorem that gives realizations of the loop algebras $\Lp(\fgb,\rho^q)$
as multiloop algebras (see \eqref{eq:rot2}) and as matrix algebras
over quantum tori (see \eqref{eq:rot3}).

\begin{theorem}
\label{thm:structurerot}
Suppose that $q\in \bbZ$.  Then, we have
\begin{align}
\label{eq:rot1}
\Lp(\fgb,\rho^q) &\simeq \Lp\big(\spl_{\rkaff+1}(R_1),P_{\rkaff+1}(t_1)^q\big),\\
\label{eq:rot2}
\Lp(\fgb,\rho^q) &\simeq \Lp\big(\spl_{\rkaff+1}(\base),(D_{\rkaff+1}(\zeta_{\rkaff+1}),P_{\rkaff+1}(1)^q)\big)\\
\intertext{and}
\label{eq:rot3}
\Lp(\fgb,\rho^q) &\simeq \spl_{\gcd(q,\rkaff+1)}\big(Q(\zeta_{\rkaff+1}^{\iota_{\rkaff+1}(\bar q)})\big).
\end{align}
\end{theorem}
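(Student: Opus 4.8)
The plan is to prove \eqref{eq:rot1} by an explicit matrix realization, and then to read off \eqref{eq:rot2} and \eqref{eq:rot3} by pushing the associative isomorphisms of the previous subsection through the commutator and derived-algebra functors. For \eqref{eq:rot1}, I would first apply Kac's realization theorem, Theorem~\ref{thm:Kacreal}(b), in the untwisted case: since $\fg$ has type $\typeaff{A}{\rkaff}{1}$ the relevant diagram automorphism of $\gd = \spl_{\rkaff+1}(\base)$ is trivial, so (b) yields an isomorphism $\varphi \colon \fgb \xrightarrow{\sim} \Lp_1(\spl_{\rkaff+1}(\base),1) = \spl_{\rkaff+1}(\base)\ot R_1 = \spl_{\rkaff+1}(R_1)$, the last equality because $R_1$ is commutative, satisfying $t_1\,\varphi((\fgb)_\al) = \varphi((\fgb)_{\al+\delta})$ for $\al\in Q$. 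Choosing $\varphi$ so that $\varphi(e_i) = E_{i,i+1}$ for $1\le i\le \rkaff$, the grading-shift identity forces $\varphi(e_0)\in\varphi((\fgb)_{\al_0}) = t_1\,\varphi((\fgb)_{-\theta})$ (with $\theta$ the highest root of $\type{A}{\rkaff}$), which is spanned by $t_1 E_{\rkaff+1,1}$. The key computation is then to check that $\rho = (0,1,\dots,\rkaff)$ is transported by $\varphi$ to $P_{\rkaff+1}(t_1)$: a direct calculation shows that $\Ad(p_{\rkaff+1}(t_1))$ cyclically advances $E_{i,i+1}\mapsto E_{i+1,i+2}$, with the corner entry $t_1$ of $p_{\rkaff+1}(t_1)$ supplying exactly the power of $t_1$ needed to interchange the finite node $e_\rkaff$ with the affine node $e_0$ (up, if necessary, to conjugation by the flip $t_1\mapsto t_1^{-1}$ of $R_1$, an automorphism of $\spl_{\rkaff+1}(R_1)$ which by Lemma~\ref{lem:changemult}(a) leaves the isomorphism class of the loop algebra unchanged). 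Granting $\varphi\,\rho^q\,\varphi^{-1} = P_{\rkaff+1}(t_1)^q$, the map $\varphi\ot\id$ on $\fgb\ot S_1$ carries the $\rho^q$-eigenspaces onto the $P_{\rkaff+1}(t_1)^q$-eigenspaces degree by degree, hence restricts to \eqref{eq:rot1} (the evident analogue of Lemma~\ref{lem:changemult}(a) for an isomorphism between two distinct algebras). Here $P_{\rkaff+1}(t_1)^{\rkaff+1} = 1$ because $p_{\rkaff+1}(t_1)^{\rkaff+1} = t_1 I$ is central, so the ungraded loop algebras are unambiguous by Lemma~\ref{lem:changemult}(c).

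For \eqref{eq:rot2} and \eqref{eq:rot3}, the plan is to transport the associative isomorphisms of Lemma~\ref{lem:assocloop1} and Proposition~\ref{prop:assocloop2} to the Lie setting. An isomorphism of associative algebras is in particular an isomorphism of the associated commutator Lie algebras, so both results hold verbatim with $\Mat$ replaced by $\gl = \Mat^-$. Passing to derived algebras, Lemma~\ref{lem:loopderived}(a) lets the derivation commute with the (multi)loop construction, while by the definitions in Section~\ref{subsection:matrix} one has $\gl_{\rkaff+1}(R_1)' = \spl_{\rkaff+1}(R_1)$, $\gl_{\rkaff+1}(\base)' = \spl_{\rkaff+1}(\base)$ and $\gl_g(Q(\prim))' = \spl_g(Q(\prim))$. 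Applying this with $m = \rkaff+1$ and $\prim = \zeta_{\rkaff+1}$ to Lemma~\ref{lem:assocloop1} gives
\[
\Lp\big(\spl_{\rkaff+1}(R_1),P_{\rkaff+1}(t_1)^q\big) \simeq \Lp\big(\spl_{\rkaff+1}(\base),(D_{\rkaff+1}(\zeta_{\rkaff+1}),P_{\rkaff+1}(1)^q)\big),
\]
which with \eqref{eq:rot1} yields \eqref{eq:rot2}; applying it instead to Proposition~\ref{prop:assocloop2} gives
\[
\Lp\big(\spl_{\rkaff+1}(R_1),P_{\rkaff+1}(t_1)^q\big) \simeq \spl_{\gcd(q,\rkaff+1)}\big(Q(\zeta_{\rkaff+1}^{\iota_{\rkaff+1}(\bar q)})\big),
\]
which with \eqref{eq:rot1} yields \eqref{eq:rot3}.

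The main obstacle is the explicit identification in the first step that $\varphi$ conjugates $\rho$ to $P_{\rkaff+1}(t_1)$. Everything turns on the bookkeeping of the $t_1$-grading: one must pin down the homogeneous component in which $\varphi(e_0)$ lives and verify that the corner entry of $p_{\rkaff+1}(t_1)$ produces precisely the required power of $t_1$ as it wraps the affine node around the diagram; this is also what determines, up to the harmless flip $t_1\mapsto t_1^{-1}$, that the exponent is $q$ rather than $-q$. Once \eqref{eq:rot1} is in place the passage to \eqref{eq:rot2} and \eqref{eq:rot3} is purely formal, being nothing more than an application of the commutator and derived-algebra functors to results already established in the associative category.
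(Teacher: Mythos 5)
Your proposal is correct and follows essentially the same route as the paper: Kac's realization identifies $\fgb$ with $\spl_{\rkaff+1}(R_1)$ via the explicit Chevalley generators $\overline{e_0}=t_1E_{\rkaff+1,1}$, $\overline{e_i}=E_{i,i+1}$, a direct computation shows $\rho$ becomes $P_{\rkaff+1}(t_1^{\pm1})$ (the paper gets $P_{\rkaff+1}(t_1^{-1})$ and then applies the substitution $t_1\mapsto t_1^{-1}$, exactly the ``flip'' you allow for), and \eqref{eq:rot2}, \eqref{eq:rot3} then follow by taking derived algebras in Lemma~\ref{lem:assocloop1} and Proposition~\ref{prop:assocloop2} using Lemma~\ref{lem:loopderived}(a). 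The only difference is one of emphasis: you flag the conjugation of $\rho$ as the delicate point and leave its verification as a ``granting,'' whereas the paper carries it out (tersely) as a direct calculation.
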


\begin{proof}
By Theorem \ref{thm:Kacreal} (with $m=1$ and $t_1=z_1$), we can identify
\[\fgb = \spl_{\rkaff+1}(\base)\ot R_1 = \spl_{\rkaff+1}(R_1).\]
Furthermore,
by \cite[Thm.~7.4]{K2}, this can be done with
\begin{equation*}
\label{eq:chevgen}
\overline{e_0} = t_1 E_{{\rkaff+1},1},\quad \overline{f_0} = t_1^{-1}E_{1,{\rkaff+1}},\quad \overline{e_i} = E_{i,i+1},\quad \overline{f_i} = E_{i+1,i}\\
\end{equation*}
for $1\le i \le \rkaff$, where $E_{ij}$ denotes the $(i,j)$-matrix unit in $\Mat_{\rkaff+1}(R_1)$.
Now, a direct calculation shows that  $P_{\rkaff+1}(t_1^{-1})(\overline{e_i}) = \overline{e_{i+1}}$ and $P_{\rkaff+1}(t_1^{-1})(\overline{f_{i}}) = \overline{f_{i+1}}$
for $0\le i \le \rkaff$, where the subscripts are interpreted modulo $\rkaff+1$.  So
$\rho = P_{\rkaff+1}(t_1^{-1})$.
Therefore
$\Lp(\fgb,\rho^q)
= \Lp(\spl_{\rkaff+1}(R_1),P_{\rkaff+1}(t_1^{-1})^q)$.
Replacing $t_1$ by $t_1^{-1}$  we have~\eqref{eq:rot1}.

Now taking the derived algebras of both sides of \eqref{eq:assocloop1} (with $m= \rkaff+1$ and
$\prim = \zeta_{\rkaff+1}$) we see, using
Lemma \ref{lem:loopderived}(a), that the right hand sides
of \eqref{eq:rot1} and \eqref{eq:rot2} are isomorphic.
Similarly,  taking the derived algebras both sides  of \eqref{eq:assocloop2},
we see that the  right hand sides
of \eqref{eq:rot1} and \eqref{eq:rot3} are isomorphic.
\end{proof}

\begin{corollary}
\label{cor:rot}  If $q\in\bbZ$, $\Lp(\fgb,\rho^q)\in \bbM_2$.  Moreover,
if $q$ is relatively prime to $\rkaff+1$,
then the relative type of{\/} $\Lp(\fgb,\rho^q)$ is $\type{A}{0}$;  in other words
$\Lp(\fgb,\rho^q)$ is  anisotropic.
\end{corollary}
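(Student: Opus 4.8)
The plan is to derive everything from the realization \eqref{eq:rot3} of Theorem \ref{thm:structurerot} together with Proposition \ref{prop:spgquantum}. Set $N = \rkaff+1$, $g = \gcd(q,N)$, and $\eta = \zeta_N^{\iota_N(\bar q)}$, so that \eqref{eq:rot3} reads
\[\Lp(\fgb,\rho^q) \simeq \spl_g(Q(\eta)).\]
The first step is to determine the order of $\eta$ in $\base^\times$. Since $\zeta_N$ is a primitive $N$-th root of unity, this order is $N/\gcd(\iota_N(\bar q),N)$. Here I would invoke the defining property of $\iota_N$: it carries each $U(N,g)$ into itself. Because $\bar q$ lies in $U(N,g)$ by the choice $g = \gcd(q,N)$, we get $\iota_N(\bar q) \in U(N,g)$, hence $\gcd(\iota_N(\bar q),N) = g$ and $\eta$ has order $N/g$.

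With the order of $\eta$ in hand, the membership $\Lp(\fgb,\rho^q)\in\bbM_2$ is immediate. The size parameter $g$ times the order $N/g$ of $\eta$ equals $N = \rkaff+1 \ge 2$ (as $\rkaff\ge 1$), so the hypothesis $gm>1$ of Proposition \ref{prop:spgquantum}(a) is satisfied and $\spl_g(Q(\eta))\in\bbM_2$. As $\bbM_2$ is closed under isomorphism, $\Lp(\fgb,\rho^q)\in\bbM_2$.

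For the relative-type assertion I would specialize to $\gcd(q,N) = 1$, that is $g = 1$. Then the displayed isomorphism becomes $\Lp(\fgb,\rho^q)\simeq \spl_1(Q(\eta))$ with $\eta$ of order $N = \rkaff+1 > 1$. Proposition \ref{prop:spgquantum}(b) then gives that $\spl_1(Q(\eta))$ is anisotropic, i.e.\ has relative type $\type{A}{0}$. Since $\Lp(\fgb,\rho^q)\in\bbM_2\subseteq\bbI_2$ (using \pref{pgraph:MnIn}) is prime, perfect and fgc by Proposition \ref{prop:ppfgc}, its relative type is defined and is preserved under isomorphism by Lemma \ref{lem:typeinvariant}; transporting along the displayed isomorphism shows $\Lp(\fgb,\rho^q)$ is itself anisotropic.

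I do not anticipate any real difficulty: the corollary is essentially a bookkeeping consequence of the structural Theorem \ref{thm:structurerot}. The single point needing care is the order computation for $\eta$, where one must use that $\iota_N$ preserves each gcd-class $U(N,g)$ (so that $\gcd(\iota_N(\bar q),N) = \gcd(q,N)$) rather than fixing individual exponents; everything else is a direct appeal to Proposition \ref{prop:spgquantum} and to the isomorphism-invariance of relative type recorded in Lemma \ref{lem:typeinvariant}.
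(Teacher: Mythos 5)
Your proof is correct and follows exactly the paper's route: the paper's own proof is the one-line observation that the corollary follows from \eqref{eq:rot3} and Proposition \ref{prop:spgquantum}, and you have simply supplied the (correct) bookkeeping details, in particular the observation that $\iota_{N}$ preserves each class $U(N,g)$ so that $\zeta_N^{\iota_N(\bar q)}$ has order $N/\gcd(q,N)$.
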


\begin{proof} This  follows from \eqref{eq:rot3} and Proposition \ref{prop:spgquantum}(b).
\end{proof}

\section{Loop algebras of affine algebras relative to nontransitive diagram automorphisms} \
\label{sec:loopnt}

Suppose that
\emph{$\fg=\fg(A)$ is the Kac-Moody Lie algebra constructed from an affine GCM
$A = (a_{ij})_{i,j\in \ttI}$, and suppose that  $\sg\in \Aut(A)$,   $m$ is a positive integer and $\sg^m = 1$.}
We continue with the notation of Section \ref{sec:autaff} for affine
algebras.

In this section, we  recall results from \cite{ABP1} that show that if $\sg$ is  not transitive,
$\Lp(\fgb,\sg)$ is the centreless core of an EALA which is constructed as
the affinization of $\fg$ relative to $\sg$.  We use this and Theorem \ref{cor:EALAmult} about EALAs
to give a description
of the relative type of $\Lp(\fgb,\sg)$ in terms of $\sg$ and the root system of $\fg$.

\subsection{Affinization}\
\label{subsec:aff}

Initially we do not assume that $\sg$ is    nontransitive.

\begin{pgraph}
\label{pgraph:aff}
Let
\begin{equation}
\label{eq:Aff1}
\Aff_m(\fg,\sg) := \Lp_m(\fg,\sg) \oplus \base \tilde c \oplus \base \tilde d
\end{equation}
as a vector space, where $\tilde c, \tilde d\ne 0$, and define a skew-symmetric product
on   $\Aff_m(\fg,\sg)$ by
\begin{equation}
\label{eq:Aff2}
\begin{split}
[x \otimes z_1^i +r_1\tilde c +r_2\tilde d,  y \otimes & z_1^j + s_1\tilde c + s_2 \tilde d]= \\
&[x,y]\otimes z_1^{i+j}+j r_2y \otimes z^j-i s_2x \otimes z^i +
i\delta_{i+j,0} (x,y)\tilde c
\end{split}
\end{equation}
for $i,j\in \bbZ$, $x\in \fg^\bari$, $y\in \fg^\barj$, $r_1,r_2, s_1, s_2\in \base$.
One checks easily that $\Aff_m(\fg,\sg)$ is a Lie algebra which we call
the \emph{affinization of $\fg$} relative to $\sg$.
We define a form $\form$ on $\Aff_m(\fg,\sg)$ by
\begin{equation}
\label{eq:Aff3}
(x \otimes z_1^i +r_1\tilde c +r_2\tilde d \, \vert \, y \otimes z_1^j +s_1\tilde c +s_2 \tilde d)=
\delta_{i+j,0}(x\vert y)+r_1 s_2 +r_2 s_1.
\end{equation}
Then, since $\sg$ preserves the form on $\fg$ by Proposition \ref{prop:diagaff}, it follows
that $\form$ is a nondegenerate symmetric bilinear form on $\Aff_m(\fg,\sg)$ \cite[Lemma 3.2]{ABP1}.
Finally, let $\cH = (\fh^\sg\otimes 1) \oplus  \base \tilde c \oplus \base \tilde d$, in which case
$\cH$ is an abelian subalgebra of $\Aff_m(\fg,\sg)$.
\end{pgraph}

To discuss the root system
of $\Aff_m(\fg,\sg)$  we need some further  notation.

\begin{pgraph}
\label{pgraph:pidef}
We have $\sg(\fh) = \fh$, so $\sg$ acts on $\fh^*$ by the inverse dual action:
\[(\sg(\al))(h) =  \al(\sg^{-1}(h))\]
for $\al\in \fh^*$, $h\in \fh$.
Then
\begin{equation}
\label{eq:sgact1}
\sg(\Dl) = \Dl, \quad\text{so}\quad \sg(V) = V.
\end{equation}
Next, since $\sg$ has finite order, we have
\begin{equation}
\label{eq:Vdecomp}
V = V^\sg \oplus (1-\sg)(V),
\end{equation}
where $V^\sg$ denotes the fixed point set of $\sg$ acting on $V$.
Let
$\pi_\sg : V \to V^\sg$ be the \emph{projection of $V$ onto $V^\sg$ relative to the decomposition
\eqref{eq:Vdecomp}}.
\end{pgraph}

\begin{proposition}
\label{prop:affEALA}
Suppose that $\fg = \fg(A)$ is an affine Kac-Moody Lie
algebra with  root system $\Dl$, $\sg\in\Aut(A)$ is not transitive, and $\sg^m = 1$.
Then the triple
$(\Aff_m(\fg,\sg),\form,\cH)$ defined in \pref{pgraph:aff} is an EALA of nullity 2
which is discrete
if $\base = \bbC$.  Moreover, the centreless core of this EALA
is isomorphic to $\Lp_m(\fgb,\sg)$.  Finally,
$\piDlb$ is an irreducible finite root system
in $\overline{V^\sg}$ and
the   quotient type of $(\Aff_m(\fg,\sg),\form,\cH)$  is
equal to the type of the finite root system   $\piDlb$.
\end{proposition}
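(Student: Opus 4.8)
The plan is to reduce everything to identifying the finite quotient root system of the EALA $\Aff_m(\fg,\sg)$ with $\piDlb$. Granting the parts of the proposition already recalled from \cite{ABP1} (namely that $(\Aff_m(\fg,\sg),\form,\cH)$ is an EALA of nullity $2$), its root system $\Phi$ relative to $\cH$ is a reduced symmetric affine reflection system whose finite quotient root system $\bar\Phi$ is an irreducible finite root system, and the quotient type is by definition the type of $\bar\Phi$ (see \pref{pgraph:LN} and \pref{pgraph:finitetype}). Thus it suffices to prove that, writing $V_\Phi=\spann_\bbQ(\Phi)$ and $V_\Phi^0$ for the radical of $\form$ on $V_\Phi$, one has $V_\Phi/V_\Phi^0=\Vsgb$ and $\bar\Phi=\piDlb$ under the canonical map; the two assertions of the final statement then follow at once, the irreducibility and finiteness of $\piDlb$ being inherited from $\bar\Phi$.

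First I would read off $\Phi$ from the bracket \eqref{eq:Aff2}. Apart from $\cH$ itself, the $\cH$-weight spaces of $\Aff_m(\fg,\sg)$ are the nonzero $\fh^\sg$-weight spaces of $\fg^\bari\ot z_1^i$, $i\in\bbZ$; on such a space $\tilde c$ acts by $0$ and $\tilde d$ acts by the scalar $i$. Since the $\fh^\sg$-weights of $\fg$ are exactly the restrictions $\al|_{\fh^\sg}$ with $\al\in\Dl$, and since the averaging formula gives $\pi_\sg(\al)|_{\fh^\sg}=\al|_{\fh^\sg}$ (the map $V^\sg\to(\fh^\sg)^*$ being injective, so that $V^\sg$ is identified with its image), every nonzero root of $\Aff_m(\fg,\sg)$ has the form $\pi_\sg(\al)+i\delta_2$, where $\delta_2\in\cH^*$ vanishes on $\fh^\sg\ot 1\oplus\base\tilde c$ and sends $\tilde d\mapsto 1$. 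As $\al$ runs over $\Dl$ the parts $\pi_\sg(\al)$ span $\pi_\sg(V)=V^\sg$, and some root has $i\ne 0$; hence $V_\Phi=V^\sg\oplus\bbQ\delta_2$.

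It remains to compute $V_\Phi^0$. Using that $\sg$ is an isometry of the normalized form (Proposition \ref{prop:diagaff}), the summands in \eqref{eq:Vdecomp} are orthogonal, so $\pi_\sg$ is the orthogonal projection and the induced form on the \emph{horizontal} parts of roots agrees with $\form$ restricted to $V^\sg$ via $\pi_\sg$; a short computation from \eqref{eq:Aff3} shows in addition that the degree direction $\delta_2$ is totally isotropic and orthogonal to every root. Since $\sg(\delta)=\delta$ we have $\delta\in V^\sg$, and $\delta$ lies in the radical of $\form$ on $V$; together with the previous sentence this shows that $\delta$ and $\delta_2$ are two linearly independent elements of $V_\Phi^0$. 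Because the nullity is $2$, $\dim_\bbQ V_\Phi^0=2$ by Remark \ref{rem:nullity1}(a), whence $V_\Phi^0=\bbQ\delta\oplus\bbQ\delta_2$. Passing to the quotient therefore yields $V_\Phi/V_\Phi^0=V^\sg/\bbQ\delta=\Vsgb$ and $\bar\Phi=\overline{\pi_\sg(\Dl)}=\piDlb$, and we are done by the first paragraph.

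The main obstacle is the matching carried out in the second and third paragraphs: identifying the abstract root system $\Phi\subseteq\cH^*$ with $\pi_\sg(\Dl)\subseteq V^\sg$, and verifying that this identification is an isometry onto $\Vsgb$ so that $\bar\Phi$ and $\piDlb$ coincide as root systems (not merely as sets) and the types agree. The crucial input is that restriction to $\fh^\sg$ agrees with the orthogonal projection $\pi_\sg$, which rests on $\sg$ preserving the normalized form; everything else is bookkeeping, and for the dimension of the radical I would rely on the nullity-$2$ statement rather than recomputing it.
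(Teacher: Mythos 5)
Your derivation of the quotient-type assertion is essentially correct, and it is worth noting that it is more self-contained than what the paper actually writes: the paper's proof of Proposition \ref{prop:affEALA} consists entirely of a citation to Theorem 4.8, Lemma 3.57 and Equation (3.48) of \cite{ABP1} for the case $\base=\bbC$, plus instructions for adapting those arguments to general $\base$ (work with $\bbQ$-spans instead of $\bbR$-spans, and re-verify (EA6) directly). Your computation — identifying restriction to $\fh^\sg$ with the orthogonal projection $\pi_\sg$ (the paper records exactly this identification right after \pref{pgraph:setuppi}), reading off that the nonzero roots of $\Aff_m(\fg,\sg)$ are of the form $\pi_\sg(\al)+i\delta_2$, and showing the radical of the form on $\spann_\bbQ(\Phi)$ is $\bbQ\delta\oplus\bbQ\delta_2$ so that the finite quotient root system is $\piDlb$ — is precisely the content of the cited equation (3.48) of \cite{ABP1}, so for that part you are reconstructing the source rather than diverging from it. Using Remark \ref{rem:nullity1}(a) to get $\dim_\bbQ V_\Phi^0=2$ from the granted nullity, instead of recomputing the radical, is a reasonable shortcut.

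The genuine gap is that your proposal does not prove the whole proposition. You explicitly grant only that $(\Aff_m(\fg,\sg),\form,\cH)$ is an EALA of nullity $2$, and you then address only the final sentence. Two assertions are left entirely untouched: the discreteness claim for $\base=\bbC$, and — more importantly, since it is the part of the proposition actually used in Theorem \ref{thm:relativent} — the claim that the centreless core of this EALA is isomorphic to $\Lp_m(\fgb,\sg)$. That claim does not follow from anything in your argument: one must identify the core $\fg_\core$ (the subalgebra generated by the root spaces of nonisotropic roots) as $\Lp_m(\fgp,\sg|_{\fgp})\oplus\base\tilde c$, compute its centre (which is $\base\tilde c$ together with the loop algebra of $\Centre(\fgp)=\base c$, using that $c$ is orthogonal to $\fgp$ so no extra $\tilde c$-terms appear in brackets), and then apply something like Lemma \ref{lem:loopfactor} to identify the quotient with $\Lp_m(\fgb,\sg)$. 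In the paper this is exactly what the citation to \cite[Lemma 3.57]{ABP1} covers. Either extend your parenthetical to grant this identification from \cite{ABP1} as well, or supply the computation; as written, the proposal proves only part of the statement.
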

\begin{proof} If $\base = \bbC$, these statements are proved in \cite{ABP1}.  (See
Theorem 4.8, Lemma 3.57 and Equation (3.48) of \cite{ABP1}.) The result for
general $\base$ is proved by straightforward modification of the arguments given there.
In particular, instead of working with the $\bbR$-spans of the root systems for
$\fg$ and  $\Aff_m(\fg,\sg)$, one works with the $\bbQ$-spans.  Also,
Lemma 3.33 of \cite{ABP1} must be replaced by the statement
that the group generated by the isotropic roots of $\Aff_m(\fg,\sg)$ is finitely generated.  This is easily checked
using \cite[(3.31)]{ABP1}.  The remaining changes are evident, so we omit
further  explanation.
\end{proof}

\subsection{Relative type in the nontransitive case}

\begin{theorem}
\label{thm:relativent}   Suppose that $\fg = \fg(A)$ is an affine Kac-Moody Lie
algebra and $\sg\in\Aut(A)$ is not transitive. Then
$\Lp(\fgb,\sg)\in \bbM_2\cap \bbE_2$, and the relative type of
$\Lp(\fgb,\sg)$ is equal to the type of the finite root system $\piDlb$.
\end{theorem}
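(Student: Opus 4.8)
The plan is to assemble the theorem from Proposition \ref{prop:affEALA} and Corollary \ref{cor:EALAmult}(a), with the finite-generation (fgc) property supplied by the iterated-loop machinery of Section \ref{sec:loop}. First I would invoke Proposition \ref{prop:affEALA}: since $\sg$ is not transitive, the affinization $(\Aff_m(\fg,\sg),\form,\cH)$ is an EALA of nullity $2$ whose centreless core is isomorphic to $\Lpm(\fgb,\sg) = \Lp(\fgb,\sg)$. This immediately gives $\Lp(\fgb,\sg) \in \bbE_2$, and it records that the quotient type of this EALA equals the type of the finite root system $\piDlb$.

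The key intermediate step is to verify that $\Lp(\fgb,\sg)$ is fgc, since this is precisely the hypothesis needed to apply Corollary \ref{cor:EALAmult}(a). Here $\sg$ is a diagram automorphism, hence of finite order, so by Corollary \ref{cor:KacReal4} we have $\Lp(\fgb,\sg) \in \bbI_2$. Proposition \ref{prop:ppfgc} then tells us that every algebra in $\bbI_2$ is prime, perfect and fgc; in particular $\Lp(\fgb,\sg)$ is fgc.

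With fgc in hand, I would apply Corollary \ref{cor:EALAmult}(a) to the EALA $\Aff_m(\fg,\sg)$. Since its centreless core $\Lp(\fgb,\sg)$ is fgc, the corollary yields three conclusions at once: $\Lp(\fgb,\sg) \in \bbM_2$, $\Lp(\fgb,\sg)$ is isotropic, and the relative type of $\Lp(\fgb,\sg)$ equals the quotient type of $\Aff_m(\fg,\sg)$. Combining the last conclusion with the quotient-type computation already extracted from Proposition \ref{prop:affEALA} identifies the relative type of $\Lp(\fgb,\sg)$ as the type of $\piDlb$; and combining $\Lp(\fgb,\sg) \in \bbM_2$ with the earlier observation $\Lp(\fgb,\sg) \in \bbE_2$ gives membership in $\bbM_2 \cap \bbE_2$.

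Essentially all of the genuine work has been discharged elsewhere (the EALA structure of the affinization in Proposition \ref{prop:affEALA}, ultimately from \cite{ABP1}, and the permanence and Lie-torus results behind Corollary \ref{cor:EALAmult}), so there is no serious obstacle left in this assembly. The one point that must not be skipped is the fgc verification, because Corollary \ref{cor:EALAmult}(a) fails without it, and it is exactly this property that bridges the EALA picture (which supplies the quotient type) with the multiloop picture (which supplies the relative type in the sense of Definition \ref{def:reltype2}).
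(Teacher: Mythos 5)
Your proposal is correct and follows essentially the same route as the paper's own proof: fgc via Corollary \ref{cor:KacReal4} and Proposition \ref{prop:ppfgc}, membership in $\bbE_2$ and the quotient-type identification via Proposition \ref{prop:affEALA}, and the final assembly via Corollary \ref{cor:EALAmult}(a). You merely spell out the intermediate steps more explicitly than the paper does.
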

\begin{proof}  Let $\cL = \Lp(\fgb,\sg)$.  Then $\cL$ is fgc by Corollary
\ref{cor:KacReal4} and Proposition \ref{prop:ppfgc},
and $\cL \in \bbE_2$ by
Proposition \ref{prop:affEALA}.  Our conclusion now
follows from Proposition \ref{prop:affEALA} and Theorem~\ref{cor:EALAmult}.
\end{proof}

\begin{pgraph}  We will return in Section \ref{sec:projaff} to study the finite root system
$\piDlb$ and see how to calculate its type.
\end{pgraph}

\section{Characterizations of algebras in $\bbM_2$}
\label{sec:char}

\subsection{Characterizations}\
\label{subsec:char}

Our first main theorem  gives three characterizations of the algebras in~$\bbM_2$.

\begin{theorem}
\label{thm:char}
For a Lie algebra $\cL$, the following statements are equivalent:
\begin{itemize}
\item[(a)] $\cL \in \bbM_2$.
\item[(b)] $\cL\simeq \Lp(\fgb,\sg)$, where $\fg$ is an untwisted affine Kac-Moody Lie algebra
and $\sg$ is a diagram automorphism of $\fgb$.
\item[(c)] $\cL\simeq \Lp(\fgb,\sg)$, where $\fg$ is an  affine Kac-Moody Lie algebra
and $\sg$ is a finite order automorphism of $\fgb$ of first kind.
\item[(d)]  $\cL\in \bbI_2$ and $\Cd(\cL)$ is isomorphic to $R_2$.
\end{itemize}
\end{theorem}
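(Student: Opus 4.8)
The plan is to prove the cycle of implications (b) $\Rightarrow$ (c) $\Rightarrow$ (d) $\Rightarrow$ (a) $\Rightarrow$ (b), so that the delicate passage to an \emph{untwisted} realization is concentrated in a single step. The implication (b) $\Rightarrow$ (c) is almost immediate: an untwisted affine algebra is in particular affine, and a diagram automorphism $\sg\in\Aut(A)$ satisfies $\bar p(\sg)=\sg\in\Aut(A)$, so it is of first kind in the sense of Definition \ref{def:kind}. For (c) $\Rightarrow$ (d) I would argue that if $\cL\simeq\Lp(\fgb,\sg)$ with $\fg$ affine then $\cL\in\bbI_2$ by Corollary \ref{cor:KacReal4}, while Proposition \ref{prop:kind} gives $\Cd(\cL)\isom R_2$ precisely because $\sg$ is of first kind.

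For (d) $\Rightarrow$ (a), Corollary \ref{cor:KacReal4} lets me write $\cL\simeq\Lp(\fgb,\sg)$ with $\fg=\fg(A)$ affine and $\sg$ of finite order; the hypothesis $\Cd(\cL)\isom R_2$ together with Proposition \ref{prop:kind} forces $\sg$ to be of first kind, so $\nu:=\bar p(\sg)\in\Aut(A)$ is a diagram automorphism with $\bar p(\sg)=\bar p(\nu)$, and Theorem \ref{thm:erase}(c) yields $\cL\simeq\Lp(\fgb,\nu)$. By the dichotomy recalled at the start of Section \ref{sec:looprot}, either $\fg$ has type $\typeaff{A}{\rkaff}{1}$ and $\nu$ is a rotation, so that $\Lp(\fgb,\nu)\in\bbM_2$ by Corollary \ref{cor:rot}, or $\nu$ is nontransitive, so that $\Lp(\fgb,\nu)\in\bbM_2$ by Theorem \ref{thm:relativent}; in either case $\cL\in\bbM_2$.

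The substantial step is (a) $\Rightarrow$ (b). Here I would start from $\cL\simeq\Lpbm(\gd,(\sg_1,\sg_2))$ for a finite dimensional simple $\gd$, taking a common period $\boldm=(m,m)$ (legitimate by Lemma \ref{lem:changemult}(b),(c)), and aim to replace $(\sg_1,\sg_2)$ by an isomorphic pair $(\tau_1,\tau_2)$ of commuting automorphisms with $\tau_2$ \emph{inner}. To manipulate the pair I would use three elementary moves on $\gd\ot S_2$: interchanging $z_1,z_2$, which swaps the two automorphisms; the substitution $z_2\mapsto z_1^{\,j}z_2$, which a direct eigenspace computation identifies as an isomorphism $\Lp(\gd,(\sg_1,\sg_2))\to\Lp(\gd,(\sg_1\sg_2^{\,j},\sg_2))$; and the power change $\sg_i\mapsto\sg_i^{-1}$ from Lemma \ref{lem:changemult}(b). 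Together these realize the full $\GL_2(\bbZ)$-action $(\sg_1,\sg_2)\mapsto(\sg_1^a\sg_2^b,\sg_1^c\sg_2^d)$ on the defining pair. Projecting to $\Out(\gd)$ and using that two commuting elements of $\Out(\gd)$ (which is trivial, cyclic of order $2$, or the symmetric group on three letters) always lie in a common cyclic subgroup, I would choose a $\GL_2(\bbZ)$-matrix carrying the pair of outer classes to one whose second entry is trivial; the corresponding $\tau_2$ is then inner.

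Once $\tau_2$ is inner I would reorganize, after a swap, as an iterated loop $\cL\simeq\Lp_m(\Lp_m(\gd,\tau_2),\tau_1\ot 1)$ via Remark \ref{rem:itloop}. Since $p(\tau_2)=1$, Theorem \ref{thm:erase}(a) gives $\Lp_m(\gd,\tau_2)\simeq\Lp(\gd,1)\simeq\fgb'$ with $\fg'$ \emph{untwisted} affine by Corollary \ref{cor:KacReal0}; transporting $\tau_1\ot1$ across this isomorphism (functoriality of the loop construction, cf.\ Lemma \ref{lem:changemult}(a)) produces a finite order $\sg'\in\Aut(\fgb')$ with $\cL\simeq\Lp(\fgb',\sg')$. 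Because $\cL\in\bbM_2$ has centroid $R_2$ by Proposition \ref{prop:centloop}, Proposition \ref{prop:kind} shows $\sg'$ is of first kind, and a final application of Theorem \ref{thm:erase}(c) replaces it by a diagram automorphism $\nu'$ of $\fgb'$; as $\fg'$ is untwisted this is exactly (b). I expect the main obstacle to be precisely this untwisting: establishing the shear isomorphism and, above all, the bookkeeping that normalizes the commuting pair of outer classes by $\GL_2(\bbZ)$ so that one defining automorphism becomes inner. The other three implications are routine assemblies of the erasing theorem, Kac's realization, Proposition \ref{prop:kind}, and the structural results of Sections \ref{sec:looprot} and \ref{sec:loopnt}.
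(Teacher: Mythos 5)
Your proposal is correct and follows essentially the same route as the paper: the equivalence of (c) and (d) via Corollary \ref{cor:KacReal4} and Proposition \ref{prop:kind}, the passage from first kind to a diagram automorphism via Theorem \ref{thm:erase}(c) combined with the rotation/nontransitive dichotomy, and, for (a) $\Rightarrow$ (b), the normalization of the commuting pair by a $\GL_2(\bbZ)$-change of the defining automorphisms (exploiting that $\Aut(\dot A)$ is trivial, of order $2$, or $S_3$, so the two outer classes lie in a common cyclic subgroup) followed by Kac's realization, the centroid computation, and the erasing theorem. The only cosmetic differences are your arrangement of the implications as a single cycle and your self-contained derivation of the $\GL_2(\bbZ)$-invariance from swap, shear and inversion moves, where the paper simply cites \cite[Lemma 5.3]{GP1}.
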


\begin{proof} (c) and (d) are equivalent by Corollary \ref{cor:KacReal4} and
Proposition \ref{prop:kind}.  It remains therefore to show that (a), (b) and (c) are equivalent.

``(a) $\Rightarrow$  (b)'':  We have $\cL \simeq \Lp(\gd,\boldsg)$, where
$\gd$ is a finite dimensional simple Lie algebra and $\boldsg = (\sg_1,\sg_2)$ is a pair
of commuting finite order automorphisms of~$\gd$.  If
$P = \left[\begin{smallmatrix} a & b \\ c & d \end{smallmatrix}\right]\in \GL_2(\bbZ)$,
we write $\boldsg^P = (\sg_1^a \sg_2^c, \sg_1^b \sg_2^d)$.  Then,
$\Lp(\gd,\boldsg^P) \simeq \Lp(\gd,\boldsg)$ \cite[Lemma 5.3]{GP1}, so we can replace
$\boldsg$ by $\boldsg^P$ as needed.

Next $\gd = \fg(\dot A)$ for some GCM $\dot A$  of finite type, so, by Proposition \ref{prop:Aut}, we have $\Aut(\gd) = \Autz(\gd) \rtimes \Aut(\dot A)$, since  $\Out(\dot A) = \Aut(\dot A)$.  We let
$\pd : \Aut(\gd) \to \Aut(\dot A)$ be the projection onto the second factor relative to this decomposition.

We claim that for suitable
$P = \left[\begin{smallmatrix} a & b \\ c & d \end{smallmatrix}\right]\in \GL_2(\bbZ)$, we have
\begin{equation}
\label{eq:main1claim}
\pd(\sg_1^a \sg_2^c) = 1.
\end{equation}
Now $\Aut(\dot A)$ is isomorphic to $S_3$, $\bbZ/2\bbZ$ or $\set{1}$.  So, since
$\pd(\sg_1)$ and $\pd(\sg_2)$ commute, one of these is a power of the other.  If
$\pd(\sg_2)= \pd(\sg_1)^k$, then  \eqref{eq:main1claim} holds with
$P = \left[\begin{smallmatrix} -k & 1 \\ 1& 0 \end{smallmatrix}\right]$.
On the other hand, if $\pd(\sg_1)= \pd(\sg_2)^k$, then  \eqref{eq:main1claim} holds with
$P = \left[\begin{smallmatrix} 1 & 0 \\ -k& 1 \end{smallmatrix}\right]$.
This demonstrates the claim.

So we can replace
$\boldsg$ by $\boldsg^P$ and assume that $\pd(\sg_1) = 1$.  Hence, using Theorem \ref{thm:erase}(a)
(applied to $\gd$) and Corollary  \ref{cor:KacReal0}, we have
$\Lp(\gd,\sg_1) \simeq \Lp(\gd,\pd(\sg_1)) = \Lp(\gd,1) \simeq \fgb$, where
$\fg = \fg(A)$ is an untwisted affine algebra.  Therefore,
\begin{equation*}
\label{eq:main1loop}
\cL \simeq \Lp(\gd,\boldsg) \simeq \Lp(\Lp(\gd,\sg_1),\sg_2 \ot 1 |_{\Lp(\gd,\sg_1)}) \simeq \Lp(\fgb,\tau),
\end{equation*}
for some automorphism $\tau$ of $\fgb$ of finite order.

Next,  by Proposition
\ref{prop:centloop}, the centroid $\Cd(\cL)$ of $\cL$ is isomorphic to $R_2$.
Thus, by Proposition \ref{prop:kind}, $\tau$ is of first kind.  So using the notation of \pref{pgraph:project},
$\bar p(\tau) \in \Aut(A)$.  Finally, by Theorem \ref{thm:erase}(c), we have
$\cL \simeq \Lp(\fgb,\tau) \simeq \Lp(\fgb, \bar p(\tau)) = \Lp(\fgb,\sg)$.

``(b) $\Rightarrow$  (c)'':  This is trivial.

``(c) $\Rightarrow$  (a)'':  Suppose $\cL\simeq \Lp(\fgb,\sg)$, where $\fg = \fg(A)$ is an affine Kac-Moody Lie algebra
and $\sg$ is a finite order automorphism of $\fgb$ of first kind. By Theorem \ref{thm:erase}(c) again,
we have $\cL \simeq \Lp(\fgb, \bar p(\sg))$; so replacing $\sg$ by $\bar p(\sg)$, we can assume that
$\sg\in \Aut(A)$. Then $\cL\in \bbM_2$  by Corollary  \ref{cor:rot} and Theorem  \ref{thm:relativent}.
\end{proof}

\begin{remark}
If  $\cL\simeq \Lp(\fgb,\sg)$,  where $\sg$ is a diagram automorphism
of an affine algebra of type $\typeaff{X}{k}{1}$, it follows from Theorem \ref{thm:char} and
Proposition \ref{prop:loopperm} that  $\cL \simeq \Lp(\gd,\boldtau)$, where $\boldtau$ is a $2$-tuple of
commuting finite order automorphisms of a finite
dimensional simple Lie algebra $\gd$ of type $\type{X}{k}$.  It would  be interesting to obtain a simple
algorithm to find such a $\boldtau$ in each case. This may be possible with a careful examination
of the steps in the proof of   Theorem \ref{thm:char}.
\end{remark}

We now look more closely at the relationship between the class $\bbM_2$ and the classes $\bbE_2$
and $\bbI_2$.  This is presented in a sequence of corollaries of Theorem \ref{thm:char}.

\begin{corollary}
\label{cor:char1}
For a Lie algebra $\cL$, the following statements are equivalent:
\begin{itemize}
\item[(a)] $\cL \in \bbM_2\cap \bbE_2$.
\item[(b)] $\cL \in \bbI_2\cap \bbE_2$.
\item[(c)] $\cL \in \bbE_2$ and $\cL$ is fgc.
\item[(d)] $\cL\in \bbM_2$ and $\cL$ is isotropic.
\item[(e)] $\cL\simeq \Lp(\fgb,\sg)$, where $\fg$ is an untwisted affine Kac-Moody Lie algebra
and $\sg$ is a nontransitive diagram automorphism of $\fgb$.
\end{itemize}
\end{corollary}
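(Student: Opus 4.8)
The plan is to prove the three ``membership'' equivalences (a)$\Leftrightarrow$(b)$\Leftrightarrow$(c) first as formal bookkeeping with the class definitions, and then to weave (d) and (e) into the single cycle (a)$\Rightarrow$(d)$\Rightarrow$(e)$\Rightarrow$(a), which together with (a)$\Leftrightarrow$(b)$\Leftrightarrow$(c) yields the full equivalence.

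For the first stage I would argue (a)$\Rightarrow$(b)$\Rightarrow$(c)$\Rightarrow$(a). Since $\bbM_2 \subseteq \bbI_2$ by \pref{pgraph:MnIn}, (a) immediately gives (b). If $\cL \in \bbI_2 \cap \bbE_2$, then $\cL$ is fgc by Proposition \ref{prop:ppfgc}, and since $\cL \in \bbE_2$ this is exactly (c). Finally (c)$\Rightarrow$(a) is the content of Corollary \ref{cor:EALAmult}(b), which identifies $\bbM_2 \cap \bbE_2$ with the class of fgc algebras in $\bbE_2$.

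To bring in (d) and (e) I would close the cycle through the structure theory of Sections \ref{sec:looprot} and \ref{sec:loopnt}. The implication (e)$\Rightarrow$(a) is immediate from Theorem \ref{thm:relativent}: a nontransitive diagram automorphism $\sg$ of an (untwisted) affine $\fg$ gives $\Lp(\fgb,\sg) \in \bbM_2 \cap \bbE_2$. For (a)$\Rightarrow$(d): given $\cL \in \bbM_2 \cap \bbE_2$, Corollary \ref{cor:EALAmult}(b) shows $\cL$ is the fgc centreless core of some nullity 2 EALA, so Corollary \ref{cor:EALAmult}(a) forces $\cL$ to be isotropic, which together with $\cL \in \bbM_2$ is (d).

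The crux is (d)$\Rightarrow$(e), and here the dichotomy between the rotation case and the nontransitive case does the work. Given $\cL \in \bbM_2$ isotropic, Theorem \ref{thm:char} lets me write $\cL \simeq \Lp(\fgb,\sg)$ with $\fg$ untwisted affine and $\sg$ a diagram automorphism of $\fgb$; I then claim $\sg$ must be nontransitive. Indeed, if $\sg$ were transitive, then by the observation preceding the two cases, $\fg$ would have type $\typeaff{A}{\rkaff}{1}$ and $\sg$ would be a rotation $\rho^q$ of full order, i.e.\ with $q$ relatively prime to $\rkaff+1$; but then Corollary \ref{cor:rot} would make $\cL$ anisotropic, contradicting (d). Hence $\sg$ is nontransitive, which is precisely (e). The main obstacle is marshalling the correct structural input at this step: one must recognize that isotropy of $\cL$ is equivalent to nontransitivity of $\sg$, and this equivalence rests simultaneously on the anisotropy statement of Corollary \ref{cor:rot} in the rotation case and the isotropy statement of Theorem \ref{thm:relativent} in the nontransitive case.
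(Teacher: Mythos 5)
Your proposal is correct and uses exactly the same ingredients as the paper's proof ($\bbM_2\subseteq\bbI_2$, Proposition \ref{prop:ppfgc}, Corollary \ref{cor:EALAmult}, Theorem \ref{thm:char}, Corollary \ref{cor:rot}, and Theorem \ref{thm:relativent}); the only difference is cosmetic, namely that the paper runs a single cycle (a)$\Rightarrow$(b)$\Rightarrow$(c)$\Rightarrow$(d)$\Rightarrow$(e)$\Rightarrow$(a) while you split it into two cycles sharing (a). The key step (d)$\Rightarrow$(e) via Theorem \ref{thm:char} plus the anisotropy conclusion of Corollary \ref{cor:rot} in the transitive case matches the paper's argument exactly.
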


\begin{proof} ``(a) $\Rightarrow$  (b)" holds since $\bbM_2\subseteq \bbI_2$,
``(b) $\Rightarrow$  (c)" follows from Proposition \ref{prop:ppfgc},
and ``(c) $\Rightarrow$  (d)" follow from Theorem \ref{cor:EALAmult}.

``(d) $\Rightarrow$  (e)": By Theorem \ref{thm:char}, we have $\cL\simeq \Lp(\fgb,\sg)$,
where $\fg = \fg(A)$ is an untwisted affine algebra
and $\sg\in \Aut(A)$.  Then, since $\cL$ is isotropic, it follows from   Corollary \ref{cor:rot}
that $\sg$ is not transitive.

``(e) $\Rightarrow$  (a)":   This  holds by Theorem \ref{thm:relativent}.
\end{proof}

\begin{corollary}
\label{cor:char2}
For a Lie algebra $\cL$, the following statements are equivalent:
\begin{itemize}
\item[(a)] $\cL \in \bbM_2\setminus \bbE_2$.
\item[(b)] $\cL\in \bbM_2$ and $\cL$ is anisotropic.
\item[(c)] $\cL\simeq \Lp(\fgb,\sg)$, where $\fg$ is an untwisted affine Kac-Moody Lie algebra
and $\sg$ is a transitive diagram automorphism of $\fgb$.
\item[(d)] $\cL \simeq \spl_1(\Qp)$, where $\prim \ne 1$ is a root of unity in $\base^\times$.
\end{itemize}
\end{corollary}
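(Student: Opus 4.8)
The plan is to prove the four conditions equivalent in two stages: first establish (a)~$\Leftrightarrow$~(b) directly from Corollary~\ref{cor:char1}, and then close the loop among the remaining conditions via (b)~$\Rightarrow$~(c)~$\Rightarrow$~(d)~$\Rightarrow$~(b). The overall skeleton mirrors the proof of Corollary~\ref{cor:char1}, with the anisotropic (transitive) case now playing the role that the isotropic (nontransitive) case played there; the new structural input is the explicit realization of loop algebras relative to diagram rotations from Theorem~\ref{thm:structurerot}.

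For (a)~$\Leftrightarrow$~(b), I would use that any $\cL\in\bbM_2$ is prime, perfect and fgc by Proposition~\ref{prop:ppfgc}, so its central closure is a simple fgc Lie algebra and $\cL$ is therefore either isotropic or anisotropic. The equivalence (a)~$\Leftrightarrow$~(d) of Corollary~\ref{cor:char1} identifies membership of an $\bbM_2$-algebra in $\bbE_2$ with isotropy; hence, for $\cL\in\bbM_2$, the condition $\cL\notin\bbE_2$ is equivalent to $\cL$ being anisotropic. For (b)~$\Rightarrow$~(c), I would apply Theorem~\ref{thm:char} to write $\cL\simeq\Lp(\fgb,\sg)$ with $\fg$ untwisted affine and $\sg\in\Aut(A)$ a diagram automorphism, and then rule out the nontransitive case: were $\sg$ not transitive, Theorem~\ref{thm:relativent} would place $\cL$ in $\bbM_2\cap\bbE_2$, forcing $\cL$ to be isotropic and contradicting (b). Thus $\sg$ is transitive, which is (c).

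For (c)~$\Rightarrow$~(d), I would invoke the observation opening Section~\ref{sec:looprot}: a transitive $\sg$ forces $\fg$ to be of type $\typeaff{A}{\rkaff}{1}$ with $\sg=\rho^q$ of order $\rkaff+1$, i.e.\ $\gcd(q,\rkaff+1)=1$. Equation~\eqref{eq:rot3} then gives $\cL\simeq\spl_{\gcd(q,\rkaff+1)}\big(Q(\zeta_{\rkaff+1}^{\iota_{\rkaff+1}(\bar q)})\big)=\spl_1\big(Q(\zeta_{\rkaff+1}^{\iota_{\rkaff+1}(\bar q)})\big)$; since $\iota_{\rkaff+1}$ restricts to inversion on the unit group, the exponent is again a unit, so the parameter $\prim:=\zeta_{\rkaff+1}^{\iota_{\rkaff+1}(\bar q)}$ is a primitive $(\rkaff+1)$-th root of unity, with $\prim\ne 1$ because $\rkaff\ge 1$; this is (d). Finally, for (d)~$\Rightarrow$~(b), writing $m$ for the order of $\prim$ (so $m>1$), Proposition~\ref{prop:spgquantum}(a) gives $\spl_1(\Qp)\in\bbM_2$ (here the relevant product $gm=m>1$) and Proposition~\ref{prop:spgquantum}(b) gives that it is anisotropic, completing the cycle. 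I expect the only delicate bookkeeping to lie in (c)~$\Rightarrow$~(d): correctly translating ``transitive'' into the arithmetic condition $\gcd(q,\rkaff+1)=1$ and then verifying, via the behaviour of $\iota_{\rkaff+1}$ on units, that the quantum-torus parameter is a genuine root of unity different from $1$. Everything else is a direct assembly of the cited results.
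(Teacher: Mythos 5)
Your proposal is correct and uses essentially the same ingredients as the paper's proof (Theorem \ref{thm:char}, Corollary \ref{cor:char1}, the transitivity observation at the start of Section \ref{sec:looprot}, the realization \eqref{eq:rot3}, and Proposition \ref{prop:spgquantum}); the only difference is the ordering of the implication cycle — the paper runs (a)$\Rightarrow$(c)$\Rightarrow$(b)$\Rightarrow$(a) and then (c)$\Leftrightarrow$(d) via \eqref{eq:rot3}, whereas you do (a)$\Leftrightarrow$(b) first and close (b)$\Rightarrow$(c)$\Rightarrow$(d)$\Rightarrow$(b), returning to (b) through Proposition \ref{prop:spgquantum} rather than back through (c). This is a cosmetic rearrangement, not a different argument.
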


\begin{proof} First ``(a) $\Rightarrow$  (c)" by Theorem \ref{thm:char} and Corollary \ref{cor:char1};
second ``(c) $\Rightarrow$  (b)" by  Corollary \ref{cor:rot}; and third
``(b) $\Rightarrow$  (a)" by Corollary \ref{cor:char1}.  Therefore,
(a), (b) and (c) are equivalent.  Finally (d) and (c) are equivalent
by   \eqref{eq:rot3}, since any root of unity $\ne 1$ in $\base^\times$ equals
$\zeta_{\rkaff+1}^r$ for some $\rkaff \ge 1$ and some $r$ relatively prime to $\rkaff+1$.
\end{proof}

\begin{remark}
\label{rem:EminusL}  We have described the  algebras in
$\bbM_2\setminus\bbE_2$ as matrix algebras.  This can also be done for the algebras
in $\bbE_2\setminus \bbM_2$ using the coordinatization
theorems for algebras in $\bbE_n$ (see \pref{pgraph:coordinate}).
In nullity 2, it turns out that the only algebras in $\bbE_2$
that are not fgc, and hence not in $\bbM_2$, are  the algebras  isomorphic to
$\spl_g{(\Qp})$, for some $g\ge 2$ and some $\prim$
of infinite order in $\base^\times$.  (See \cite{Neh2} for a more general theorem.)
We will not use this fact in this paper except to include it in  Figure~\ref{fig:classdiag}  below.
\end{remark}

\begin{corollary}
\label{cor:char3}
For a Lie algebra $\cL$, the following statements are equivalent:
\begin{itemize}
\item[(a)] $\cL \in \bbI_2\setminus \bbM_2$.
\item[(b)] $\cL\simeq \Lp(\fgb,\chev\sg)$, where $\fg$ is an untwisted affine Kac-Moody Lie algebra,
$\chev$ is the Chevalley automorphism and $\sg$ is a  diagram automorphism of $\fgb$.
\item[(c)] $\cL\simeq \Lp(\fgb,\sg)$, where $\fg$ is an affine Kac-Moody Lie algebra
and $\sg$ is a finite order automorphism of second kind of $\fgb$.
\end{itemize}
\end{corollary}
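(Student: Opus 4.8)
The plan is to establish the clean equivalence (a)$\Leftrightarrow$(c) together with the easy implication (b)$\Rightarrow$(c) first, and then to isolate the one substantive point, namely (c)$\Rightarrow$(b): that a second--kind realization can always be replaced by an \emph{untwisted} one. The equivalences (a)$\Leftrightarrow$(c) and (b)$\Rightarrow$(c) are formal consequences of the machinery already set up, so the entire weight of the corollary falls on the untwisting step.

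For (a)$\Rightarrow$(c): if $\cL\in\bbI_2$ then by Corollary \ref{cor:KacReal4} I may write $\cL\simeq\Lp(\fgb,\sg)$ with $\fg=\fg(A)$ affine and $\sg$ of finite order. Since $\cL\notin\bbM_2$, the equivalence of parts (a) and (d) of Theorem \ref{thm:char} forces $\Cd(\cL)\not\simeq R_2$, and then Proposition \ref{prop:kind} shows $\sg$ cannot be of first kind; that is, $\sg$ is of second kind, which is (c). The converse (c)$\Rightarrow$(a) runs backwards through the same three results: Corollary \ref{cor:KacReal4} gives $\cL\in\bbI_2$, Proposition \ref{prop:kind} gives $\Cd(\cL)\not\simeq R_2$, and Theorem \ref{thm:char} then excludes $\cL$ from $\bbM_2$, so $\cL\in\bbI_2\setminus\bbM_2$. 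For (b)$\Rightarrow$(c) I would observe that, when $\fg$ is affine and $\sg\in\Aut(A)$, the element $\chev\sg$ lies in $\Out(A)=\langle\chev\rangle\times\Aut(A)$ (see \pref{pgraph:autsubgp}) with $\bar p(\chev\sg)=\chev\sg\notin\Aut(A)$; hence $\chev\sg$ is of finite order and of second kind in the sense of Definition \ref{def:kind}, so (b) is simply a special case of (c).

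It remains to prove (c)$\Rightarrow$(b). By Theorem \ref{thm:erase}(c) I may replace $\sg$ by $\bar p(\sg)\in\Out(A)$, and since $\sg$ is of second kind I may write $\bar p(\sg)=\chev\nu$ with $\nu\in\Aut(A)$, where $\fg=\fg(A)$ is affine of some type $\typeaff{X}{k}{m}$. If $m=1$ the algebra $\fg$ is already untwisted, and one more application of Theorem \ref{thm:erase}(c) replacing $\nu$ by a diagram automorphism puts us exactly in case (b). So the genuine content is the removal of the twist when $m>1$. For this I would pass to the iterated--loop picture: Kac's realization (Theorem \ref{thm:Kacreal}) writes $\fgb\simeq\Lp_m(\gd,\sgd)$ with $\gd$ finite dimensional simple of type $\type{X}{k}$ and $\sgd$ a diagram automorphism of order $m$, so that $\cL$ becomes a two--fold iterated loop algebra of $\gd$ inside $\gd\ot\base[z_1^{\pm1},z_2^{\pm1}]$, with $\sgd$ governing the first loop and $\chev\nu$ the second. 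The goal is to trade the inner twist $\sgd$ for a diagram automorphism of the untwisted affine algebra $\fg_{\mathrm u}=\fg(A_{\mathrm u})$ of type $\typeaff{X}{k}{1}$, whose centreless core $\overline{\fg_{\mathrm u}}=\gd\ot R_1$ (Corollary \ref{cor:KacReal0}) has the same absolute type $\type{X}{k}$, producing $\cL\simeq\Lp(\overline{\fg_{\mathrm u}},\chev_{\mathrm u}\tau)$ with $\tau\in\Aut(A_{\mathrm u})$, which is (b).

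I expect this untwisting to be the main obstacle. The feature to exploit is the one already isolated in the proof of Proposition \ref{prop:kind}: the Chevalley automorphism inverts the centroid variable, $\Cd(\chev\nu)(t_1)\in\base^\times t_1^{-1}$, so that $\chev\nu$ reflects the $z_1$--degree, sending degree $j$ to degree $-j$. The difficulty is precisely that, because of this reflection, the two loop directions are \emph{not} interchangeable by ordinary commuting automorphisms of $\gd$: the operative symmetry is an infinite--dihedral--type group $\bbZ\rtimes\bbZ$ (outer generator acting by $-1$) rather than $\bbZ^2$, so the plain $\GL_2(\bbZ)$ change of variables that underlies the untwisting for $\bbM_2$ in Theorem \ref{thm:char}(a)$\Rightarrow$(b) is unavailable verbatim (consistently with the fact that here $\cL$ is not a multiloop algebra at all). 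I would therefore restrict to the transformations compatible with this reflection and then match the second--kind conjugacy classes in $\Out(A)$ for the twisted type $\typeaff{X}{k}{m}$ against those for the untwisted type $\typeaff{X}{k}{1}$; this matching is where the classification of affine diagram automorphisms, and hence a short case analysis over the affine Dynkin diagrams, is likely to be needed.
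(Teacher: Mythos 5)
Your treatment of (a)$\Leftrightarrow$(c) and of (b)$\Rightarrow$(c) is correct and uses the same ingredients as the paper: Corollary \ref{cor:KacReal4} to produce an affine realization $\cL\simeq\Lp(\fgb,\sg)$, Proposition \ref{prop:kind} to translate ``second kind'' into ``$\Cd(\cL)\not\simeq R_2$'', and Theorem \ref{thm:char} (the paper uses Proposition \ref{prop:centloop} at this point, which amounts to the same thing) to relate that to membership in $\bbM_2$. The paper closes the remaining implication by proving (a)$\Rightarrow$(b) directly in three lines: Corollary \ref{cor:KacReal4} gives $\cL\simeq\Lp(\fgb,\sg)$ with $\fg$ affine, Theorem \ref{thm:erase}(c) lets one replace $\sg$ by $\bar p(\sg)\in\Out(A)$, and Theorem \ref{thm:char} rules out $\bar p(\sg)\in\Aut(A)$, so that $\bar p(\sg)=\chev\nu$ with $\nu\in\Aut(A)$; your reduction of (c)$\Rightarrow$(b) to this normal form is the identical computation.

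The gap is exactly where you locate it: neither your argument nor the three-line argument just described produces the word ``untwisted''. Corollary \ref{cor:KacReal4} yields only \emph{some} affine $\fg=\fg(A)$, possibly of twisted type $\typeaff{X}{k}{m}$ with $m>1$ (for instance $\typeaff{A}{2}{2}$ with $\sg=\chev$), and since $\cL\notin\bbM_2$ the $\GL_2(\bbZ)$ change of variables that forces an untwisted first loop in the proof of Theorem \ref{thm:char} is not available. You correctly isolate this as the one substantive point, but your proposal stops at a plan (``restrict to the transformations compatible with this reflection and then match the second-kind conjugacy classes \dots is likely to be needed'') rather than an argument, so as it stands the implication (c)$\Rightarrow$(b), and with it the full equivalence, is not established. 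It is worth noting that the paper's written proof is also silent on this point---it cites Corollary \ref{cor:KacReal4} and never upgrades ``affine'' to ``untwisted affine''---so you have identified the one step of the corollary that does not follow formally from the quoted results; to complete your proposal you would need to actually carry out the matching of second-kind loop algebras of the twisted types $\typeaff{A}{k}{2}$, $\typeaff{D}{k}{2}$, $\typeaff{D}{4}{3}$, $\typeaff{E}{6}{2}$ with untwisted ones, or supply a structural argument replacing that case analysis.
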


\begin{proof} `(a) $\Rightarrow$  (b)"  By Corollary \ref{cor:KacReal4},
$\cL \simeq \Lp(\fgb,\sg)$ for some affine Lie algebra $\fg = \fg(A)$
and some finite order $\sg\in\Aut(\fg)$.  By Theorem \ref{thm:erase}(c), we can
assume that $\sg\in \Out(A)$.  Finally, $\sg\notin\Aut(A)$, by Theorem \ref{thm:char}.

``(b) $\Rightarrow$  (c)" is trivial.

``(c) $\Rightarrow$  (a)" By Corollary \ref{cor:KacReal4}, $\cL\in \bbI_2$.  Further, if
$\cL\in \bbM_2$, then  $\Cd(\cL) \simeq R_2$ by Proposition \ref{prop:centloop},
which implies that $\sg$ is of first kind by Proposition~\ref{prop:kind}.
\end{proof}

\begin{pgraph}
Much of the information from this section is summarized in Figure \ref{fig:classdiag} below.
In the figure, $\bbM_2$ has a bold line border, $\bbI_2$ is the tall vertical rectangle, and
$\bbE_2$ is the wide horizontal rectangle.
\end{pgraph}

\newcommand\McapE{%
\parbox[t]{3in}
{
\small
$\bbM_2\cap \bbE_2 = \bbI_2\cap \bbE_2$\\
$= \{\Lp(\fgb,\sg) \suchthat \fg \text{ untwisted affine,}$\\
$\text{ \hspace{.95in} $\sg\in \Aut(A)$ not transitive}\}$\\
$= \set{\text{isotropic algebras in } \bbM_2}$\\
$= \set{\text{fgc algebras in } \bbE_2}$
}
}%
\newcommand\MminusE{%
\parbox[t]{3in}
{\small
$\bbM_2\setminus \bbE_2$\\
$= \{\Lp(\fgb,\sg) \suchthat \fg \text{ untwisted affine,}$\\
$\text{ \hspace{.95in} $\sg\in \Aut(A)$ transitive}\}$\\
$= \set{\text{anisotropic algebras in } \bbM_2}$\\
$= \set{\spl_1(\Qp) \suchthat \theta \text{  a root of unity} \ne 1}$
}
}%
\newcommand\EminusM{%
\parbox[t]{2in}
{\small
$\bbE_2\setminus \bbM_2$\\
$=$ non-fgc algebras in $\bbE_2$\\
$= \{\spl_g(\Qp) \suchthat g \ge 2,$\\
$\text{ \hspace{.35in} $\theta$ not a root of unity}\}$
}
}%
\newcommand\IminusM{%
\parbox[t]{3in}
{\small
$\bbI_2\setminus \bbM_2$\\
$= \set{\Lp(\fgb,\omega\sg) \suchthat \fg \text{ untwisted affine}, \sg\in \Aut(A)}$\\
$= \{\text{algebras in } \bbI_2
\text{ with centroid}  \not\simeq R_2\}$
\\
}
}%

\begin{figure}[ht]
\centering
\parbox{6.7in}
{
\setlength{\unitlength}{5pt}
\begin{picture}(66,39)(-5,0)
\drawline(0,0)(66,0)(66,14)(0,14)(0,0)
\drawline(0,0)(39,0)(39,38)(0,38)(0,0)
\thicklines
\drawline(0,0)(39,0)(39,28)(0,28)(0,0)
\drawline(.1,.1)(38.85,.1)(38.85,27.9)(.1,27.9)(.1,.1)
\put(1,35){\IminusM}
\put(1,25){\MminusE}
\put(1,11){\McapE}
\put(41,11){\EminusM}
\put(63,15){$\bbE_2$}
\put(40,26){$\bbM_2$}
\put(-3,36){$\bbI_2$} 
\end{picture}
}
\caption{The classes $\bbM_2$, $\bbI_2$ and $\bbE_2$}
\label{fig:classdiag} 
\end{figure}

\begin{pgraph}
There   is another interesting class of algebras related to multiloop algebras which has
been investigated in \cite{GP1} using cohomological methods.
This is the class, which we denote here by $\bbF_n$, of \emph{$R_n$-forms}
of finite dimensional simple Lie algebras.  (If $\fg$ is an algebra, an \emph{$R_n$-form of $\fg$} is an algebra $\cL$ over $R_n$
such that $\cL\ot_{R_n} S \simeq_S \fg\ot S$ for some faithfully flat and finitely presented
extension $S/R_n$ of unital commutative associative algebras over $\base$.)
For any $n\ge 1$, $\bbF_n$ contains $\bbM_n$ \cite[\S 5.1]{GP1}; if $n=1$, $\bbF_1 = \bbM_1$ \cite{P2}, so
$\bbF_1 = \bbM_1 = \bbI_1 = \bbE_1$;
but already if $n=2$, an example of B.~Margaux shows that $\bbF_2$ properly contains $\bbM_2$  \cite[Example 5.7]{GP1}.
Furthermore, algebras in $\bbF_2$ are fgc and have centroids isomorphic
to $R_2$ \cite[Lemma 4.6]{GP1}, so $\bbF_2 \cap \bbI_2 = \bbM_2$ and
$\bbF_2 \cap \bbE_2 = \bbM_2 \cap \bbE_2$.  We will not discuss the
class $\bbF_n$ further here, and instead refer the reader to \cite{GP1} for more information.
\end{pgraph}

\section{Isomorphism conditions for matrix algebras over quantum tori}
\label{sec:isomat}

In the next two sections we  develop the results we need to determine
when two algebras of the form $\Lp(\fgb,\sg)$, where $\fg$ is an affine algebra
and $\sg$ is a diagram automorphism of $\fgb$, are isomorphic.  We do this for the rotation
case in this section and for the nontransitive case in the next section.

Recall that  $R_2 = \base[t_1^{\pm 1},t_2^{\pm 1}]$. We let
\[K_2 =  \base(t_1,t_2)\]
be the quotient field of $R_2$; that is $K_2$ is the field of rational functions in the variables $t_1,t_2$ over
$\base$.

Recall also from
\pref{pgraph:centquantum} that if
$\prim$ is an element of $\base^\times$ of finite order~$m$ and
$\Qp$ is the quantum torus  determined by $\prim$  with distinguished generators
$x_1,x_2,x_1^{-1},x_2^{-1}$, then we  have identified $R_2$
with the centre of $\Qp$ by setting $t_1 = x_1^m$ and $t_2 = x_2^m$.

\subsection{Comparing $Q(\prim)$ and $Q(\prim^{-1})$}\
\label{subsec:compare}

Suppose that $\prim\in \base^\times$ has order $m$.

\begin{pgraph}\label{pgraph:exchange}  Note  that $\Qp \simeq Q(\prim^{-1})$ under the isomorphism that exchanges
the distinguished generators $x_1$ and~$x_2$.  However, it is not true that
$\Qp \simeq_{R_2} Q(\prim^{-1})$ unless $m = 1$ or $2$, as we'll see below
in Lemma \ref{lem:isomquantum}.
\end{pgraph}

\begin{pgraph}
If $\cA$ is an arbitrary algebra,
we denote the
\emph{opposite algebra} of $\cA$, with underlying vector space $\cA$ and multiplication $(x,y) \mapsto yx$,
by $\cA^\op$.
\end{pgraph}

\begin{lemma}\label{lem:opposite}
Suppose that $g\ge 1$.  Then
\begin{gather*}
\notag
\Mat_g(\Qp) \simeq_{R_2} \Mat_g\big(Q(\prim^{-1})\big)^\op,
\\
\gl_g(\Qp) \simeq_{R_2} \gl_g\big(Q(\prim^{-1})\big) \andd \spl_g(\Qp) \simeq_{R_2} \spl_g\big(Q(\prim^{-1})\big).
\end{gather*}
\end{lemma}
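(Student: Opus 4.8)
The plan is to reduce all three isomorphisms to one elementary observation: reversing the multiplication of a quantum torus inverts its parameter. First I would establish the associative $R_2$-isomorphism $Q(\prim)^\op \isoR Q(\prim^{-1})$. Writing $x_1,x_2$ for the distinguished generators of $Q(\prim)$ and $*$ for the product of $Q(\prim)^\op$, the relation $x_1x_2 = \prim x_2x_1$ becomes $x_2*x_1 = \prim(x_1*x_2)$, which is exactly the defining relation of $Q(\prim^{-1})$ with its distinguished generators $y_1,y_2$ sent to $x_1,x_2$. Thus the identity on the underlying space extends to an algebra isomorphism $Q(\prim^{-1}) \to Q(\prim)^\op$. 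Since each $x_i$ commutes with itself, $x_i*\dots*x_i = x_i^m$, so $y_i^m \mapsto x_i^m$; hence this map carries $t_1,t_2 = y_1^m,y_2^m$ to $t_1,t_2 = x_1^m,x_2^m$ and is $R_2$-linear, as required.

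Next I would use the standard identity $\Mat_g(\cA) \isom \Mat_g(\cA^\op)^\op$, valid for any unital associative algebra $\cA$ and realized by transposition $X \mapsto X^t$: a direct computation gives $(X*Y)^t = X^t Y^t$ in $\Mat_g(\cA^\op)$, where $*$ is the product of $\Mat_g(\cA)^\op$, so $X\mapsto X^t$ is an isomorphism $\Mat_g(\cA)^\op \to \Mat_g(\cA^\op)$, equivalently $\Mat_g(\cA) \to \Mat_g(\cA^\op)^\op$. Transposition commutes with scaling by central elements, so it is $R_2$-linear when $\cA$ is an $R_2$-algebra. Taking $\cA = Q(\prim)$ and feeding in the first step yields
\[\Mat_g(Q(\prim)) \isoR \Mat_g(Q(\prim)^\op)^\op \isoR \Mat_g(Q(\prim^{-1}))^\op,\]
which is the first assertion.

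For the Lie algebra statements I would apply the commutator functor $(-)^-$, noting it is functorial and that for any associative $\cB$ one has $(\cB^\op)^- = (\cB^-)^\op$, with $x\mapsto -x$ an $R_2$-linear Lie isomorphism $(\cB^-)^\op \isom \cB^-$. Applying $(-)^-$ to the matrix isomorphism above and then this sign map with $\cB = \Mat_g(Q(\prim^{-1}))$ gives
\[\gl_g(Q(\prim)) = \Mat_g(Q(\prim))^- \isoR \bigl(\Mat_g(Q(\prim^{-1}))^\op\bigr)^- \isom \gl_g(Q(\prim^{-1})).\]
Since $\spl_g(\cA) = \gl_g(\cA)'$ is the derived algebra, any Lie isomorphism preserves it, so this composite restricts to the desired $R_2$-isomorphism $\spl_g(Q(\prim)) \isoR \spl_g(Q(\prim^{-1}))$. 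The only real hazard is the opposite/transpose bookkeeping — keeping track of exactly where each $\op$ lands and verifying $R_2$-linearity at every stage, which ultimately rests on the fact that each distinguished generator commutes with itself, so that passing to the opposite algebra fixes the central generators $t_1,t_2$.
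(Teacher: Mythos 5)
Your argument is correct and follows essentially the same route as the paper: establish the $R_2$-isomorphism between $\Qp$ and the opposite of $Q(\prim^{-1})$ from the defining relation, pass to matrices via the transpose, undo the remaining $\op$ on the Lie algebra level by additive inversion $x\mapsto -x$, and restrict to derived algebras for $\spl_g$. The only cosmetic difference is the direction in which you set up the first isomorphism, and your attention to why $t_1,t_2$ are fixed (powers of a single generator are unaffected by reversing the product) correctly supplies the $R_2$-linearity the paper calls ``clear.''
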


\begin{proof} Let $*$ denote the product in $Q(\prim^{-1})^\op$.  Then  in $Q(\prim^{-1})^\op$, we have
$x_1*x_2 = x_2x_1 = \prim  x_1 x_2 = \prim x_2 * x_1$.  Hence, we have a $\base$-algebra isomorphism of
$\Qp$ onto $Q(\prim^{-1})^\op$  with $x_i \mapsto x_i$.    It is clear that this is
an $R_2$-algebra isomorphism, so $\Qp \simeq_{R_2} Q(\prim^{-1})^\op$.  Thus,
\[\Mat_g(\Qp) \simeq_{R_2} \Mat_g(Q(\prim^{-1})^\op) \simeq_{R_2} \Mat_g\big(Q(\prim^{-1})\big)^\op,\]
where the last isomorphism is the transpose map; and
\[\gl_g(\Qp) \simeq_{R_2} \gl_g\big(Q(\prim^{-1})^\op\big) \simeq_{R_2} \gl_g\big(Q(\prim^{-1})\big),\]
where the last isomorphism is additive inversion.  Taking derived algebras now finishes the proof.
\end{proof}

\begin{lemma}\label{lem:extension} If $\ph \in \Aut(R_2)$, then there exists a $\base$-algebra isomorphism
of $\Qp$ onto either $\Qp$ or $Q(\prim^{-1})$ that extends $\ph$.
\end{lemma}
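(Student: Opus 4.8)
The plan is to lift $\ph$ by prescribing the images of the distinguished generators $x_1,x_2$ of $\Qp$ as monomials in the distinguished generators of a suitable target, the target being either $\Qp$ or $Q(\prim^{-1})$. First I would record the structure of $\Aut(R_2)$. Since $R_2=\base[t_1^{\pm 1},t_2^{\pm 1}]$ is the group algebra of $\bbZ^2$ over the field $\base$, its units are exactly the monomials $\base^\times t_1^i t_2^j$. Hence every $\ph\in\Aut(R_2)$ is \emph{monomial}: there are scalars $c_1,c_2\in\base^\times$ and a matrix $M=\left[\begin{smallmatrix} a & b \\ c & d\end{smallmatrix}\right]\in\GL_2(\bbZ)$ with
\[\ph(t_1)=c_1 t_1^a t_2^b \andd \ph(t_2)=c_2 t_1^c t_2^d.\]

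The key computation is the commutation rule for monomials. Write $\omega$ for the commutation factor of the target, so $\omega=\prim$ for $\Qp$ and $\omega=\prim^{-1}$ for $Q(\prim^{-1})$; in either case the distinguished generators satisfy $x_1x_2=\omega x_2x_1$, from which $x_2^{\beta}x_1^{\alpha}=\omega^{-\alpha\beta}x_1^{\alpha}x_2^{\beta}$ by an easy induction. Setting $X_1=x_1^a x_2^b$ and $X_2=x_1^c x_2^d$ one then finds
\[X_1X_2=\omega^{\det M}X_2X_1.\]
Defining $\psi(x_1)=\lambda_1 x_1^a x_2^b$ and $\psi(x_2)=\lambda_2 x_1^c x_2^d$ (with $\lambda_i\in\base^\times$ to be fixed below), this shows that the images satisfy the defining relation $\psi(x_1)\psi(x_2)=\prim\,\psi(x_2)\psi(x_1)$ of $\Qp$ exactly when $\omega^{\det M}=\prim$. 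Since $\det M=\pm1$, I would choose the target to be $\Qp$ when $\det M=1$ and $Q(\prim^{-1})$ when $\det M=-1$; in both cases $\omega^{\det M}=\prim$, so $\psi$ extends to a well-defined $\base$-algebra homomorphism into the target (the images of $x_1,x_2$ being units, the inverse relations are automatic). This dichotomy, forced by the determinant, is precisely what produces the two possible targets in the statement.

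Next I would fix $\lambda_1,\lambda_2$ so that $\psi$ restricts to $\ph$ on $R_2$. A computation of $m$-th powers of monomials in the target shows that $\psi(t_1)=\psi(x_1)^m$ is a scalar multiple of $t_1^a t_2^b$ and $\psi(t_2)=\psi(x_2)^m$ a scalar multiple of $t_1^c t_2^d$, the scalars having the form $\lambda_1^m\mu_1$ and $\lambda_2^m\mu_2$ for fixed $\mu_1,\mu_2\in\base^\times$ depending only on $M$ and $\omega$. Comparing with $\ph(t_1),\ph(t_2)$ reduces the requirement $\psi|_{R_2}=\ph$ to the equations $\lambda_1^m\mu_1=c_1$ and $\lambda_2^m\mu_2=c_2$, which are solvable in $\base^\times$ because $\base$ is algebraically closed. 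Finally, since $M\in\GL_2(\bbZ)$, the map $\psi$ preserves the $\bbZ^2$-grading up to the invertible linear substitution given by $M$, so it permutes the one-dimensional homogeneous components bijectively; hence $\psi$ is an isomorphism extending $\ph$.

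I expect the only real subtlety to be the bookkeeping of the scalar $\mu_i$ arising in the $m$-th power computation (which involves a factor $\omega^{-ab\binom{m}{2}}$, equal to $\pm1$), together with keeping straight which target makes $\omega^{\det M}=\prim$. Everything else is formal: the monomial description of $\Aut(R_2)$ is standard, the determinant identity is the crux, and algebraic closedness disposes of the scalar equations.
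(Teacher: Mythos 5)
Your proof is correct, and it rests on the same underlying mechanism as the paper's: every automorphism of $R_2$ is monomial, and one lifts it to a monomial substitution $x_i \mapsto \lambda_i x_1^{p_{i1}}x_2^{p_{i2}}$ on the quantum torus, with the sign of the determinant of the exponent matrix dictating whether the target must be $\Qp$ or $Q(\prim^{-1})$. The difference is in how the lift is produced. The paper proceeds by three reductions: it first composes with the exchange isomorphism $\Qp \to Q(\prim^{-1})$ to force determinant $+1$; it then cites Neeb's lemmas for the existence of an automorphism $\psi_P$ of $\Qp$ realizing any $P\in \SPL_2(\bbZ)$ on the monomials; and it finally handles the residual case $\ph(t_i)\sim t_i$ by extracting $m$-th roots in $\base^\times$. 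You instead build the lift in one step and verify the defining relation $\psi(x_1)\psi(x_2)=\prim\,\psi(x_2)\psi(x_1)$ directly from the identity $X_1X_2=\omega^{\det M}X_2X_1$, which is precisely the content of the cited result of Neeb in the $\det M=1$ case. What your route buys is self-containedness (no appeal to an external reference) at the cost of the scalar bookkeeping $\mu_i=\omega^{-ab\binom{m}{2}}$ in the $m$-th power computation, which you correctly flag as the only delicate point and which the paper's reduction to $\ph(t_i)\sim t_i$ sidesteps entirely; the final use of $\base^\times=(\base^\times)^m$ is the same in both arguments.
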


\begin{proof}    It is well known that there
exists a unique $P= \left[\begin{smallmatrix} p_{11} & p_{12} \\ p_{21} & p_{22}\end{smallmatrix}\right]\in \GL_2(\bbZ)$ such that
\[\ph(t_i) \sim t_1^{p_{i1}}  t_2^{p_{i2}}\]
for $i=1,2$, where
$\sim$ means that the first element is obtained from the second by multiplying by an element of $\base^\times$.
Let $\varepsilon = \det(P) \in \set{\pm 1}$.  We will show that $\ph$
extends to an isomorphism from $\Qp$ to  $Q(\prim^\varepsilon)$.
Let $\psi_{\text{ex}} : \Qp \to Q(\prim^{-1})$ be the isomorphism mentioned in \pref{pgraph:exchange}.
Replacing $\ph$ by $\psi_{\text{ex}}|_{R_2} \ph$ if necessary, we can assume that $\varepsilon = 1$.
But  then, since $P\in \SPL_2(\bbZ)$, there exists $\psi_P\in \Aut(\Qp)$ such
that  $\psi_P(x_i) \sim  x_1^{p_{i1}}  x_2^{p_{i2}}$ for $i=1,2$
\cite[Lemmas IV.3 and  IV.6]{Neeb}.
Then
$\psi_P(t_i) \sim  t_i^{p_{i1}}  t_2^{p_{i2}}$,
so $(\psi_P^{-1}\ph)(t_i) \sim t_i$ for $i=1,2$.  Hence, replacing $\ph$ by $\psi_P^{-1}|_{R_2}\ph$, we can assume that
$\ph(t_i) \sim t_i$ for $i=1,2$.  Since $\base^\times = (\base^\times)^m$, the proof in this case is clear.
\end{proof}

\subsection{Cyclic algebras and quantum tori}\
\label{subsec:cyclicquantum}

Suppose that $\prim\in \base^\times$ has order $m$.

\begin{pgraph}\headtt{Cyclic algebras}
\label{pgraph:cyclic}
Suppose that $F$ is a unital commutative associative  $\base$-algebra and
$u_1,u_2$ are units in $F$.  We let
\[\cA = (u_1,u_2;m,F,\prim)\]
denote the algebra over $F$ presented by the generators
$y_1,y_2$ subject to the relations $y_1y_2 = \zeta y_2 y_1$, $y_1^m = u_1$ and $y_2^m = u_2$.  We call
$\cA$ the \emph{cyclic algebra} over $F$ determined by
$u_1$, $u_2$ and $\prim$, and we call
$y_1,y_2$ the \emph{distinguished generators} of $\cA$ over $F$.  It is easy to see that
$\cA$ is a free $F$-module of rank $m^2$ with basis
$\set{y_1^{k_1}y_2^{k_2}\suchthat 0\le k_1,k_2\le m-1}$.
If $F$ is a field, it is known that $\cA$ is a central simple
algebra over $F$ \cite[Theorem 11.1]{D}, and we denote the element represented by $\cA$
in the Brauer group $\Br(F)$ of $F$ by
$[u_1,u_2;m,F,\prim]$.
\end{pgraph}

\begin{lemma} \label{lem:cyclicquantum}\
\begin{itemize}
\item[(a)] $Q(\prim) \simeq_{R_2} (t_1,t_2;m,R_2,\prim)$
\item[(b)] $Q(\prim)\ot_{R_2} K_2  \simeq_{K_2} (t_1,t_2;m,K_2,\prim)$
\item[(c)] $Q(\prim)\ot_{R_2} K_2$ is a central division algebra of dimension $m^2$ over $K_2$.
\item[(d)] $[t_1,t_2;m,K_2,\prim]$ has order $m$ in $\Br(K_2)$.
\end{itemize}
\end{lemma}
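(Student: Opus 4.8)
The plan is to prove (d) by a direct ramification computation for the symbol algebra $A = (t_1,t_2;m,K_2,\prim)$, which by (b) represents the Brauer class of $Q(\prim)\ot_{R_2}K_2$ and is, by \pref{pgraph:cyclic}, central simple of degree $m$ over $K_2$. First I would observe that this class is $m$-torsion. Indeed, by bimultiplicativity of the symbol, $m[t_1,t_2;m,K_2,\prim] = [t_1,t_2^m;m,K_2,\prim]$, and the latter splits: replacing the second distinguished generator $y_2$ by $t_2^{-1}y_2$ gives $(t_1,t_2^m;m,K_2,\prim)\isom (t_1,1;m,K_2,\prim)\isom \Mat_m(K_2)$. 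Hence the order of $[t_1,t_2;m,K_2,\prim]$ divides $m$, and the whole problem reduces to bounding the order from below, i.e.\ showing it is killed by no proper divisor of $m$.

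For the lower bound I would use the residue (ramification) homomorphism attached to a discrete valuation. Regard $K_2 = E(t_1)$ as the rational function field in $t_1$ over $E := \base(t_2)$, and let $v$ be the $t_1$-adic valuation, whose residue field is $\kappa = E = \base(t_2)$. Since $\base$ contains the primitive $m$-th root of unity $\prim$, Kummer theory identifies the residue map on the $m$-torsion of the Brauer group with a group homomorphism $\partial_v : {}_m\Br(K_2) \to \kappa^\times/(\kappa^\times)^m$. The standard tame-symbol formula evaluates $\partial_v$ on a symbol algebra: $\partial_v([a,b;m,K_2,\prim])$ is the class of $(-1)^{v(a)v(b)}\,a^{v(b)}b^{-v(a)}$ in $\kappa^\times/(\kappa^\times)^m$. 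Taking $a=t_1$ and $b=t_2$, so that $v(t_1)=1$ and $v(t_2)=0$, this yields $\partial_v([t_1,t_2;m,K_2,\prim]) = \overline{t_2}^{\,-1}$ in $\kappa^\times/(\kappa^\times)^m$.

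It then suffices to check that $\overline{t_2}$ has order exactly $m$ in $\base(t_2)^\times/(\base(t_2)^\times)^m$, which is immediate from the $t_2$-adic valuation $w$ on $\base(t_2)$: if $t_2^d = h^m$ with $h\in\base(t_2)^\times$ then $d = m\,w(h)$, so $m\mid d$, and the least positive such $d$ is $m$. Because $\partial_v$ is a homomorphism, the order of $[t_1,t_2;m,K_2,\prim]$ in $\Br(K_2)$ is a multiple of the order of its image, namely $m$; together with the upper bound from the first paragraph, the order is exactly $m$, which is (d). As a bonus this recovers (c): the exponent $m$ divides the index, which divides the degree $m$, so the index equals $m$ and $A$ is a division algebra.

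The main obstacle I anticipate is expository rather than mathematical — setting up the residue map with the correct normalization and invoking the tame-symbol formula with its sign convention. Here the sign $(-1)^{v(a)v(b)}$ is harmless since $v(t_1)v(t_2)=0$, and the identification of the ${}_m\Br$-residue with $\kappa^\times/(\kappa^\times)^m$ via $\prim$ is routine; the only genuinely computational input is the one-line valuation argument showing that $t_2$ is not a proper power in $\base(t_2)$.
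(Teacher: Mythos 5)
Your argument is correct, but it reaches (c) and (d) by a genuinely different route from the paper, and in the opposite logical order. The paper proves (c) first and directly: $\Qp$ is a domain (seen by multiplying components of highest degree in its $\bbZ^2$-grading), hence $\Qp\ot_{R_2}K_2$ is a finite-dimensional domain over $K_2$ and therefore a division algebra. It then deduces (d) by letting $r$ be the order of the class, noting $r\mid m$, invoking Draxl's identity $r[t_1,t_2;m,K_2,\prim]=[t_1,t_2;\frac mr,K_2,\prim^r]$, and applying (c) to $\prim^r$: a split algebra would be a division algebra of degree $m/r$, forcing $r=m$. You instead establish (d) first by a ramification computation: the residue of the symbol at the $t_1$-adic valuation is $\overline{t_2}^{\,\pm 1}$, which visibly has order $m$ in $\base(t_2)^\times/(\base(t_2)^\times)^m$ by the $t_2$-adic valuation, so the class has order at least (hence exactly) $m$; you then recover (c) from the coincidence of exponent and degree. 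Both arguments are sound; your tame-symbol evaluation, the triviality of the sign since $v(t_1)v(t_2)=0$, and the splitting of $(t_1,t_2^m;m,K_2,\prim)$ all check out. What each buys: the paper's proof is more elementary and self-contained, using only the grading of the quantum torus plus one cited lemma on cyclic algebras; yours requires setting up the residue homomorphism with the Kummer identification, but it is the standard Brauer-theoretic computation and scales better --- essentially the same calculation would also give Lemma \ref{lem:isomquantum} and its analogues in more variables. You leave (a) and (b) to a comparison of presentations, which is exactly what the paper does, so nothing is missing there; just note that your derivation of (c) does rely on (b) together with the fact, recorded in \pref{pgraph:cyclic}, that a cyclic algebra over a field is central simple of degree $m$.
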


\begin{proof}  (a) follows easily from the presentations for the two algebras, and (b) follows from
(a).

(c) is well  known (see for example the arguments in \cite[\S 2--3]{Y1}).  For the reader's convenience, we briefly recall the proof.
In view of (b) and the remarks in \pref{pgraph:cyclic}, it suffices to show that
$Q(\prim)\ot_{R_2} K_2$ is a division algebra.   Since $Q(\prim)\ot_{R_2} K_2$ is finite dimensional over $K_2$,
it is enough to show that $Q(\prim)\ot_{R_2} K_2$ is a domain, and thus it suffices to show that
$Q(\prim)$ is a domain.  This is easily seen looking at components of highest degree (relative
to the lexicographic ordering) in the $\bbZ^2$-grading for $\Qp$.

To prove (d), let $r$ be the order of $[t_1,t_2;m,K_2,\prim]$ in $\Br(K_2)$.  Since
the algebra $(t_1,t_2;m,K_2,\prim)$ is  central simple of dimension $m^2$ over $K_2$, we know $r$ divides $m$.
But by \cite[Lemma 11.6]{D}, we have $r[t_1,t_2;m,K_2,\prim] = [t_1,t_2;\frac mr,K_2,\prim^r]$,
so \[[t_1,t_2;\textstyle \frac mr,K_2,\prim^r] = 0.\]
Also, by (b),  $(t_1,t_2;\textstyle \frac mr,K_2,\prim^r) \simeq_{R_2} Q(\prim^{r}) \ot_{R_2}K_2$,
which is a division algebra of dimension $(\frac mr)^2$ over $K_2$ by (c).  Hence, $\frac mr = 1$, so $r = m$.
\end{proof}

\begin{remark}  It is known that the homomorphism
$\Br(R_2) \mapsto \Br(K_2)$ induced by base ring extension is an injection \cite[Cor. IV.2.6]{Mi},
where $\Br(R_2)$ is the Brauer group
of the ring $R_2$.  Using this fact, Lemma \ref{lem:cyclicquantum}(d)
is equivalent to the statement, proved in   \cite[Prop.~3.16]{GP1}
in a different way, that  $(t_1,t_2;m,R_2,\prim)$
represents an element of order $m$ in $\Br(R_2)$.
\end{remark}

\subsection{Isomorphism conditions}\
\label{subsec:isocond}

\begin{lemma} \label{lem:isomquantum}\  Suppose that $\prim_1$ and $\prim_2$
are elements of $\base^\times$ of finite order.   If
$Q(\prim_1)\ot_{R_2} K_2 \simeq_{K_2}  Q(\prim_2)\ot_{R_2} K_2$, then $\prim_1 = \prim_2$.
\end{lemma}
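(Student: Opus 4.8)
The plan is to extend scalars to $K_2$, where both algebras become cyclic division algebras, and then to recover $\prim$ from their classes in the Brauer group. First I would let $m_i$ be the order of $\prim_i$. By Lemma \ref{lem:cyclicquantum}(b),(c), the given algebra $Q(\prim_i)\ot_{R_2} K_2 \simeq_{K_2} (t_1,t_2;m_i,K_2,\prim_i)$ is a central division algebra of dimension $m_i^2$ over $K_2$. The hypothesized isomorphism $Q(\prim_1)\ot_{R_2}K_2 \simeq_{K_2} Q(\prim_2)\ot_{R_2}K_2$ forces equality of $K_2$-dimensions, so $m_1 = m_2 =: m$, and the two cyclic algebras are isomorphic as $K_2$-algebras, hence represent the same Brauer class:
\[ [t_1,t_2;m,K_2,\prim_1] = [t_1,t_2;m,K_2,\prim_2] \quad\text{in } \Br(K_2). \]
Since $\prim_1$ and $\prim_2$ are now both primitive $m$-th roots of unity, I can write $\prim_2 = \prim_1^{r}$ with $\gcd(r,m)=1$, and the whole problem becomes showing $r \equiv 1 \pmod m$.

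The heart of the argument is to determine how the class $[t_1,t_2;m,K_2,\eta]$ depends on the chosen primitive root $\eta$. I would establish the identity
\[ [t_1,t_2;m,K_2,\prim_1] = r\,[t_1,t_2;m,K_2,\prim_1^{r}] \quad\text{in } \Br(K_2). \]
To prove it, let $y_1,y_2$ be the distinguished generators of $(t_1,t_2;m,K_2,\prim_1)$, so $y_1y_2 = \prim_1 y_2 y_1$, $y_1^m=t_1$, $y_2^m=t_2$. Then $y_1^{r}$ and $y_2$ satisfy $(y_1^{r})y_2 = \prim_1^{r} y_2 (y_1^{r})$, $(y_1^{r})^m = t_1^{r}$, $y_2^m=t_2$, and because $\gcd(r,m)=1$ they again generate the whole algebra; a dimension count ($m^2$ on both sides) then gives a $K_2$-algebra isomorphism $(t_1,t_2;m,K_2,\prim_1) \simeq_{K_2} (t_1^{r},t_2;m,K_2,\prim_1^{r})$. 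Combining this with the bimultiplicativity of the cyclic symbol in its first argument, namely $[t_1^{r},t_2;m,K_2,\prim_1^{r}] = r\,[t_1,t_2;m,K_2,\prim_1^{r}]$ (a standard property of cyclic algebras; cf.\ \cite{D}), yields the displayed identity.

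Finally I would feed this identity back into the Brauer equality from the first step. Abbreviating $\beta = [t_1,t_2;m,K_2,\prim_1^{r}]$, I have $[t_1,t_2;m,K_2,\prim_2] = \beta$ (as $\prim_2 = \prim_1^{r}$) and $[t_1,t_2;m,K_2,\prim_1] = r\beta$, so the equality of the two classes gives $(r-1)\beta = 0$ in $\Br(K_2)$. Since $\prim_1^{r}$ has order $m$ in $\base^\times$, Lemma \ref{lem:cyclicquantum}(d) shows $\beta$ has order exactly $m$, whence $r \equiv 1 \pmod m$ and therefore $\prim_2 = \prim_1^{r} = \prim_1$. The genuinely delicate point is the middle identity: the tame residue of the symbol at a discrete valuation of $K_2$ turns out to be insensitive to the chosen root of unity, so the dependence on $\eta$ cannot be read off from a naive residue computation and must instead be captured through the explicit generator isomorphism together with bimultiplicativity, as above.
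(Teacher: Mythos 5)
Your proof is correct and follows essentially the same route as the paper's: equate the orders $m_1=m_2$ via the dimension of the division algebra, pass to Brauer classes, and use the order-$m$ statement of Lemma \ref{lem:cyclicquantum}(d) to force $r\equiv 1\pmod m$. The only difference is that you prove the change-of-root-of-unity identity $[t_1,t_2;m,K_2,\prim_1]=r\,[t_1,t_2;m,K_2,\prim_1^{r}]$ by an explicit generator argument, whereas the paper simply cites it as \cite[Lemma 11.5]{D}.
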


\begin{proof} By Lemma \ref{lem:cyclicquantum}(c), $\prim_1$ and $\prim_2$ have the same order
$m$ in $\base^\times$.  So we can write $\prim_2 = \prim_1^q$, where $q\in \bbZ$ and $\gcd(q,m) = 1$.  Then, by Lemma \ref{lem:cyclicquantum}(b),
we have
$[t_1,t_2;m,K_2,\prim_1] = [t_1,t_2;m,K_2,\prim_1^q]$
in $\Br(K_2)$. So  $q[t_1,t_2;m,K_2,\prim_1] = q[t_1,t_2;m,K_2,\prim_1^q]$.  But, by \cite[Lemma 11.5]{D},
the right hand side is equal to $[t_1,t_2;m,K_2,\prim_1]$. So $(q-1)[t_1,t_2;m,K_2,\prim_1] = 0$, and therefore,
by Lemma \ref{lem:cyclicquantum}(d), we have $q\equiv 1 \pmod {m}$.  Thus $\prim_1 = \prim_2$.
\end{proof}

\begin{theorem}
\label{thm:isomquantum} Suppose that $g_1$ and $g_2$ are positive integers,
and that $\prim_1$ and $\prim_2$ are elements of $\base^\times$ of finite order.
Then, the following statements are equivalent:
\begin{itemize}
\item[(a)] $\spl_{g_1}(Q(\prim_1)) \simeq \spl_{g_2}(Q(\prim_2))$
\item[(b)] $g_1= g_2$ and $\prim_1 = \prim_2^{\pm 1}$.
\item[(c)] $\Mat_{g_1}(Q(\prim_1)) \simeq \Mat_{g_2}(Q(\prim_2))$
\end{itemize}
\end{theorem}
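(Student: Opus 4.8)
The plan is to prove the three conditions equivalent by running the cycle $(b)\Rightarrow(c)\Rightarrow(a)\Rightarrow(b)$, where the first two implications are routine and the third carries all the weight. For $(b)\Rightarrow(c)$: if $\prim_1=\prim_2$ and $g_1=g_2$ there is nothing to do, while if $\prim_1=\prim_2^{-1}$ the exchange isomorphism of \pref{pgraph:exchange} gives a $\base$-algebra isomorphism $Q(\prim_2)\isom Q(\prim_2^{-1})=Q(\prim_1)$, whence $\Mat_{g_2}(Q(\prim_2))\isom\Mat_{g_1}(Q(\prim_1))$. For $(c)\Rightarrow(a)$: an associative isomorphism $\Mat_{g_1}(Q(\prim_1))\isom\Mat_{g_2}(Q(\prim_2))$ is in particular an isomorphism of the Lie algebras $\gl_{g_i}(Q(\prim_i))=\Mat_{g_i}(Q(\prim_i))^-$, hence restricts to their derived algebras $\spl_{g_i}(Q(\prim_i))$, giving $(a)$.

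The substance is $(a)\Rightarrow(b)$. Write $m_i$ for the order of $\prim_i$ and $\cL_i=\spl_{g_i}(Q(\prim_i))$. First I would dispose of the degenerate case: if $g_im_i=1$ then $Q(\prim_i)=R_2$ is commutative and $\cL_i=[R_2,R_2]=0$, so any isomorphism forces both sides to vanish and hence $g_1=g_2=1$, $\prim_1=\prim_2=1$. So assume $g_im_i>1$ for $i=1,2$. Then by Lemma \ref{lem:quantum}(e) the centroid of $\cL_i$ is $R_2$, so a $\base$-isomorphism $\Phi\colon\cL_1\to\cL_2$ induces (via \pref{pgraph:cfunctor}) an automorphism $\ph$ of $R_2$, and extends to a $\tilde\ph$-semilinear isomorphism of central closures $\tcL_1\to\tcL_2$, where $\tilde\ph$ is the extension of $\ph$ to $K_2$. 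By Proposition \ref{prop:cclosure} and Lemma \ref{lem:cyclicquantum}(c), $\tcL_i=\spl_{g_i}(D_i)$ with $D_i=Q(\prim_i)\ot_{R_2}K_2$ a central division algebra over $K_2$ of degree $m_i$; thus $\tcL_i$ is the inner central simple Lie algebra of type $\type{A}{g_im_i-1}$ attached to the central simple associative algebra $\Mat_{g_i}(D_i)$ of degree $g_im_i$ over $K_2$. Lemma \ref{lem:typeinvariant} gives $g_1m_1=g_2m_2$.

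Next I would reinterpret the semilinear isomorphism as a genuine $K_2$-linear isomorphism $\spl_{g_1}(D_1^{\tilde\ph})\isom_{K_2}\spl_{g_2}(D_2)$, where $D_1^{\tilde\ph}$ denotes the base change of $D_1$ along $\tilde\ph$. Here the classical classification of central simple Lie algebras of type $\mathrm A$ applies: two such inner forms are isomorphic over $K_2$ exactly when the underlying associative algebras are isomorphic or anti-isomorphic, so $g_1=g_2$ and $\Mat_{g_1}(D_1^{\tilde\ph})\isom_{K_2}\Mat_{g_2}(D_2)$ or $\Mat_{g_2}(D_2)^\op$ (the degree $\le 2$ cases, where the algebra is a field or a quaternion algebra and so anti-isomorphic to itself, are checked directly). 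It then remains to identify $D_1^{\tilde\ph}$: by Lemma \ref{lem:extension} the automorphism $\ph$ lifts to a $\base$-algebra isomorphism $Q(\prim_1)\to Q(\prim_1^\epsilon)$ for some $\epsilon=\pm1$, and base-changing this isomorphism shows $D_1^{\tilde\ph}\isom_{K_2}Q(\prim_1^\epsilon)\ot_{R_2}K_2$. Since also $\Mat_{g_2}(D_2)^\op\isom_{K_2}\Mat_{g_2}\big(Q(\prim_2^{-1})\ot_{R_2}K_2\big)$ by Lemma \ref{lem:opposite}, Wedderburn's uniqueness theorem forces $g_1=g_2$ together with $Q(\prim_1^\epsilon)\ot K_2\isom_{K_2}Q(\prim_2^{\pm1})\ot K_2$; Lemma \ref{lem:isomquantum} then yields $\prim_1^\epsilon=\prim_2^{\pm1}$, i.e. $\prim_1=\prim_2^{\pm1}$, which is $(b)$.

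I expect the main obstacle to be precisely this passage $(a)\Rightarrow(b)$: one must leave the category of quantum tori, descend to central closures over $K_2$, and invoke the nontrivial fact that an isomorphism of type-$\mathrm A$ Lie algebras recovers the associative algebra up to opposite. The delicate bookkeeping is the semilinear twist $\tilde\ph$ — in particular verifying through Lemma \ref{lem:extension} that base change along $\tilde\ph$ carries $Q(\prim_1)\ot K_2$ to another quantum-torus algebra $Q(\prim_1^{\pm1})\ot K_2$ — after which Wedderburn's theorem and Lemma \ref{lem:isomquantum} close the argument routinely.
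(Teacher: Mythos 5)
Your proposal is correct and follows essentially the same route as the paper's proof: centroid identification via Lemma \ref{lem:quantum}(e), untwisting the induced automorphism of $R_2$ via Lemmas \ref{lem:extension} and \ref{lem:opposite}, passing to the central closure over $K_2$, and then invoking the type-$\mathrm{A}$ isomorphism theorem, Wedderburn uniqueness, and Lemma \ref{lem:isomquantum}. The only (cosmetic) difference is that the paper composes with the lifted automorphism to make the isomorphism $R_2$-linear \emph{before} extending scalars, whereas you extend scalars first and carry the twist as a semilinear base change $D_1^{\tilde\ph}$.
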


\begin{proof}  For simplicity, we write $R = R_2$ and $K = K_2$.

``(c)$\Rightarrow$(a)'' is clear, and, since $Q(\prim_1) \simeq Q(\prim_1^{-1})$,
``(b)$\Rightarrow$(c)'' is also clear. So all that must be proved
is that ``(a)$\Rightarrow$(b)''.

Let $\cL_i = \spl_{g_i}(Q(\prim_i))$ for $i=1,2$, and suppose that $\rho: \cL_1 \to \cL_2$
is an isomorphism of $\base$-algebras. Let $m_i$ be the order of $\prim_i$ for $i=1,2$.
If $g_1m_1 = 1$, then $g_1 = 1$ and $\prim_1 = 1$.  Thus $\cL_1 = 0$, so $\cL_2 = 0$.  Therefore,
$g_2= 1$ and $\prim_2 = 1$, so (b) holds.  Hence, we can assume that $g_1m_1 > 1$ and similarly
$g_2m_2 > 1$.  Thus, by Lemma \ref{lem:quantum}(e), the natural homomorphism $R \to C(\cL_i)$
is an isomorphism which we regard as an identification for $i=1,2$.  So
we have the induced automorphism $\ph = C(\rho)$ of $R$ satisfying
$\rho(rx) = \ph(r) \rho(x)$
for $r\in R$, $x\in \cL_1$ (see \pref{pgraph:cfunctor}).  Then, by Lemma \ref{lem:extension}, there exists an isomorphism
$\psi : Q(\prim_1) \to Q(\prim_1^\varepsilon)$, where $\varepsilon = \pm 1$, such that
$\psi|_R = \ph^{-1}$.  Further, by the last isomorphism in Lemma \ref{lem:opposite}, we can assume that $\varepsilon = 1$.
Now $\psi\in \Aut(Q(\prim_1))$ induces  $\tilde \psi\in \Mat_{g_1}(Q(\prim_1))$, which restricts to
$\tilde \psi|_{\cL_1}\in \Aut(\cL_1)$.  Then, replacing $\rho$ by $\rho \tilde \psi|_{\cL_1}$, we can assume
that $\rho$ is $R$-linear.  Thus, $\cL_1 \simeq_R \cL_2$, so $\cL_1\ot_{R}K \simeq_K \cL_2\ot_{R}K$.

Now  since $K/R$ is a flat extension of commutative $k$-algebras \cite[Chap~I, \S~2.4, Thm~1]{Bo1}, we have
$(\mathcal{M}\otimes_R K)' \simeq_K \mathcal{M}'\otimes_R K$ for any Lie algebra
$\mathcal{M}$ over $R$.
Thus,
\begin{align*}
\spl_{g_i}(Q(\prim_i)\ot_R K) &\simeq_K \left(\gl_{g_i}(Q(\prim_i)\ot_R K)\right)'
\simeq_K \left(\gl_{g_i}(Q(\prim_i))\ot_R K\right)'\\
&\simeq_K \left(\gl_{g_i}(Q(\prim_i))\right)'\ot_R K
\simeq_K \spl_{g_i}(Q(\prim_i))\ot_R K \simeq \cL_i \ot_R K
\end{align*}
for $i=1,2$.  So
\begin{equation}
\label{eq:isomquantum1}
\spl_{g_1}(Q(\prim_1)\ot_R K) \simeq_K \spl_{g_2}(Q(\prim_2)\ot_R K).
\end{equation}

Now, by Lemma \ref{lem:cyclicquantum}, $Q(\prim_i)\ot_R K$ is an  $m_i^2$-dimensional central division algebra
over $K$ for $i=1,2$.  Thus, by  the isomorphism theorem for central simple Lie algebras of type $A$ \cite[Thm.~X.10]{J},
\eqref{eq:isomquantum1} implies that
$\Mat_{g_1}(Q(\prim_1)\ot_R K)$ is isomorphic as an algebra over $K$ to either $\Mat_{g_2}(Q(\prim_2)\ot_R K)$
or   $\left(\Mat_{g_2}(Q(\prim_2)\ot_R K)\right)^\op$.  Now,  by Lemma \ref{lem:opposite}, we can replace $\prim_2$ by $\prim_2^{-1}$ if necessary
and assume that
\[\Mat_{g_1}(Q(\prim_1)\ot_R K) \simeq_{K} \Mat_{g_2}(Q(\prim_2)\ot_R K).\]
By the uniqueness part of Wedderburn's structure theorem \cite[Thm.~3.5(ii)]{Pi}, it follows that
$g_1 = g_2$ and $Q(\prim_1)\ot_R K \simeq_K Q(\prim_2)\ot_R K$. Thus,
$\prim_1 = \prim_2$ by Lemma~\ref{lem:loopderived}.
\end{proof}

\section{Calculating the Relative type in the nontransitive case}
\label{sec:projaff}

Suppose  that
\emph{$\fg=\fg(A)$ is the Kac-Moody Lie algebra constructed from an affine GCM
$A = (a_{ij})_{i,j\in \ttI}$, where $\ttI = \set{0,\dots,\rkaff}$ with $\rkaff\ge 1$, and that
$\sg\in \Aut(A)$ is not transitive}. We  continue with the notation of Section \ref{sec:autaff} for affine
algebras, and we choose a positive integer $m$ such that  $\sg^m = 1$.

We  saw in Theorem \ref{thm:relativent} that the relative type of  $\Lp(\fgb,\sg)$
is the type of the finite root system $\piDlb$.
In this section  we use methods from   \cite{FSS} and \cite{Bau}
to compute the type of  $\piDlb$.

\subsection{Calculating $\pi_\sg(\Dl)^\times$} \
\label{subsec:projnt}

\begin{pgraph}
\label{pgraph:setuppi}  Recall from \pref{pgraph:pidef} that $\sg$ acts on $\fh^*$ and that
$\sg(\Dl) = \Dl$, so  $\sg(V) = V$.  Also, by \eqref{eq:diagaff1}, we have
$\sg(\al_i) = \al_{\sg(i)}$ for $i\in\ttI$. Further,
since $\sg$ preserves the form $\form$ on $\fh^*$ by Proposition \ref{prop:diagaff},
$\sg$ also preserves the form $\form$ on $V$.

Recall next that $V = V^\sg \oplus (1-\sg)(V)$. In fact,
since $\sg$ preserves the form $\form$ this decomposition is orthogonal:
\begin{equation}
\label{eq:Vdecomp1}
V = V^\sg \perp (1-\sg)(V).
\end{equation}
As in \pref{pgraph:pidef}, let
$\pi_\sg : V \to V^\sg$ be the projection of $V$ onto $V^\sg$ relative to the decomposition
\eqref{eq:Vdecomp1}.
Then, since $\sg$ has period $m$, we have
\begin{equation*}
\label{eq:piact1}
\pi_\sg(\al) = \frac 1m \sum_{i=0}^{m-1} \sg^i(\al)
\end{equation*}
for $\al\in V$.
\end{pgraph}

\begin{pgraph}  Although we will not use this fact, it is clear that
$\fh = \fg^\sg \perp (1-\sg)\fh$,
$\pi_\sg(\al)|_{\fh^\sg} = \al |_{\fh^\sg}$ and  $\pi_\sg(\al)\mid_{(1-\sg)\fh} = 0$.  In this way, $\pi_\sg(\al)$ can be identified
with $\al|_{\fh^\sg}$.  This is the point of view taken in \cite{Bau} and \cite{ABP1}.
\end{pgraph}

\begin{pgraph}
Let
\[\brvI = \set{i\in \ttI \suchthat \sg^k (i) \ge i \text{ for } k\in \bbZ}.\]
For $i\in \brvI$, let
$\cO(i)$ denote the orbit containing $i$ under the action of the group $\langle \sg \rangle$.
Then the sets $\cO(i)$, $i\in \brvI$, are the distinct orbits
for the action of $\langle \sg \rangle$ on $\ttI$; and,
for $i\in \ttI$, $i$ is the least element of
$\cO(i)$.

For $i\in \brvI$, set
\[\mu_i = \pi_\sg(\al_i).\]
Then $\pi_\sg(\al_p) = \mu_i$
for $i\in \brvI$ and $p\in \cO(i)$.
Also $\set{\mu_i}_{i\in \brvI}$ is a $\bbQ$-basis for $V^\sg$,
so $\dim_\bbQ(V^\sg) = \cardnum{\brvI}$.
\end{pgraph}

\begin{pgraph}
\label{pgraph:dimVsgb}
Since $\sg(\al_i) = \al_{\sg(i)}$ for $i\in\ttI$, we have
$\sg(\delta) = \delta$ by  \eqref{eq:Xaffdiag1} and \eqref{eq:Xaffdiag2}.
So $\delta\in V^\sg$, and hence
$\dim_\bbQ(\overline{V^\sg}) = \cardnum{\brvI} - 1$.
\end{pgraph}

The next lemma was observed in the proof of Proposition III.3.4 of \cite{Bau} (see also \cite[\S 2.5]{FSS}).   It is proved by
checking the claim for each possible affine matrix $A$ and each $\sg\in \Aut(A)$ that is not  transitive.

\begin{lemma}
\label{lem:si1} If $i\in \brvI$, then exactly one of the following holds:
\begin{itemize}
\item[(a)]  The elements of $\cO(i)$ are pairwise orthogonal.
\item[(b)]  $\cO(i) = \set{\al_p,\al_q}$, where
$p,q\in \brvI$, $p\ne q$ and  $a_{pq} = a_{qp} = -1$.
\end{itemize}
\end{lemma}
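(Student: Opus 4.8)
The plan is to convert the statement into a combinatorial assertion about how the orbit $\cO(i)$ sits inside the Dynkin diagram of $A$, and then to show that an orbit can only carry an edge when it has exactly two nodes. The two facts that make this possible are that $\sg\in\Aut(A)$ satisfies $\sg(\al_p)=\al_{\sg(p)}$ and $a_{\sg(p),\sg(q)}=a_{pq}$ (Proposition \ref{prop:diagaff}), and that $\sg$ preserves $\form$, so that two simple roots $\al_p,\al_q$ are orthogonal precisely when $a_{pq}=a_{qp}=0$, i.e. when $p$ and $q$ are unjoined in the diagram (using $a_{pq}=2(\al_p\,\vert\,\al_q)/(\al_p\,\vert\,\al_p)$). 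Thus alternative (a) says exactly that the subgraph induced on the node set $\cO(i)$ has no edges, and the whole lemma reduces to: \emph{if this induced subgraph has an edge, then $\cO(i)=\set{p,q}$ with $a_{pq}=a_{qp}=-1$.} The two alternatives are then automatically mutually exclusive, since (b) exhibits an edge and (a) forbids one.

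First I would exploit the $\langle\sg\rangle$-invariance of the induced subgraph. Suppose $\al_p,\al_q$ are non-orthogonal with $p,q\in\cO(i)$ distinct, say $q=\sg^k(p)$, and set $d=\card(\cO(i))$. Identifying $\cO(i)$ with $\bbZ/d$ via $\sg^j(p)\leftrightarrow j$, invariance forces all edges $\set{j,j+k}$ to be present, so the induced subgraph contains the circulant on $\bbZ/d$ of step $k$; this is a disjoint union of $\gcd(k,d)$ cycles of common length $d/\gcd(k,d)$. If $d/\gcd(k,d)\ge 3$ the diagram of $A$ would contain a cycle of length $\ge 3$, which happens only for $A=\typeaff{A}{\rkaff}{1}$, whose diagram is a single $(\rkaff+1)$-cycle; a cycle graph has no proper cycle subgraph and only one cycle, so $\gcd(k,d)=1$ and $d=\rkaff+1$, whence $\cO(i)=\ttI$ and $\sg$ is transitive, contrary to hypothesis. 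So the cyclic case is excluded, and only the case $d/\gcd(k,d)=2$ survives, in which the edges $\set{j,j+k}$ form a perfect matching of the $d$ nodes (with $k=d/2$).

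The step I expect to be the real obstacle is ruling out such a matching on $d\ge 4$ nodes: here a submatrix argument is of no help, since a disjoint union of edges is a perfectly legitimate finite-type subdiagram. A matching with step $d/2$ forces $\sg$ to act on $\cO(i)$ as a single $d$-cycle, so $\langle\sg\rangle$ must contain an element of order $\ge 4$ moving $\ge 4$ nodes cyclically. I would dispatch this by invoking the classification of the groups $\Aut(A)$ over the affine types (Tables Aff1--Aff3 of \cite{K2}): the only affine diagrams admitting an automorphism with an orbit of size $\ge 3$ are $\typeaff{A}{\rkaff}{1}$ (rotations), $\typeaff{D}{4}{1}$ ($S_4$ permuting the four outer nodes) and $\typeaff{E}{6}{1}$ ($S_3$ permuting the three legs); and in each of these, every orbit of size $\ge 3$ consists of pairwise unjoined nodes — nontransitive rotations of the $\typeaff{A}{\rkaff}{1}$ cycle never relate two nodes of one orbit by an edge, and the outer nodes of $\typeaff{D}{4}{1}$, respectively the corresponding nodes on the three legs of $\typeaff{E}{6}{1}$, are mutually non-adjacent. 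Hence any orbit carrying an edge must have $d=2$.

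Finally I would settle the case $d=2$. Then $\sg(p)=q$ and $\sg^2(p)=p$, so invariance gives $a_{pq}=a_{\sg(p),\sg(q)}=a_{qp}$; the bond is symmetric, whence $a_{pq}=a_{qp}\in\set{0,-1,-2}$. A symmetric double bond would produce an $\typeaff{A}{1}{1}$ subdiagram, impossible for a proper connected subdiagram of an indecomposable affine diagram, and $\typeaff{A}{1}{1}$ itself is excluded because its nontrivial diagram automorphism is transitive. Since we are in the edge case ($a_{pq}\ne 0$), this leaves $a_{pq}=a_{qp}=-1$, which is alternative (b); a singleton orbit, an orbit of size $\ge 3$, or a size-$2$ orbit with no edge gives alternative (a). As noted above these two outcomes cannot occur simultaneously, so exactly one holds.
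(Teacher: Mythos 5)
Your argument is correct in outline but takes a genuinely different route from the paper. The paper disposes of this lemma in one sentence: it is proved ``by checking the claim for each possible affine matrix $A$ and each $\sg\in\Aut(A)$ that is not transitive,'' i.e.\ by brute-force inspection of the finitely many pairs $(A,\sg)$ up to conjugacy. You instead reduce everything to three structural observations: an edge inside an orbit propagates under $\langle\sg\rangle$ to a circulant subgraph, which is either a disjoint union of cycles of length $\ge 3$ (forcing $A=\typeaff{A}{\rkaff}{1}$ and then transitivity of $\sg$, since a cycle graph has no proper cycle subgraph) or a perfect matching with step $\card(\cO(i))/2$; a $2$-element orbit has symmetric Cartan integers, so once $\typeaff{A}{1}{1}$ is excluded by nontransitivity the bond must be $a_{pq}=a_{qp}=-1$; and orbits of size $\ge 4$ never carry an edge. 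This is cleaner and more portable than the paper's check, and the first two steps are airtight.

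The third step, however, rests on a false enumeration. You assert that the only affine diagrams admitting an automorphism with an orbit of size $\ge 3$ are $\typeaff{A}{\rkaff}{1}$, $\typeaff{D}{4}{1}$ and $\typeaff{E}{6}{1}$. This omits $\typeaff{D}{\rkaff}{1}$ for $\rkaff\ge 5$: its automorphism group has order $8$ and contains elements of order $4$ for which the four end nodes $\set{0,1,\rkaff-1,\rkaff}$ form a single orbit -- these are exactly the automorphisms labelled $\typemult{D}{\rkfin}{1}{4}$ in Table \ref{tab:reltypeu}. The omission happens to be harmless, because each of those four nodes is joined only to its adjacent trivalent node, so the orbit is again edgeless and your conclusion survives; but as written the case analysis is incomplete and must be repaired by adding this family to the list. (A smaller quibble: for the matching step you only need to examine orbits of even size $\ge 4$, so $\typeaff{E}{6}{1}$, whose largest orbits have size $3$, is not actually relevant there; size-$3$ orbits with an edge are already killed by your cycle argument.)
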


\begin{pgraph}
\label{pgraph:sidef}
For $i\in \brvI$,
 we now define
\begin{equation}
\label{eq:si1}
s_i = \left\{
        \begin{array}{ll}
          1, & \hbox{if (a) holds in Lemma \ref{lem:si1};} \\
          2, & \hbox{if (b) holds in Lemma \ref{lem:si1}.}
        \end{array}
      \right.
\end{equation}
\end{pgraph}

\begin{lemma}  \cite[\S 2.1]{FSS}
\label{lem:si2}
Let $i\in \brvI$. Then $s_i (3-s_i) = 2$ and
\begin{equation} \label{eq:si2}
s_i = \textstyle 3- \sum_{p \in \cO(i)} a_{pi}.
\end{equation}
\end{lemma}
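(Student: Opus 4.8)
The plan is to separate the two assertions, since the first is immediate and the second reduces to a short case analysis driven by Lemma \ref{lem:si1}. By the definition in \pref{pgraph:sidef} we have $s_i \in \set{1,2}$, so the identity $s_i(3-s_i) = 2$ is checked by direct substitution: $s_i = 1$ gives $1\cdot 2 = 2$ and $s_i = 2$ gives $2\cdot 1 = 2$. No further work is required for this part.

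For the formula $s_i = 3 - \sum_{p\in\cO(i)} a_{pi}$, I would first record the two standard facts about the GCM $A$ that control the computation, namely $a_{ii} = 2$ and $a_{pi} \le 0$ for $p \ne i$. The point is that the sum $\sum_{p\in\cO(i)} a_{pi}$ collapses to very few nonzero terms once the internal geometry of the orbit $\cO(i)$ is known, and this geometry is exactly the content of the dichotomy in Lemma \ref{lem:si1}. Note throughout that $i \in \cO(i)$, so the diagonal term $a_{ii} = 2$ always appears in the sum.

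I would then run through the two alternatives. In case (a) of Lemma \ref{lem:si1} the elements of $\cO(i)$ are pairwise orthogonal, so $a_{pi} = 0$ for every $p \in \cO(i)$ with $p \ne i$; the sum is therefore just $a_{ii} = 2$, giving $3 - 2 = 1 = s_i$. In case (b) the orbit has exactly two elements, which I would write as $\cO(i) = \set{i,q}$ with $i$ least and $a_{qi} = -1$; then $\sum_{p\in\cO(i)} a_{pi} = a_{ii} + a_{qi} = 2 - 1 = 1$, giving $3 - 1 = 2 = s_i$. In both situations the claimed equality holds.

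Because the whole argument is a routine verification once Lemma \ref{lem:si1} is in hand, there is no genuine obstacle here. The only point requiring any care is bookkeeping: matching the two alternatives of Lemma \ref{lem:si1} to the correct value of $s_i$ prescribed in \pref{pgraph:sidef}, and observing that the diagonal contribution $a_{ii} = 2$ is what makes both cases land on the right number.
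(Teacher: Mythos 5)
Your proof is correct and is precisely the verification the paper leaves implicit: its proof of Lemma \ref{lem:si2} consists of the single sentence that the claim follows immediately from Lemma \ref{lem:si1} and the definition of $s_i$, and your two-case computation (using $a_{ii}=2$ plus orthogonality in case (a), and $a_{ii}+a_{qi}=2-1$ in case (b)) is exactly what that remark refers to. No issues.
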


\begin{proof} This  follows immediately from Lemma \ref{lem:si1} and
the definition of $s_i$.\end{proof}

We now calculate the Cartan integers for the set $\set{\mu_i}_{i\in \brvI}$.

\begin{proposition} \label{prop:cartanbeta}  Suppose that $i,j\in \brvI$.  Then
\begin{itemize}
\item[(a)]  $(\mu_i\vert \mu_i) \ne 0$, so $\mu_i\in \pi_\sg(\Dl)^\times$.
\item[(b)] If $\al\in V$, we have $\displaystyle\frac {(\mu_i \vert \pi_\sg(\al))}{(\mu_i\vert \mu_i)}
= s_i \sum_{p\in \cO(i)}  \frac{(\al_p\vert\al)}{(\al_p\vert\al_p)}$.
\item[(c)] $\displaystyle 2 \frac {(\mu_i \vert \mu_j)}{(\mu_i\vert \mu_i)}
= s_i \sum_{p\in \cO(i)} a_{pj} \in s_i\bbZ$.
\end{itemize}
\end{proposition}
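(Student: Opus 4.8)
The plan is to reduce all three parts to a single closed-form evaluation of the self-product $(\mu_i\vert\mu_i)$, and then to read off (a), (b) and (c) as formal consequences. I would begin by recording two preliminary facts. Since $\langle\sg\rangle$ permutes $\cO(i)$ and $\sg^m=1$ (so $\cardnum{\cO(i)}$ divides $m$), the averaging formula for $\pi_\sg$ in \pref{pgraph:setuppi} collapses to
\[
\mu_i=\pi_\sg(\al_i)=\frac{1}{\cardnum{\cO(i)}}\sum_{p\in\cO(i)}\al_p .
\]
Second, because $\sg$ preserves $\form$ and $a_{\sg(p)\sg(q)}=a_{pq}$, both $(\al_p\vert\al_p)$ and the sum $\sum_{q\in\cO(i),\,q\neq p}a_{qp}$ are constant as $p$ ranges over $\cO(i)$; I would write $(\al_p\vert\al_p)=(\al_i\vert\al_i)$ throughout, and note that the definition of $s_i$ in \pref{pgraph:sidef} rearranges to $\sum_{q\in\cO(i),\,q\neq p}a_{qp}=1-s_i$ for every $p\in\cO(i)$.

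The crux is the identity
\[
(\mu_i\vert\mu_i)=\frac{(\al_i\vert\al_i)}{\cardnum{\cO(i)}\,s_i}.
\]
I would prove it by expanding $(\mu_i\vert\mu_i)=\cardnum{\cO(i)}^{-2}\sum_{p,q\in\cO(i)}(\al_p\vert\al_q)$, separating the diagonal from the off-diagonal terms, and rewriting each off-diagonal $(\al_p\vert\al_q)$ as $\tfrac12 a_{qp}(\al_i\vert\al_i)$ via the Cartan-integer formula of \pref{pgraph:notationaff}. The two preliminary facts turn the whole expression into $(\al_i\vert\al_i)\,\cardnum{\cO(i)}^{-1}\cdot\tfrac{3-s_i}{2}$, and the relation $s_i(3-s_i)=2$ from Lemma \ref{lem:si2} converts $\tfrac{3-s_i}{2}$ into $1/s_i$. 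Since $(\al_i\vert\al_i)\neq0$ in the affine case, this value is nonzero, which is precisely (a). I expect this to be the only step that is not pure bookkeeping: everything turns on letting $s_i(3-s_i)=2$ absorb the factor produced by the orbit sum; the orbit-counting itself is routine.

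For (b) and (c) I would exploit that the decomposition $V=V^\sg\perp(1-\sg)(V)$ is orthogonal, so that $\mu_i\in V^\sg$ forces $(\mu_i\vert\pi_\sg(\al))=(\mu_i\vert\al)$ for every $\al\in V$. Then $(\mu_i\vert\al)=\cardnum{\cO(i)}^{-1}\sum_{p\in\cO(i)}(\al_p\vert\al)$, and dividing by the value of $(\mu_i\vert\mu_i)$ just obtained, together with $(\al_p\vert\al_p)=(\al_i\vert\al_i)$, yields (b) directly and without any nonvanishing hypothesis on $\al$. Finally I would read (c) off as the special case $\al=\al_j$ of (b): since $\pi_\sg(\al_j)=\mu_j$ and $(\al_p\vert\al_j)/(\al_p\vert\al_p)=\tfrac12 a_{pj}$, part (b) gives $2(\mu_i\vert\mu_j)/(\mu_i\vert\mu_i)=s_i\sum_{p\in\cO(i)}a_{pj}$, which lies in $s_i\bbZ$ because every $a_{pj}$ is an integer.
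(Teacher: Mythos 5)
Your proposal is correct and follows essentially the same route as the paper: both rest on the orthogonality of $V=V^\sg\perp(1-\sg)(V)$ to get $(\mu_i\vert\pi_\sg(\al))=(\mu_i\vert\al)$, the Cartan-integer formula, and the identity $s_i(3-s_i)=2$ from Lemma \ref{lem:si2} to arrive at $(\mu_i\vert\mu_i)=\frac{(\al_i\vert\al_i)}{\cardnum{\cO(i)}\,s_i}$, from which (a), (b), (c) follow exactly as you describe. The only cosmetic difference is order of operations: the paper first evaluates $(\mu_i\vert\pi_\sg(\al))$ for general $\al$ and then specializes to $\al=\al_i$ to obtain the self-product, whereas you compute $(\mu_i\vert\mu_i)$ by a direct double-sum expansion first; the two computations are interchangeable.
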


\begin{proof}  Suppose that $\al\in V$.  Then
\begin{align}\notag
(\mu_i\vert \pi_\sg(\al)) &= \textstyle (\mu_i \vert \frac 1m \sum_{p=0}^{m-1}\sg^p(\al))
= (\mu_i \vert \al)\quad\text{(since $\mu_i\in V^\sg$)}\\ \notag
&=  \frac 1{m} ( \sum_{p=0}^{m-1}\sg^p(\al_i) \vert \al)
=  \frac 1{m} \frac m{\cardnum{\cO(i)}} ( \sum_{p\in \cO(i)}\al_p \vert \al)\\\label{eq:GCM1}
&=    \frac {(\al_i\vert \al_i)}{\cardnum{\cO(i)}} \sum_{p\in \cO(i)} \frac{(\al_p \vert \al)}{(\al_i\vert \al_i)}
= \frac {(\al_i\vert \al_i)}{\cardnum{\cO(i)}} \sum_{p\in \cO(i)} \frac{(\al_p \vert \al)}{(\al_p\vert \al_p)}.
\end{align}
Now putting $\al = \al_i$ in \eqref{eq:GCM1}, we get
\begin{equation} \label{eq:GCM2}
(\mu_i\vert\mu_i) =
\frac {(\al_i\vert \al_i)}{2\cardnum{\cO(i)}} \sum_{p\in \cO(i)} a_{pi}
=\frac {(\al_i\vert \al_i)}{2\cardnum{\cO(i)}} (3-s_i) =  \frac {(\al_i\vert \al_i)}{\cardnum{\cO(i)}} \frac 1{s_i}.
\end{equation}
using Lemma \ref{lem:si2}.  This implies (a).  Also, dividing \eqref{eq:GCM1} by \eqref{eq:GCM2}, we obtain (b).
Finally, setting $\al = \al_j$ in (b) yields (c).
\end{proof}

\begin{pgraph}  Let  $\brvA = (\brva_{ij})_{i,j\in \brvI} \in \Mat_{\cardnum{\brvI}}(\bbQ)$,
where
\begin{equation}
\label{eq:brvA1}
\brva_{ij} = 2 \frac {(\mu_i \vert \mu_j)}{(\mu_i\vert \mu_i)}
\end{equation}
for $i,j\in \brvI$.  By Proposition \ref{prop:cartanbeta}(c), we have
\begin{equation}
\label{eq:brvA2}
\brva_{ij} = s_i \sum_{p\in \cO(i)} a_{pj} \in  s_i \bbZ.
\end{equation}
for $i,j\in \brvI$, so in particular  $\brvA \in \Mat_{\cardnum{\brvI}}(\bbZ)$.
\end{pgraph}

The next result is a special case of a result proved in
\cite{FSS} about diagram automorphisms
of symmetrizable GCM's  (see also
\cite[Prop.III.3.3]{Bau}).

\begin{proposition}
\label{prop:brvA} $\brvA$ is an affine GCM.
\end{proposition}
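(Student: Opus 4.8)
The plan is to verify that $\brvA$ meets the definition of a generalized Cartan matrix and then to identify its type as affine by reading off the associated symmetric bilinear form and applying the classification of indecomposable GCMs. First I would check the GCM axioms directly from the formula in Proposition \ref{prop:cartanbeta}. By \eqref{eq:brvA2} the entries of $\brvA$ are integers, and $\brva_{ii} = 2$ is immediate from \eqref{eq:brvA1}. For $i \ne j$ in $\brvI$ the orbits $\cO(i)$ and $\cO(j)$ are distinct, so $j \notin \cO(i)$; hence every $p \in \cO(i)$ satisfies $p \ne j$ and $a_{pj} \le 0$, giving $\brva_{ij} = s_i \sum_{p \in \cO(i)} a_{pj} \le 0$ since $s_i > 0$. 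Finally $\brva_{ij} = 0 \iff (\mu_i \mid \mu_j) = 0 \iff \brva_{ji} = 0$ by symmetry of $\form$. Thus $\brvA$ is a GCM.

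Next I would make the symmetrization explicit. Setting $d_i = (\mu_i \mid \mu_i)/2$, which is a positive rational since $\form$ is positive semidefinite and $(\mu_i \mid \mu_i) \ne 0$ by Proposition \ref{prop:cartanbeta}(a), the definition \eqref{eq:brvA1} gives $d_i \brva_{ij} = (\mu_i \mid \mu_j)$. Hence, with $D = \diag\big((d_i)_{i \in \brvI}\big)$, the matrix $D\brvA$ is precisely the Gram matrix $\big((\mu_i \mid \mu_j)\big)_{i,j \in \brvI}$ of the $\bbQ$-basis $\set{\mu_i}_{i\in\brvI}$ of $V^\sg$. So $\brvA$ is symmetrizable, and its type is governed by this Gram matrix.

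The key step is to show $D\brvA$ is positive semidefinite of corank exactly $1$, and that $\brvA$ is indecomposable. Since the decomposition $V = V^\sg \perp (1-\sg)(V)$ in \eqref{eq:Vdecomp1} is orthogonal and $\form$ is positive semidefinite on $V$ with $\rad(V) = \bbQ\delta$, the restriction of $\form$ to $V^\sg$ is positive semidefinite with radical $V^\sg \cap \bbQ\delta$; and as $\delta \in V^\sg$ (see \pref{pgraph:dimVsgb}) this radical is exactly $\bbQ\delta$, which is one-dimensional. Thus $D\brvA$, and hence $\brvA$ (as $D$ is invertible), has corank $1$. Concretely, writing $\delta = \sum_{i \in \brvI} c_i \mu_i$ with $c_i = \cardnum{\cO(i)}\, a_i > 0$ (using $\delta = \sum_i a_i\al_i$, $\pi_\sg(\al_i) = \mu_i$ and $a_{\sg(i)} = a_i$), the relations $(\delta \mid \mu_j) = 0$ give $\brvA\, \mathbf c = 0$ with $\mathbf c > 0$. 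For indecomposability, if $\brvI = \brvI_1 \sqcup \brvI_2$ with $\brva_{ij} = 0$ whenever $i\in\brvI_1,\ j\in\brvI_2$, then since $\sg$ is a diagram automorphism ($a_{\sg^k(p),\sg^k(q)} = a_{pq}$) one checks $a_{pq} = 0$ for all $p \in \bigcup_{\brvI_1}\cO(i)$ and $q \in \bigcup_{\brvI_2}\cO(j)$, contradicting the indecomposability of $A$. An indecomposable GCM possessing a positive vector in its kernel — equivalently, a symmetrizable indecomposable GCM whose symmetrization is positive semidefinite of corank $1$ — is of affine type by the classification of GCMs \cite[Ch.~4, Thm.~4.3]{K2}, so $\brvA$ is an affine GCM.

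The main obstacle will be pinning down that the corank is exactly $1$ rather than $0$ or larger: the orthogonality of \eqref{eq:Vdecomp1} together with $\rad(V) = \bbQ\delta$ keeps it from exceeding $1$, while $\delta \in V^\sg$ keeps it from being $0$ (which would wrongly force finite type). Everything else is a routine translation of the Cartan-integer formula of Proposition \ref{prop:cartanbeta} into the language of matrices and bilinear forms.
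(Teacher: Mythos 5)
Your proof is correct, but it takes a genuinely different route from the paper's. The paper disposes of Proposition \pref{prop:brvA} in one line, by observing that $s_i$ and $\brva_{ij}$ as defined by \eqref{eq:si2} and \eqref{eq:brvA2} coincide with the quantities studied by Fuchs, Schellekens and Schweigert, and then citing \cite[\S 2.3]{FSS} for the result. You instead give a self-contained argument: the GCM axioms follow from \eqref{eq:brvA2} and the symmetry of $\form$; the diagonal matrix $D = \diag\big(((\mu_i\vert\mu_i)/2)_{i\in\brvI}\big)$ exhibits $D\brvA$ as the Gram matrix of $\set{\mu_i}_{i\in\brvI}$; the orthogonality of \eqref{eq:Vdecomp1} forces the radical of $\form|_{V^\sg}$ to be $V^\sg\cap\rad(V)=\bbQ\delta$, pinning the corank at exactly $1$; the explicit positive null vector $\mathbf c = (\cardnum{\cO(i)}\,a_i)_{i\in\brvI}$ and the indecomposability of $\brvA$ (inherited from that of $A$ via the $\sg$-invariance $a_{\sg(p),\sg(q)}=a_{pq}$) then place $\brvA$ in the affine class by \cite[Thm.~4.3]{K2}. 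Each detail checks out: in particular $(\mu_i\vert\mu_i)>0$ follows from Proposition \pref{prop:cartanbeta}(a) together with positive semidefiniteness, and your off-diagonal sign argument correctly uses that distinct $i,j\in\brvI$ lie in distinct orbits so that $a_{pj}\le 0$ for all $p\in\cO(i)$. What your approach buys is independence from \cite{FSS} and an explicit identification of the null vector of $\brvA$ (which also recovers the marks of the folded diagram); what the paper's citation buys is brevity and consistency with its systematic reliance on \cite{FSS} and \cite{Bau} throughout Section \ref{sec:projaff}.
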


\begin{proof}  In \cite{FSS}, $s_i$ and $\brva_{ij}$ are defined by \eqref{eq:si2}
and \eqref{eq:brvA2} respectively.
With those definitions the proposition is proved in \cite[\S 2.3]{FSS}.
\end{proof}

\begin{remark}
\label{rem:folding}
It follows from
\eqref{eq:brvA2} that the Dynkin diagram for $\brvA$ is obtained
from the diagram for $A$ as follows.  If $i,j\in \brvI$,
\begin{equation}
\label{eq:brvA3}
\parbox{1.6in}{Multiplicity of the arrow\\
from $\beta_j$ to $\beta_i$ in the \\
diagram for $\brvA$}
=
s_i \sum_{p\in \cO(i)}\
\parbox{2in}{number of arrows from $\mu_j$ to $\mu_p$,\\ including multiplicity,
in the\\
diagram for $A$.}
\end{equation}
(If  the sum on the right is zero, there is no arrow from $\beta_j$ to $\beta_i$.)  See
\pref{ex:nt} for an example.
\end{remark}

\begin{remark}
\label{rem:FSS}  The type of the matrix $\brvA$  for each $A$ and each $\sg$ (up to conjugacy in $\Aut(A)$)
is recorded in \cite[List II, \S III.4]{Bau} and in \cite[Table 2.24]{FSS}.
\end{remark}

\begin{pgraph}
\label{pgraph:Weyl}
If $\al\in V^\times$, we define the \emph{orthogonal reflection $r_\al : V \to V$ along $\al$} as usual as
\[r_\al(\beta) = \beta - 2 \frac{(\al\vert\beta)}{(\al\vert\al)} \al\]
for $\beta\in V$.  Then $r_\al^2 = 1$ and $r_\al$ is in the orthogonal group of the form $\form$ on $V$.
Let
\[\brvW = \langle r_{\mu_i} \suchthat i\in \ttI\rangle \le \GL(V).\]
Note that each $r_{\mu_i}$ fixes $(1-\sg)(V)$ pointwise, so we can, by restriction, identify
$\brvW$  with a subgroup of $\GL(V^\sg)$.
\end{pgraph}

\begin{pgraph}
\label{pgraph:breveDl}  Let $\brvDl$ be the set of roots (including 0)  of the affine Kac-Moody Lie algebra $\fg(\brvA)$, and let
$\breve V = \spann_\bbQ(\brvDl)$.  It follows from Proposition \ref{prop:brvA} and \eqref{eq:brvA1} that we can
identify $V^\sg$ with $\breve V$ so that $\set{\mu_i}_{i\in \brvI}$ is the standard base for $\brvDl$ and the
form $\form$ on $V^\sg$ is a nonzero rational multiple of the standard form on $\breve V$.
We make this identification from this point on.  Then, $\brvW$ is the Weyl group
of $\brvDl$.  Hence, by \eqref{eq:realaff} applied to $\fg(\brvA)$, we have
\begin{equation}
\label{eq:breveDl}
\brvDl^\times =  \cup_{i\in \brvI} \brvW \mu_i.
\end{equation}
\end{pgraph}

\begin{pgraph}
In \cite[pp.~33--37]{Bau}, J.~Bausch  proved the next proposition in the case when $\base = \bbC$
by constructing a subalgebra
of $\fg^\sg$ that is isomorphic to $\fg(\brvA)$ and then
using  a characterization \cite[Prop.~5.8(a)]{K2} of the Tits cone of $\fg(\brvA)$.  The result for arbitrary
$\base$ can likely be proved along the same lines.  Instead, for the convenience
of the reader, we present a proof by induction on the height of an element of $\pi_\sg(\Delta)$.
\end{pgraph}

\begin{proposition}\
\label{prop:calcpiDl} If $s_i = 1$ for $i\in \brvI$, then
\begin{equation}
\label{eq:piDlc}
\textstyle
\pi_\sg(\Dl)^\times = \brvDl^\times.
\end{equation}
\end{proposition}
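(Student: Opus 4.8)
The plan is to prove the two inclusions separately, the hypothesis $s_i = 1$ entering through the fact that each orbit $\cO(i)$ then consists of \emph{pairwise orthogonal} simple roots (case (a) of Lemma \ref{lem:si1}). First I would record an intertwining relation between the reflection $r_{\mu_i}$ on $V^\sg$ and a genuine element of the Weyl group $W$ of $\fg$. For $i\in\brvI$ put $w_i = \prod_{p\in\cO(i)} r_{\al_p}\in W$; pairwise orthogonality makes these reflections commute, so $w_i$ is well defined and $w_i(\al) = \al - \sum_{p\in\cO(i)}\frac{2(\al_p\vert\al)}{(\al_p\vert\al_p)}\al_p$ for $\al\in V$. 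Applying $\pi_\sg$, using $\pi_\sg(\al_p) = \mu_i$ for $p\in\cO(i)$ and identifying $\sum_{p\in\cO(i)}\frac{2(\al_p\vert\al)}{(\al_p\vert\al_p)}$ with $\frac{2(\mu_i\vert\pi_\sg(\al))}{(\mu_i\vert\mu_i)}$ via Proposition \ref{prop:cartanbeta}(b) and $s_i=1$, I obtain
\[
\pi_\sg\circ w_i = r_{\mu_i}\circ\pi_\sg .
\]
Since $w_i$ permutes $\Dl$, this shows each $r_{\mu_i}$, hence all of $\brvW$, stabilizes $\pi_\sg(\Dl)$, and therefore also $\pi_\sg(\Dl)^\times$ as reflections preserve the form. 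The inclusion $\brvDl^\times\subseteq\pi_\sg(\Dl)^\times$ is then immediate: by \eqref{eq:breveDl}, $\brvDl^\times = \cup_{i\in\brvI}\brvW\mu_i$, each $\mu_i = \pi_\sg(\al_i)$ lies in $\pi_\sg(\Dl)^\times$ by Proposition \ref{prop:cartanbeta}(a), and $\pi_\sg(\Dl)^\times$ is $\brvW$-stable.

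For the reverse inclusion I would induct on height. As both sides are stable under negation, it suffices to treat $\beta = \pi_\sg(\al)$ with $\al\in\Dl$ positive; writing $\al = \sum_p c_p\al_p$ gives $\beta = \sum_{i\in\brvI} d_i\mu_i$ with nonnegative integers $d_i = \sum_{p\in\cO(i)} c_p$, and I set $\mathrm{ht}(\beta) = \sum_i d_i$, which equals the height of $\al$. If $\mathrm{ht}(\beta) = 1$ then $\beta = \mu_i\in\brvDl^\times$. Suppose $\mathrm{ht}(\beta) > 1$ and $(\beta\vert\beta) > 0$, which is what non-isotropy means since the form on $V^\sg$ is positive semidefinite with radical $\bbQ\delta$. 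From $(\beta\vert\beta) = \sum_i d_i(\mu_i\vert\beta) > 0$ I may choose $i$ with $(\mu_i\vert\beta) > 0$ and put $\beta' = r_{\mu_i}(\beta) = \beta - c\mu_i$, where $c = \frac{2(\mu_i\vert\beta)}{(\mu_i\vert\mu_i)} = \sum_{p\in\cO(i)}\frac{2(\al_p\vert\al)}{(\al_p\vert\al_p)}$ is a positive integer (a sum of Cartan integers, by Proposition \ref{prop:cartanbeta}(b) and $s_i = 1$). The intertwining relation gives $\beta' = \pi_\sg(w_i\al)\in\pi_\sg(\Dl)^\times$ with $\mathrm{ht}(\beta') = \mathrm{ht}(\beta) - c < \mathrm{ht}(\beta)$.

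The delicate point, and the second place where $s_i = 1$ is essential, is to check that $\beta'$ is again the projection of a \emph{positive} root so that the inductive hypothesis applies. Since $w_i\al$ is a root it is positive or negative, so the $\mu$-coordinates of $\beta'$ are either all nonnegative or all nonpositive. In the first case induction gives $\beta'\in\brvDl^\times$, whence $\beta = r_{\mu_i}(\beta')\in\brvDl^\times$ by $\brvW$-stability. The second case forces $d_j = 0$ for $j\ne i$, so that $\al = \sum_{p\in\cO(i)} c_p\al_p$ is supported on $\cO(i)$; but the $\al_p$ with $p\in\cO(i)$ are pairwise orthogonal, so the sub-root-system of $\Dl$ they generate is a sum of copies of $\type{A}{1}$ whose only positive roots are the $\al_p$, forcing $\al = \al_p$ and $\mathrm{ht}(\beta) = 1$, contrary to assumption. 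Hence only the first case occurs, the induction closes, and $\pi_\sg(\Dl)^\times\subseteq\brvDl^\times$, giving equality. I expect this sign-and-support analysis to be the main obstacle; the remaining ingredients are direct consequences of Proposition \ref{prop:cartanbeta} and \eqref{eq:breveDl}.
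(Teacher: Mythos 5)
Your proposal is correct and follows essentially the same route as the paper's own proof: the intertwining identity $r_{\mu_i}\circ\pi_\sg=\pi_\sg\circ\prod_{p\in\cO(i)}r_{\al_p}$ (valid because $s_i=1$ makes the orbit reflections commute) gives $\brvW$-stability of $\pi_\sg(\Dl)^\times$ and hence the inclusion $\supseteq$ via \eqref{eq:breveDl}, and the reverse inclusion is an induction on height whose only sticky case is $\nu=q\mu_i$, ruled out for $q>1$ because a root supported on a pairwise-orthogonal orbit must be a simple root. Your sign-and-support analysis in the second case is exactly the step the paper handles by noting that $r_{\mu_i}(\nu)$ negative forces $\nu=q\mu_i$, so no changes are needed.
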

\begin{proof}  We first show as in \cite{Bau} that
\begin{equation}
\label{eq:Weylinv1}
\brvW(\pi_\sg(\Dl)) \subseteq \pi_\sg(\Dl).
\end{equation}
To prove this, we must show that $r_{\mu_i}(\pi_\sg(\Dl)) \subseteq \pi_\sg(\Dl)$ for $i\in \brvI$.
Indeed, since  $s_i = 1$, the refections $r_{\al_p}$, $p\in \cO(i)$,  commute.
Set $w = \prod_{p\in \cO(i)} r_{\al_p}\in W$.  Then, for $\al\in \Dl$, we have, using Proposition \ref{prop:cartanbeta}(b),
\begin{align*}
r_{\mu_i}(\pi_\sg(\al)) &=  \pi_\sg(\al) - 2 \frac{(\mu_i\vert\pi_\sg(\al))}{(\mu_i\vert\mu_i)} \mu_i
=\textstyle  \pi_\sg(\al) - 2  \left(\sum_{p\in \cO(i)}  \frac{(\al_p\vert\al)}{(\al_p\vert\al_p)}\right) \mu_i \\
&=\textstyle \pi_\sg \left(\al - \sum_{p\in \cO(i)}  2\frac{(\al_p\vert\al)}{(\al_p\vert\al_p)}\al_p \right) = \pi_\sg(w(\al)) \in \pi_\sg(\Dl).
\end{align*}
This proves \eqref{eq:Weylinv1}; and, since reflections preserve the form $\form$, it follows that
\begin{equation}
\label{eq:Weylinv2}
\brvW(\pi_\sg(\Dl)^\times) \subseteq \pi_\sg(\Dl)^\times.
\end{equation}

The inclusion ``$\supseteq$'' in \eqref{eq:piDlc} now follows from
\eqref{eq:breveDl} and \eqref{eq:Weylinv2}.
For the inclusion ``$\subseteq$'', observe first that any nonzero element $\nu$ of $\pi_\sg(\Dl)$ can be written uniquely in the form
$\nu = \sum_{i\in \brvI} n_i\mu_i$, where the $n_i$ are integers which are all either
nonnegative or nonpositive.  We say that $\nu$ is positive or negative accordingly,
and we define the height of $\nu$ to be $\sum_{i\in \brvI} n_i$.
We must show that
\begin{equation*}
\label{eq:inductpi}
\textstyle \nu\in \pi_\sg(\Dl)^\times \implies \nu \in  \brvDl^\times.
\end{equation*}
To do this, we can assume that $\nu$ is positive and induct on the height of $\nu$, the case of height $1$
being clear.  Write $\nu = \sum_{i\in \brvI} n_i\mu_i$, where each $n_i$ is nonnegative.  Then, since
$(\nu\vert \nu) > 0$, we have $(\nu\vert \mu_i) > 0$ for some $i\in \brvI$ with $n_i > 0$.
If $r_{\mu_i}(\nu)$ is positive, we're done by induction.  So we can assume that $r_{\mu_i}(\nu)$ is negative.
Then $\nu = q \mu_i$, where $q$ is a positive integer.  Thus,  we have $\nu = \pi_\sg(\al)$, where
$\al =  \sum_{p\in \cO(i)} m_p \al_p \in \Dl$ and $\sum_{p\in \cO(i)} m_p = q$. Since $s_i = 1$
for $i\in \brvI$, this implies that
$q=1$.
\end{proof}

\subsection{Calculating the relative type}
\label{subsec:quotientpi}\

\begin{theorem}
\label{thm:relativecalc}  Suppose  that $\fg = \fg(A)$ is an affine Kac-Moody Lie
algebra and $\sg\in\Aut(A)$ is not transitive. If $s_i = 1$ for all $i\in \brvI$ then
the relative type of $\Lp(\fgb,\sg)$
is equal to the quotient  type of $\fg(\brvA)$, where $\brvA$ is obtained from
$A$ using \eqref{eq:brvA2}; whereas
if $s_i = 2$ for some $i\in \brvI$ then
the relative type of $\Lp(\fgb,\sg)$ is equal to
$\type{BC}{\cardnum{\brvI}-1}$.
(Here the integers $s_i$, $i\in \brvI$, are
defined in \ref{pgraph:sidef}.)
\end{theorem}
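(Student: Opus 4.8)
The plan is to feed everything through Theorem \ref{thm:relativent}, which identifies the relative type of $\Lp(\fgb,\sg)$ with the type of the finite root system $\piDlb$, and then to pin down that type using the structural results on $\pi_\sg(\Dl)$. First I would record two general facts about $\piDlb$. Since $\pi_\sg$ is a projection onto $V^\sg$ and $\Dl$ spans $V$, the set $\pi_\sg(\Dl)$ spans $V^\sg$, so $\piDlb$ spans $\overline{V^\sg}$; being an irreducible finite root system by Proposition \ref{prop:affEALA}, it therefore has rank $\dim_\bbQ(\overline{V^\sg}) = \cardnum{\brvI}-1$ by \pref{pgraph:dimVsgb}. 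Second, because the form on $V^\sg$ is positive semidefinite with radical $\bbQ\delta$, a nonzero $\nu\in\pi_\sg(\Dl)$ has nonzero image in $\piDlb$ exactly when $(\nu\vert\nu)\ne 0$; hence the nonzero roots of $\piDlb$ are precisely the images of $\pi_\sg(\Dl)^\times$.

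In the case $s_i=1$ for all $i\in\brvI$, Proposition \ref{prop:calcpiDl} gives $\pi_\sg(\Dl)^\times=\brvDl^\times$ inside $V^\sg$. Under the identification of \pref{pgraph:breveDl}, the form on $V^\sg$ is a nonzero rational multiple of the standard form on $\breve V$, so the two forms share the radical $\bbQ\delta$, which is also spanned by the null root of $\fg(\brvA)$; consequently $\overline{V^\sg}$ is literally the quotient space carrying the finite quotient root system $\overline{\brvDl}$ of $\fg(\brvA)$, and the two quotient maps agree. Applying the observation above to both $\pi_\sg(\Dl)$ and $\brvDl$, the equality $\pi_\sg(\Dl)^\times=\brvDl^\times$ shows that $\piDlb$ and $\overline{\brvDl}$ have the same nonzero elements, hence are equal as root systems. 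By the definition of quotient type in \pref{pgraph:finitetype}, the type of $\piDlb$ is the quotient type of $\fg(\brvA)$, which is the first assertion.

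In the case $s_i=2$ for some $i\in\brvI$, I cannot invoke Proposition \ref{prop:calcpiDl}, so I would argue directly that $\piDlb$ is nonreduced. By Lemma \ref{lem:si1}, $\cO(i)=\set{\al_p,\al_q}$ with $a_{pq}=a_{qp}=-1$, so both $\al_p$ and $\al_p+\al_q$ lie in $\Dl$. Applying $\pi_\sg$ yields $\pi_\sg(\al_p)=\mu_i$ and $\pi_\sg(\al_p+\al_q)=2\mu_i$, and $(\mu_i\vert\mu_i)\ne 0$ by Proposition \ref{prop:cartanbeta}(a); thus $\mu_i$ and $2\mu_i$ both lie in $\pi_\sg(\Dl)^\times$, so their nonzero images are both roots of $\piDlb$. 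Hence $\piDlb$ contains a root together with its double and is nonreduced. Since $\piDlb$ is an irreducible nonreduced finite root system of rank $\cardnum{\brvI}-1$, it must be of type $\type{BC}{\cardnum{\brvI}-1}$, the unique such system of that rank (see \pref{subsec:rtsystem}), giving the second assertion.

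The only genuinely delicate point is the radical identification in the $s_i=1$ case: one must check that $\overline{V^\sg}$ and the quotient space on which $\overline{\brvDl}$ lives coincide as \emph{sets with their forms}, so that the set equality of nonisotropic roots upgrades to an equality of root systems. Once this is secured, both halves are essentially bookkeeping with nonisotropic roots, together with the classification fact that a nonreduced irreducible finite root system of a given rank is forced to be of type $\type{BC}{}$.
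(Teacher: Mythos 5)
Your argument is correct and follows essentially the same route as the paper's proof: reduce via Theorem \ref{thm:relativent} to identifying the type of $\piDlb$, invoke Proposition \ref{prop:calcpiDl} to get $\pi_\sg(\Dl)^\times = \brvDl^\times$ when all $s_i=1$, and exhibit $\mu_i, 2\mu_i \in \pi_\sg(\Dl)^\times$ to force type $\type{BC}{\cardnum{\brvI}-1}$ when some $s_i=2$. The "delicate point" you flag about matching radicals is handled in the paper by the chain of equalities $\overline{\pi_\sg(\Delta)} = \overline{\pi_\sg(\Delta)^\times}\cup\set{0} = \overline{\brvDl^\times}\cup\set{0} = \overline{\brvDl}$, which is the same bookkeeping you carry out explicitly.
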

\begin{proof}  By Theorem \ref{thm:relativent}, the relative type of  $\Lp(\fgb,\sg)$
is the type of the finite root system $\piDlb$.

If  $s_i = 1$ for all $i\in \brvI$,  then  by
\eqref{eq:piDlc} we have  $\pi_\sg(\Dl)^\times = \brvDl^\times$, and hence
\[\overline{\pi_\sg(\Delta)} =
\overline{\pi_\sg(\Delta)}^\times \cup \set{0} =
\overline{\pi_\sg(\Delta)^\times} \cup \set{0} =
\overline{ \brvDl^\times} \cup \set{0} =
\overline{ \brvDl}^\times \cup \set{0} =
\overline{ \brvDl}.
\]

Suppose finally that  $s_i = 2$ for some $i\in \brvI$.
Then,
$\cO(i) = \set{\al_p,\al_q}$ , where
$p,q\in \brvI$, $p\ne q$ and $a_{pq} = a_{qp} = -1$.
So $\al_p+\al_q\in \Dl$, and hence $2\mu_i = \pi_\sg(\al_p+\al_q) \in \pi_\sg(\Dl)$.
Thus $\mu_i$ and $2\mu_i$ are elements
of $\pi_\sg(\Dl)^\times$.  So $\overline{\mu_i}$ and $2\overline{\mu_i}$ are nonzero elements of   $\piDlb$. Therefore
$\piDlb$  is a nonreduced irreducible finite root system  in $\overline{V^\sg}$.  Thus, by the classification of
irreducible finite root systems \cite[Chap. VI, \S 4, no.~14]{Bo2},
$\piDlb$ has type $\type{BC}{r}$, where $r = \dim{\overline{V^\sg}} = \cardnum{\brvI}-1$,
using  \pref{pgraph:dimVsgb}.
\end{proof}

\newcommand\node{\circle*{1}}

\newcommand\Dfiveone{%
\parbox{1.3in}
{
\setlength{\unitlength}{3pt}
\begin{picture}(25,20)(0,0)\thicklines
\put(4,4){\line(1,1){6}}
\put(4,16){\line(1,-1){6}}
\put(10,10){\line(1,0){8}}
\put(18,10){\line(1,1){6}}
\put(18,10){\line(1,-1){6}}
\put(4,4){\node} \put(-1,3){$\al_0$}
\put(4,16){\node} \put(-1,15){$\al_1$}
\put(10,10){\node} \put(9.5,6.5){$\al_2$}
\put(18,10){\node}  \put(16.5,6.5){$\al_3$}
\put(24,16){\node} \put(26,15){$\al_4$} 
\put(24,4){\node}  \put(26,3){$\al_5$}
\end{picture}
}
}%

\newcommand\Dfourtwo{%
\parbox[c]{1.3in}
{
\setlength{\unitlength}{3pt}
\begin{picture}(25,12)(0,1)\thicklines
\put(2,7){\line(1,0){8}}
\put(2,5.7){$\boldsymbol<$}
\put(2,6.4){\line(1,0){8}}
\put(10,6.7){\line(1,0){8}}
\put(18,7){\line(1,0){8}}
\put(23.6,5.7){$\boldsymbol>$}
\put(18,6.4){\line(1,0){8}}
\put(2,6.7){\node} \put(2,3){$\mu_0$}
\put(10,6.7){\node} \put(9.5,3){$\mu_2$}
\put(18,6.7){\node}  \put(17.5,3){$\mu_3$}
\put(26,6.7){\node} \put(25,3){$\mu_4$} 
\end{picture}
}
}%

\begin{example}
\label{ex:nt}
In this example let $\fg = \fg(A)$, where $A$ has type $\typeaff{D}{5}{1}$  with Dynkin
diagram
\[\Dfiveone.\]
If $\sg\in \Aut(A)$, we know  from Corollary \ref{cor:KacReal4} that
the absolute type of $\Lp(\fgb,\sg)\in \bbM_2$  is
$\type{D}{5}$.  We now
use Theorem \ref{thm:relativecalc} to calculate  the relative type of $\Lp(\fgb,\sg)$ in two cases.

(a)  Suppose first that $\sg = (0,1)(4,5) \in \Aut(A)$.   Then
$\brvI = \set{0,2,3,4}$ and
\begin{equation}
\label{eq:orbitex}
\cO(0) = \set{0,1},\quad \cO(2) = \set{2},\quad \cO(3) = \set{3},\quad \cO(4) = \set{4,5}.
\end{equation}
Since $a_{01} = 0$, we have $s_0 = 1$, and similarly $s_2 = s_3 = s_4 = 1$.
We now use Remark \ref{rem:folding} to calculate the Dynkin diagram for
$\brvA$.  We get
\[\Dfourtwo,\]
which is the diagram for the affine matrix of type $\typeaff{D}{4}{2}$.  So,
by Table \ref{tab:fq}, the  quotient type of  $\fg(\brvA)$ is  $\type{B}{3}$; hence,
by Theorem \ref{thm:relativecalc},
$\Lp(\fgb,\sg)$ has relative type $\type{B}{3}$.

(b)  Suppose next that $\sg = (0,5)(1,4)(2,3) \in \Aut(A)$.   Then
$\brvI = \set{0,1,2}$, and
\begin{equation*}
\cO(0) = \set{0,5},\quad \cO(1) = \set{1,4},\quad \cO(2) = \set{2,3}.
\end{equation*}
Since $a_{23} = a_{32} = -1$, we have $s_2 = 2$.
Thus, by Theorem \ref{thm:relativecalc},
$\Lp(\fgb,\sg)$ has relative type $\type{BC}{2}$.
\end{example}

\section{Classification of algebras in $\bbM_2$}
\label{sec:class}

In this section, we obtain a classification of the algebras in $\bbM_2$ up to isomorphism.

\subsection{Tables of relative types}
\label{subsec:tabreltype}

\begin{pgraph}
If $A$ is an affine GCM and $\sg\in \Aut(A)$,
the relative type of $\Lp(\overline{\fg(A)'},\sg)$ can be calculated
using the method  described in  Example \ref{ex:nt} when $\sg$ is not transitive
and using  Corollary \ref{cor:rot}   when $\sg$ is transitive.
In Tables \ref{tab:reltypeu}  and  \ref{tab:reltypet}  below, we record  this
relative type for all $A$ up to isomorphism and all $\sg \in \Aut(A)$ up to conjugacy.
Table \ref{tab:reltypeu} covers the cases when $\fg(A)$ is untwisted, whereas
Table \ref{tab:reltypet} covers the twisted cases.
Our enumeration of the fundamental roots of the root system corresponding to $A$ follows
\cite[\S 4.8]{K2}.

The  first column of the tables contains the type $\typeaff{X}{\rkfin}{m}$
of $A$; and the second column contains $\sg$ (when $\sg$ is uniquely determined up to conjugacy in $\Aut(A)$
by its order, we simply list its order).
The third column contains a label that we assign to the algebra
$\Lp(\overline{\fg(A)'},\sg)$ for ease of reference;
and the fourth column contains  the relative type of $\Lp(\overline{\fg(A)'},\sg)$.
When the entry in column 4 depends on the parity of $\rkfin$,
the first   expression in the column applies when $\rkfin$ is even and the second when $\rkfin$ is odd.

The labels in column 3 were chosen to indicate the method of construction of the algebras as iterated loop algebras.
For example, we used the labels
$\typemult{D}{\rkfin}{1}{2a}$, $\typemult{D}{\rkfin}{1}{2b}$ and $\typemult{D}{\rkfin}{1}{2c}$ in lines
9, 10 and 11 of Table $\ref{tab:reltypeu}$, since these algebras are constructed
starting from a finite dimensional Lie algebra of type $\type{D}{\rkfin}$ using a loop construction
relative  to an automorphism of order 1 followed by a loop construction relative to an automorphism of order 2.
The one exception to this scheme is the label $\typemult{A}{\rkfin}{1}{\rot(q)}$ in Line 1 of Table
$\ref{tab:reltypeu}$, where we needed to convey more information with the label.

If  $A$ and $\sg$ are chosen from columns 1 and 2 of Tables $\ref{tab:reltypeu}$
or $\ref{tab:reltypet}$, \emph{we will often denote the algebra
$\Lp(\overline{\fg(A)'},\sg)$ by the corresponding label from  column  3}.
\end{pgraph}

\begin{table}
\renewcommand{\arraystretch}{1.3}
\begin{tabular}[ht]
{|c |   c | c| c | c |}
\whline
\small
$A$ & $\sg$
& Label
& Relative type
\\

\whline

\rule{0ex}{3ex} $\typeaff{A}{\rkfin}{1}$, $\rkfin \ge 1$
& $(0,1, \dots, \rkfin)^q,\ 0\le q\le \lfloor\frac{\rkfin+1}2\rfloor$
& $\typemult{A}{\rkfin}{1}{\rot(q)}$
& $\type{A}{\gcd(q,\rkfin+1)-1}$
\\

\cline{2-4}
& \rule{0ex}{3ex} $\Pi_{i=1}^{\lfloor \frac \rkfin 2\rfloor}(i,\rkfin+1-i)$, $\rkfin \ge 2$
& $\typemult{A}{\rkfin}{1}{2a}$
&  $\type{BC}{\frac \rkfin 2}$\slashspace$\type{C}{\frac {\rkfin+1}2}$
\\

\cline{2-4}
& \rule{0ex}{3ex} $\Pi_{i=0}^{\frac {\rkfin-1} 2}(i,\rkfin-i)$, $\rkfin$ odd $\ge 3$
& $\typemult{A}{\rkfin}{1}{2b}$
&  $\type{BC}{\frac {\rkfin-1}2}$
\\

\whline

$\typeaff{B}{\rkfin}{1}$, $\rkfin \ge 3$
& $(1)$
& $\typemult{B}{\rkfin}{1}{1}$
& $\type{B}{\rkfin}$
\\

\cline{2-4}
& $\order{\sg} = 2$
& $\typemult{B}{\rkfin}{1}{2}$
& $\type{B}{\rkfin-1}$
\\

\whline

$\typeaff{C}{\rkfin}{1}$, $\rkfin \ge 2$
& $(1)$
& $\typemult{C}{\rkfin}{1}{1}$
& $\type{C}{\rkfin}$
\\

\cline{2-4}
& $\order{\sg} = 2$
& $\typemult{C}{\rkfin}{1}{2}$
&  $\type{C}{\frac \rkfin 2}$\slashspace $\type{BC}{\frac {\rkfin-1}2}$
\\

\whline

$\typeaff{D}{\rkfin}{1}$, $\rkfin \ge 4$
& $(1)$
& $\typemult{D}{\rkfin}{1}{1}$
& $\type{D}{\rkfin}$
\\

\cline{2-4}
& $(\rkfin-1,\rkfin)$
& $\typemult{D}{\rkfin}{1}{2a}$
& $\type{B}{\rkfin-1}$
\\

\cline{2-4}
& $(0,1)(\rkfin-1,\rkfin)$
& $\typemult{D}{\rkfin}{1}{2b}$
& $\type{B}{\rkfin-2}$
\\

\cline{2-4}
&\rule{0ex}{3.5ex}$\Pi_{i=0}^{\lfloor \frac {\rkfin -1} 2 \rfloor} (i,\rkfin-i)$,  $\rkfin \ge 5$
& $\typemult{D}{\rkfin}{1}{2c}$
& $\type{C}{\frac \rkfin 2}$\slashspace $\type{BC}{\frac {\rkfin-1}2}$
\\

\cline{2-4}
& $\order{\sg} = 3$, $\rkfin = 4$
& $\typemult{D}{4}{1}{3}$
& $\type{G}{2}$
\\

\cline{2-4}
& $\order{\sg} = 4$
& $\typemult{D}{\rkfin}{1}{4}$
& $\type{BC}{\lfloor \frac{\rkfin-2}2 \rfloor }$
\\

\whline

$\typeaff{E}{6}{1}$
& $(1)$
& $\typemult{E}{6}{1}{1}$
& $\type{E}{6}$
\\

\cline{2-4}
& $\order{\sg} = 2$
& $\typemult{E}{6}{1}{2}$
& $\type{F}{4}$
\\

\cline{2-4}
& $\order{\sg} = 3$
& $\typemult{E}{6}{1}{3}$
& $\type{G}{2}$
\\

\whline

$\typeaff{E}{7}{1}$
& $(1)$
& $\typemult{E}{7}{1}{1}$
& $\type{E}{7}$
\\

\cline{2-4}
& $\order{\sg} = 2$
& $\typemult{E}{7}{1}{2}$
& $\type{F}{4}$
\\

\whline

$\typeaff{E}{8}{1}$
& $(1)$
& $\typemult{E}{8}{1}{1}$
& $\type{E}{8}$
\\

\whline

$\typeaff{F}{4}{1}$
& $(1)$
& $\typemult{F}{4}{1}{1}$
& $\type{F}{4}$
\\

\whline

$\typeaff{G}{2}{1}$
& $(1)$
& $\typemult{G}{2}{1}{1}$
& $\type{G}{2}$
\\

\whline
\end{tabular}
\medskip
\caption{Relative type of $\Lp(\overline{\fg(A)'},\sg)$ for untwisted $A$}
\label{tab:reltypeu}
\end{table}


\begin{table}
\renewcommand{\arraystretch}{1.3}
\small
\begin{tabular}[ht]
{|c |   c | c| c | c |}
\whline

$A$
& $\sg$
& Label
& Relative type
\\

\whline

$\typeaff{A}{\rkfin}{2}$, $\rkfin \ge 2, \rkfin \ne 3$
& $(1)$
& $\typemult{A}{\rkfin}{2}{1}$
& $\type{BC}{\frac \rkfin 2}$ \slashspace $\type{C}{\frac{\rkfin+1}2}$

\\

\cline{2-4}
& $\order{\sg} = 2$, $\rkfin$ odd
& $\typemult{A}{\rkfin}{2}{2}$
& $\type{BC}{\frac{\rkfin-1}2}$
\\

\whline

$\typeaff{D}{\rkfin}{2}$, $\rkfin\ge 3$
& $(1)$
& $\typemult{D}{\rkfin}{2}{1}$
& $\type{B}{\rkfin-1}$
\\

\cline{2-4}
& \rule{0ex}{4ex} $\Pi_{i=0}^{\lfloor\frac {\rkfin-2}2\rfloor}(i,\rkfin-1-i)$
& $\typemult{D}{\rkfin}{2}{2}$
&  $\type{BC}{\lfloor\frac{\rkfin-1}2\rfloor}$
\\

\whline

$\typeaff{D}{4}{3}$
& $(1)$
& $\typemult{D}{4}{3}{1}$
& $\type{G}{2}$
\\

\whline

$\typeaff{E}{6}{2}$
& $(1)$
& $\typemult{E}{6}{2}{1}$
&$\type{F}{4}$
\\

\whline
\end{tabular}
\medskip
\caption{Relative type of $\Lp(\overline{\fg(A)'},\sg)$ for twisted $A$}
\label{tab:reltypet}
\end{table}

\begin{remark}  It is important to recall that the absolute type
of $\Lp(\overline{\fg(A)'},\sg)$ can also be read from Tables
\ref{tab:reltypeu} and \ref{tab:reltypet}.  Indeed if $A$ has type
$\typeaff{X}{\rkfin}{m}$, then $\Lp(\overline{\fg(A)'},\sg)$ has absolute type
$\type{X}{\rkfin}$ by Corollary \ref{cor:KacReal4}.
\end{remark}

\begin{pgraph}
Note that  by \eqref{eq:rot3} the algebras occurring in
row 1 of Table \ref{tab:reltypeu} have matrix realizations:
\begin{equation}
\label{eq:TypsAAreal}
\typemult{A}{\rkfin}{1}{\rot(q)} \simeq \spl_{\gcd(q,\rkfin+1)}(Q(\zeta_{\rkfin+1}^{\iota_{\rkfin+1}(\bar q)})),
\end{equation}
$0\le q\le \lfloor\frac{\rkfin+1}2\rfloor$. (Recall that if $q=0$, $\gcd(q,\rkfin+1)$ is interpreted as $\rkfin+1$.)
\end{pgraph}

The following proposition follows from Theorem \ref{thm:char} and Table
\ref{tab:reltypeu}.

\begin{proposition}
\label{prop:AA} Let $\cL\in \bbM_2$.  Then $\cL$ satisfies condition (AA)  (described  in the introduction) if and only if
$\cL \simeq \typemult{A}{\rkfin}{1}{\rot(q)}$ for some $\rkfin\ge 1$ and some~$0\le q \le \lfloor \frac {\rkfin+1}2 \rfloor$.
\end{proposition}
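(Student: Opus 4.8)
The plan is to reduce everything to the classification already assembled in Table \ref{tab:reltypeu} and then simply inspect which entries meet condition (AA). First I would invoke Theorem \ref{thm:char}: since $\cL\in \bbM_2$, we have $\cL \simeq \Lp(\fgb,\sg)$ for some untwisted affine algebra $\fg = \fg(A)$ and some diagram automorphism $\sg$ of $\fgb$, which we identify with an element of $\Aut(A)$. Conjugating $\sg$ within $\Aut(A)$ replaces $\cL$ by an isomorphic algebra (Lemma \ref{lem:changemult}(a)) and changes neither its relative nor its absolute type (Lemma \ref{lem:typeinvariant}), so I may assume that the pair $(A,\sg)$ is one of those listed in columns 1--2 of Table \ref{tab:reltypeu}. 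In particular $\cL$ is isomorphic to the algebra carrying the corresponding label from column~3, whose relative type is recorded in column~4.

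Next I would pin down the absolute type. By Corollary \ref{cor:KacReal4}, if $A$ has type $\typeaff{X}{\rkfin}{m}$ then $\cL$ has absolute type $\type{X}{\rkfin}$. Since $A$ is untwisted and the rank ranges imposed in Table \ref{tab:reltypeu} are chosen to avoid the low-rank coincidences $\type{A}{1}=\type{B}{1}=\type{C}{1}$, $\type{B}{2}=\type{C}{2}$ and $\type{A}{3}=\type{D}{3}$, the absolute type of $\cL$ equals $\type{A}{\rkfin}$ for some $\rkfin\ge 1$ precisely when $A$ has type $\typeaff{A}{\rkfin}{1}$. Thus the first requirement in (AA) confines $\cL$ to the opening block of the table, namely the three families $\typemult{A}{\rkfin}{1}{\rot(q)}$, $\typemult{A}{\rkfin}{1}{2a}$ and $\typemult{A}{\rkfin}{1}{2b}$.

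It then remains to decide which of these have relative type of the form $\type{A}{r}$. From column~4, the algebra $\typemult{A}{\rkfin}{1}{\rot(q)}$ has relative type $\type{A}{\gcd(q,\rkfin+1)-1}$, which is of type $\textrm{A}$ with $r = \gcd(q,\rkfin+1)-1 \ge 0$; hence every such algebra satisfies (AA), giving the ``if'' direction (with $0\le q\le \lfloor\frac{\rkfin+1}2\rfloor$ as in row~1). By contrast the families $\typemult{A}{\rkfin}{1}{2a}$ and $\typemult{A}{\rkfin}{1}{2b}$ have relative types among $\type{BC}{\rkfin/2}$, $\type{C}{(\rkfin+1)/2}$ and $\type{BC}{(\rkfin-1)/2}$, and the rank constraints built into the table force the indicated rank to be at least $1$ for the $\textrm{BC}$ entries and at least $2$ for the $\textrm{C}$ entry. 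Since $\type{BC}{r}$ is nonreduced for $r\ge 1$ while type $\textrm{A}$ systems are reduced, and since $\type{C}{r}$ for $r\ge 2$ is not of type $\textrm{A}$, none of these relative types has the form $\type{A}{r}$. Consequently, among the algebras of absolute type $\type{A}{\rkfin}$ exactly the algebras $\typemult{A}{\rkfin}{1}{\rot(q)}$ satisfy (AA), which is the asserted conclusion.

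The argument is essentially a table lookup, so I expect no deep obstacle; the only point needing genuine care is the final verification that the relative types appearing for the $2a$ and $2b$ families are never accidentally of type $\textrm{A}$. For this I would lean on the reducedness of type $\textrm{A}$ root systems, which instantly disposes of every $\type{BC}{}$ entry, together with the rank bounds in the table, which keep the lone $\type{C}{}$ family away from the coincidence $\type{C}{1}=\type{A}{1}$.
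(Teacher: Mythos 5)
Your argument is correct and is exactly the paper's proof, which is stated in one line as "follows from Theorem \ref{thm:char} and Table \ref{tab:reltypeu}"; you have simply filled in the details (reduction to the table via Theorem \ref{thm:char} and Lemma \ref{lem:changemult}(a), the absolute-type computation via Corollary \ref{cor:KacReal4}, and the inspection of column 4). Your explicit checks that the low-rank type coincidences are avoided and that the $\type{BC}{}$ and $\type{C}{}$ entries are never of type $\textrm{A}$ are precisely the points the paper leaves implicit.
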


\subsection{Isomorphism}\
\label{subsec:isomL2}

We know from Theorem \ref{thm:char} that the algebras in $\bbM_2$ are,  up to isomorphism, the algebras
of the form $\Lp(\fgb,\sg)$, where $\fg$ is an untwisted affine algebra
and $\sg$ is a diagram automorphism of $\fgb$.  The next theorem
give necessary and sufficient conditions for two algebras  of this form to be isomorphic.

\begin{theorem}
\label{thm:isom} \

\emph{(a)} If $\fg_i$ is an untwisted affine Kac-Moody Lie algebra
and $\sg_i$ is a diagram automorphism of $\overline{\fg'_i}$, $i=1,2$,
 then
$\Lp(\overline{\fg'_1},\sg_1) \isom\Lp(\overline{\fg'_2},\sg_2)$ implies that $\fg_1 \simeq \fg_2$.

\emph{(b)}  If $\fg$ is an affine Kac-Moody Lie algebra (twisted or untwisted)
and $\sg_i$ is a diagram automorphism of $\fgb$, $i=1,2$,
then $\Lp(\fgb,\sg_1) \isom\Lp(\fgb,\sg_2)$
if and only if
$\sg_1$ and $\sg_2$ are conjugate in the group of diagram automorphisms of $\fgb$.
\end{theorem}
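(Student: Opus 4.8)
The plan is to handle the two parts by different invariants: the absolute type settles (a), while (b) rests on the relative type, reinforced by the matrix invariant of Theorem \ref{thm:isomquantum} in the single case where relative type fails.

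For part (a) I would argue purely through the absolute type. Each $\Lp(\overline{\fg_i'},\sg_i)$ lies in $\bbM_2$ by the implication ``(b)$\Rightarrow$(a)'' of Theorem \ref{thm:char}, hence in $\bbI_2$, so it is prime, perfect and fgc by Proposition \ref{prop:ppfgc} and its absolute type is defined. Writing $A_i$ for the affine GCM of $\fg_i$, of untwisted type $\typeaff{X_i}{\rkfin_i}{1}$, Corollary \ref{cor:KacReal4} gives that $\Lp(\overline{\fg_i'},\sg_i)$ has absolute type $\type{X_i}{\rkfin_i}$. Since absolute type is an isomorphism invariant (Lemma \ref{lem:typeinvariant}), the hypothesised isomorphism forces $\type{X_1}{\rkfin_1}=\type{X_2}{\rkfin_2}$; because an untwisted affine matrix is determined by the type of its finite part, $A_1=A_2$ and hence $\fg_1\simeq\fg_2$. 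The ``if'' half of part (b) is immediate: if $\sg_2=\ph\sg_1\ph^{-1}$ with $\ph\in\Aut(A)\subseteq\Aut(\fgb)$, then Lemma \ref{lem:changemult}(a) (case $n=1$) gives $\Lp(\fgb,\sg_2)=\Lp(\fgb,\ph\sg_1\ph^{-1})\isom\Lp(\fgb,\sg_1)$.

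The ``only if'' half of part (b) is where the real content lies, and the relative type (again an invariant by Lemma \ref{lem:typeinvariant}) is the primary tool. Since $\fg$, and hence $A$, is fixed, the conjugacy classes of diagram automorphisms of $\fgb$ are exactly the rows of Table \ref{tab:reltypeu} or \ref{tab:reltypet} attached to $A$. When $A$ is not of type $\typeaff{A}{\rkfin}{1}$, inspection of those tables (respecting the low-rank identifications of reduced types) shows that distinct rows for a given $A$ carry distinct relative types; so if $\Lp(\fgb,\sg_1)\isom\Lp(\fgb,\sg_2)$ then $\sg_1$ and $\sg_2$ occupy the same row and are therefore conjugate in $\Aut(A)$. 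When $A$ is of type $\typeaff{A}{\rkfin}{1}$, the rotation classes all have relative type $\type{A}{r}$ (or $\type{A}{0}$), while the reflection classes $\typemult{A}{\rkfin}{1}{2a}$ and $\typemult{A}{\rkfin}{1}{2b}$ have type $\type{C}{s}$ or $\type{BC}{s}$; these are separated from each other and from the rotations by relative type, so the only residual task is to separate the rotation classes from one another.

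For rotations $\sg_i=\rho^{q_i}$ I would invoke \eqref{eq:rot3}, which realises $\Lp(\fgb,\rho^{q_i})$ as $\spl_{\gcd(q_i,\rkfin+1)}\big(Q(\zeta_{\rkfin+1}^{\iota_{\rkfin+1}(\bar q_i)})\big)$. Substituting an isomorphism of these into Theorem \ref{thm:isomquantum} yields $\gcd(q_1,\rkfin+1)=\gcd(q_2,\rkfin+1)$ and $\zeta_{\rkfin+1}^{\iota_{\rkfin+1}(\bar q_1)}=\zeta_{\rkfin+1}^{\pm\iota_{\rkfin+1}(\bar q_2)}$. Using that $\zeta_{\rkfin+1}$ is a primitive $(\rkfin+1)$-th root of unity and that $\iota_{\rkfin+1}$ is a gcd-preserving bijection with $\iota_{\rkfin+1}(-\bar q)=-\iota_{\rkfin+1}(\bar q)$, this collapses to $q_1\equiv\pm q_2\pmod{\rkfin+1}$. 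Finally I would note that $\Aut(A)$ for $\typeaff{A}{\rkfin}{1}$ is the dihedral group of order $2(\rkfin+1)$, in which $\rho^{q_1}$ and $\rho^{q_2}$ are conjugate exactly when $q_1\equiv\pm q_2\pmod{\rkfin+1}$ (a reflection inverts each rotation), giving the required conjugacy. The main obstacle is precisely this rotation case: relative and absolute type provably cannot distinguish these algebras, so one genuinely needs the cyclic-algebra and Brauer-group computation underlying Theorem \ref{thm:isomquantum}, together with the bookkeeping of the involution $\iota_{\rkfin+1}$ and its compatibility with dihedral conjugacy.
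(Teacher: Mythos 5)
Your proposal is correct and follows essentially the same route as the paper: part (a) via the absolute type (Corollary \ref{cor:KacReal4}), the ``if'' of (b) via Lemma \ref{lem:changemult}(a), and the ``only if'' by splitting into the rotation case (handled by \eqref{eq:rot3} and Theorem \ref{thm:isomquantum}) and the remaining cases (separated by relative type in Tables \ref{tab:reltypeu} and \ref{tab:reltypet}). The only cosmetic difference is that the paper first normalizes $q_i$ to $0\le q_i\le\lfloor\frac{\rkfin+1}{2}\rfloor$ and concludes $q_1=q_2$, whereas you derive $q_1\equiv\pm q_2\pmod{\rkfin+1}$ and then invoke dihedral conjugacy; these are the same argument.
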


\begin{proof} (a) follows from Corollary \ref{cor:KacReal4}, so it remains to prove (b).
The implication ``$\Leftarrow$'' in (b) follows from  Lemma \ref{lem:changemult}(a).

To prove the implication ``$\Rightarrow$'' in (b), suppose that $\cL_1 \simeq \cL_2$, where
$\cL_i = \Lp(\fgb,\sg_i)$ for $i=1,2$.  Now $\fg = \fg(A)$, where $A$ is one of the
affine matrices in column 1 of Tables \ref{tab:reltypeu} or \ref{tab:reltypet}.  Further,
replacing each $\sg_i$ by a conjugate in $\Aut(A)$, we can, by  Lemma \ref{lem:changemult}(a), assume that
$\sg_1$ and $\sg_2$ are among the automorphisms listed in column 2 of Tables \ref{tab:reltypeu}
or \ref{tab:reltypet}.

Case 1:  Suppose that $\cL_1$, and hence also $\cL_2$, satisfies Condition (AA).
Then  by Proposition \ref{prop:AA}, $A$ has type $\typeaff{A}{\rkfin}{1}$ for some $\rkfin\ge 1$ and $\sg_i = (0,1,\dots,\rkfin)^{q_i}$,
where $0\le q_i \le  \lfloor \frac {\rkfin+1}2 \rfloor$, $i=1,2$.
So $\cL_i = \Lp(\fgb,(0,1,\dots,\rkfin)^{q_i})$ for $i=1,2$.  Thus,  since $\cL_1\simeq \cL_2$, we have
\[ \spl_{\gcd(q_1,\rkfin+1)}\big(Q(\zeta_{\rkfin+1}^{\iota_{\rkfin+1}(\bar q_1)})\big)
\simeq
 \spl_{\gcd(q_2,\rkfin+1)}\big(Q(\zeta_{\rkfin+1}^{\iota_{\rkfin+1}(\bar q_2)})\big)
\]
by \eqref{eq:rot3}.  Hence, by Theorem \ref{thm:isomquantum}, we have
$\iota_{\rkfin+1}(\bar q_1) = \pm \iota_{\rkfin+1}(\bar q_2)$ in $\bbZ_{\rkfin+1}$,
and therefore $\bar q_1 = \pm \bar q_2$.
So $q_1 = q_2$,  and $\sg_1 = \sg_2$.

Case 2:  Suppose that $\cL_1$, and hence also $\cL_2$, does not satisfy Condition (AA).
If $\fg$ is untwisted, then $\sg_1$
and $\sg_2$ appear in one of the rows after row 1 of Table~\ref{tab:reltypeu}.
Since the relative type of $\cL_1$ is the same as the relative type of $\cL_2$,
it follows checking Table \ref{tab:reltypeu} that $\sg_1$ and $\sg_2$ appear in the same row.
Thus, $\sg_1 = \sg_2$.  The argument in the twisted case is the same using Table
\ref{tab:reltypet}.
\end{proof}

\begin{pgraph}
As a corollary of this theorem, we  next prove that the converse in Theorem \ref{thm:erase}(c) is valid
for an affine algebra $\fg = \fg(A)$ and  automorphisms of first kind.  Before
doing so, we note that by the classification of affine matrices, $\Aut(A)$ is one
of the following: a cyclic group of order 1 or 2, a dihedral group of order $\ge 6$, or a symmetric group $S_3$ or $S_4$.
Hence, any element of $\Aut(A)$ is conjugate to its inverse, so the relation
$\sim$ in $\Aut(A)$ described prior to  Theorem \ref{thm:erase}  is simply conjugacy.
\end{pgraph}

\begin{corollary} \label{cor:class1}  Suppose that
$\fg = \fg(A)$ is an affine Kac-Moody Lie algebra and
$\sg_1$ and $\sg_2$
are finite order automorphisms of first kind of  $\fgb$.
Then $\Lp(\fgb,\sg_1)\isom \Lp(\fgb,\sg_2)$ if and only if
$\bar p(\sg_1)$ is conjugate to $\bar p(\sg_2)$ in $\Aut(A)$.
\end{corollary}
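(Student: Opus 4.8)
The plan is to deduce this directly from the already-established Theorem \ref{thm:isom}(b), using the \emph{first kind} hypothesis to reduce each $\sg_i$ to a genuine diagram automorphism. Throughout I would lean on the observation recorded just before the corollary: since $\Aut(A)$ is, in the affine case, cyclic of order $1$ or $2$, dihedral of order $\ge 6$, or $S_3$ or $S_4$, every element is conjugate to its inverse, so the relation $\sim$ on $\Aut(A)$ coincides with ordinary conjugacy.

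For the implication ``$\Leftarrow$'', suppose $\bar p(\sg_1)$ is conjugate to $\bar p(\sg_2)$ in $\Aut(A)$. By the observation this says exactly $\bar p(\sg_1)\sim \bar p(\sg_2)$, whence Theorem \ref{thm:erase}(c) gives $\Lp(\fgb,\sg_1)\isom \Lp(\fgb,\sg_2)$ with no further work. Note that this direction does not use the first kind hypothesis at all.

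For ``$\Rightarrow$'', the key step is an erasing move that replaces each $\sg_i$ by its projection. Since $\sg_i$ is of first kind, $\bar p(\sg_i)\in \Aut(A)$ by Definition \ref{def:kind}; regarding $\bar p(\sg_i)$ as a diagram automorphism of $\fgb$, one has $\bar p(\bar p(\sg_i)) = \bar p(\sg_i)$, because $\bar p$ restricts to the identity on $\Out(A)\supseteq \Aut(A)$ (it is the projection onto the second factor of $\Aut(\fgb)=\Autz(\fgb)\rtimes\Out(A)$, as in \pref{pgraph:project}). Hence the relation $\bar p(\sg_i)\sim \bar p(\bar p(\sg_i))$ holds trivially, and Theorem \ref{thm:erase}(c) applied to the pair $\sg_i$, $\bar p(\sg_i)$ yields $\Lp(\fgb,\sg_i)\isom \Lp(\fgb,\bar p(\sg_i))$ for $i=1,2$. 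Combining these with the assumed isomorphism $\Lp(\fgb,\sg_1)\isom \Lp(\fgb,\sg_2)$ gives $\Lp(\fgb,\bar p(\sg_1))\isom \Lp(\fgb,\bar p(\sg_2))$.

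Finally, $\bar p(\sg_1)$ and $\bar p(\sg_2)$ are now honest diagram automorphisms of $\fgb$, so Theorem \ref{thm:isom}(b) applies and shows they are conjugate in the group of diagram automorphisms of $\fgb$, which is precisely $\Aut(A)$ by Definition \ref{def:diagsymm}. This is the desired conclusion. I do not expect a genuine obstacle here: all the real content is carried by Theorem \ref{thm:isom}(b), and the only substantive point is the erasing step that trades $\sg_i$ for its diagram projection. That step is exactly where the first kind hypothesis is essential—without it $\bar p(\sg_i)$ would involve the Chevalley automorphism $\chev$ and fail to lie in $\Aut(A)$, so Theorem \ref{thm:isom}(b) could not be invoked.
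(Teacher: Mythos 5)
Your proof is correct and follows essentially the same route as the paper: both directions reduce to Theorem \ref{thm:erase}(c), and the forward implication replaces each $\sg_i$ by $\bar p(\sg_i)$ (justified exactly as you say, since $\bar p$ fixes $\Aut(A)$ pointwise) and then invokes Theorem \ref{thm:isom}(b). The supporting observations you make — that $\sim$ restricted to $\Aut(A)$ is ordinary conjugacy, and that the first kind hypothesis is what puts $\bar p(\sg_i)$ in $\Aut(A)$ — are precisely the points the paper relies on.
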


\begin{proof}  By Theorem \ref{thm:erase}(c), we only need to prove the implication ``$\Rightarrow$''.
Suppose that $\Lp(\fgb,\sg_1)\isom \Lp(\fgb,\sg_2)$.   By Theorem \ref{thm:erase}(c), we
have $\Lp(\fgb,\sg_i)\isom \Lp(\fgb,\bar p(\sg_i))$, for $i=1,2$.  Hence, replacing
$\sg_i$ by $\bar p(\sg_i)$, we can assume that
$\sg_i\in \Aut(A)$, $i=1,2$.  The result now follows from Theorem \ref{thm:isom}(b).
\end{proof}

\subsection{Classification}\
\label{subsec:classL2}

Our second  main theorem gives a classification of the algebras in $\bbM_2$.

\begin{theorem}
\label{thm:class}  If
$A$ and $\sg$ are chosen from Columns 1 and 2 of
Table \ref{tab:reltypeu}, then  $\Lp(\overline{\fg(A)'},\sg)$ is in $\bbM_2$.  Conversely,
any algebra in $\bbM_2$ is isomorphic to exactly one such algebra.
\end{theorem}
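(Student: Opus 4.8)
The plan is to reduce the classification to Theorem \ref{thm:char} together with the isomorphism criterion of Theorem \ref{thm:isom}, leaving only a finite combinatorial check about the diagram automorphism groups of untwisted affine types. Accordingly I would split the statement into its three constituent claims: membership of each table entry in $\bbM_2$, existence of a table entry isomorphic to a given $\cL \in \bbM_2$, and uniqueness of that entry.

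Membership is immediate. Every pair $(A,\sg)$ drawn from Columns 1 and 2 of Table \ref{tab:reltypeu} has $A$ untwisted affine and $\sg$ a diagram automorphism of $\overline{\fg(A)'}$, so by the implication ``(b) $\Rightarrow$ (a)'' of Theorem \ref{thm:char} the algebra $\Lp(\overline{\fg(A)'},\sg)$ lies in $\bbM_2$. This settles the first sentence. For the converse, let $\cL \in \bbM_2$. By ``(a) $\Rightarrow$ (b)'' of Theorem \ref{thm:char} we may write $\cL \simeq \Lp(\fgb,\sg)$ with $\fg = \fg(A)$ untwisted affine and $\sg \in \Aut(A)$, and by Lemma \ref{lem:changemult}(a) the isomorphism class of $\Lp(\fgb,\sg)$ depends only on the conjugacy class of $\sg$ in $\Aut(A)$. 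Thus $\cL$ is isomorphic to a table entry as soon as Column 2 meets every conjugacy class of $\Aut(A)$, giving existence. For uniqueness, suppose two entries $(A_1,\sg_1)$, $(A_2,\sg_2)$ yield isomorphic algebras. Theorem \ref{thm:isom}(a) forces $\fg(A_1) \simeq \fg(A_2)$, hence $A_1 = A_2 =: A$; and then Theorem \ref{thm:isom}(b) shows $\sg_1$, $\sg_2$ are conjugate in $\Aut(A)$. Consequently ``exactly one'' holds provided that, for each untwisted affine $A$, Column 2 of Table \ref{tab:reltypeu} is a complete and irredundant set of representatives of the conjugacy classes of $\Aut(A)$.

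This last statement is the heart of the matter and the step I expect to be most laborious. I would verify it case by case, using the classification of untwisted affine diagrams and the description of $\Aut(A)$ recalled before Corollary \ref{cor:class1}. The types $\typeaff{E}{8}{1}$, $\typeaff{F}{4}{1}$, $\typeaff{G}{2}{1}$ have $\Aut(A) = \set{1}$, while $\typeaff{B}{\rkfin}{1}$, $\typeaff{C}{\rkfin}{1}$, $\typeaff{E}{7}{1}$ have $\Aut(A) \cong \bbZ/2\bbZ$, so their one- and two-entry rows are trivially complete and irredundant. For $\typeaff{E}{6}{1}$ one has $\Aut(A) \cong S_3$ with three conjugacy classes (identity, transpositions, $3$-cycles), matched by $\typemult{E}{6}{1}{1}$, $\typemult{E}{6}{1}{2}$, $\typemult{E}{6}{1}{3}$. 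For $\typeaff{A}{\rkfin}{1}$ the group is cyclic of order $2$ when $\rkfin = 1$ and dihedral of order $2(\rkfin+1)$ when $\rkfin \ge 2$: the rotation $(0,1,\dots,\rkfin)^q$ is conjugate to $(0,1,\dots,\rkfin)^{\rkfin+1-q}$, so the rotation classes are represented by $0 \le q \le \lfloor \frac{\rkfin+1}{2}\rfloor$, while the reflections split into one or two classes according as $\rkfin+1$ is odd or even, matching $\typemult{A}{\rkfin}{1}{2a}$ and, for $\rkfin$ odd, $\typemult{A}{\rkfin}{1}{2b}$.

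The genuinely delicate case is $\typeaff{D}{\rkfin}{1}$. Here $\Aut(A) \cong S_4$ when $\rkfin = 4$, with five classes matched by $\typemult{D}{4}{1}{1}$, $\typemult{D}{4}{1}{2a}$, $\typemult{D}{4}{1}{2b}$, $\typemult{D}{4}{1}{3}$, $\typemult{D}{4}{1}{4}$; and $\Aut(A)$ is dihedral of order $8$ when $\rkfin \ge 5$, whose five conjugacy classes comprise the identity $\typemult{D}{\rkfin}{1}{1}$, the single order-$4$ class $\typemult{D}{\rkfin}{1}{4}$, and three distinct classes of involutions — the fork swap $\typemult{D}{\rkfin}{1}{2a}$, the central double swap $\typemult{D}{\rkfin}{1}{2b}$, and the end-reversing reflection $\typemult{D}{\rkfin}{1}{2c}$. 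Separating these three involution classes (and checking that the end-reversing reflection is not conjugate into the order-$4$ class) is what establishes both completeness and irredundancy, and I expect it to be the main obstacle; once it is done, the classification follows.
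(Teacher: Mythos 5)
Your proposal is correct and follows the paper's own route exactly: the paper's proof is the one-line deduction from Theorem \ref{thm:char} (membership and existence) together with Theorem \ref{thm:isom} (uniqueness of $A$ and of the conjugacy class of $\sg$). The case-by-case verification that Column 2 of Table \ref{tab:reltypeu} exhausts the conjugacy classes of $\Aut(A)$, which you rightly identify as the remaining combinatorial content and carry out correctly, is in the paper simply absorbed into the construction of the table.
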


\begin{proof}  This follows from Theorems \ref{thm:char} and Theorems \ref{thm:isom}.
\end{proof}

\begin{pgraph}  Using the labels in column 3 of Table \ref{tab:reltypeu}, Theorem \ref{thm:class}
states that a   nonredundant list of   all algebras in  $\bbM_2$ up to isomorphism is
$\typemult{A}{\rkfin}{1}{\rot(q)},\dots,\typemult{G}{2}{1}{1}$.
\end{pgraph}

The following corollary follows from  Proposition \ref{prop:AA}, Theorem \ref{thm:class} and Table \ref{tab:reltypeu}.

\begin{corollary}
\label{cor:class2}
Two algebras in $\bbM_2$ that do not satisfy Condition (AA) are isomorphic if and only
if they have the same absolute type and the same relative type.
\end{corollary}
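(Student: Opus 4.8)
The plan is to read both implications off the classification already obtained, namely Theorem~\ref{thm:class}, supplemented by a direct scan of Table~\ref{tab:reltypeu}.

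For the ``only if'' direction I would simply invoke the fact that relative and absolute type are isomorphism invariants. Indeed, any $\cL\in\bbM_2$ lies in $\bbI_2$ by \pref{pgraph:MnIn}, hence is prime, perfect and fgc by Proposition~\ref{prop:ppfgc}, so both invariants are defined; Lemma~\ref{lem:typeinvariant} then gives that isomorphic algebras share them. Note that this half uses nothing about Condition~(AA).

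For the ``if'' direction, suppose $\cL_1,\cL_2\in\bbM_2$ both fail (AA) and have equal absolute and equal relative type. By Theorem~\ref{thm:class} each $\cL_i$ is isomorphic to exactly one algebra $\Lp(\overline{\fg(A_i)'},\sg_i)$ with $(A_i,\sg_i)$ chosen from Columns~1 and~2 of Table~\ref{tab:reltypeu}. Since $\cL_i$ fails (AA), Proposition~\ref{prop:AA} shows its representing pair is not the rotation entry $\typemult{A}{\rkfin}{1}{\rot(q)}$ of Row~1. Each $A_i$ is untwisted, so by Corollary~\ref{cor:KacReal4} the absolute type of $\cL_i$ is $\type{X}{\rkfin}$ when $A_i$ has type $\typeaff{X}{\rkfin}{1}$; as untwisted affine types correspond bijectively to finite types and the rank ranges in the table are arranged to avoid the low-rank coincidences, equality of absolute types forces $A_1=A_2=:A$. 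It then remains to check that, for this fixed $A$ and among the Column~2 entries other than Row~1, the relative type in Column~4 determines $\sg$ up to the listed conjugacy. I would verify this block by block: for $\type{B}{\rkfin}$ one has $\type{B}{\rkfin}\ne\type{B}{\rkfin-1}$; for $\type{C}{\rkfin}$ one compares $\type{C}{\rkfin}$ against the smaller-rank $\type{C}{\rkfin/2}$ or against $\type{BC}{(\rkfin-1)/2}$; for $\type{A}{\rkfin}$ the two surviving rows $\typemult{A}{\rkfin}{1}{2a}$ and $\typemult{A}{\rkfin}{1}{2b}$ carry distinct relative types whenever both occur (for $\rkfin$ odd one is of type $\type{C}{}$ and the other of type $\type{BC}{}$); and for $\type{D}{\rkfin}$ one confirms that $\type{D}{\rkfin}$, $\type{B}{\rkfin-1}$, $\type{B}{\rkfin-2}$, the $\type{C}{}$ or $\type{BC}{}$ entry from $\typemult{D}{\rkfin}{1}{2c}$, the $\type{G}{2}$ entry, and $\type{BC}{\lfloor(\rkfin-2)/2\rfloor}$ are pairwise distinct. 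The $\type{E}{}$, $\type{F}{}$ and $\type{G}{}$ blocks are immediate. Hence $(A,\sg_1)$ and $(A,\sg_2)$ are the same row, so the uniqueness clause of Theorem~\ref{thm:class} (equivalently Theorem~\ref{thm:isom}(b)) yields $\cL_1\simeq\cL_2$.

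The one real obstacle is this last verification: confirming that, once Row~1 is deleted, the Column~4 relative types are pairwise distinct inside each absolute-type block. The delicate blocks are $\type{D}{\rkfin}$, where several $\type{B}{}$ and $\type{BC}{}$ entries of nearby rank coexist, and $\type{A}{\rkfin}$, where one must see that deleting the rotation family --- which is exactly the (AA) family by Proposition~\ref{prop:AA} --- removes precisely the relative-type-$\type{A}{r}$ coincidences that make the (AA) exception necessary in the first place. Everything else is routine bookkeeping.
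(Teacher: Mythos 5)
Your proposal is correct and follows essentially the same route as the paper, which derives the corollary directly from Proposition~\ref{prop:AA}, Theorem~\ref{thm:class} and an inspection of Table~\ref{tab:reltypeu}; you have merely written out the block-by-block distinctness check (and the invariance of relative and absolute type via Lemma~\ref{lem:typeinvariant}) that the paper leaves implicit. The verification itself is sound, including the observation that the rank restrictions in the table sidestep the coincidences $\type{A}{1}=\type{B}{1}=\type{C}{1}$, $\type{B}{2}=\type{C}{2}$ and $\type{A}{3}=\type{D}{3}$.
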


\begin{pgraph}
\label{pgraph:twistuntwist}
Any algebra appearing in  Table \ref{tab:reltypet}
is in $\bbM_2$ by Theorem \ref{thm:char}, and hence it is isomorphic to an algebra in Table
\ref{tab:reltypeu}.  But any such algebra does not satisfy Condition (AA) (by Table
\ref{tab:reltypet}), so to match it with an algebra in Table \ref{tab:reltypeu},
we just have to match the relative and absolute types.  In this way, we see that
\begin{gather*}
\typemult{A}{\rkfin}{2}{1} \simeq \typemult{A}{\rkfin}{1}{2a} \quad \text{ if } \rkfin\ge 2, \rkfin\ne 3,\\
\typemult{A}{\rkfin}{2}{2} \simeq \typemult{A}{\rkfin}{1}{2b} \quad \text{ if } \rkfin\ge 5, \rkfin \text{ odd},\\
\typemult{D}{\rkfin}{2}{1} \simeq
\left\{
  \begin{array}{ll}
    \typemult{D}{\rkfin}{1}{2a}, & \hbox{if $\rkfin\ge 4$;} \\
    \typemult{A}{3}{1}{2a}, & \hbox{if $\rkfin=3$,}
  \end{array}
\right.\\
\typemult{D}{\rkfin}{2}{2} \simeq
\left\{
  \begin{array}{ll}
    \typemult{D}{\rkfin}{1}{4}, & \hbox{if $\rkfin\ge 4$, $\rkfin$ even;} \\
    \typemult{D}{\rkfin}{1}{2c}, & \hbox{if $\rkfin\ge 5$, $\rkfin$ odd;} \\
    \typemult{A}{3}{1}{2b}, & \hbox{if $\rkfin=3$,}
  \end{array}
\right.\\
\typemult{D}{4}{3}{1} \simeq \typemult{D}{4}{1}{3} \andd
\typemult{E}{6}{2}{1} \simeq \typemult{E}{6}{1}{2}.
\end{gather*}
The  isomorphisms $\typemult{A}{\rkfin}{2}{1} \simeq \typemult{A}{\rkfin}{1}{2a}$,
$\typemult{D}{\rkfin}{2}{1} \simeq  \typemult{D}{\rkfin}{1}{2a}$,
$\typemult{D}{4}{3}{1} \simeq \typemult{D}{4}{1}{3}$ and
$\typemult{E}{6}{2}{1} \simeq \typemult{E}{6}{1}{2}$
are of course not surprising and can be proved directly without our classification
results.

\end{pgraph}

\section{Links to some related work.}
\label{sec:links}

In this section, we  consider two invariants associated to an algebra $\cL$ in $\bbM_2$, the index and the  Saito
extended affine
root system, thereby linking our results to the work of several other  researchers.

\subsection{The index of a prime perfect fgc Lie algebra}\
\label{subsec:TitsI}

In order to define the index of a prime perfect fgc Lie algebra,
we first discuss the corresponding notion for algebraic groups.

\begin{pgraph}
\label{pgraph:indexgroup} Suppose that $F$ is a field, $F_s$ is a separable closure of $F$, and
$\Gamma = \Gal(F_s/F)$.

(a)\  Starting from a semisimple algebraic group $G$ defined over $F$,
one can construct a triple $(\Pi,\Pi_0,*)$, called the \emph{index} of $G$  \cite[\S2.3]{T}.
This triple, which is often also called the \emph{Tits index} of $G$,
consists of a finite Dynkin diagram $\Pi$, a
(possibly empty) subset $\Pi_0$ of $\Pi$, and a homomorphism $\sg \xrightarrow{*} \sg^*$
(called the $*$-action)
from $\Gamma$ into $\Aut(\Pi)$
satisfying $\sg^*(\Pi_0) = \Pi_0$ for $\sg\in \Gamma$.
(Here $\Aut(\Pi)$ is the group of diagram automorphisms of $\Pi$.)
We will not need to describe the construction of  $(\Pi,\Pi_0,*)$, but instead recall
it below in \pref{pgraph:indexsimple}(b) in the special case of interest here.

(b)\  If $G_i$ is a semisimple algebraic group defined over $F$ with index
$(\Pi_i,\Pi_{i0},*)$ for $i=1,2$, we say that
$(\Pi_1,\Pi_{10},*)$ and
$(\Pi_2,\Pi_{20},*)$ are \emph{isomorphic} if there is a diagram isomorphism
from $\Pi_1$ onto $\Pi_2$ which maps $\Pi_{10}$ onto
$\Pi_{20}$ and commutes with the $*$-actions.
As a special case of \cite[Prop.~2.6.3]{T}, we know that
if $G_1$ and $G_2$ are $F$-isomorphic then their indices are isomorphic.
In particular,  the index is independent up to isomorphism of the choices
made in its construction.
\end{pgraph}

\begin{pgraph}
\label{pgraph:indexlabel}
Besides isomorphism, there is a another notion of equivalence for indices which is useful.  To describe this,
suppose that for $i=1,2$, $F_i$ is a field, $(F_i)_s$ is a separable closure of $F_i$, $\Gamma_i = \Gal((F_i)_s/F_i)$, and
$(\Pi_i,\Pi_{i0},*)$ is the index for a semisimple algebraic group $G_i$ defined over $F_i$.  We say that the indices
$(\Pi_1,\Pi_{10},*)$ and $(\Pi_2,\Pi_{20},*)$ are \emph{similar}
if there is a diagram isomorphism $\lambda : \Pi_1 \to \Pi_2$ and a group isomorphism
$\mu : (\Gamma_1)^* \to (\Gamma_2)^*$ such that
$\lambda(\Pi_{10}) = \Pi_{20}$ and
$\lambda(g \alpha) = \mu(g) \lambda(\alpha)$ for $g\in (\Gamma_1)^*$
and $\alpha\in \Pi$.  Clearly, if $F_1 = F_2$,
isomorphic indices are similar.

In the famous Table 2 of \cite{T},  Tits listed and labelled all indices up to similarity
that can occur for some absolutely  simple algebraic group defined over some $F$.
We will use those labels  $\Tindex{1}{A}{n,r}{d},\dots, \Tindex{}{G}{2,2}{0}$
in Section \ref{subsec:TitsM2} below.
\end{pgraph}

\begin{pgraph}
\label{pgraph:indexsimple}
Suppose next that $\cL$ is a simple fgc Lie algebra.  Let $F = \Cd(\cL)$, which
is a field extension of $\base$.

(a) Recall from \pref{pgraph:centralsimple} that $\cL$ is a finite dimensional central simple Lie algebra
over $F$, and hence  $\bfAut^0(\cL)$ is a simple algebraic group over $F$, where
$\bfAut^0(\cL)$ denotes the connected component of the automorphism group $\bfAut(\cL)$ of $\cL$ over $F$.
We define the \emph{index} of $\cL$
to be the index of $\bfAut^0(\cL)$.

(b) Since $\characteristic(\base) = 0$, the index of $\cL$ can equivalently be defined purely
in terms of the structure of $\cL$.  For the convenience of the reader, we do this here following \cite[\S2.3]{T},
\cite[\S 4]{Sat}  and \cite{Se1} (in the Lie algebra case).
Let $\bF$ (= $F_s$) be an algebraic closure of $F$ with $\Gamma = \Gal(\bF/F)$, let $\cL_\bF = \cL\otimes_F \bF$, and identify
$\cL$ as usual as an $F$-subalgebra of $\cL_\bF$.
Let $\cT$ be a MAD $F$-subalgebra of $\cL$, and let
$\cH$ be a  Cartan  $F$-subalgebra of $\cL$ containing $\cT$  \cite[Prop.~I.1]{Se2}.
Then $\cH_\bF =  \cH\otimes_F \bF$ is a MAD $\bF$-subalgebra of $\cL_\bF$.
Let $\Delta$ be the  root system (including 0) of $\cL_\bF$ relative to $\cH_\bF$, in which case the restriction map   $\alpha \mapsto \alpha_\cT$ maps
$\Delta$ onto the root system of $\cL$ with respect to $\cT$.
Choose an additive linear order $>$ on the root lattice $Q(\Delta)$
that is compatible with restriction (that is if $\al_1,\al_2\in Q(\Delta)$,
$\al_1|_\cT = \al_2|_\cT$ and $\al_1>0$ then $\al_2>0$).
Let $\Pi$ be the base for $\Delta$ determined by $>$, and let $\Pi_0 = \set{\alpha\in \Pi : \alpha|_{\cT} = 0}$.
Finally,
$\Gamma$ acts on $\cH_\bF$ (trivially on $\cH$ and naturally on $\bF$), and this action
transfers as usual to an action of $\Gamma$ on $\cH_\bF^* = \Hom_\bF(\cH_\bF,\bF)$ which stabilizes
$\Delta$.  For $\sg \in\Gamma$, we define $\sg^*\in \Aut(\Pi)$, by
$\sg^*(\alpha) = w_\sigma \sg(\alpha)$, where
$w_\sg$ is the element of the Weyl group of $\Delta$ such that
$w_\sigma \sg(\Pi) = \Pi$.  Then $(\Pi,\Pi_0,*)$
is the index of $\cL$

(c) If follows from the definition in (b) (or (a)) that the absolute type
of $\cL$ is the type of the Dynkin diagram $\Pi$.  (This type is used
as the ``base'' for the label of the index  of $\cL$.)
Moreover,  Tits described in \cite[\S 2.5.2]{T} an algorithm for
calculating the relative type of $\cL$ given its index up to similarity.
Hence, both the absolute and relative type of $\cL$ are
determined by its index up to similarity.  For example if $\cL$ has index
$\Tindex{}{E}{7,4}{9}$, then
$\cL$ has absolute type $\type{E}{7}$ and, using the algorithm,
relative type $\type{F}{4}$.
\end{pgraph}

\begin{definition}
\label{def:indexprime}
Suppose finally that $\cL$ is an prime perfect fgc Lie algebra.
 We know from Proposition \ref{prop:cclosure} that the central closure
$\tcL := \cL \ot_{\Cd(\cL)} \tCd(\cL)$
is a simple fgc Lie algebra.  We define
the \emph{index of $\cL$} to be the index of
$\tcL$.
\end{definition}

\begin{lemma}
\label{lem:indexinvariant}
Suppose that $\cL_1$ and $\cL_2$
are prime perfect fgc Lie algebras that are isomorphic (as $\base$-algebras). Then the indices of  $\cL_1$ and $\cL_2$ are similar.
\end{lemma}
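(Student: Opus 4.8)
The plan is to reduce to the simple case and then transport the entire construction of the index in \pref{pgraph:indexsimple}(b) across the given isomorphism, keeping careful track of its semilinearity over the induced field isomorphism. First, exactly as in the opening lines of the proof of Lemma \ref{lem:typeinvariant}, I would use \pref{pgraph:cfunctor} to see that an isomorphism $\varphi : \cL_1 \to \cL_2$ induces isomorphisms of the centroids and of their quotient fields, hence an isomorphism of the central closures; since the index of $\cL_i$ is by Definition \ref{def:indexprime} the index of $\tcL_i$, I may replace $\cL_i$ by $\tcL_i$ and assume that $\cL_1,\cL_2$ are simple fgc Lie algebras. Writing $F_i = \Cd(\cL_i)$ and $\gamma = \Cd(\varphi) : F_1 \to F_2$, the map $\varphi$ is $\gamma$-semilinear. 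I would then extend $\gamma$ to an isomorphism $\bar\gamma : \bar F_1 \to \bar F_2$ of algebraic closures and set $\nu : \Gamma_1 \to \Gamma_2$, $\nu(g) = \bar\gamma\, g\, \bar\gamma^{-1}$, a group isomorphism of the Galois groups. Tensoring gives a $\bar\gamma$-semilinear algebra isomorphism $\psi = \varphi\otimes\bar\gamma : \cL_1\otimes_{F_1}\bar F_1 \to \cL_2\otimes_{F_2}\bar F_2$ satisfying $\psi\circ(1\otimes g) = (1\otimes\nu(g))\circ\psi$ for $g\in\Gamma_1$.

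Next I would transport the choices used to build the index of $\cL_1$. Fixing a MAD subalgebra $\cT_1$ and a Cartan subalgebra $\cH_1\supseteq\cT_1$ of $\cL_1$, the images $\cT_2=\varphi(\cT_1)$ and $\cH_2=\varphi(\cH_1)$ are a MAD subalgebra and a Cartan subalgebra of $\cL_2$ (the first point is checked in Lemma \ref{lem:typeinvariant}). The semilinear map $\psi$ carries $(\cH_1)_{\bar F_1}$ onto $(\cH_2)_{\bar F_2}$, and a short computation shows it sends the root system $\Delta_1$ onto $\Delta_2$ via $\lambda_0 : \alpha\mapsto \bar\gamma\circ\alpha\circ\psi^{-1}$; because $\psi$ is an algebra isomorphism, $\lambda_0$ is an isomorphism of root systems, preserving Cartan integers and Weyl groups. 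Since $\psi^{-1}$ restricted to $\cL_2$ is $\varphi^{-1}$ and $\cT_2=\varphi(\cT_1)$, one sees that $\lambda_0(\alpha)|_{\cT_2}=0$ if and only if $\alpha|_{\cT_1}=0$; hence transporting the compatible order on $Q(\Delta_1)$ along $\lambda_0$ yields a compatible order on $Q(\Delta_2)$, and I may compute the index of $\cL_2$ using these transported data. With these choices $\Pi_2=\lambda_0(\Pi_1)$ and $\Pi_{20}=\lambda_0(\Pi_{10})$, so $\lambda_0$ is a diagram isomorphism carrying $\Pi_{10}$ onto $\Pi_{20}$.

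The crux, and the step I expect to require the most care, is matching the two $*$-actions. Using $\psi\,(1\otimes g)\,\psi^{-1}=1\otimes\nu(g)$ together with $\nu(g)\circ\bar\gamma=\bar\gamma\circ g$, I would first verify that $\lambda_0$ intertwines the Galois actions, namely $\lambda_0(g\cdot\alpha)=\nu(g)\cdot\lambda_0(\alpha)$; then, since $\lambda_0$ conjugates the Weyl group of $\Delta_1$ to that of $\Delta_2$ and sends $\Pi_1$ to $\Pi_2$, the Weyl correction defining the $*$-action transports correctly and I obtain $\lambda_0(g^*\alpha)=\nu(g)^*\lambda_0(\alpha)$. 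Read as an identity in $\Aut(\Pi_2)$, this shows that conjugation by $\lambda_0$ carries $(\Gamma_1)^*$ isomorphically onto $(\Gamma_2)^*$ and is compatible with $\lambda_0$, which is precisely the similarity condition of \pref{pgraph:indexlabel}. Finally, since the index of each $\cL_i$ is independent of the choices up to isomorphism (\pref{pgraph:indexgroup}(b) and \pref{pgraph:indexsimple}) and isomorphic indices over the same field are similar, transitivity of similarity lets me pass from the indices computed with the transported data to the indices of $\cL_1$ and $\cL_2$, completing the proof.
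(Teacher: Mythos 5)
Your proposal is correct and follows essentially the same route as the paper's own (sketched) proof: reduce to the simple case via the central closure, transport $\cT_1$, $\cH_1$ and the order along $\varphi$ and the extension $\bar\gamma$ of $\Cd(\varphi)$, and check that the resulting diagram isomorphism intertwines the $*$-actions via conjugation by $\bar\gamma$. You have simply filled in the semilinearity bookkeeping that the paper leaves to the reader.
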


\begin{proof}  We sketch this proof; the details are straightforward and left to the reader. As in the proof
of Lemma \ref{lem:typeinvariant}, we can assume that
$\cL_1$ and $\cL_2$ are simple.
Choose
$F_i$, $\bar F_i$ and $\Gamma_i$ for $\cL_i$ as in
\pref{pgraph:indexsimple}(b).
Let
$\varphi : \cL_1 \to \cL_2$ be an isomorphism.
Let $\gamma = \Cd(\varphi) : F_1 \to F_2$ which we extend
to an isomorphism $\bar\gamma : \bar F_1 \to \bar F_2$.
Choose $\cT_1$, $\cH_1$ and $>$ for $\cL_1$
as in \pref{pgraph:indexsimple}(b), and use these
to compute the index  $(\Pi_1,\Pi_{10},*)$ for $\cL_1$.
Next using $\varphi$ and $\bar\gamma$, we can transfer
$\cT_1$, $\cH_1$ and $>$ to corresponding
$\cT_2$, $\cH_2$ and $>$ for $\cL_2$,  and use these
to compute
$(\Pi_2,\Pi_{20},*)$ for $\cL_2$.  (This is permitted
by \pref{pgraph:indexgroup}(b).) Then we can show that
there exists a diagram isomorphism $\lambda : \Pi_1 \to \Pi_2$
and a group isomorphism $\nu : \Gamma_1 \to \Gamma_2$ (conjugation
by $\bar\gamma$) such that
$\lambda(\sigma^* \alpha) = \nu(\sigma)^* \lambda(\alpha)$
for $\sg\in \Gamma_1$, $\alpha\in \Pi_1$.  This clearly implies
similarity.
\end{proof}

\subsection{The index of algebras in $\bbM_2$}
\label{subsec:TitsM2}

\begin{pgraph}
If $\cL\in \bbM_n$, then $\cL$ is prime perfect and fgc by Proposition
\ref{prop:ppfgc}, and hence the index of $\cL$ is defined.
\end{pgraph}

\begin{theorem}
\label{thm:class3}
Two algebras in $\bbM_2$ that do not satisfy Condition (AA) are isomorphic if and only if they have similar indices.
\end{theorem}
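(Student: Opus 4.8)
The plan is to prove the two implications separately, observing that the hypothesis excluding Condition (AA) is needed for only one of them. Throughout I would use that any $\cL \in \bbM_2$ is prime, perfect and fgc by Proposition \ref{prop:ppfgc}, so that its index is defined (Definition \ref{def:indexprime}) and its relative and absolute types are defined (Definition \ref{def:reltype2}); moreover, by construction all three invariants are read off the central closure $\tcL$, which is a simple fgc Lie algebra by Proposition \ref{prop:cclosure}.

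First I would dispose of the ``only if'' direction, which requires no reference to Condition (AA): if $\cL_1 \simeq \cL_2$ are isomorphic algebras in $\bbM_2$, then since both are prime perfect fgc Lie algebras, Lemma \ref{lem:indexinvariant} shows at once that their indices are similar.

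The substance is the ``if'' direction. Suppose the indices of $\cL_1$ and $\cL_2$ are similar. By \pref{pgraph:indexsimple}(c), both the absolute type and the relative type of a simple fgc Lie algebra are determined by its index up to similarity, the relative type via the algorithm of \cite[\S 2.5.2]{T}. Applying this to the central closures $\tcL_1$ and $\tcL_2$, whose indices are by definition the indices of $\cL_1$ and $\cL_2$ and hence similar, I would conclude that $\tcL_1$ and $\tcL_2$ have equal absolute types and equal relative types; by Definition \ref{def:reltype2} this says that $\cL_1$ and $\cL_2$ have the same absolute type and the same relative type. Since neither algebra satisfies Condition (AA), Corollary \ref{cor:class2} then gives $\cL_1 \simeq \cL_2$, completing the proof.

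Thus the argument is essentially a bookkeeping assembly of three earlier results, Lemma \ref{lem:indexinvariant}, \pref{pgraph:indexsimple}(c), and Corollary \ref{cor:class2}, and I do not anticipate a genuine analytic obstacle. The single point that carries the real weight is the assertion in \pref{pgraph:indexsimple}(c) that the relative type is recoverable from the index \emph{up to similarity} rather than merely up to isomorphism; this is what lets me pass between algebras whose centroids $\Cd(\cL_1)$ and $\Cd(\cL_2)$ may be non-isomorphic fields, and it rests on the combinatorial nature of Tits's algorithm, which reads the relative type off the labelled Dynkin diagram in a way invariant under the diagram and Galois isomorphisms permitted by similarity. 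Condition (AA) enters only through Corollary \ref{cor:class2}: it is exactly the family for which absolute and relative type together fail to be a complete invariant, so excluding it is precisely what upgrades ``same absolute and relative type'' to ``isomorphic.''
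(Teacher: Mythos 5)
Your proposal is correct and follows the paper's own proof exactly: the forward implication from Lemma \ref{lem:indexinvariant}, and the converse by recovering the absolute and relative types from the index via \pref{pgraph:indexsimple}(c) and then applying Corollary \ref{cor:class2}. The only difference is that you spell out the bookkeeping (passing through the central closures and noting why similarity rather than isomorphism of indices suffices) that the paper leaves implicit.
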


\begin{proof} The implication ``$\implies$'' follows from
Lemma \ref{lem:indexinvariant}.  The converse follows from
\pref{pgraph:indexsimple}(c) and Corollary \ref{cor:class2}.
\end{proof}

\begin{pgraph}  Theorem \ref{thm:class3} provides a positive answer to Conjecture
6.4 in \cite{GP1}.\footnote{A partial proof of this conjecture for
nullity two algebras of classical absolute type of ``sufficiently large"
rank was given by   A.~Steinmetz-Zikesch in \cite{S-Z}. His proof is in fact rather explicit,
and the algebras in question are described in full detail. Another proof
of a more general version of the conjecture will appear in \cite{GP2}.}
\end{pgraph}

\begin{pgraph}
In rest of this section, we describe how to calculate the index of an algebra $\cL$
in $\bbM_2$.   We omit most of the details here, but rather sketch three examples that
will be enough for the interested reader to complete the details.

We summarize our results in Table \ref{tab:TitsEARSu}.
In that table, column 1 lists  our label from Table \ref{tab:reltypet}  for the algebra $\cL$,
and
column 2 lists the index of $\cL$.
(Column 3 contains information
that we will discuss in  Section \ref{subsec:SEARSM2}.)
As before, when an entry in columns 2 or 3 depends on the parity of $\rkfin$,
the first  expression in the column applies when $\rkfin$ is even and the second when $\rkfin$ is odd.
\end{pgraph}

\begin{table}
\renewcommand{\arraystretch}{1.3}
\begin{tabular}[ht]
{|c |   c | c| c | c |}
\whline
\small
Label for $\cL$
& Index
& SEARS \\
\whline

$\typemult{A}{\rkfin}{1}{\rot(q)},\  \rkfin \ge 1,$
&\rule{0ex}{3.7ex} $\Tindex{1}{A}{k,r}{\left(\frac {k+1}{r+1}\right)},$
&$\sears{A}{r}{(1,1)},$
\\[-.5ex]

$0\le q\le \lfloor\frac{\rkfin+1}2\rfloor$
&$r =  \gcd(q,\rkfin+1)-1$
&$\text{if $r\ge 1$}$
\\

\hline
$\typemult{A}{\rkfin}{1}{2a},\ \rkfin \ge 2$
&$\Tindex{2}{A}{k,\frac k2}{(1)}$\slashspace $\Tindex{2}{A}{k,\frac {k+1}{2}}{(1)}$
&$\sears{BC}{\frac k2}{(2,1)}$\slashspace$\sears{C}{\frac {k+1}2}{(1,2)}$
\\

\hline
$\typemult{A}{\rkfin}{1}{2b},\ \rkfin$ odd $\ge 3$
&$\Tindex{2}{A}{k,\frac {k-1}{2}}{(1)}$
&$\sears{BC}{\frac {k-1}2}{(2,2)}(2)$
\\

\whline

$\typemult{B}{\rkfin}{1}{1},\ \rkfin \ge 3$
&$\Tindex{}{B}{k,k}{}$
&$\sears{B}{k}{(1,1)}$
\\

\hline
$\typemult{B}{\rkfin}{1}{2},\ \rkfin \ge 3$
&$\Tindex{}{B}{k,k-1}{}$
&$\sears{B}{k-1}{(2,2)*}$
\\

\whline

$\typemult{C}{\rkfin}{1}{1},\ \rkfin \ge 2$
&$\Tindex{}{C}{k,k}{(1)}$
&$\sears{C}{k}{(1,1)}$
\\

\hline
$\typemult{C}{\rkfin}{1}{2},\ \rkfin \ge 2$
&$\Tindex{}{C}{k,\frac k 2}{(2)}$ \slashspace $\Tindex{}{C}{k,\frac {k-1} 2}{(2)}$
&$\sears{C}{\frac k2}{(1,1)*}$\slashspace $\sears{BC}{\frac {\rkfin-1}2}{(1,1)}$
\\

\whline

$\typemult{D}{\rkfin}{1}{1},\ \rkfin \ge 4$
&$\Tindex{1}{D}{k,k}{(1)}$
&$\sears{D}{k}{(1,1)}$
\\

\hline
$\typemult{D}{\rkfin}{1}{2a},\ \rkfin \ge 4$
&$\Tindex{2}{D}{k,k-1}{(1)}$
&$\sears{B}{k-1}{(1,2)}$
\\

\hline
$\typemult{D}{\rkfin}{1}{2b},\ \rkfin \ge 4$
&$\Tindex{1}{D}{k,k-2}{(1)}$
&$\sears{B}{k-2}{(2,2)}$
\\

\hline
$\typemult{D}{\rkfin}{1}{2c},\  \rkfin \ge 5$
&$\Tindex{1}{D}{k,\frac k2}{(2)}$\slashspace$\Tindex{2}{D}{k,\frac {k-1}2}{(2)}$
&$\sears{C}{\frac k2}{(2,2)}$\slashspace$\sears{BC}{\frac {k-1}2} {(2,2)}(1)$
\\

\hline
$\typemult{D}{4}{1}{3}$
&$\Tindex{3}{D}{4,2}{2}$
&$\sears{G}{2}{(1,3)}$
\\

\hline
$\typemult{D}{\rkfin}{1}{4},\ \rkfin \ge 4$
&$\Tindex{2}{D}{k,\frac {k-2}2}{(2)}$\slashspace$\Tindex{1}{D}{k,\frac {k-3}2}{(2)}$
&$\sears{BC}{\frac {k-2}2}{(2,4)}$\slashspace $\sears{BC}{\frac{\rkfin-3}2}{(4,4)}$
\\

\whline

$\typemult{E}{6}{1}{1}$
&$\Tindex{1}{E}{6,6}{0}$
&$\sears{E}{6}{(1,1)}$
\\

\hline
$\typemult{E}{6}{1}{2}$
&$\Tindex{2}{E}{6,4}{2}$
&$\sears{F}{4}{(1,2)}$
\\

\hline
$\typemult{E}{6}{1}{3}$
&$\Tindex{1}{E}{6,2}{16}$
&$\sears{G}{2}{(3,3)}$
\\

\whline

$\typemult{E}{7}{1}{1}$
&$\Tindex{}{E}{7,7}{0}$
&$\sears{E}{7}{(1,1)}$
\\

\hline
$\typemult{E}{7}{1}{2}$
&$\Tindex{}{E}{7,4}{9}$
&$\sears{F}{4}{(2,2)}$
\\

\whline

$\typemult{E}{8}{1}{1}$
&$\Tindex{}{E}{8,8}{0}$
&$\sears{E}{8}{(1,1)}$
\\

\whline

$\typemult{F}{4}{1}{1}$
&$\Tindex{}{F}{4,4}{0}$
&$\sears{F}{4}{(1,1)}$
\\

\whline

$\typemult{G}{2}{1}{1}$
&$\Tindex{}{G}{2,2}{0}$
&$\sears{G}{2}{(1,1)}$
\\

\whline

\end{tabular}
\medskip
\caption{The index and the SEARS of $\cL\in \bbM_2$}
\label{tab:TitsEARSu}
\end{table}

\begin{example}
\label{ex:TitsE7}
To give our first example, suppose that $\cL = \typemult{E}{7}{1}{2}$.  Then,
$\cL$ has absolute type $\type{E}{7}$ and relative type $\type{F}{4}$.
But using the list of indices in Table 2 of \cite{T} and the algorithm (see  \pref{pgraph:indexsimple}(c)) for calculating
the relative type from the index, we find that there is
only one possible index, namely $\Tindex{}{E}{7,4}{9}$,  yielding these relative and absolute types. (In fact this is true over any field $F$.)
So we conclude that $\cL$
has index  $\Tindex{}{E}{7,4}{9}$.
\end{example}

\begin{pgraph} The same simple method (which with some practice goes quite quickly) works for most of the
algebras in $\bbM_2$.  The exceptions are
the following algebras:
\begin{gather}
\label{eq:TitsAk}
\typemult{A}{\rkfin}{1}{\rot(q)},\ \rkfin \ge 1,\ \textstyle 0\le q\le \lfloor\frac{\rkfin+1}2\rfloor,\ \gcd(q,\rkfin+1)=1;\\
\label{eq:Titsrest}
\typemult{C}{\rkfin}{1}{2}, k \text{ odd}, \rkfin \ge 3;\quad
\typemult{D}{\rkfin}{1}{2b}, \rkfin \ge 4;\quad
\typemult{D}{4}{1}{3};\quad
\typemult{D}{\rkfin}{1}{4}, \rkfin \ge 4; \quad\text{and }
\typemult{E}{6}{1}{1}.
\end{gather}
To handle each of these  cases, we construct a matrix model for the algebra and then
use the  Descriptions in Table 2 of \cite{T} (or a direct calculation) to identify the index.
\end{pgraph}

\begin{example}
\label{ex:TitsAk}
Suppose that
$\cL = \typemult{A}{\rkfin}{1}{\rot(q)}$, with $\rkfin$ and $q$ as in \eqref{eq:TitsAk}.
Then,  by \eqref{eq:TypsAAreal}, we  can identify
$\cL = \spl_{1}(Q(\theta))$, where $\theta$ has order
$m = \rkfin +1$ in $\base^\times$. Then, we  identify the centroid of $\cL$ with $R_2$
as in Lemma \ref{lem:quantum}(e); and, since $K_2/R_2$ is flat, we have
$\tcL = \spl_{1}(Q(\theta)) \otimes_{R_2} K_2
\simeq  \spl_{1}(Q(\theta)\otimes_{R_2} K_2)$.
But, by Lemma \ref{lem:cyclicquantum}, $Q(\theta)\otimes_{R_2} K_2$ is a central division algebra of degree
$k+1$ over $K_2$.    Thus, by Table 2 of \cite{T} (see the first Description on p.~55 of  \cite{T}),
$\cL$ has index   $\Tindex{1}{A}{k,0}{(k+1)}$.  \end{example}

We are left with the algebras in \eqref{eq:Titsrest}.  For these we use the
easy part (the construction) in the coordinatization theorems  for centerless Lie tori  (see \pref{pgraph:coordinate}).

\begin{example}
\label{ex:TitsDk}
Suppose that
$\cL = \typemult{D}{\rkfin}{1}{4}$
with $\rkfin \ge 4$ and $\rkfin$ even.
Then,
$\cL$ has relative type $\type{BC}{\frac{k-1}2}$.
To describe a construction of $\cL$, let
$\cA = Q(-1)$,
the unital associative algebra presented by the generators $x_1,x_2,x_1^{-1},x_2^{-1}$ subject to the inverse relations
and the relation $x_1x_2 = - x_2x_1$. We  identify the centre of $\cA$ with $R_2$ as in \pref{pgraph:centquantum} with $m=2$.  Let
$*$ be the $R_2$-linear involution on $\cA$ such that $x_i^* = x_i$ for $i=1,2$.
Let $J_{k-2}$ be the matrix in $\Mat_{k-2}(\cA)$ whose $(i,j)$ entry
is $\delta_{i,k-1-j}$, and let $G = \diag(J_{k-2},1,x_1)$ (in block diagonal form) in $\cE = \Mat_\rkfin(\cA)$.
Note that $G$ is a unit
in the $R_2$-algebra $\cE$ and $(G^*)^t = G$, so we may define an $R_2$-linear involution
$\tau$ on $\cE$ by $\tau(T) = G^{-1}(T^*)^t G$.
We set
\[\cS = \set{T\in \cE \suchthat \tau(T) = -T},\]
in which case $\cS$ is a Lie algebra over $R_2$ under the commutator product.

We chose to look at $\cS$ since it arises in \cite[Theorem 6.3.1]{AB},
which is the coordinatization theorem for centreless $(\Lm,\Dl)$-tori  for $\Dl$
of type $\type{BC}{r}$,
where in our case $r =  \frac{k-1}2$.  Indeed the first part of that theorem
tells us that $\cS$ is a centreless Lie $(\Lm,\Delta)$-torus,  where
$\Lm$ has rank $2$ and $\Delta$ is of type $\type{BC}{\frac{k-1}2}$.
(See also \textbf{7.2.3} of \cite{AB}.)
Moreover, since $*$ is nontrivial, $\cS$ has full root support \cite[Remark 8.1.1]{AB}.
Also, it follows from  Propositions 4.10 and 6.21 of \cite{AB} that
the centroid of $\cS$ is identified with $R_2$ using the natural action of $R_2$
on $\cS$.  Since it is clear that $\cS$ is finitely generated as an $R_2$-module,
$\cS$ is fgc.  Hence, by Theorem \ref{thm:LT}, $\cS\in \bbM_2$
and the relative type of $\cS$ is $\type{BC}{\frac{k-1}2}$.

We next look at the central
closure of $\cS$.  For this we set $\widetilde \cA = \cA \otimes_{R_2} K_2$, $\widetilde \cE = \cE \otimes_{R_2} K_2$,
and $\widetilde \cS = \cS \otimes_{R_2} K_2$;
and we extend $*$ and $\tau$ to $K_2$-linear involutions of $\widetilde \cA$ and
$\widetilde \cE$ respectively.  One checks that
\[\widetilde \cS = \set{T\in \widetilde\cE \suchthat \tau(T) = -T}.\]
Further, we know from Lemma \ref{lem:cyclicquantum} that $\cA$ is a central division algebra of degree 2
over $K_2$; and its clear that $*$ is an orthogonal involution on $\cA$.
Thus, by \cite[Theorem X.9]{J}, the absolute type of $\cS$ is $\type{D}{k}$.
Since we have matched relative and absolute types, we see that $\cL \simeq \cS$.

Finally from the information in the previous paragraph and from
Table 2 of \cite{T} (see the Descriptions on p.~57 of \cite{T}),
it follows that $\cS$ has index $\Tindex{t}{D}{k,\frac{k-1}2}{(2)}$, where
$t = 1$ or $2$.  Moreover, $t= 1$ if and only if
the discriminant $\disc(\widetilde\cE,\tau)$ of the algebra with involution $(\widetilde\cE,\tau)$
over $K_2$ is trivial in $K_2^\times/ (K_2^\times)^2$. Finally one calculates using
\cite[7.2]{KMRT} that $\disc(\widetilde\cE,\tau) = -t_1^{k+1}t_2^k (K_2^\times)^2 \ne
1 (K_2^\times)^2$, since $k$ is even.  So $\cL \simeq \cS$ has index $\Tindex{2}{D}{k,\frac{k-1}2}{(2)}$.
\end{example}

\begin{pgraph}
The remaining cases in \eqref{eq:Titsrest} can be handled in a similar fashion.
For the algebras $\typemult{C}{\rkfin}{1}{2}$ ($k$ odd), $\typemult{D}{\rkfin}{1}{2b}$, and $\typemult{D}{\rkfin}{1}{4}$  ($k$ odd), one
again uses the construction from \cite[Theorem 6.3.1]{AB}.  (That theorem covers  quotient types $\type{BC}{r}$ and $\type{B}{r}$.)
Finally for the algebras $\typemult{D}{4}{1}{3}$ and $\typemult{E}{6}{1}{3}$, one uses the construction from
\cite[Thm.~5.63]{AG} which covers  quotient type
$\type{G}{2}$.\footnote{A different ``cohomological"
approach to computing the index will be described in \cite{GP2}.}
\end{pgraph}

\subsection{Saito's extended affine root systems.}\
\label{subsec:SEARS}

In  this section and the next, we assume for convenience that $\base = \bbC$, although it is very likely that with minor
modifications the results mentioned carry over to the general case.

\begin{pgraph}
Let $V_\bbR$ be a finite dimensional real vector space with positive definite real valued symmetric bilinear
form $\form$. For $\al\in V_\bbR$ with $(\al,\al)\ne 0$, let $r_\al$ be the orthogonal reflection along $\al$.
We will call a subset $\Sigma$ of $V_\bbR$ a \emph{Saito extended affine root system},
or \emph{SEARS} for short, if the following conditions hold:
the natural map $\spann_\bbZ(\Sigma) \otimes_\bbZ \bbR \to V_\bbR$ is bijective,
$(\al,\al)\ne 0$ for $\al\in \Sigma$, $r_\al(\Sigma) = \Sigma$ for $\al\in \Sigma$, $2\frac{(\al,\beta)}{(\al,\al)}\in \bbZ$
for $\al,\beta\in\Sigma$, and $\Sigma$ is irreducible in the usual sense.  In that case, we say that $\Sigma$
is \emph{reduced} if $\al\in \Sigma$ implies $2\al\notin \Sigma$, and
we define the \emph{null dimension} of $\Sigma$ to be the
the dimension of the radical of $V_\bbR$.  If $\Sigma$ is a SEARS, then the image of $\Sigma$
in $V_\bbR$ modulo the radical of $\form$
is the set of nonzero roots of an irreducible finite root system called the \emph{finite quotient
root system} of $\Sigma$ \cite[Example 1.3 and Assertion 1.8]{Sai}.  Two SEARS
$\Sigma$ in $V_\bbR$ and $\Sigma'$ in $V'_\bbR$
are said to be isomorphic if there is a vector space  isomorphism from
$V_\bbR$ onto $V'_\bbR$ which maps $\Sigma$ onto $\Sigma'$, in which
case the forms are preserved up to nonzero scalar by the isomorphism \cite[Lemma 1.4]{Sai}.
\end{pgraph}

\begin{pgraph} SEARS of null dimension
$n$ were introduced by Saito in \cite[\S 1.3]{Sai}, where they were called $n$-extended affine root systems.
His motivation came from the study of elliptic singularities of complex analytic surfaces.
\end{pgraph}

\begin{pgraph}
\label{pgraph:SEARSEALA}
SEARS also play an important role in the theory of EALAs, because the set of nonisotropic roots
$\Phi^\times$ of a discrete EALA $(\fg,\form,\fh)$ of nullity n is a reduced SEARS
of null dimension $n$ (by \cite[Thm.~2.16]{AABGP} and \cite[Lemma 1.4]{Az}).  Here $V_\bbR$ is the real span of $\Phi^\times$ with
symmetric form $\form$ induced and suitably normalized from the given form  on~$\cE$.
\end{pgraph}

\begin{pgraph}
\label{pgraph:SEARSclass} \headtt{Classification of SEARS}
In \cite[\S's 5.2 and 5.4]{Sai}, Saito classified up to isomorphism all reduced SEARS of null dimension
2 that satisfy an additional condition, the existence of a marking $G$
such that $\Sigma/G$ is reduced.
In \cite{Az}, Azam completed the classification of reduced SEARS of null dimension
2 by establishing a relationship between reduced SEARS
and the root systems studied in \cite[Chap.~2]{AABGP}.  He showed that, in addition
to the root systems in Saito's classification, there are two
infinite families that we will denote  here by $\sears{BC}{\ell}{(1,1)}$, $\ell \ge 1$, and $\sears{BC}{\ell}{(4,4)}$, $\ell\ge1$,
following the notational conventions in \cite{Sai}.  They are
\begin{equation}
\label{eq:moreSEARS}
\begin{gathered}
\sears{BC}{\ell}{(1,1)} =  \big(\Dl_\text{sh} +\Lm\big)
\cup \big(\Dl_\text{lg} +\Lm\big)
\cup \big(\Dl_\text{ex} +\Lm\setminus(2\Lm)\big) \quad \text{ and } \\
\sears{BC}{\ell}{(4,4)} =  \big(\Dl_\text{sh} +\Lm\setminus(2\Lm+a)\big)
\cup \big(\Dl_\text{lg} +2\Lm\big)
\cup \big(\Dl_\text{ex} +4\Lm+2a\big),
\end{gathered}
\end{equation}
where $\Dl_{\text{sh}}$, $\Dl_{\text{lg}}$ and $\Dl_{\text{ex}}$ are the sets of roots
of length $1$, $2$ and $4$ respectively in the irreducible finite root system of type $\type{BC}{\ell}$,
$a,b$ is a basis for the radical of $\form$, and
$\Lm = \bbZ a \oplus \bbZ b$. Note that the second term in each of these unions is empty
if~$\ell = 1$.
The complete nonredundant list of reduced SEARS of null dimension 2  up to isomorphism is then
\begin{equation}
\label{eq:SEARSlist}
\begin{gathered}
\sears{A}{\ell}{(1,1)}\, (\ell \ge 1),\quad \sears{A}{1}{(1,1)*},\\
\sears{B}{\ell}{(1,1)}\, (\ell \ge 3),\quad \sears{B}{\ell}{(1,2)}\, (\ell \ge 3),
    \quad \sears{B}{\ell}{(2,2)}\, (\ell \ge 2), \quad \sears{B}{\ell}{(2,2)*}\, (\ell \ge 2),\\
\sears{C}{\ell}{(1,1)}\, (\ell \ge 2),\quad \sears{C}{\ell}{(1,2)}\, (\ell \ge 2),
    \quad \sears{C}{\ell}{(2,2)}\, (\ell \ge 3), \quad \sears{C}{\ell}{(1,1)*}\, (\ell \ge 2),\\
\sears{BC}{\ell}{(2,1)}\, (\ell \ge 1),\quad \sears{BC}{\ell}{(2,4)}\, (\ell \ge 1),
    \quad \sears{BC}{\ell}{(2,2)}(1)\, (\ell \ge 2),\quad \sears{BC}{\ell}{(2,2)}(2)\, (\ell \ge 1),\\
\sears{BC}{\ell}{(1,1)}\, (\ell \ge 1),\quad \sears{BC}{\ell}{(4,4)}\, (\ell \ge 1),\\
\sears{D}{\ell}{(1,1)}\, (\ell \ge 4),\quad \sears{E}{\ell}{(1,1)} (\ell = 6,7,8),\\
\sears{F}{4}{(1,1)},\quad \sears{F}{4}{(1,2)},\quad  \sears{F}{4}{(2,2)},\quad \sears{G}{2}{(1,1)},\quad \sears{G}{2}{(1,3)}.
\end{gathered}
\end{equation}
(See \cite[\S's 5.2]{Sai} and \eqref{eq:moreSEARS} above for the definitions of these root systems.)
Note that the type of the finite quotient root system of a reduced SEARS
$\Sigma$ of null dimension 2 is used as the ``base'' in the notation
for $\Sigma$.  For example,
the reduced SEARS of null dimension 2 with finite quotient root system of type $\type{BC}{\ell}$ occur in Rows 4 and 5
of~\eqref{eq:SEARSlist}.
\end{pgraph}

\subsection{The SEARS of an isotropic algebra  in $\bbM_2$.}\
\label{subsec:SEARSM2}

Suppose that $\base = \bbC$ and that the primitive roots of unity
are chosen as $\zeta_m = e^{\frac{2\pi \sqrt{-1}}m}$ for $m\ge 1$.  Let $\cL$
be an isotropic algebra in $\bbM_2$.

\begin{pgraph}
\label{pgraph:SEARSM2}
By Theorem \ref{thm:class},  there exists a unique affine GCM $A$
and a unique $\sg \in \Aut(A)$ such that $(A,\sg)$ appears
in columns 1 and 2 of Table \ref{tab:reltypeu}
and $\cL \simeq \Lp(\fgb,\sg)$, where $\fg = \fg(A)$.
We note that the assumption that $\cL$ is anisotropic
precisely rules out the case in Row 1 of Table \ref{tab:reltypeu}
where $\gcd(q,k+1) =1$.
We set $\fg = \fg(A)$ and use the above isomorphism to identify
\begin{equation*}
\label{eq:Saitolink}
\cL = \Lp_m(\fgb,\sg),
\end{equation*}
where $m$ is the order of $\sg$.
Since $\sg$ is not transitive,  we can
follow \pref{pgraph:aff} and construct a discrete EALA
\[\cE = \Lp_m(\fg,\sg) \oplus \base \tilde c \oplus \base \tilde d,\]
with nondegenerate symmetric bilinear  form $\form$ and
ad-diagonalizable abelian subalgebra  $\cH = \fh^\sg \oplus \base \tilde c \oplus \base \tilde d$.
Then, by Proposition \ref{prop:affEALA}, $\cE$ is a discrete EALA of nullity 2 with centreless core isomorphic to $\cL$. Let $\Phi$ be the  root system for $\cE$ relative to  $\cH$. By
\pref{pgraph:SEARSEALA},  $\Phi^\times$ is a reduced SEARS of null dimension 2 which we call
the \emph{SEARS of $\cL$}.
\end{pgraph}

\begin{remark}
\label{rem:conjugacy}  The SEARS of $\cL$ just defined depends only on  the isomorphism class of
$\cL$ because of the uniqueness
of $A$ and $\sg$ in the above discussion.  Indeed, once these have been  selected,
$\cE$, $\cH$ and $\form$ are defined in terms of $A$ and $\sg$.  It would be desirable
to have a more intrinsic proof of the invariance of the SEARS of $\cL$.
\end{remark}

\begin{proposition}
\label{prop:SEARStype}
Let $\Phi^\times$ be the SEARS of $\cL$,
and let $X_\ell$ be the type of the finite quotient root system of $\Phi^\times$.
Then, $\cL$ has relative type $X_\ell$.
\end{proposition}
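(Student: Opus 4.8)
The plan is to route the computation through the affinization EALA $\cE = \Aff_m(\fg,\sg)$ attached to $\cL$ in \pref{pgraph:SEARSM2}, and to match its quotient type against the type of the SEARS finite quotient root system on one side and the relative type of $\cL$ on the other. Recall from \pref{pgraph:SEARSM2} that, $\cL$ being isotropic, Theorem \ref{thm:class} furnishes a pair $(A,\sg)$ from Columns 1 and 2 of Table \ref{tab:reltypeu} with $\cL \simeq \Lp_m(\fgb,\sg)$; here $\sg$ is necessarily not transitive, since the transitive case yields anisotropic algebras by Corollary \ref{cor:rot}. Thus Theorem \ref{thm:relativent} applies and gives that the relative type of $\cL$ equals the type of the finite root system $\piDlb$. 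On the other hand, by Proposition \ref{prop:affEALA} the quotient type of $\cE$ is also the type of $\piDlb$. So it remains only to show that $X_\ell$, the type of the finite quotient root system of the SEARS $\Phi^\times$, equals the quotient type of $\cE$.

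For this last identification I would compare the two ``quotient'' constructions directly. By \pref{pgraph:finitetype} the quotient type of $\cE$ is the type of $\bPhi$, the image of $\Phi$ in $\bV = V/V^0$, where $V = \spann_\bbQ(\Phi)$ and $V^0 = \rad(V)$ for the normalized (positive semi-definite) form. The key observation is that every isotropic root lies in the radical: if $(\al \vert \al) = 0$ then Cauchy--Schwarz for the positive semi-definite form forces $(\al \vert \be) = 0$ for all $\be$, so $\Phi^0 \subseteq V^0$. Hence the image of $\Phi = \Phi^\times \cup \Phi^0 \cup \set{0}$ in $\bV$ is exactly $\overline{\Phi^\times} \cup \set{0}$. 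By definition (\pref{pgraph:SEARSclass}) the finite quotient root system of $\Phi^\times$ is the irreducible finite root system whose nonzero elements are the image of $\Phi^\times$ modulo the radical of $\form$; invoking $\spann_\bbR(\Phi^\times) = \spann_\bbR(\Phi)$ (which follows from $\Phi^0 \subseteq \Phi^\times + \Phi^\times$ in \pref{pgraph:LN}) and Remark \ref{rem:finitetype} to pass between the $\bbQ$- and $\bbR$-spans without changing the type, this is the same root system as $\bPhi$ under the convention of Section \ref{subsec:rtsystem} that finite root systems contain $0$. Therefore $X_\ell$ is the quotient type of $\cE$, and combining the three equalities yields that the relative type of $\cL$ equals $X_\ell$.

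The whole argument is essentially bookkeeping that assembles Theorem \ref{thm:relativent}, Proposition \ref{prop:affEALA}, and the definitions of the two finite quotient root systems; the heavy lifting (that $\cE$ genuinely is a nullity-$2$ EALA with centreless core $\cL$, and that the relative type is computed by the projected root system $\piDlb$) was already carried out in Proposition \ref{prop:affEALA} and Theorem \ref{thm:relativent}. Alternatively, one could reach the same conclusion by applying Corollary \ref{cor:EALAmult}(a) to $\cE$, using that $\cL \simeq \cE_\ccore$ is fgc by Proposition \ref{prop:ppfgc} and that relative type is an isomorphism invariant (Lemma \ref{lem:typeinvariant}). The one substantive point, and where I would be most careful, is the coincidence of the SEARS quotient root system with the EALA quotient root system, which rests entirely on the inclusion $\Phi^0 \subseteq V^0$ coming from positive semi-definiteness of the form; once that is in hand I expect no remaining obstacle.
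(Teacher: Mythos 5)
Your proposal is correct and takes essentially the same route as the paper: the paper's proof is exactly your ``alternative'' (Remark \ref{rem:finitetype} to identify $X_\ell$ with the quotient type of $\cE$, Proposition \ref{prop:affEALA} for $\cE_\ccore \simeq \cL$, and Corollary \ref{cor:EALAmult}(a)), and your main route via Theorem \ref{thm:relativent} is the same argument since that theorem is itself proved from those two results. Your careful justification that the SEARS quotient root system agrees with the EALA quotient root system (via $\Phi^0 \subseteq V^0$ and the $\bbQ$-versus-$\bbR$ comparison) is a correct unpacking of what the paper compresses into a single citation of Remark \ref{rem:finitetype}.
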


\begin{proof} Let $\cE$ be the EALA described in described in
\pref{pgraph:SEARSM2}.  It follows from Remark \ref{rem:finitetype} that
$X_\ell$ is the  quotient type of $\cE$.
But, by Proposition \ref{prop:affEALA}, we have $\cE_\ccore \simeq \cL$, so our conclusion follows from Corollary
\ref{cor:EALAmult}(a).
\end{proof}

\begin{pgraph}
We now use the work of U.~Pollmann  to list, in Column 3 of  Table~\ref{tab:TitsEARSu},
the SEARS of each isotropic $\cL$ in $\bbM_2$.

Indeed Pollmann did most of the work.
For each algebra $\Lp(\fgb,\sg)$ in Table  \ref{tab:reltypeu}, except those in Row 1,
she calculated the root system $\Phi$
of the EALA $\cE$ described in \pref{pgraph:SEARSM2}.
(She actually worked with a central quotient of $\cE$ rather than $\cE$,
but this does not change the root system.)  In view of
Lemma \ref{lem:EALAbasic}(b), this determines $\Phi^\times$ as well.

This leaves us with the case when $\cL =\typemult{A}{\rkfin}{1}{\rot(q)}$, where $\rkfin\ge 1$,
$0\le q \le \lfloor \frac {\rkfin+1}2 \rfloor$, and $r :=  \gcd(q,\rkfin+1)-1 \ge 1$.
Then, by Table \ref{tab:reltypeu},
$\cL$ has relative type $\type{A}{r}$.
Now, if $r\ge 2$, there is only one reduced SEARS of null dimension 2 with finite quotient root system $\type{A}{r}$,
so by Proposition \ref{prop:SEARStype},
$\Phi^\times \simeq \sears{A}{r}{(1,1)}$.  It remains to consider the case when $r=1$.
In this case, the root system $\Phi$ (and hence $\Phi^\times$) can be found by direct calculation
using the realization \eqref{eq:TypsAAreal} of $\cL$ in terms of a quantum  torus, and we obtain
$\Phi^\times \simeq \sears{A}{1}{(1,1)}$.  The details of this calculation are somewhat delicate but straightforward,
and we leave them to the reader.
\end{pgraph}

\begin{remark}
\label{rem:Saitolowrank}
In Row 7, Column 3 of Table~\ref{tab:TitsEARSu},
$\smash{\sears{C}{\frac k2}{(1,1)*}}$ should be interpreted as
$\sears{A}{1}{(1,1)*}$ when $k=2$.
\end{remark}

A  comparison of Column 3 of  Table~\ref{tab:TitsEARSu} with the list \eqref{eq:SEARSlist}
now yields our final result.

\begin{theorem}  Every reduced SEARS of null dimension 2  arises as the SEARS of some
isotropic algebra in $\bbM_2$. Moreover, if $\cL$ and $\cL'$ are algebras in $\bbM_2$
that do not satisfy Condition (AA) (and in particular are isotropic), then $\cL$ and $\cL'$ are isomorphic
if and only if their SEARS are  isomorphic.
\end{theorem}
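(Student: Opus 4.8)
The plan is to deduce both assertions from the explicit data in Column~3 of Table~\ref{tab:TitsEARSu}, which records the SEARS of every isotropic algebra in $\bbM_2$, by comparing it with the nonredundant list \eqref{eq:SEARSlist} of all reduced SEARS of null dimension~2. Before doing so I would record two preliminary observations. First, the SEARS of an isotropic $\cL\in\bbM_2$ depends only on the isomorphism class of $\cL$, by Remark~\ref{rem:conjugacy} (which rests on the uniqueness of the pair $(A,\sg)$ furnished by Theorem~\ref{thm:class}); this already gives the forward implication of the ``moreover'' clause, namely that $\cL\simeq\cL'$ implies the SEARS of $\cL$ and $\cL'$ are isomorphic. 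Second, every algebra in $\bbM_2$ that fails Condition~(AA) is automatically isotropic, since by Corollary~\ref{cor:char2} and Proposition~\ref{prop:AA} the anisotropic algebras in $\bbM_2$ are precisely the $\typemult{A}{\rkfin}{1}{\rot(q)}$ with $\gcd(q,\rkfin+1)=1$, all of which satisfy~(AA); thus the SEARS is defined for the algebras named in the second statement.

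For the first assertion I would argue as follows. By Theorem~\ref{thm:class} every isotropic $\cL\in\bbM_2$ is isomorphic to exactly one algebra $\Lp(\overline{\fg(A)'},\sg)$ with $(A,\sg)$ taken from Columns~1 and~2 of Table~\ref{tab:reltypeu}, and its SEARS is the corresponding entry of Column~3 of Table~\ref{tab:TitsEARSu}, as computed in \pref{pgraph:SEARSM2}. It then suffices to check, running through the families and matching parameter ranges (and invoking Remark~\ref{rem:Saitolowrank} for the low-rank coincidence $\sears{C}{1}{(1,1)*}=\sears{A}{1}{(1,1)*}$), that the collection of Column~3 entries exhausts the list \eqref{eq:SEARSlist}. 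This exhibits each reduced SEARS of null dimension~2 as the SEARS of some isotropic algebra in $\bbM_2$.

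For the reverse implication, suppose $\cL$ and $\cL'$ both fail Condition~(AA) and have isomorphic SEARS. By Proposition~\ref{prop:AA} the algebras failing~(AA) are exactly those attached to the rows of Table~\ref{tab:reltypeu} other than the $\rot(q)$ family of Row~1, and by Theorem~\ref{thm:class} each such algebra is determined up to isomorphism by its row. The key point is then that the Column~3 SEARS attached to distinct such rows are pairwise nonisomorphic; granting this, isomorphic SEARS forces $\cL$ and $\cL'$ to come from the same row, whence $\cL\simeq\cL'$.

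The main obstacle is the exactness of this table comparison: verifying both that every class in \eqref{eq:SEARSlist} is attained and that distinct non-(AA) rows yield nonisomorphic SEARS. This requires the classification of reduced SEARS of null dimension~2 due to Saito and Azam, together with the correct identification of the isomorphism class of each labeled entry in Column~3. The delicate cases are the multiply-laced and low-rank families---for instance the several algebras sharing relative type $\type{F}{4}$ or $\type{G}{2}$ but distinguished only by the finer SEARS data (the superscripts $(t_1,t_2)$ and the starred markings)---where Proposition~\ref{prop:SEARStype} pins down only the finite quotient type, so that the remaining separation must come from the lattice structure recorded in \eqref{eq:moreSEARS} and \eqref{eq:SEARSlist}. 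The genuinely computational input, namely the determination of each Column~3 entry (due largely to Pollmann, and to the direct quantum-torus computation for the $\type{A}{1}$ case), I would take as established in \pref{pgraph:SEARSM2}.
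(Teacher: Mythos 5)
Your proposal is correct and follows essentially the same route as the paper, which likewise deduces the theorem directly from a comparison of Column~3 of Table~\ref{tab:TitsEARSu} with the list \eqref{eq:SEARSlist}, taking the computation of the individual entries (Pollmann's work plus the quantum-torus calculation) and the well-definedness from Remark~\ref{rem:conjugacy} as already established. The only difference is that you spell out the surjectivity and injectivity checks that the paper leaves implicit in the phrase ``a comparison \dots now yields our final result.''
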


\end{document}